\documentclass[oneside]{amsart}
\usepackage{geometry}                
\geometry{letterpaper}                   
\usepackage{graphicx}
\usepackage{amsmath}
\usepackage{amssymb}
\usepackage{amsthm}
\usepackage{mathtools}
\usepackage{mathscinet}
\usepackage{tikz-cd}
\usepackage{dsfont}
\usepackage{color}
\usepackage{enumitem}
\usepackage{tensor}
\usepackage{nicefrac}
\usepackage{todonotes}
\usepackage{mathrsfs}
\usepackage{float}

\setcounter{secnumdepth}{4}
\setcounter{tocdepth}{1}
\usepackage[]{hyperref}
\hypersetup{
	pdftitle={Theta functions, fourth moments of eigenforms, and the sup-norm problem I},
	pdfauthor={Ilya Khayutin},
	bookmarksnumbered=true,
	bookmarksopen=true,
	bookmarksopenlevel=1,
	bookmarksdepth=3,
	colorlinks=true,
	citecolor=blue,
	pdfstartview=Fit,
	pdfpagemode=UseOutlines,
	pdfpagelayout=TwoPageRight
}

\usepackage[protrusion=true]{microtype}
\DeclareSymbolFont{legacymaths}{OT1}{cmr}{m}{n}
\SetSymbolFont{legacymaths}{bold}{OT1}{cmr}{bx}{n}
\usepackage[vvarbb,upint]{newtxmath}
\usepackage[cal=boondoxo]{mathalfa}

\tolerance=500

\makeatletter
\let\orgdescriptionlabel\descriptionlabel
\renewcommand*{\descriptionlabel}[1]{%
  \let\orglabel\label
  \let\label\@gobble
  \phantomsection
  \protected@edef\@currentlabel{#1}%
  \let\label\orglabel
  \orgdescriptionlabel{#1}%
}
\def\paragraph{
	\@startsection{paragraph}{4}
	\z@{.5\linespacing\@plus.7\linespacing}{-.5em}%
	{\normalfont\itshape}}
\makeatother

\DeclareFontFamily{U}{mathx}{\hyphenchar\font45}
\DeclareFontShape{U}{mathx}{m}{n}{
      <5> <6> <7> <8> <9> <10>
      <10.95> <12> <14.4> <17.28> <20.74> <24.88>
      mathx10
      }{}
\DeclareSymbolFont{mathx}{U}{mathx}{m}{n}
\DeclareFontSubstitution{U}{mathx}{m}{n}
\DeclareMathAccent{\widecheck}{0}{mathx}{"71}

\newcommand{\dif}{\,\mathrm{d}}

\DeclareMathOperator{\Id}{Id}

\DeclareMathOperator{\covol}{covol}

\DeclareMathOperator{\Nr}{Nr}

\DeclareMathOperator{\Tr}{Tr}
\DeclareMathOperator{\Lie}{Lie}

\DeclareMathOperator{\sign}{sign}

\DeclareMathOperator{\diag}{diag}
\DeclareMathOperator{\Span}{Span}
\DeclareMathOperator{\Sym}{Sym}

\DeclareMathOperator{\supp}{supp}

\DeclareMathOperator{\Fr}{\mathcal{F}}

\newcommand{\ord}{\operatorname{ord}}

\renewcommand{\det}{\operatorname{det}} 

\newlength{\faktorheight}
\newcommand*{\dfaktor}[3]{
\mathchoice{
 \settototalheight{\faktorheight}{\ensuremath{#1}}%
  \raisebox{-0.5\faktorheight}{\ensuremath{#1}}
   \backslash
   \settototalheight{\faktorheight}{\ensuremath{#2}}%
  \raisebox{0.5\faktorheight}{\ensuremath{#2}}
  \slash
   \settototalheight{\faktorheight}{\ensuremath{#3}}%
  \raisebox{-0.5\faktorheight}{\ensuremath{#3}}
  }{
  \ensuremath{#1}
  \backslash
  \ensuremath{#2}
  \slash
  \ensuremath{#3}
  }
  {
    \ensuremath{#1}
    \backslash
    \ensuremath{#2}
    \slash
    \ensuremath{#3}
  }
  {
    \ensuremath{#1}
    \backslash
    \ensuremath{#2}
    \slash
    \ensuremath{#3}
  }
}

\newcommand*{\lfaktor}[2]{
\mathchoice{
 \settototalheight{\faktorheight}{\ensuremath{#1}}%
  \raisebox{-0.5\faktorheight}{\ensuremath{#1}}
	\backslash
   \settototalheight{\faktorheight}{\ensuremath{#2}}%
  \raisebox{0.5\faktorheight}{\ensuremath{#2}}
}
{
	\ensuremath{#1}
	\backslash
	\ensuremath{#2}
}
{
	\ensuremath{#1}
	\backslash
	\ensuremath{#2}
}
{
	\ensuremath{#1}
	\backslash
	\ensuremath{#2}
}
}

\newcommand*{\faktor}[2]{
\mathchoice{
 \settototalheight{\faktorheight}{\ensuremath{#1}}%
  \raisebox{0.5\faktorheight}{\ensuremath{#1}}
  \slash
   \settototalheight{\faktorheight}{\ensuremath{#2}}%
  \raisebox{-0.5\faktorheight}{\ensuremath{#2}}
}
{
	\ensuremath{#1}
	\slash
	\ensuremath{#2}
}
{
	\ensuremath{#1}
	\slash
	\ensuremath{#2}
}
{
	\ensuremath{#1}
	\slash
	\ensuremath{#2}
}
}

\newcommand*{\cyclic}[1]{
	\faktor{\mathbb{Z}}{#1\mathbb{Z}}
}

\newcommand{\sm}{\left(\begin{smallmatrix}}
\newcommand{\esm}{\end{smallmatrix}\right)}
\newcommand{\bpm}{\begin{pmatrix}}
\newcommand{\ebpm}{\end{pmatrix}}

\newtheorem{thm}{Theorem}
\numberwithin{thm}{section}
\newtheorem*{thm*}{Statement of Theorem}
\newtheorem*{thm-quote}{Theorem}
\newtheorem{lem}[thm]{Lemma}
\newtheorem{prop}[thm]{Proposition}
\newtheorem{cor}[thm]{Corollary}

\theoremstyle{definition}
\newtheorem{defi}[thm]{Definition}

\theoremstyle{remark}
\newtheorem{remark}[thm]{Remark}
\newtheorem*{remark*}{Remark}
\newtheorem*{acknowledgements}{Acknowledgements}

\title[Theta functions, fourth moments of eigenforms, and the sup-norm problem I]{Theta functions, fourth moments of eigenforms, and the sup-norm problem I}
\author[I. Khayutin and R. Steiner]{Ilya Khayutin and Raphael S.\ Steiner}
\address{Department of Mathematics, Northwestern University, Evanston IL 60203, USA}
\address{Department of Mathematics, ETH Z{\"u}rich, 8092 Z\"urich, CH}

\subjclass[2010]{11F72 (11F11, 11F27, 11F70, 58G25)}

\begin{document}

\begin{abstract} We give sharp point-wise bounds in the weight-aspect on fourth moments of modular forms on arithmetic hyperbolic surfaces associated to Eichler orders. Thereby, we strengthen a result of Xia and extend it to co-compact lattices. We realize this fourth moment by constructing a holomorphic theta kernel on $\mathbf{G} \times \mathbf{G} \times \mathbf{SL}_{2}$, for $\mathbf{G}$ an indefinite inner-form of $\mathbf{SL}_2$ over $\mathbb{Q}$, based on the Bergman kernel, and considering its $L^2$-norm in the Weil variable. The constructed theta kernel further gives rise to new elementary theta series for integral quadratic forms of signature $(2,2)$.
\end{abstract}
\maketitle

{
\hypersetup{linkcolor=blue}
\tableofcontents
}
\section{Introduction}

The study of distributional aspects of automorphic forms has enjoyed ample consideration in the past couple of decades, in particular questions related to the Quantum Unique Ergodicity Conjecture, various bounds for $L^p$-norms, and restriction problems. In this paper, we are mainly concerned with the $L^{\infty}$-norm of holomorphic Hecke eigenforms on arithmetic hyperbolic surfaces in the large weight limit, though our method also gives essentially sharp results for moments of $L^4$-norms.

The sup-norm problem asks to provide the best possible bound on the sup-norm of a Hecke eigenform in terms of the analytic conductor. Specifically, one often seeks a non-trivial bound on the sup-norm separately with respect to the weight, Laplace eigenvalue or level aspect. It is analogous and closely related to the Lindel\"of Hypothesis for automorphic $L$-functions.
The go-to method for the majority of previous work on this problem is amplification. It was first used in this context by Iwaniec--Sarnak in the pioneering paper \cite{IwaniecSarnak}, though the idea of an amplifier goes back to Selberg \cite{selberg1942zeros}. Iwaniec and Sarnak showed the bound 
\begin{equation}
\|\varphi\|_{\infty} \ll_{\Gamma, \varepsilon} (1+|\lambda_{\varphi}|)^{\frac{5}{24}+\varepsilon} \|\varphi\|_2
\label{eq:IS95}
\end{equation}
for a Hecke--Maass form $\varphi\colon\lfaktor{\Gamma}{\mathbb{H}}\to\mathbb{C}$, where the lattice $\Gamma<\mathbf{SL}_2(\mathbb{R})$ is the unit norm elements of an Eichler order in a quadratic division algebra. Here and henceforth, we've adopted Vinogradov's notation. Their result marked the first time a power of $1+|\lambda_{\varphi}|$ was saved over what holds for a general Riemannian surface. Indeed, \eqref{eq:IS95} has been known to hold with exponent $\frac{1}{4}$ for a general compact Riemannian surface, without any further assumptions of arithmetic nature (cf. \cite{Sogge}). 
The amplifying technique has been used heavily due to its versatility. In the context of autmorphic forms on arithmetic hyperbolic surfaces, Blomer--Holowinsky \cite{BH10}, Templier \cite{HT1, Thybrid}, Harcos--Templier \cite{HT2,HT3}, Saha \cite{saha2017hybrid, Saha14, saha2019sup}, Hu--Saha \cite{HuSaha} and K{\i}ral \cite{halflevel} have used it to show subconvex bounds in various level aspects; Das--Sengupta \cite{DasSengupta}, Steiner \cite{halfsup} have used it to show subconvex bounds in the weight aspect. Blomer--Harcos--Mili\'cevi\'c \cite{blomer2016bounds}, Blomer--Harcos--Maga--Mili\'{c}evi\'{c} \cite{BHMM} applied it to a more general setting over number fields, which corresponds to products of hyperbolic 2- and 3-spaces. The most general $\mathbf{PGL}_2$ result is due to Assing \cite{AssingSup}. Moreover, the technique has also been adopted to arithmetic 2-spheres by Vanderkam \cite{Vanderkam} and products of 2- and 3-spheres by Blomer--Michel \cite{BM2011,BM2013}, and generalized to higher rank, e.g. Blomer--Pohl \cite{BlomerPohl} for $\mathbf{Sp}_4$, Blomer--Maga \cite{BlomerMagaPGL4,BlomerMagaPGLn} for $\mathbf{PGL}_n$ ($n\ge 4$), and Marshall \cite{Marshallsup} for semisimple split Lie groups over totally real fields and their totally imaginary quadratic extensions, to name a few.

In this paper, we employ a different tool, namely the theta correspondence. The theta correspondence was first used by the second named author \cite{S3sup} to tackle sup-norm problems. It has been previously used by Nelson to answer questions regarding quantum unique ergodicity and quantum variance \cite{NelsonQV1,NelsonQV2,NelsonQV3,NelsonSymmL}, and give Fourier-like expansions for forms living on compact spaces \cite{NelsonComp}. The main advantage of this approach is that instead of looking at an amplified second moment, we are able to bound a fourth moment sharply. Another advantage is that it works for co-compact lattices equally well as it does for non-co-compact ones. Our main theorem and its corollary read as follows.

\begin{thm}
\label{thm:main}
Let the arithmetic lattice $\Gamma < \mathbf{SL}_{2}(\mathbb{R}) $ be the unit norm elements of an Eichler order in an indefinite quaternion algebra over $\mathbb{Q}$ and $\{f_j\}_j \subset S_m^{\mathrm{new}}(\Gamma) $ be an orthonormal\footnote{with respect to the probability measure} basis of Hecke newforms of weight $m >2$. Then, there is a constant $A \ge 1$, such that for any $\varepsilon>0$, there is a constant $C_{\varepsilon}$ for which we have
\begin{equation}
\sum_j y^{2m}|f_j(z)|^4 \le C_{\varepsilon} \covol(\Gamma)^A  m^{1+\varepsilon} \left(1+m^{-\frac{1}{2}} \mathrm{ht}_{\Gamma}(z)^2 \right)\;,
\label{eq:thmmain1}
\end{equation}
where $\mathrm{ht}_{\Gamma}(z)=1$ if $\Gamma$ is co-compact and $$\mathrm{ht}_{\Gamma}(z)=\max_{\gamma \in \mathbf{SL}_2(\mathbb{Z})}  \ \Im(\gamma z)$$
if $\Gamma < \mathbf{SL}_2(\mathbb{Z})$. Furthermore, we have
\begin{equation}
\sum_j \|f_j\|_4^4 \le C_{\epsilon} \covol(\Gamma)^A  m^{1+\epsilon}\;.
\label{eq:thmmain2}
\end{equation}
\end{thm}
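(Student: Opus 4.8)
The plan is to realize the fourth moment $\sum_j y^{2m}|f_j(z)|^4$ spectrally as the value at $(z,z)$ of a reproducing-type kernel on $\mathbf{G}\times\mathbf{G}$, and then to express that kernel as a theta lift from the Weil variable $\mathbf{SL}_2$, so that its size can be read off from the $L^2$-mass of an explicit theta kernel. Concretely, write $\mathbf{G}$ for the indefinite inner form of $\mathbf{SL}_2$ over $\mathbb{Q}$ attached to the quaternion algebra, so that $\Gamma\backslash\mathbb{H}$ is (up to the Eichler-order adelic packaging) the relevant arithmetic surface. The starting observation is that $\sum_j y^{2m}|f_j(z)|^2 \overline{f_j(w)}^2\, \bar w{}^{2m}$, summed over an orthonormal Hecke basis of $S_m^{\mathrm{new}}$, is essentially the square of the weight-$m$ Bergman kernel projected onto newforms; squaring the Bergman kernel is what produces a \emph{holomorphic} kernel of weight $m$ in each of the two $\mathbf{G}$-variables, and this is the object one feeds into the seesaw. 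The first step, then, is to set up the dual reductive pair and the seesaw diagram relating $(\mathbf{G}\times\mathbf{G}, \mathbf{SL}_2)$ inside an orthogonal group of a signature $(2,2)$ quadratic space built from the quaternion algebra with its reduced-norm form, and to verify that the Bergman-kernel-based theta kernel constructed earlier in the paper has the correct holomorphy and transformation properties in all three variables.

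The second step is the core identity: integrate the theta kernel against itself in the $\mathbf{SL}_2$ (Weil) variable over $\mathbf{SL}_2(\mathbb{Z})\backslash\mathbb{H}$ — equivalently, take its $L^2$-norm there — and unfold. By the seesaw, this $L^2$-norm equals, on the one hand, a sum over $j$ of $|$theta lift of $f_j|^2$ paired against itself, which after using that the theta lift sends the weight-$m$ form $f_j$ on $\mathbf{G}$ to (a multiple of) $|f_j|^2$-type data — or more precisely realizes $y^m f_j(z)\overline{y^m f_j(z)}$ weighted by Hecke eigenvalue information — reproduces $\sum_j y^{2m}|f_j(z)|^4$ up to controlled constants and archimedean factors; and on the other hand it equals an integral of $|\Theta(z,z;\tau)|^2$ over $\tau$, which is a classical theta series for the signature $(2,2)$ lattice. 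The main point is that the latter is an \emph{elementary} object: it is a Poincaré-type series whose terms are indexed by lattice vectors $v$ in the quaternion order, weighted by a Gaussian-type archimedean profile coming from the Bergman kernel, so it can be bounded by counting lattice points in the order with a norm/size constraint dictated by $m$ and by the position of $z$.

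The third step is the lattice-point count, which I expect to be the main obstacle and the source of the shape of the bound in \eqref{eq:thmmain1}. After unfolding, one is left with a sum over $v$ in (a coset of) the Eichler order $\mathcal{O}$, weighted by something like $m\cdot\exp\!\big(-c\, m\, d(z, v\cdot z)^2\big)$ or its holomorphic analogue involving $\Nrd(v)$ and the imaginary parts; the diagonal contribution $v$ central gives the main term $m^{1+\varepsilon}$ (the $m$ from the archimedean normalization, the $\varepsilon$ from a divisor-type sum over Hecke data / the volume factor $\covol(\Gamma)^A$ absorbing level and discriminant dependence), while the off-diagonal $v$ contribute the term $m^{1/2}\,\mathrm{ht}_\Gamma(z)^2$: once $z$ is high in a cusp (in the non-cocompact case) there are genuinely many order-elements $v$ moving $z$ a short hyperbolic distance, and the Gaussian width $m^{-1/2}$ together with the area $\asymp \mathrm{ht}_\Gamma(z)^2$ of the relevant ball controls their number. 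In the co-compact case $\mathrm{ht}_\Gamma(z)$ is bounded so only the main term survives, uniformly in $z$. The delicate points are (i) keeping the dependence on $\covol(\Gamma)$ polynomial while summing over the finitely many adelic components / oldform-vs-newform bookkeeping, which forces the ``new'' projection and a Hecke-theoretic sieving argument, and (ii) handling the non-absolutely-convergent theta integral in the cuspidal case by regularization (à la the standard truncation of the Eisenstein-type divergence), where the $\mathrm{ht}_\Gamma$-dependent term is exactly the regularization defect.

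Finally, \eqref{eq:thmmain2} follows from \eqref{eq:thmmain1} by integrating over $z\in\Gamma\backslash\mathbb{H}$ against the probability measure: in the co-compact case this is immediate since the right-hand side is $O_\varepsilon(\covol(\Gamma)^A m^{1+\varepsilon})$ uniformly; in the non-cocompact case one integrates $1+m^{-1/2}\mathrm{ht}_\Gamma(z)^2$ over the fundamental domain, and since $\int \mathrm{ht}_\Gamma(z)^2\,d\mu(z)<\infty$ (the cusp neighborhoods have area shrinking like $\mathrm{ht}^{-2}$, so $\mathrm{ht}^2$ is integrable against $d\mu = \tfrac{dx\,dy}{y^2}$ up to a logarithmic or bounded factor absorbed into $m^\varepsilon$ and $\covol(\Gamma)^A$), the $m^{-1/2}$-term is dominated by the main term and one again obtains $O_\varepsilon(\covol(\Gamma)^A m^{1+\varepsilon})$. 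This completes the deduction.
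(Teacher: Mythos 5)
Your overall architecture matches the paper's: build a theta kernel on $\mathbf{G}\times\mathbf{G}\times\mathbf{SL}_2$ from the Bergman test function, identify its spectral expansion with the Jacquet--Langlands lifts of the $f_j$, bound $\sum_j\|f_{j,M}\|_2^2\,y^{2m}|f_j(z)|^4$ by the Weil-variable $L^2$-norm via Bessel, and convert that $L^2$-norm into a lattice count. However, there are two genuine gaps.

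First, the counting problem you arrive at is not the first-moment count you describe. Expanding $\int|\vartheta(z,z;\cdot)|^2$ and integrating out the unipotent variable (Parseval on the Fourier--Whittaker coefficients) produces $\sum_n \frac1n\bigl(\sum_{\Nr\xi=n,\;u(\xi z,z)\le\delta}1\bigr)^2$, i.e.\ a \emph{second-moment} count of \emph{pairs} $(\xi_1,\xi_2)$ in the order with $\Nr\xi_1=\Nr\xi_2\le N$ for $N\asymp m$, each lying in a ball of density $\delta\asymp 1/m$ around $z$. Your heuristic ``diagonal term plus off-diagonal Gaussian tail'' does not address this: the needed bound $\sum_{n\le N}\frac1n M(z,n;\delta)^2\ll N^{1+\varepsilon}$ sits exactly at the threshold of what is true (central elements already saturate it), and establishing it is the technical core of the argument. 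The paper does this by splitting each $\xi_i$ into trace and traceless parts, applying geometry-of-numbers to the traceless parts, and using the divisor bound on $(\Tr\xi_1)^2-(\Tr\xi_2)^2$ when the traces differ in absolute value, with a separate argument for equal traces; none of this is present or replaceable by the soft count you sketch. Relatedly, no regularization of the theta integral is needed: the Bergman test function vanishes on norm-zero elements, so the kernel is cuspidal and genuinely in $L^2$, and the $\mathrm{ht}_\Gamma(z)^2$ term arises from the contribution of upper-triangular matrices to the second-moment count over a Siegel set, not as a ``regularization defect.''

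Second, your deduction of \eqref{eq:thmmain2} from \eqref{eq:thmmain1} fails in the non-cocompact case: the claim $\int_{\Gamma\backslash\mathbb{H}}\mathrm{ht}_\Gamma(z)^2\,d\mu(z)<\infty$ is false. The cusp neighbourhood $\{\mathrm{ht}_\Gamma>T\}$ has hyperbolic area $\asymp T^{-1}$ (not $T^{-2}$), so $\int \mathrm{ht}_\Gamma^2\,d\mu \asymp \int^\infty dy$ diverges; even $\int\mathrm{ht}_\Gamma\,d\mu$ diverges logarithmically. One needs an extra input, namely that the $L^4$-mass of holomorphic forms of weight $m$ is concentrated in the region $\mathrm{ht}_\Gamma(z)\ll m^{1/4}$ (Theorem 1.8 of Blomer--Khan--Young), on which $1+m^{-1/2}\mathrm{ht}_\Gamma(z)^2\ll 1$; integrating \eqref{eq:thmmain1} over that region then gives \eqref{eq:thmmain2}. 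A further small but necessary ingredient you omit is the Hoffstein--Lockhart lower bound on $\|f_{j,M}\|_2^2$, which is what removes the arithmetic normalization of the lifts and leaves the clean sum $\sum_j y^{2m}|f_j(z)|^4$ on the spectral side.
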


\begin{cor} Let $\Gamma < \mathbf{SL}_{2}(\mathbb{R}) $ be as above with the additional assumption of being co-compact ($\Leftrightarrow B(\mathbb{Q})$ is non-split) and $f \in S_m^{\mathrm{new}}(\Gamma)$ a Hecke newform of weight $m$. Then, there is a constant $A \ge 1$, such that for any $\varepsilon>0$, there is a constant $C_{\varepsilon}$ for which we have
\begin{equation}
\sup_{z \in \mathbb{H}} y^{\frac{m}{2}}|f(z)| \le C_{\varepsilon} \covol(\Gamma)^{A}  m^{\frac{1}{4}+\varepsilon} \|f\|_2\;.
\label{eq:cormain}
\end{equation}
\label{cor:main}
\end{cor}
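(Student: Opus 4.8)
The plan is to deduce Corollary~\ref{cor:main} directly from the pointwise fourth-moment estimate \eqref{eq:thmmain1}. Rescaling $f$, we may assume $\|f\|_2=1$ with respect to the probability measure; since Hecke newforms of a given weight and level are mutually orthogonal, $f$ then belongs to some orthonormal basis $\{f_j\}_j$ of $S_m^{\mathrm{new}}(\Gamma)$, and dropping all but the corresponding (non-negative) summand in \eqref{eq:thmmain1} gives
\[
\bigl(y^{m/2}\lvert f(z)\rvert\bigr)^{4}\;\le\;\sum_j y^{2m}\lvert f_j(z)\rvert^{4}\;\le\;C_\varepsilon\covol(\Gamma)^{A}m^{1+\varepsilon}\Bigl(1+m^{-\frac12}\,\mathrm{ht}_\Gamma(z)^{2}\Bigr).
\]
Taking a fourth root,
\[
y^{m/2}\lvert f(z)\rvert\;\ll_\varepsilon\;\covol(\Gamma)^{A/4}\,m^{\frac14+\varepsilon}\,\bigl(1+m^{-\frac12}\mathrm{ht}_\Gamma(z)^{2}\bigr)^{1/4}\,\|f\|_2 .
\]
If $\Gamma$ is co-compact, then $\mathrm{ht}_\Gamma\equiv 1$ and the last factor is $\le 2^{1/4}$, so \eqref{eq:cormain} follows on taking the supremum over $z$ and renaming $A$ (using $\covol(\Gamma)\gg 1$ to inflate the exponent to be $\ge 1$) and $C_\varepsilon$. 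If $\Gamma<\mathbf{SL}_2(\mathbb{Z})$, the same inequality already yields \eqref{eq:cormain} for every $z$ with $\mathrm{ht}_\Gamma(z)\le m^{1/4}$, since there $m^{-1/2}\mathrm{ht}_\Gamma(z)^2\le 1$.

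The only remaining points are those lying deep in a cusp of $\Gamma$, where $\mathrm{ht}_\Gamma(z)>m^{1/4}$; there I would abandon the theta kernel and invoke the classical Fourier-expansion bound. Using the scaling matrix $\sigma_{\mathfrak a}$ of the relevant cusp $\mathfrak a$, one is reduced to estimating $Y^{m/2}\lvert g(x+iY)\rvert$, where $g=f\vert_m\sigma_{\mathfrak a}=\sum_{n\ge1}\rho(n)e(nz)$ is a weight-$m$ cusp form with width-one Fourier expansion and $Y\gg m^{1/4}$ is the imaginary coordinate at the cusp (the cusp width, which is $O(\covol(\Gamma))$, is harmless). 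By Atkin--Lehner--Li theory together with Deligne's bound one has $\lvert\rho(n)\rvert\ll_\varepsilon\covol(\Gamma)^{O(1)}\lvert\rho(1)\rvert\,n^{\frac{m-1}{2}+\varepsilon}$, while Rankin--Selberg unfolding (Petersson's formula, with the Hoffstein--Lockhart lower bound for $L(1,\mathrm{sym}^2 f)$) gives $\lvert\rho(1)\rvert^{2}\ll_\varepsilon\covol(\Gamma)^{O(1)}m^{\varepsilon}\,\tfrac{(4\pi)^{m}}{\Gamma(m)}\|f\|_2^{2}$. Feeding these into $Y^{m/2}\lvert g\rvert\le Y^{m/2}\sum_{n}\lvert\rho(n)\rvert e^{-2\pi nY}$, comparing the sum with $\int_{0}^{\infty}t^{\frac{m-1}{2}}e^{-2\pi tY}\dif t\asymp\Gamma(\tfrac{m+1}{2})(2\pi Y)^{-\frac{m+1}{2}}$ (only the terms with $n\lesssim m$ matter, where $n^{\varepsilon}\ll m^{\varepsilon}$), and using Legendre's duplication formula to see $(4\pi)^{m/2}\Gamma(\tfrac{m+1}{2})\Gamma(m)^{-1/2}(2\pi)^{-\frac{m+1}{2}}\asymp m^{1/4}$, one arrives at $y^{m/2}\lvert f(z)\rvert\ll_\varepsilon\covol(\Gamma)^{O(1)}m^{\frac14+\varepsilon}\,Y^{-1/2}\|f\|_2$, which is better than needed whenever $Y\gg m^{1/4}\ge 1$. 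Combining this with the previous range and taking the supremum finishes the proof.

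I do not expect a genuine obstacle here: the co-compact case is immediate, and the cuspidal range is handled by a short, self-contained computation, with all of the difficulty sitting in Theorem~\ref{thm:main} itself. The two points requiring a modicum of care are the passage to an arbitrary cusp (tracking the width and the Fourier coefficients of $f\vert_m\sigma_{\mathfrak a}$, which need not be a newform) and the bookkeeping of the archimedean $\Gamma$-factors, which must cancel precisely so as to leave the clean power $m^{1/4}$.
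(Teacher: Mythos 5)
Your overall route is the one the paper intends: the corollary is meant to follow by dropping all but one term in \eqref{eq:thmmain1} (which handles co-compact $\Gamma$ and the region $\mathrm{ht}_\Gamma(z)\le m^{1/4}$ outright), and by falling back on the Fourier expansion with Deligne's bound and Hoffstein--Lockhart high in the cusp, exactly as in Xia's treatment of $\mathbf{SL}_2(\mathbb{Z})$ cited in the introduction. So there is no divergence of strategy.

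There is, however, a quantitative error in your cusp computation. The comparison $\sum_{n\ge1}n^{\frac{m-1}{2}}e^{-2\pi nY}\asymp\int_0^\infty t^{\frac{m-1}{2}}e^{-2\pi tY}\,\dif t$ is only legitimate when the integrand varies slowly on unit intervals, i.e.\ when the Gaussian peak of $t^{\frac{m-1}{2}}e^{-2\pi tY}$ (centered at $t_0=\frac{m-1}{4\pi Y}$, of width $\asymp\sqrt{m}/Y$) is wider than the integer spacing. For $Y\gg\sqrt{m}$ this fails: the sum must be majorized by the integral \emph{plus} the maximal term, and a single term $\lvert\rho(1)\rvert\,Y^{m/2}e^{-2\pi Y}$ already contributes $\asymp m^{1/4}$ at $Y\asymp m/4\pi$ (Stirling: $(4\pi)^{m/2}\Gamma(m)^{-1/2}(m/4\pi e)^{m/2}\asymp m^{1/4}$). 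Consequently the bound $m^{1/4}Y^{-1/2}$, and the assertion that the cuspidal range is ``better than needed,'' are false: the supremum over $Y\ge m^{1/4}$ is genuinely $\asymp m^{1/4}$, attained near $Y\asymp m$, not $O(m^{1/8})$. The corollary survives because the corrected estimate, integral term plus maximal term, is still $\ll_\varepsilon\covol(\Gamma)^{O(1)}m^{1/4+\varepsilon}\|f\|_2$ uniformly in $Y$, which is all that \eqref{eq:cormain} requires; but as written your step would not compile into a valid inequality in the range $\sqrt{m}\ll Y\ll m$.
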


The first half of Theorem \ref{thm:main} marks a significant improvement over what has been known previously. It shows that the $L^{\infty}$-norm of the fourth moment of holomorphic newforms of weight $m$ is, essentially, as small as it can be, meaning that they enjoy a stronger `orthogonality' relation than what was previously known. Remarkably, our proof does not rely on any deep results from arithmetic geometry such as Deligne's bound for the Hecke eigenvalues, but rather a sharp bound for a second moment matrix count as we shall explain in further detail in Section \ref{sec:method}. The second half of Theorem \ref{thm:main} is a simple consequence of the first half if $\Gamma$ is co-compact and otherwise it follows in conjunction with \cite[Theorem 1.8]{BKY}, which says that the mass of the fourth norm is concentrated in the domain $\{z \in \lfaktor{\Gamma}{\mathbb{H}} : \mathrm{ht}_{\Gamma}(z) \le m^{\frac{1}{4}} \}$. Following Sarnak and Watson \cite{SarnakWatsonL4}, Inequality \eqref{eq:thmmain2}, through the use of Watson's formula \cite[Thm. 3]{Watsontriple} or more generally Ichino's formula \cite{Ichino} (cf. \cite[Section 4]{Nelsonlevelequidistribution}), may be reformulated as a Lindel\"of on average statement about degree eight $L$-functions. In particular, assuming that the product of the reduced discriminant $D_B$ of $B$ and the level $q$ of $\Gamma$ is square-free, one deduces
\begin{equation}
\frac{1}{m}\sum_f  \frac{1}{2m}\sum_g L(f \times f \times g, \tfrac{1}{2}) \le C_{\varepsilon} \covol(\Gamma)^A m^{\varepsilon}\;,
\label{eq:lindlavg}
\end{equation}
where $f \in S_m^{\mathrm{new}}(\Gamma)$ runs through a basis of newforms of weight $m$ for $\Gamma$ and $g \in S_{2m}(\Gamma)$ runs through an orthonormal set of newforms of weight $2m$ for $\Gamma$ with Hecke eigenvalues equal to $1$ for all primes $p \mid D_B$ and Atkin--Lehner eigenvalues equal to $-1$ for all primes $p \mid q$. This should be compared to the result of Sun--Ye \cite{sun2019double} who considered the double average of the degree six $L$-function $L(\Sym^2 f \times g, \frac{1}{2})$, where $f,g$ are Hecke eigenforms of weight $m$, respectively $2m$, for $\mathbf{SL}_2(\mathbb{Z})$. Note that $L(f \times f \times g, \tfrac{1}{2})=L(\Sym^2 f \times g, \tfrac{1}{2}) L(g,\frac{1}{2})$. One should also mention a result of Khan, who managed to show an asymptotic formula for the left-hand-side of \eqref{eq:thmmain2} for $\Gamma=\mathbf{SL}_2(\mathbb{Z})$ with an extra (smooth) average over the weight $m$. Khan's result matches up with conjectures concerning the asymptotics of the $L^4$-norm in the large weight aspect. We refer to \cite{BKY} for details regarding these conjectures. In the future, we plan to address the question whether one can upgrade the second half of Theorem \ref{thm:main} to an asymptotic without any extra average over the weight. We shall also mention the strongest individual bound for the $L^4$-norm of a Hecke eigenform $f$ of weight $m$ on $\mathbf{SL}_2(\mathbb{Z})$ which is due to Blomer--Khan--Young \cite{BKY}. They managed to show $\|f\|_4 \ll_{\varepsilon} m^{\frac{1}{12}+\varepsilon} \|f\|_2$.

The convex or trivial bound in the context of Corollary \ref{cor:main} is $ \ll \covol(\Gamma)^{\frac{1}{2}}m^{\frac{1}{2}}$ and the first non-trivial bound in the weight aspect $\ll_\varepsilon m^{\frac{1}{2}-\delta+\varepsilon}$ for a small $\delta>0$ was achieved by Das--Sengupta\footnote{$\delta=1/32$ appears in the published version, though this has been corrected to $\delta=1/64$ in a recent revision on the arXiv.} \cite{DasSengupta} through the use of an amplifier. The previous best  bound in the weight aspect is due to Ramacher--Wakatsuki \cite{RamacherWakatsuki} who establish a subconvex bound for the sup-norm in significant generality.

The analogue of Corollary \ref{cor:main} for non-uniform lattices is much easier to establish, because one can use the Fourier expansion at a cusp and then apply Deligne's bound for the Fourier coefficients. This was observed by Xia \cite{Supnormintweight}, who worked out the case $\Gamma=\mathbf{SL}_2(\mathbb{Z})$. In the same fashion, a sharp hybrid bound for holomorphic forms of minimal type was derived by Hu--Nelson--Saha \cite{hu2019some}. We would also like to thank Paul Nelson for pointing out to us the relation between our technique and \cite[Theorem 3.1.]{NelsonComp}. Nelson uses  an explicit (non-holomorphic) version of Shimizu's theta kernel \cite{Shimizu} to construct an expansion of $y^m|f(z)|^2$, where $f$ is an arithmetically normalized newform on a compact arithmetic surface, that resembles a Fourier expansion.

Finally, we shall mention that we did not attempt to optimize the dependence on the co-volume or level in Theorem \ref{thm:main} in this first paper. Due to our method requiring sharp bounds for a second moment matrix count of length comparable to the conductor, any such undertaking must necessarily address the inability of pre-existing matrix counting techniques in the non-split case, such as \cite{HT1}, to deal with large determinants. Furthermore, a strategy needs to be devised to incorporate the dependence on the reduced discriminant of the indefinite quaternion algebra. All of this shall be addressed in a sequel joint with P. Nelson \cite{Masstheta4moment}.

As far as the structure of this paper goes, in the subsequent section, we shall briefly explain the main concept of the proof as well as mentioning an alternative approach using $L$-functions instead of a theta kernel. Sections \ref{sec:weilreptheta} and \ref{sec:actionEichler} deal with local and global properties of the Weil representation and their consequences to the associated theta series. The action of the Hecke algebra on the theta kernel is computed in Section \ref{sec:lift}. In Sections \ref{sec:Bergman} and \ref{sec:specexp}, we show that the Bergman kernel satisfies the required assumptions in the construction of the theta kernel and compute its spectral expansion. In Section \ref{sec:L2reduction}, we reduce a bound on the $L^2$-norm of the theta kernel to matrix counts. In Section \ref{sec:matrix}, we prove the essentially sharp second moment matrix count. The main theorem is then established in Section \ref{sec:proof}.

\begin{acknowledgements} 
It is a pleasure to thank Paul Nelson for many enlightening discussions on this project and very useful comments on an earlier draft of this manuscript. We would like to deeply thank Peter Sarnak for his continuous encouragement and numerous fruitful discussions on the topic. We thank Jared Wunsch for his help with the analytic aspects of this work. We are also grateful to Valentin Blomer, Simon Marshall, and the referees for their comments on a previous version of the manuscript. 

The majority of this work was conducted during a stay of the second named author at Northwestern University, and subsequently completed at the respective home institutions: Northwestern University and Institute for Advanced Study / ETH Z\"urich.
The first named author has been supported by an AMS Centennial Fellowship and he would like to thank the AMS  for its generosity. The second named author would like to thank Northwestern University for their hospitality, the Institute for Advanced Study, where he was supported by the National Science Foundation Grant No. DMS -- 1638352 and the Giorgio and Elena Petronio Fellowship Fund II, and the Institute for Mathematical Research (FIM) at ETH Z\"urich.
\end{acknowledgements}

\section{General Method}
\label{sec:method}

In this section, we shall briefly explain two essentially equivalent strategies that lead to Theorem \ref{thm:main}. We shall first lay out the approach which is conceptually closer to that of an amplifier. For simplicity, we shall assume everything is unramified, i.e. $\Gamma= \mathbf{SL}_2(\mathbb{Z})$, which is the set of determinant one elements of the maximal order $\mathcal{R}=\operatorname{Mat}_{2x2}(\mathbb{Z})$ inside the quaternion algebra $\operatorname{Mat}_{2x2}(\mathbb{Q})$. Let $\mathcal{R}_n$ denote the elements of $\mathcal{R}$ of norm $n$, such that $\Gamma=\mathcal{R}_1$. We begin with a Bergman kernel (also known as a reproducing kernel) on $S_m(\Gamma)$, the space of weight $m$ holomorphic cusp forms on $\Gamma$,
\begin{equation}
B(z,w) = \sum_j \Im(z)^{\frac{m}{2}} f_j(z) \Im(w)^{\frac{m}{2}} \overline{f_j(w)}\;,
\label{eq:methodBerg}
\end{equation}
where $\{f_j\}_j$ is an orthonormal basis of Hecke eigenforms of the space $S_m(\Gamma)$. The amplified counterparts to the Bergman kernel are
\begin{equation}
B_n(z,w) = \sum_j \lambda_j(n) \Im(z)^{\frac{m}{2}} f_j(z) \Im(w)^{\frac{m}{2}} \overline{f_j(w)}\;,
\label{eq:methodBergamp}
\end{equation}
where $\lambda_j(n)$ is the $n$-th Hecke eigenvalue of the newform $f_j$. We normalize the Hecke operators so that Deligne's bound reads $|\lambda_j(n)| \le d(n)$, $d(n)$ is the divisor function. The kernels $B_n$ are roughly of the shape
\begin{equation}
B_n(z,w) \approx \frac{m}{\sqrt{n}} \sum_{\substack{\alpha \in \mathcal{R}_n \\ u(\alpha z, w) \le \frac{1}{m}}} 1\;,
\label{eq:methodBergaprox}
\end{equation}
where $u(z,w)=|z-w|^2/(4 \Im(z) \Im(w))$. Instead of taking a suitable linear combination of \eqref{eq:methodBergamp} as one would do for an amplifier, we consider
\begin{equation}
\int_0^1  \left| \sum_{n \le m} B_n(z,z) e(nt) \right|^2 dt = \sum_{i,j} \Im(z)^{2m} |f_j(z)|^2|f_i(z)|^2 \sum_{n \le m} \lambda_j(n) \overline{\lambda_i(n)}\;.
\label{eq:methodlsym}
\end{equation}
To the latter, or more precisely a smooth version thereof, one may apply Vorono\"i summation. If we set aside any intricacies stemming from Riemann zeta factors and smoothing, we pick up main terms for $i=j$ corresponding to the poles of $L(f_i \times f_j,s)$ at $s=1$ for $i=j$ and a dual sum of length $m^2/m$. Thus, we find that \eqref{eq:methodlsym} is approximately
\begin{equation}
 m \sum_{j} \Im(z)^{2m} |f_j(z)|^4  + \sum_{i,j} \Im(z)^{2m} |f_j(z)|^2|f_i(z)|^2 \sum_{n \leq  m^2/m} \lambda_j(n) \overline{\lambda_i(n)}\;.
\label{eq:methodlsym2}
\end{equation}
We see that the \emph{new} dual sum is once again of the shape \eqref{eq:methodlsym} and we may replace it with its geometric counterpart. Through rearranging and the use of the approximation of the Bergman kernel \eqref{eq:methodBergaprox}, one arrives at
\begin{equation}
\sum_{j} \Im(z)^{2m} |f_j(z)|^4 \ll  m \sum_{n \le m} \frac{1}{n} \sum_{\substack{\alpha_1,\alpha_2 \in \mathcal{R}_n \\ u(\alpha_i z,z) \le \frac{1}{m}, i=1,2}} 1\;.
\label{eq:methodmainineq}
\end{equation}
We see that we end up with a second moment matrix count. Before we discuss the latter further, we shall describe how to arrive at the same inequality in an alternate fashion by using a theta kernel.

At its core, one wishes to find a kernel\footnote{Here, $\Gamma<\mathbf{G}(\mathbb{Q})$ is a congruence lattice in an indefinite inner-form $\mathbf{G}$ of $\mathbf{SL}_2$ and $\Lambda<\mathbf{SL}_2(\mathbb{Q})$ is a lattice in the split form $\mathbf{SL}_2$ that arises in Shimizu's explicit Jacquet-Langlands transfer of modular forms on $\lfaktor{\Gamma}{\mathbf{G}(\mathbb{R})}$.} $\vartheta: \lfaktor{\Gamma}{\mathbf{SL}_2(\mathbb{R})} \times \lfaktor{\Gamma}{\mathbf{SL}_2(\mathbb{R})} \times \lfaktor{\Lambda}{\mathbf{SL}_2(\mathbb{R})}\to\mathbb{C}$, such that
\begin{equation}
\left \langle \vartheta(z,w;\bullet), (\Im\bullet)^{m/2}\tilde{f} \right\rangle = \Im(z)^{\frac{m}{2}} f(z) \Im(w)^{\frac{m}{2}} \overline{f(w)} \cdot \|\tilde{f}\|_2^2\;,
\label{eq:introthetaproperty}
\end{equation}
for an $L^2$-normalized newform $f$ and $\tilde{f}$ an arithmetically normalized newform in the Jaquet--Langlands transfer to $\mathbf{GL}_2$ of the automorphic representation generated by $f$. It immediately follows that
\begin{equation}
\sum_{f} \|\tilde{f} \|_2^2 \cdot \Im(z)^{2m} |f(z)|^4 \le \|\vartheta(z,z;\bullet)\|_2^2
\label{eq:methodbessel}
\end{equation}
by Bessel's inequality. For $\Gamma = \mathbf{SL}_2(\mathbb{Z})$, such a kernel may be given by
\begin{equation}
\vartheta(z,w;\zeta) = \sum_{n=1}^{\infty} B_n(z,w) n^{\frac{m-1}{2}} e(n\zeta)\;.
\label{eq:simplethetasl2Z}
\end{equation} 
This may be used to recover \eqref{eq:methodmainineq} upon using the Hoffstein--Lockhart bound for $\|\tilde{f}\|_2$ \cite{HoffsteinLockhart} and standard bounds for the incomplete Gamma function. 

We prefer to employ the latter approach as it avoids translating spectral data back into geometric terms. Specifically, in equation \eqref{eq:methodlsym2} we have been able to replace the dual sum by the integral of the same amplified Bergman kernel on the left hand side of \eqref{eq:methodlsym}. This step cannot be reproduced verbatim in the ramified cases. Instead, one would need to express the dual sum in terms of Fourier expansions of amplified Bergman kernels associated to various levels and different cusps. The approach using the theta correspondence avoids these issues altogether.

Whilst the constructions of theta kernels in great generality have been known for a while, see \cite{Shimizu} or \cite[Section 5 \& Appendix B]{NelsonComp} for an explicit example, they are unfortunately generally not in $L^2$. An attempt to rectify this, would be to project such a theta kernel to $S_m(\Gamma)$. Formul{\ae} for such projections are given in Gross--Zagier \cite[Section IV.5]{grosszagier}. However, we follow a different path. Motivated by the simplicity of the kernel $\vartheta$ in the case $\Gamma=\mathbf{SL}_2(\mathbb{Z})$ \eqref{eq:simplethetasl2Z}, we modify the general construction of a theta kernel to mirror a classical Bergman kernel of weight $m$. In order to show that the novel theta kernel behaves in the prescribed fashion, we use a method of Vign\'eras \cite{Vigneras} at the infinite place and compute the Fourier--Whittaker expansion in the $\zeta$-variable. We compare the latter with Shimizu's explicit form of the Jacquet-Langlands correspondence \cite{Shimizu}. As a corollary, we derive a new elementary theta series for indefinite quadratic forms of signature $(2,2)$.

\begin{thm} Let $\mathcal{R}$ be an Eichler order of level $q$ in an indefinite division quaternion algebra over $\mathbb{Q}$ of reduced discriminant $D_B$. Denote by $\mathcal{R}^{+}$ the subset of elements of positive norm and by $\Gamma$ the subset of elements of norm equal to one. Furthermore, let $f \in S_m(\Gamma)$ be a cusp form of weight $m>2$. Then, for each $z \in \mathbb{H}$, the function $\mathcal{F}_{f}(z;\bullet)$, given by
\begin{equation}
\mathcal{F}_{f}(z;\zeta) = \sum_{\alpha \in \lfaktor{\Gamma}{\mathcal{R}^+}} \Nr(\alpha)^{\frac{m}{2}-1} (f|_m\alpha)(z) e(\Nr(\alpha)\zeta)\;,
\label{eq:individualtheta}
\end{equation}
is a cusp form of weight $m$ for $\Gamma_0(qD_B)$. Moreover, we have $\mathcal{F}_{T_n f}(z;\bullet) = (T_n\mathcal{F}_{f})(z;\bullet)$ for $(n,qD_B)=1$.
\label{thm:individualtheta}
\end{thm}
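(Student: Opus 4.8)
The plan is to realize $\mathcal{F}_{f}(z;\bullet)$, for each fixed $z\in\mathbb{H}$, as the classical incarnation of the theta lift of $f$ to $\mathbf{SL}_{2}$ attached to the quaternary quadratic space $(B,\Nr)$ — which is of signature $(2,2)$ because $B$ is indefinite, so $B\otimes\mathbb{R}\cong\mathbf{M}_{2}(\mathbb{R})$ with $\Nr$ the determinant form — and then to read the asserted properties off the theta kernel $\vartheta$ of Section~\ref{sec:method}, the one built from the weight-$m$ Bergman kernel. First I would check convergence: since $\Im(\alpha z)^{m/2}|f(\alpha z)|$ descends to a bounded function on $\lfaktor{\Gamma}{\mathbb{H}}$, one has $|(f|_{m}\alpha)(z)|\ll_{z}1$ uniformly in $\alpha\in\mathcal{R}^{+}$, so each term of \eqref{eq:individualtheta} is $\ll_{z}\Nr(\alpha)^{m/2-1}|e(\Nr(\alpha)\tau)|$; combined with $\#\{\alpha\in\lfaktor{\mathcal{R}_{1}}{\mathcal{R}^{+}}:\Nr(\alpha)=N\}\ll_{\varepsilon}N^{1+\varepsilon}$ and the exponential decay of $|e(N\tau)|$ in $N$, the series converges absolutely and locally uniformly, defining a holomorphic function of $\tau\in\mathbb{H}$. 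Invariance under $\tau\mapsto\tau+1$ is clear since $\Nr(\alpha)\in\mathbb{Z}_{>0}$, and grouping the terms by $N=\Nr(\alpha)$ displays the $q$-expansion of $\mathcal{F}_{f}(z;\bullet)$, whose $N$-th Fourier coefficient is $z\mapsto$ (a fixed $N$-dependent multiple of) the $N$-th quaternionic Hecke operator applied to $f$, evaluated at $z$.

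The modularity under the full group $\Gamma_{0}(qD_{B})$, with weight $m$ and trivial nebentypus (the discriminant of $(\mathcal{R},\Nr)$ being the perfect square $(qD_{B})^{2}$), I would deduce from the automorphy in the Weil variable of the global theta kernel: the underlying theta series arises by Poisson summation over the lattice $\mathcal{R}$ in the adelic model, with archimedean datum the holomorphic, weight-$m$ ($\mathbf{SO}(2)$-isotypic) Schwartz-type function extracted from the weight-$m$ Bergman kernel by Vign\'eras's method — this is where the hypothesis $m>2$ is used — and with non-archimedean data the characteristic functions $\mathbf{1}_{\mathcal{R}\otimes\mathbb{Z}_{p}}$ and their Weil transforms, which cut out exactly the level $qD_{B}$ for an Eichler order of level $q$ in an algebra of reduced discriminant $D_{B}$. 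Equivalently — the route flagged in the introduction — one computes the Fourier--Whittaker expansion of $\mathcal{F}_{f}(z;\bullet)$ in $\tau$ and matches it, on an eigenbasis $\{f_{j}\}$ of $S_{m}(\Gamma)$, with Shimizu's explicit Jacquet--Langlands transfer: the $q$-expansion above gives $\mathcal{F}_{f_{j}}(z;\bullet)=f_{j}(z)\,\widetilde{f_{j}}$ for a suitably normalized transfer $\widetilde{f_{j}}\in S_{m}(\Gamma_{0}(qD_{B}))$, so $\mathcal{F}_{f}(z;\bullet)=\sum_{j}c_{j}f_{j}(z)\widetilde{f_{j}}$ is a finite linear combination of cusp forms in $S_{m}(\Gamma_{0}(qD_{B}))$, hence a cusp form. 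If $\Gamma$ is co-compact there are no cusps and nothing more is needed; in general, cuspidality is the statement that the constant term of $\mathcal{F}_{f}(z;\bullet)$ at every cusp of $\Gamma_{0}(qD_{B})$ vanishes, and in the Fourier--Whittaker picture that constant term is a period of $f$ against an Eisenstein series, which vanishes because $f$ is a cusp form.

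For the Hecke relation with $(n,qD_{B})=1$ I would argue directly from the $q$-expansion. Its $N$-th Fourier coefficient is the $N$-th quaternionic Hecke operator applied to $f$ at $z$, up to a factor depending only on $N$; since at a prime $p\nmid qD_{B}$ the algebra $B$ is split and $\mathcal{R}\otimes\mathbb{Z}_{p}=\mathbf{M}_{2}(\mathbb{Z}_{p})$ carries the classical Hecke structure, the quaternionic Hecke operators away from $qD_{B}$ obey the usual multiplicativity relations, and both $T_{n}$ acting on the Weil variable and $T_{n}$ acting on $f$ reduce to the same manipulation of the index $N$; the $q$-expansions of $\mathcal{F}_{T_{n}f}(z;\bullet)$ and $(T_{n}\mathcal{F}_{f})(z;\bullet)$ then agree term by term.

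I expect the main obstacle to be the non-co-compact case and the two non-formal claims above: (i) pinning the level down to \emph{exactly} $\Gamma_{0}(qD_{B})$, which requires a careful local analysis of the Weil representation at the primes $p\mid qD_{B}$ — the ramified primes $p\mid D_{B}$, whose local order is the maximal order of the local division quaternion algebra, and the Eichler primes $p\mid q$, contributing to the level in genuinely different ways; and (ii) the vanishing of the constant term of $\mathcal{F}_{f}(z;\bullet)$ at each of the (several) cusps of $\Gamma_{0}(qD_{B})$, which is really the substance of the Fourier--Whittaker computation advertised in the introduction, and is exactly what distinguishes this construction from an ordinary non-cuspidal, single-cusp theta series.
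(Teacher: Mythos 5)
Your proposal is correct and follows essentially the same route as the paper: the authors realize $\mathcal{F}_f(z;\bullet)$ as $\int_{[\mathbf{G}(\mathbb{A})]}\Theta_M(l,r_z;s_\tau)\varphi_f(l)\dif l$ for the cuspidal test function $M=M_\infty^{(m)}\prod_p\mathds{1}_{\mathcal{R}_p}$, and the level, cuspidality, holomorphy of weight $m$, and Hecke equivariance are exactly the content of the local Weil-representation computations, Propositions \ref{prop:theta-lift-whittaker}, \ref{prop:theta-hecke}, and \ref{prop:begman-spherical}, together with the Fourier--Whittaker/Shimizu comparison you describe. Your worry (ii) about checking the constant term at each cusp of $\Gamma_0(qD_B)$ is disposed of in the paper's framework because the adelic quotient $[\mathbf{SL}_2(\mathbb{A})]$ has a single cusp and $M_\infty^{(m)}$ vanishes on elements of non-positive norm, so the zeroth Fourier--Whittaker coefficient vanishes identically.
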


Returning to the second moment matrix count, we see that upon using partial summation we need to bound the number of solutions to
\begin{equation}
\alpha_1,\alpha_2 \in \mathcal{R}: 1 \le \Nr(\alpha_1)=\Nr(\alpha_2) \le N, \quad u(\alpha_i z,z) \le \tfrac{1}{m}, \ i=1,2\;.
\label{eq:methodmatrix}
\end{equation}
Consider $z$ fixed for the moment. Then, we are given a quadratic equation in eight variables all of size $N^{\frac{1}{2}}$ with four additional linear inequalities of density $m^{-\frac{1}{2}}$. Heuristics suggest that we should have on the order of $(N^{\frac{1}{2}})^8\cdot N^{-1} \cdot (m^{-\frac{1}{2}})^4=N^3m^{-2}$ solutions for $N$ large. We see that for $N \le m$, $N^3m^{-2} \le N$, which is the bound we are aiming for. Moreover, by considering the order $\operatorname{Mat}_{2 \times 2}(\mathbb{Z})$ and the special point $z=i$, we see that the matrices of the shape $\sm a & -b \\ b & a  \esm$ with $1 \le a^2+b^2 \le N$ satisfy the conditions in \eqref{eq:methodmatrix} and give rise to a lower bound of size $N$. Likewise, we should expect that such subvarieties with exceptionally many solutions exist also for other special points and orders under consideration. Hence, the general estimate we seek is at the cusp of what is achievable. This is in stark contrast to the classical approach of an amplifier, where one may consider matrices of reduced norm up to only a small power of $m$ in order to get a non-trivial result. However, the difficulty of the task at hand is rewarded with a sharp fourth moment estimate. In order to achieve the required bound, we rely on geometry of numbers arguments, which have been successful in the past for first moments (cf. \cite{HT3}), in particular with regards to uniformity in the varying point $z$. To account for the additional quadratic equation, we decompose each matrix $\alpha_i$ into two parts: a multiple of the identity and a traceless part $\alpha_i^0$. To the traceless parts $\alpha_i^0$ we apply the geometry of numbers arguments. The quadratic equation now reads
$$
\Nr \alpha_1 = \frac{(\Tr \alpha_1)^2}{4}+\Nr \alpha_1^0 = \Nr \alpha_2 = \frac{(\Tr \alpha_2)^2}{4} + \Nr \alpha_2^0
$$
and we may use the divisor bound to bound the number of possibilities for the traces. This gives the required bound if and only if the traces are not equal in absolute value. The latter case needs to be dealt with separately. We do so by showing that there are essentially only a constant number of matrices $\alpha \in \mathcal{R}$ satisfying $u(\alpha z,z) \le \frac{1}{m}$ of a given trace and reduced norm $\le m$.

As a final remark, we address the natural question, whether the method lends itself to further amplification. Albeit it being straightforward to produce amplified versions of \eqref{eq:methodmainineq}, the problem lies within the matrix count, where there is no further space for savings as all of the savings stemming from $u(\alpha z,z)\le \frac{1}{m}$ are used up by the fact that we already have to consider matrices of determinant $\le m$. Any additional increase in the size of the determinant will thus automatically increase the bound on the matrix count and subsequently the geometric side of \eqref{eq:methodmainineq} by a considerable amount.

\section{The Weil Representation and Theta Series}
\label{sec:weilreptheta}

\subsection{Inner-forms of \texorpdfstring{$\mathbf{SL}_2$}{SL2}}
Let $B$ be a quadratic central simple algebra over $\mathbb{Q}$ and for each place $v$ denote $B_v\coloneqq B\otimes \mathbb{Q}_v$. We define the affine algebraic group $\mathbf{G}$ over $\mathbb{Q}$ as representing the group functor
\begin{equation*}
\mathbf{G}(L)=\left\{x\in B\otimes L \mid \Nr(x)=1\right\}
\end{equation*}
for all $\mathbb{Q}$-algebras $L$. The group $\mathbf{G}$ is an inner-form of $\mathbf{SL}_2$, and all inner-forms of $\mathbf{SL}_2$ over $\mathbb{Q}$ arise this way.

Fix a maximal order $R\subset B$ and define $R_v$ to be the completion of $R$ in $B_v$. For each finite place $v$ the order $R_v$ is maximal in $B_v$. For $v<\infty$ set $K_v\coloneqq R_v^1 < \mathbf{G}(\mathbb{Q}_v)$ to be the group of norm $1$ elements in $R_v$.
If $B$ splits over $\mathbb{R}$ we fix once and for all an isomorphism $B_\infty\simeq \operatorname{Mat}_{2\times 2}(\mathbb{R})$ and use it to identify the two spaces.
We then set $K_\infty=\mathbf{SO}_2(\mathbb{R})$ if $B$ splits over $\mathbb{R}$ and $K_\infty=\mathbf{G}(\mathbb{R})$ otherwise. For almost all $v$ the group $K_v$ is a hyperspecial maximal compact subgroup of $\mathbf{G}(\mathbb{Q}_v)$. We define $\mathbf{G}(\mathbb{A})$ as the restricted direct product
\begin{equation*}
\mathbf{G}(\mathbb{A})\coloneq \left\{ (g_v)_v\in \prod_{v\leq \infty} \mathbf{G}(\mathbb{Q}_v) \mid g_v\in K_v \textrm{ for almost all } v \right\}\;.
\end{equation*}

\subsection{Normalization of Haar Measures.}
For a linear algebraic group $\mathbf{L}$ defined over $\mathbb{Q}$ we denote $[\mathbf{L}(\mathbb{A})]\coloneqq \lfaktor{\mathbf{L}(\mathbb{Q})}{\mathbf{L}(\mathbb{A})}$. Assume $[\mathbf{L}(\mathbb{A})]$ is of finite volume. We shall always integrate with respect to the probability Haar measure on $[\mathbf{L}(\mathbb{A})]$. Let $U<\mathbf{G}(\mathbb{A}_f)$ be a compact open subgroup. Then, $\mathbf{L}(\mathbb{R})$ acts on $[\mathbf{L}(\mathbb{A}))]_U\coloneqq\dfaktor{\mathbf{L}(\mathbb{Q})}{\mathbf{L}(\mathbb{A})}{U}$ with finitely many orbits \cite{BorelFiniteness}, and $[\mathbf{L}(\mathbb{A})]_U\simeq \bigsqcup_i \lfaktor{\Gamma_i}{\mathbf{L}(\mathbb{R})}$ with $\Gamma_i<\mathbf{G}(\mathbb{R})$ finitely many lattices. On  $[\mathbf{L}(\mathbb{A})]_U$ we integrate with respect to the push-forward of the probability Haar measure on $[\mathbf{L}(\mathbb{A})]$. This measure is evidently an $\mathbf{L}(\mathbb{R})$-invariant probability measure. If $[\mathbf{L}(\mathbb{A})]_U\simeq \lfaktor{\Gamma}{\mathbf{L}(\mathbb{R})}$ is a single $\mathbf{L}(\mathbb{R})$-orbit, then this measure is the probability Haar measure on $\lfaktor{\Gamma}{\mathbf{L}(\mathbb{R})}$.

On $\faktor{\mathbf{SL}_2(\mathbb{R})}{\mathbf{SO_2(\mathbb{R})}}$ and $\faktor{\mathbf{Spin}_3(\mathbb{R})}{\mathbf{SO}_2(\mathbb{R})}$ we fix the standard Haar measures corresponding to the volume form of Gaussian curvature $\pm 1$ on the hyperbolic plane and the $2$-sphere. 
We fix the  unique Haar measures on $\mathbf{SL}_2(\mathbb{R})$ and $\mathbf{Spin}_3(\mathbb{R})$ whose push-forward to the symmetric space coincides with the measure above.

On $\mathbf{SL}_2(\mathbb{Q}_p)$ and $\mathbf{PGL}_2(\mathbb{Q}_p)$ we fix the Haar measure giving volume $1$ to $\mathbf{SL}_2(\mathbb{Z}_p)$ and $\mathbf{PGL}_2(\mathbb{Z}_p)$ respectively. Let $\mathbb{D}_p$ be the unique ramified quaternion algebra over $\mathbb{Q}_p$ with ring of integers $\mathcal{O}(\mathbb{D}_p)$. Denote by $\mathbb{D}_p^{(1)}$ the group of norm $1$ element in $\mathbb{D}_p$. We fix the Haar measures on $\mathbb{D}_p^{(1)}$, $\lfaktor{\mathbb{Q}_p^\times}{\mathbb{D}_p^\times}$ that give volume $1$ to the compact open subgroups $\mathcal{O}(\mathbb{D}_p)\cap \mathbb{D}_p^{(1)}$, $\lfaktor{\mathbb{Z}_p^\times}{\mathcal{O}(\mathbb{D}_p)^\times}$ respectively. These choices fix a Haar measure $m_{\mathbf{G}(\mathbb{Q}_p)}$ on $\mathbf{G}(\mathbb{Q}_p)$ for all primes $p$.

The product of the local Haar measures $m_{\mathbf{G}(\mathbb{Q}_p)}$ at all primes $p$ induce a Haar measure on $\mathbf{G}(\mathbb{A}_f)=\prod'_p \mathbf{G}(\mathbb{Q}_p)$, which we call the \emph{unnormalized} Haar measure on $\mathbf{G}(\mathbb{A}_f)$. Similarly we call the product of the fixed Haar measure on $\mathbf{G}(\mathbb{R})$ with the unnormalized Haar measure on $\mathbf{G}(\mathbb{A}_f)$, the \emph{unnormalized} Haar measure on $\mathbf{G}(\mathbb{A})$. The unnormalized Haar measure on $\mathbf{G}(\mathbb{A})$ is necessarily proportional to the covolume $1$ measure, but they are \emph{not} equal. Our local measure normalization forces $m_{\mathbf{G}(\mathbb{Q}_p)}(K_p)=1$ for all primes $p$, hence the volume of $[\mathbf{G}(\mathbb{A})]$ with respect to the unnormalized measure is not $1$, but rather the sum of the volumes $m_{\mathbf{G}(\mathbb{R})}(\lfaktor{R_i^{(1)}}{\mathbf{G}(\mathbb{R})})$ for  orders $R_i\subset B$ representing all the classes in the class set\footnote{That is, all orders everywhere locally conjugate to $R$ by a norm $1$ element, where two orders are equivalent if they are  globally conjugate by a rational norm $1$ element.} of $R$, where $R$ is the maximal order from above.  Denote by $\varrho_{\mathbf{G}}$ the volume of $[\mathbf{G}(\mathbb{A})]$ with respect to the unnormalized measure.
In the indefinite case the class number is $1$ and the volume is $\varrho_{\mathbf{G}}=\frac{\pi}{3} \varphi(D_B)$ \cite[Theorem 39.1.2]{VoightQA}. Exactly, the same formula holds in the definite case, due to the Eichler mass formula \cite[Theorem 25.1.1]{VoightQA}. We henceforth fix the Haar measure $m_{\mathbf{G}(\mathbb{A}_f)}$ on $\mathbf{G}(\mathbb{A}_f)$ to be the measure induced by $\varrho_{\mathbf{G}}^{-1} \prod_p m_{\mathbf{G}(\mathbb{Q}_p)}$. The product $m_{\mathbf{G}(\mathbb{R})}\times m_{\mathbf{G}(\mathbb{A}_f)}$ is the co-volume $1$ Haar measure. The same discussion applies mutatis mutandi to $\mathbf{SL}_2$.

Note that we have several normalizations of the Haar measure on $\mathbf{G}(\mathbb{R})$. When integrating over a quotient by a lattice $\lfaktor{\Gamma}{\mathbf{G}(\mathbb{R})}$ we always use the co-volume $1$ Haar measure. When integrating over $\mathbf{G}(\mathbb{R})$ we use the standard measure $m_{\mathbf{G}(\mathbb{R})}$ which is not a co-volume $1$ measure in general. The discrepancy is accounted for by the factor $\varrho_{\mathbf{G}} ^{-1}$ in the Haar measure of $\mathbf{G}(\mathbb{A}_f)$. The same discussion applies to $\mathbf{SL}_2$. 

\subsection{Local Weil Representation}
In this section, the field $F=\mathbb{Q}_v$ is a completion of $\mathbb{Q}$ at a place $v$, then $B_v$ is a quadratic central simple algebra over $F$, i.e.\ $B_v=\operatorname{Mat}_{2\times 2}(F)$ or $B_v$ is the unique quadratic division algebra over $F$. Denote by $x\mapsto \tensor[^\iota]{x}{}$ the canonical involution on $B_v$. In the split case, the involution sends a matrix to its adjugate. Denote the reduced norm on $B_v$ by $\Nr$ and the reduced trace by $\Tr$. We shall also fix a unitary additive character $\psi_v\colon F\to \mathbb{C}^\times$.
In this section, we recall the construction and elementary properties of the Weil representation.

The vector space $B_v$ is endowed with an additive Haar measure. For an integrable function $M\colon B_v\to\mathbb{C}$, we define the Fourier transform by
\begin{equation*}
\Fr{M}(x)=\int M(y)\psi_v\left(\left\langle x,y \right\rangle\right) \dif y\;,
\end{equation*}
where the bilinear form $\langle\, ,\rangle$ is defined by
\begin{equation*}
\left\langle x,y \right\rangle\coloneqq \Tr(x \tensor[^\iota]{y}{})\;.
\end{equation*}
Notice that this is twice the polarization of the norm quadratic form, i.e.\ $\langle x,x\rangle=2 \Nr x$.
We normalize the measure on $B_v$ so that it is Fourier self-dual, i.e.\ $\Fr^2 M (x)=M(-x)$ for a Schwartz function $M$.

If $v$ is non-archimedean denote by $\Omega_v$ the space of Schwartz--Bruhat functions on $B_v$, i.e.\ locally constant functions of compact support. In the archimedean place, we need to consider a space that differs from the space of Schwartz functions because the Bergman kernel does not arise from a Schwartz function. To construct $\Omega_\infty$, we will start first with a larger space $L^2(B_\infty)$
and then restrict the Weil representation to a subspace $\Omega_\infty$ to be defined later.

The Weil representation of $\mathbf{SL}_2(F)$ on $\Omega_v$, $L^2(B_\infty)$ satisfies
\begin{align*}
\rho\left(\begin{pmatrix}
1 & \sigma \\ 0 & 1
\end{pmatrix}\right) M (x) &= \psi_v\left(\sigma \Nr(x)\right) M(x)\;,\\
\rho\left(\begin{pmatrix}
\lambda & 0 \\ 0 & \lambda^{-1}
\end{pmatrix}\right) M (x) &= |\lambda|_v^{2} M(\lambda x)\;,\\
\rho\left(\begin{pmatrix}
0 & 1 \\ -1 & 0
\end{pmatrix}\right) M (x) &= \gamma \Fr{M}(x)\;,
\end{align*}
where $\gamma=1$ if $B_v$ is split and $\gamma=-1$ otherwise. For a proof that this defines a representation see \cite[\S 1.1]{JacquetLanglands}.

Notice that the representation depends on the choice of an additive character $\psi_v$. We will usually suppress this dependence in the notation, but when we need to keep track of the character we shall write $\rho_{\psi_v}$.   Because $\mathbb{Q}_v$ is Fourier self-dual, all non-trivial additive characters are of the form $\tensor[^\varpi]{\psi}{_v}(a)=\psi_v(a\varpi)$ for some $\varpi\in\mathbb{Q}_v^\times$.  We see that
\begin{equation}\label{eq:weil-rep-change-of-character}
\rho_{\tensor[^\varpi]{\psi}{_v}}(g)=\rho_{\psi_v}(\diag(\varpi,1) g \diag(\varpi,1)^{-1})\;.
\end{equation}

\begin{lem}
Let $\operatorname{O}(B_v,\Nr)$ be the group of linear transformations preserving the norm form, this group acts on functions by $u.M(x)=M(u^{-1}x)$. The action of the orthogonal group $\operatorname{O}(B_v,\Nr)$ commutes with the action of $\mathbf{SL}_2(F)$ via $\rho$.
\end{lem}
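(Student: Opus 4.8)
The plan is to verify the intertwining relation on the three generators of $\mathbf{SL}_2(F)$ listed in the definition of $\rho$, since these generate the Weil group. Fix $u\in\operatorname{O}(B_v,\Nr)$, write $(u.M)(x)=M(u^{-1}x)$, and check directly that $\rho(g)(u.M)=u.(\rho(g)M)$ for each of the three matrices.

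For the unipotent generator $g=\left(\begin{smallmatrix}1&\sigma\\0&1\end{smallmatrix}\right)$, both sides equal $\psi_v(\sigma\Nr(u^{-1}x))M(u^{-1}x)$; this uses only $\Nr(u^{-1}x)=\Nr(x)$, which holds by hypothesis. For the torus generator $g=\diag(\lambda,\lambda^{-1})$, we get $|\lambda|_v^2 M(u^{-1}\lambda x)=|\lambda|_v^2 M(\lambda u^{-1}x)$, using linearity of $u^{-1}$. The only step requiring a short computation is the Weyl element $w=\left(\begin{smallmatrix}0&1\\-1&0\end{smallmatrix}\right)$, where one must show the Fourier transform commutes with the orthogonal action: $\Fr(u.M)=u.\Fr{M}$. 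The hard part — really the only nontrivial point — is this compatibility of $\Fr$ with $\operatorname{O}(B_v,\Nr)$. I would prove it by a change of variables $y\mapsto uy$ in the defining integral
\[
\Fr(u.M)(x)=\int M(u^{-1}y)\,\psi_v(\langle x,y\rangle)\dif y=\int M(y)\,\psi_v(\langle x,uy\rangle)\dif y,
\]
where I need two facts: first, the Haar measure on $B_v$ is invariant under $u$, because $u$ is norm-preserving hence has determinant $\pm1$ as a linear map (equivalently, $u$ preserves the self-dual measure normalized by the norm form); second, $\langle x,uy\rangle=\langle u^{-1}x,y\rangle$, i.e.\ elements of $\operatorname{O}(B_v,\Nr)$ are self-adjoint-inverse for the bilinear form $\langle x,y\rangle=\Tr(xy^\iota)$. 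This last identity is exactly the statement that $u$ preserves $\langle\cdot,\cdot\rangle$ (since $\langle\cdot,\cdot\rangle$ is twice the polarization of $\Nr$, which $u$ preserves by assumption), rearranged using bilinearity. Combining these, $\Fr(u.M)(x)=\int M(y)\psi_v(\langle u^{-1}x,y\rangle)\dif y=\Fr{M}(u^{-1}x)=(u.\Fr{M})(x)$.

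With these three checks in hand, the lemma follows since the relations $\rho(g)\circ u.=u.\circ\rho(g)$ for $g$ ranging over a generating set of $\mathbf{SL}_2(F)$ propagate to all of $\mathbf{SL}_2(F)$ by multiplicativity of $\rho$ and the fact that $M\mapsto u.M$ is linear. In the archimedean case one should note that $u.$ preserves the relevant function space (it is an isometry of $L^2(B_\infty)$ by the measure-invariance already used, and later one checks it preserves $\Omega_\infty$), so the statement makes sense on the nose. I do not anticipate any obstacle beyond bookkeeping; the measure-normalization point is the only place where one genuinely uses that the Haar measure was chosen to be Fourier self-dual for the norm form rather than an arbitrary Haar measure.
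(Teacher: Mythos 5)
Your proof is correct and follows essentially the same route as the paper: check the intertwining on the three generators, with the only substantive point being that the Fourier transform intertwines $u$ with its adjoint-inverse for the form $\langle\cdot,\cdot\rangle$, which for orthogonal $u$ is $u$ itself. If anything, your write-up is more careful than the paper's (which attributes the unipotent case to determinant $\pm1$ alone, whereas the phase $\psi_v(\sigma\Nr x)$ genuinely requires norm preservation, as you correctly note).
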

\begin{proof}
It is sufficient to verify the claim for each of the formul{\ae} above. The action of the upper triangular matrices commutes with the action of any linear transformation that preserves the norm. The Fourier transform intertwines the action of $L\in\operatorname{GL}(B_v)$ with the action of $|\det L|_v^{-1}\tensor[^t]{L}{^{-1}}$. Hence, it commutes with orthogonal transformations.
\end{proof}

\begin{cor}
The $\rho$ action of $\mathbf{SL}_2(F)$ commutes with the right and left actions of $\mathbf{G}(F)$ by multiplication. Moreover, the $\rho$ action commutes with the $B^\times$-action by conjugation.
\end{cor}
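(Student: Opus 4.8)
The plan is to deduce all three assertions from the preceding lemma by exhibiting each action as (the restriction of) an action of a subgroup of $\operatorname{O}(B_v,\Nr)$, so that commutation with the Weil action becomes automatic.

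First I would treat left multiplication. For $g\in\mathbf{G}(F)$ the map $L_g\colon x\mapsto gx$ is $F$-linear on $B_v$, and since $\Nr(gx)=\Nr(g)\Nr(x)=\Nr(x)$ because $\Nr(g)=1$, we get $L_g\in\operatorname{O}(B_v,\Nr)$. Thus $g\mapsto L_g$ realizes the left $\mathbf{G}(F)$-action on functions, $(g.M)(x)=M(g^{-1}x)$, as a restriction of the $\operatorname{O}(B_v,\Nr)$-action, which commutes with the Weil action by the lemma. Right multiplication is handled identically: $R_g\colon x\mapsto xg$ is $F$-linear with $\Nr(xg)=\Nr(x)\Nr(g)=\Nr(x)$, so $R_g\in\operatorname{O}(B_v,\Nr)$ and the right $\mathbf{G}(F)$-action again factors through the orthogonal group; moreover $L_g$ and $R_{g'}$ commute, so the two together give an action of $\mathbf{G}(F)\times\mathbf{G}(F)$ inside $\operatorname{O}(B_v,\Nr)$.

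For conjugation one should not split $x\mapsto bxb^{-1}$ into a one-sided left and a one-sided right multiplication, since for $b\in B^\times$ with $\Nr(b)\neq 1$ neither factor preserves the norm form (each scales it by $\Nr(b)^{\pm1}$). Instead I would observe directly that the inner automorphism $c_b\colon x\mapsto bxb^{-1}$ is $F$-linear and satisfies $\Nr(bxb^{-1})=\Nr(b)\,\Nr(x)\,\Nr(b)^{-1}=\Nr(x)$, so $c_b\in\operatorname{O}(B_v,\Nr)$; applying the lemma once more gives the claimed commutation with the Weil action.

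There is essentially no obstacle here beyond bookkeeping; the only point requiring a moment's care is precisely the last one, namely that conjugation must be treated as a single element of $\operatorname{O}(B_v,\Nr)$ rather than as a composite of the $\mathbf{G}(F)$-actions, because $B^\times$ does not act by norm-preserving maps under one-sided multiplication. Everything else is immediate from the multiplicativity of the reduced norm together with the preceding lemma.
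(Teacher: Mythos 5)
Your argument is correct and is essentially the paper's own proof: both reduce each of the three actions to the preceding lemma by checking that left/right multiplication by norm-one elements and conjugation by arbitrary $b\in B^\times$ preserve the norm form, hence land in $\operatorname{O}(B_v,\Nr)$. Your remark that conjugation must be treated as a single orthogonal map rather than a composite of one-sided multiplications is a worthwhile clarification, but the route is the same.
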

\begin{proof}
The actions of $B^\times$ and $\mathbf{G}(F)$ preserve the norm form, hence they factor through the orthogonal group.
\end{proof}

\begin{lem}
The Weil representation is a continuous unitary representation of $\mathbf{SL}_2(F)$ on $\Omega_v$, $L^2(B_\infty)$.
\end{lem}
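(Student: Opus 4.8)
The fact that $\rho$ is a group homomorphism is already granted to us by \cite[\S 1.1]{JacquetLanglands} (on Schwartz functions in the archimedean case; the relations then extend to all of $L^2(B_\infty)$ by density and boundedness of the operators below). So what remains is unitarity and continuity, and the plan is to verify unitarity on a generating set and then propagate it via the homomorphism property, and to obtain strong continuity by reduction to the identity along one-parameter subgroups.

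For unitarity I would first invoke the Bruhat decomposition: $\mathbf{SL}_2(F)$ is generated by the upper-triangular Borel subgroup together with $w=\sm 0 & 1 \\ -1 & 0\esm$, and the Borel is generated by the unipotents $\sm 1 & \sigma \\ 0 & 1\esm$ and the torus elements $\diag(\lambda,\lambda^{-1})$. Since $\rho$ is multiplicative it then suffices to see that $\rho$ carries each of these three families into the unitary group. The first is multiplication by the unimodular function $x\mapsto\psi_v(\sigma\Nr x)$, hence an isometry, invertible with inverse $\rho(\sm 1 & -\sigma \\ 0 & 1\esm)$. For the torus, $\rho(\diag(\lambda,\lambda^{-1}))$ sends $M$ to $x\mapsto |\lambda|_v^2 M(\lambda x)$; because $B_v$ is four-dimensional over $F$, substituting $x\mapsto\lambda x$ scales Haar measure by $|\lambda|_v^4$, which exactly compensates the factor $(|\lambda|_v^2)^2$ in the squared norm, so the operator is a norm-preserving bijection. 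For $w$, $\rho(w)M=\gamma\,\Fr M$ with $\gamma=\pm1$, and $\Fr$ is an $L^2$-isometry precisely because we normalized the measure on $B_v$ to be Fourier self-dual (Plancherel). Having checked the generators, $\rho(g)$ preserves the $L^2$-inner product for all $g$; for finite $v$ this says $\rho$ preserves the inner product on $\Omega_v$ and hence extends to a unitary representation on its completion $L^2(B_v)$.

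For continuity I would argue that a multiplicative map into the unitary group is strongly continuous as soon as it is strongly continuous at the identity, and that for unitary representations weak and strong continuity agree, so it is enough to show $\rho(g_n)M\to M$ in $L^2$ as $g_n\to e$ for $M$ in the dense subspace of Schwartz(--Bruhat) functions. A neighbourhood of $e$ is generated by the one-parameter subgroups $u^+(\sigma)=\sm 1 & \sigma \\ 0 & 1\esm$, $a(\lambda)=\diag(\lambda,\lambda^{-1})$ and $u^-(\tau)=\sm 1 & 0 \\ \tau & 1\esm$, and I would treat these in turn. For $u^+$: $\|\rho(u^+(\sigma))M-M\|_2^2=\int|\psi_v(\sigma\Nr x)-1|^2|M(x)|^2\dif x\to0$ by dominated convergence, the integrand being dominated by $4|M|^2\in L^1$ and tending to $0$ pointwise (continuity of $\psi_v$ for $v$ archimedean; compact support of $M$ for $v$ finite). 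For $a(\lambda)$: this is continuity of dilation on $L^2(B_v)$, clear for continuous compactly supported $M$ and extended to the closure by the uniform unitary bound. For $u^-$: use $u^-(-\tau)=w\,u^+(\tau)\,w^{-1}$, so $\rho(u^-(-\tau))=\rho(w)\,\rho(u^+(\tau))\,\rho(w)^{-1}$, and conjugation by the fixed bounded operator $\rho(w)$ preserves strong convergence, reducing to the $u^+$ case. For finite $v$ the same computations in fact show that each Schwartz--Bruhat $M$ (with $M$ and $\Fr M$ both supported on, and invariant under translation by, suitable lattices) is fixed by an open subgroup, so $\rho$ is smooth. I do not expect a genuine obstacle here: the homomorphism property is imported, the norm identities are one line each, and the only real care is in passing continuity from the generating one-parameter subgroups to the whole group via the conjugation identity for $u^-$ and the boundedness of $\rho(w)$.
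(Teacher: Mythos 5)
Your proposal is correct. The paper's own ``proof'' is a one-line deferral to Weil (``This is established by Weil for the space of Schwartz or Schwartz--Bruhat functions. The same proof works for $L^2(B_\infty)$''), so what you have written is precisely the standard argument hiding behind that citation rather than a genuinely different route: the homomorphism property is imported from \cite[\S 1.1]{JacquetLanglands} exactly as the paper does, unitarity is checked on the Bruhat generators (unimodular multiplier for the unipotent, the $|\lambda|_v^{4}$ Jacobian against the $(|\lambda|_v^{2})^{2}$ prefactor for the torus since $\dim_F B_v=4$, Plancherel for $w$ with the self-dual measure), and continuity is reduced to the identity and then to the three one-parameter subgroups on a dense subspace, with uniform boundedness carrying it to the closure. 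The only step you leave implicit is why continuity along $u^+$, $a$, $u^-$ suffices at the identity: one should note that the big cell $N^-AN^+$ is open, so every $g$ near $e$ factors as $u^-(\tau)a(\lambda)u^+(\sigma)$ with small parameters and one telescopes using unitarity; for finite $v$ your observation that each Schwartz--Bruhat function is fixed by an open subgroup (cf.\ Lemma \ref{lem:schwartz-bruhat-stabilizer}) settles continuity outright. With that small point made explicit, the argument is complete.
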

\begin{proof}
This is established by Weil \cite{Weil} for the space of Schwartz or Schwartz--Bruhat functions.
The same proof works for $L^2(B_\infty)$.
\end{proof}

\subsection{The Archimedean Weil Representation}\label{sec:archimedean-weil}
To construct the appropriate subspace $\Omega_\infty\subset L^2(B_\infty)$, we will use a method based on the work of Vign\'eras \cite{Vigneras}. We define the Laplacian $\Delta$ on $B_\infty$ as the Fourier multiplier operator with symbol $-4\pi^2 \Nr$. Write the archimedean additive character as $\psi_\infty(a)=\exp(2\pi i a \varpi)$ and consider the PDE
\begin{equation}\label{eq:harmonic-oscillator}
-\Delta M(x)+\omega^2 \Nr(x)M(x)= \frac{\omega m }{\varpi}M(x)\;,
\end{equation}
where $m\in\mathbb{Z}$ and $\omega>0$.
This is nothing but the PDE for energy eigenstates of four independent quantum harmonic oscillators with total energy $(\omega\varpi)  m$ and angular frequency $\omega \varpi$. We call $m$ the \emph{quantum number} of the equation and we denote by $V_{m,\omega}$ the $L^2$-closure of the space of Schwartz solutions to \eqref{eq:harmonic-oscillator}.
Notice that unlike the standard harmonic oscillator, the individual oscillators may have either positive or negative energy depending on the signature of the quadratic form $\Nr$. 

We fix henceforth $\psi_\infty(a)=\exp(2\pi ia)$, i.e. $\varpi=1$.
Consider the densely defined linear operator $L_\omega\colon L^2(B_\infty)\to L^2(B_\infty)$ given by $L_\omega[M]=-\Delta M +\omega^2 \Nr(x)\cdot M$ with the domain of Schwartz functions $D(L_\omega)=\mathcal{S}(B_\infty)$. Then, $L_\omega$ is real, i.e.\ $\langle L_\omega[M],M \rangle\in\mathbb{R}$ for all $M\in\mathcal{S}(B_\infty)$. Hence, $L_\omega$ is symmetric.
For explicitness, we state the following classical linear algebra lemma.
\begin{lem}\label{lem:V-orth}
The spaces $\left\{V_{m,\omega}\right\}_{m\in\mathbb{Z}}$ are mutually orthogonal.
\end{lem}
\begin{proof}
It is enough to show that if $M$,$M'$ are Schwartz solutions to \eqref{eq:harmonic-oscillator} with quantum numbers $m\neq m'$ then $\langle M, M' \rangle=0$. Because the operator $L_\omega$ is symmetric, we have $\langle {\omega m} M,M' \rangle = \langle L[M],M' \rangle=\langle M, L[M'] \rangle=\langle M, {\omega m'} M' \rangle$. We  deduce that $\langle M,M' \rangle=0$ in the usual fashion.
\end{proof}

\begin{lem}\label{lem:k-action-2pi}
Let $k_{\theta}\coloneqq \begin{pmatrix} \cos\theta & \sin\theta \\ -\sin\theta & \cos\theta \end{pmatrix}\in \mathbf{SO}_2(\mathbb{R})$ and set $\psi_\infty(a)=\exp(2\pi i a)$. Then, for every $M\in V_{m,2\pi}$ we have
\begin{equation*}
\left(\rho\left(k_{\theta}\right).M\right) (x)= e^{im\theta} M(x)\;.
\end{equation*}
Moreover, $L^2(B_\infty)=\bigoplus_{m\in\mathbb{Z}} V_{m,2\pi}$. Therefore, $V_{m,2\pi}$ is the $\left(\rho(\mathbf{SO}_2(\mathbb{R})),e^{im\theta}\right)$-isotypic subspace of $L^2(B_\infty)$. 
\end{lem}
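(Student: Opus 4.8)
The plan is to use the standard identification of the harmonic-oscillator eigenspaces with the $\mathbf{SO}_2(\mathbb{R})$-isotypic components of the Weil representation, but to track signs carefully because the quadratic form $\Nr$ on $B_\infty$ has signature $(2,2)$ rather than being positive definite. First I would observe that $k_\theta$ generates the connected maximal compact of $\mathbf{SL}_2(\mathbb{R})$, and that the Weil action of its Lie algebra element $Z = \begin{pmatrix} 0 & 1 \\ -1 & 0 \end{pmatrix}$ is, by the three defining formul\ae{} of $\rho$ in Section \ref{sec:archimedean-weil}, the skew-adjoint operator obtained by differentiating the one-parameter subgroup $\theta \mapsto k_\theta$. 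A direct computation — using that $Z$ is conjugate inside $\mathbf{SL}_2(\mathbb{R})$ to a combination of the unipotent $\begin{pmatrix} 1 & \sigma \\ 0 & 1 \end{pmatrix}$, its transpose, and the diagonal torus, or alternatively by differentiating the Bruhat-type decomposition — identifies $\dif\rho(Z)$ with (a constant multiple of) the harmonic-oscillator Hamiltonian $H = -\tfrac{1}{4\pi|\varpi|}\bigl(-\Delta + \varpi^2(2\pi)^2\,\Nr(x)\bigr)$ acting on $L^2(B_\infty)$, after inserting $\omega = 2\pi$. The point is that the multiplication operator $\psi_\infty(\sigma\Nr(x))$ contributes the potential term $\Nr(x)$ and the Fourier-conjugated copy (coming from the Weil action of the lower unipotent, via the $\begin{pmatrix} 0 & 1 \\ -1 & 0\end{pmatrix}$ formula) contributes the Laplacian term $-\Delta$, with the relative normalization fixed by Fourier self-duality of the measure.

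Granting that $\dif\rho(Z) = -i\,c\,\bigl(-\Delta + (2\pi|\varpi|)^2\,\Nr\bigr)$ for the appropriate positive constant $c$ (where the factor $-i$ reflects that $Z$ acts by a skew-adjoint operator and $-\Delta + \omega^2\Nr$ is self-adjoint), equation \eqref{eq:harmonic-oscillator} with $\omega = 2\pi$ says precisely that $M \in V_{m,2\pi}$ is an eigenfunction of $\dif\rho(Z)$ with eigenvalue $-im$ (the constants $c$ and the $\omega m/|\varpi|$ on the right of \eqref{eq:harmonic-oscillator} are rigged so that this comes out to exactly $-im$). Exponentiating, $\rho(k_\theta).M = e^{\theta\,\dif\rho(Z)}M = e^{-im\theta}M$; since $k_\theta$ is a compact one-parameter group this exponentiation is legitimate on the dense subspace of smooth solutions and extends to the $L^2$-closure $V_{m,2\pi}$ by continuity of the unitary representation (the previous lemma). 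The reverse inclusion — that $V_{m,2\pi}$ is \emph{all} of the $e^{-im\theta}$-isotypic subspace, not just contained in it — follows because $\dif\rho(Z)$ is (essentially) self-adjoint with discrete spectrum on its natural domain, so its $-im$-eigenspace is exactly the $L^2$-closure of the smooth $-im$-eigenfunctions, which is the definition of $V_{m,2\pi}$.

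The main obstacle I anticipate is bookkeeping the normalizations so that the right-hand side of \eqref{eq:harmonic-oscillator} is genuinely $\omega m/|\varpi|$ and not some rescaling of it: one must pin down (i) the constant in the symbol $-4\pi^2\Nr$ defining $\Delta$, (ii) the factor $|\lambda|_v^2$ versus $|\lambda|_v$ in the diagonal Weil action and how it interacts with differentiating the torus part of $k_\theta$, and (iii) the self-dual normalization of Haar measure on $B_\infty$, which is what makes $\Fr^2 M(x) = M(-x)$ and hence makes the Fourier-multiplier computation of $\dif\rho(Z)$ come out cleanly. A secondary subtlety is that, unlike the positive-definite case, the operator $-\Delta + \omega^2\Nr$ on the split signature $(2,2)$ space is not bounded below and its spectrum is all of $\tfrac{\omega}{|\varpi|}\mathbb{Z}$ (every integer $m$, positive or negative, occurs), so one should be slightly careful asserting essential self-adjointness and discreteness of spectrum; this is exactly the content of Vign\'eras' analysis \cite{Vigneras}, which we may invoke. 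Once these normalization constants are matched, the statement is immediate.
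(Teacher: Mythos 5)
Your proposal is correct and follows essentially the same route as the paper, which simply identifies the left-hand side of \eqref{eq:harmonic-oscillator} (at $\omega=2\pi$) with the Hermite operator attached to the infinitesimal $\mathbf{SO}_2(\mathbb{R})$-action and cites \cite[\S III.2]{HoweTan} for the correspondence between its eigenspaces and the $\rho(\mathbf{SO}_2(\mathbb{R}))$-isotypic components. You are merely unpacking that citation by computing $\dif\rho(Z)$ explicitly; your remarks on the indefinite signature and on completeness of the Hermite-type eigenbasis are exactly the points the reference handles.
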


The idea to use the one-dimensional Hermite functions in the proof of the lemma has been suggested to us by J. Wunsch.
\begin{proof}
We establish first the direct sum decomposition.
Recall that we need the Laplacian in \eqref{eq:harmonic-oscillator} to be defined consistently as having Fourier symbol $-4\pi^2 \Nr$. Choose a coordinate system $x_1,\ldots,x_4$ for $B_\infty$ such that $\Nr(x)=\sum_{i=1}^4 \epsilon_i x_i^2$ with $\epsilon_i\in\{\pm 1\}$. The Laplacian for our fixed character can be written in this coordinate system as
\begin{equation*}
\frac{1}{4}\sum_{i=1}^{4} \epsilon_i \frac{\partial^2}{\partial x_i^2}\;.
\end{equation*}

The $1/4$ factor appears because the Fourier transform is defined with respect to the bilinear form $\sum_{i=1}^4 2\epsilon_i x_i y_i$. The space of solutions to the one-dimensional quantum harmonic oscillator with angular frequency $4\pi$, $n\in \mathbb{Z}_{\geq 0}$,
\begin{equation*}
-\frac{1}{4}f''(x)+4\pi^2 x^2 f(x)=(2n+1)\pi f(x)
\end{equation*}
is one-dimensional and the $L^2$-normalized solution is
\begin{equation*}
f_n(x)\coloneqq\frac{1}{\sqrt{2^{n-1} n!}}\exp\left(-2\pi x^2\right) H_n(2\sqrt{\pi} x)\;,
\end{equation*}
where $H_n$ are the physicist's Hermite polynomials. Moreover,  these solutions form an orthonormal basis of the Hilbert space $L^2(\mathbb{R})$.
Define for every $\underline{n}=(n_1,n_2,n_3,n_4)\in \mathbb{Z}_{\geq 0}^4$ the function $M_{\underline{n}}\colon B_\infty\to \mathbb{C}$ by
\begin{equation*}
M_{\underline{n}}(x) = \prod_{j=1}^4 f_{n_j}(x_j) =  \prod_{j=1}^4 \frac{1}{\sqrt{2^{n_j-1}n_j!}} \exp\left(-2\pi x_j^2\right) H_{n_j}(2\sqrt{\pi} {x_j})\;,
\end{equation*}
where $x_1,\ldots,x_4$ are the normal form coordinates for the quadratic form $\Nr(x)$. Because $L^2(B_\infty)\simeq L^2(\mathbb{R})^{\bigotimes 4}$, we deduce that the functions $M_{\underline{n}}$ form an orthonormal basis of $L^2(B_\infty)$. These are Schwartz functions, and a separation of variables computation shows that $M_{\underline{n}}$ solves \eqref{eq:harmonic-oscillator} with\footnote{The sum of four odd numbers $\sum_{j=1}^4 \epsilon_j (2n_j+1)$ is always even.} $2m=\sum_{j=1}^4 \epsilon_j (2n_j+1)$. This and Lemma \ref{lem:V-orth} establish that $L^2(B_\infty)=\bigoplus_{m\in\mathbb{Z}} V_{m,2\pi}$ as claimed.

We need to prove that if $M\in V_{m,2\pi}$ then $\rho(k_\theta).M=e^{i m \theta}M$ for all $\theta\in[0,2\pi)$. By continuity of the Weil representation it is enough to establish this for Schwartz functions. Because Schwartz functions are smooth vectors for the Weil representation it is sufficient to show $\frac{\dif}{\dif \theta} \rho(k_\theta).M=i m \left(\rho(k_\theta).M\right)$.  Because the group $\mathbf{SO}_2(\mathbb{R})$ is abelian it is enough to verify this ODE at $\theta=0$. 
The formula $k_\theta=\exp( \theta w) $ for $w=\sm
0 & 1 \\ -1 & 0
\esm$ implies that the  ODE at $\theta=0$ is equivalent to
\begin{equation}\label{eq:Hermite-ode}
\dif \rho(w). M= im M\;,
\end{equation}
where $\dif \rho$ is the Lie algebra representation of $\mathfrak{sl}_2(\mathbb{R})$ on $\mathcal{S}(B_\infty)$ differentiated from the Weil representation of $\mathbf{SL}_2(\mathbb{R})$. Using the definition of the Weil action for upper diagonal unipotent matrices one easily computes that $\left(\dif\rho \sm 0 & 1 \\ 0 & 0 \esm.M\right)(x)=2\pi i \Nr (x) M(x)$. The formula $\sm
0 & 0 \\ -1 & 0
\esm=w^{-1} \sm 0 & 1 \\ 0 & 0 \esm w$, then implies that $\dif\rho \sm 0 & 0 \\ -1 & 0 \esm.M=\frac{1}{2\pi i}\Delta M$, and $(\dif \rho(w).M)(x)=\frac{1}{2\pi i} \Delta M(x)+2\pi i \Nr(x) M(x)$. Thus \eqref{eq:Hermite-ode} is equivalent to \eqref{eq:harmonic-oscillator}.
\end{proof}

\begin{cor}\label{cor:Weil-infinity-action}
Let $M\in V_{\omega,m}$ for arbitrary $\omega>0$ and fix $g=\sm
a & b\\ c & d
\esm\in\mathbf{SL}_2(\mathbb{R})$. Then,
\begin{equation*}
(\rho(g).M)(x)=
\frac{2 \pi}{\omega}
\frac{1}{D^2}
\psi_\infty\left(\frac{bd \frac{2\pi}{\omega} +ac \frac{\omega}{2\pi}}{D^2} \Nr x \right)\left(\sqrt{\frac{2\pi}{\omega}}\frac{d}{D}-i\sqrt{\frac{\omega}{2\pi}}\frac{c}{D}\right)^m
M\left(\sqrt{\frac{2 \pi }{ \omega}}\frac{x}{D}\right)\;,
\end{equation*}
where $D=\sqrt{c^2 \frac{\omega}{2 \pi} + d^2\frac{2 \pi}{\omega}}$.
\end{cor}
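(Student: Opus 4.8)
The plan is to reduce Corollary~\ref{cor:Weil-infinity-action} to Lemma~\ref{lem:k-action-2pi} by a change of variables in the Weil representation together with the Iwasawa-type decomposition of $\mathbf{SL}_2(\mathbb{R})$, handling first the rescaling $\omega\leftrightarrow 2\pi$ and then the group element $g$. First I would treat the passage from general $\omega>0$ to $\omega=2\pi$. A solution $M$ of the harmonic-oscillator PDE \eqref{eq:harmonic-oscillator} at frequency $\omega$ becomes, after the dilation $M(x)\mapsto M(\sqrt{\omega/2\pi}\,x)$, a solution at frequency $2\pi$; equivalently, conjugating the Weil action by $\rho(\diag(\mu,\mu^{-1}))$ for a suitable $\mu$ interchanges $V_{m,\omega}$ and $V_{m,2\pi}$, because $\rho(\diag(\lambda,\lambda^{-1}))M(x)=|\lambda|^2 M(\lambda x)$ just rescales the norm form. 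So it suffices to compute $\rho(g')$ on $V_{m,2\pi}$ for $g'=\diag(\mu,\mu^{-1})^{-1}g\diag(\mu,\mu^{-1})$ with the right $\mu$ (something like $\mu=(\omega/2\pi)^{1/4}$), and then transport the answer back; this accounts for the ubiquitous factors of $\sqrt{\omega/2\pi}$ and $\sqrt{2\pi/\omega}$ in the statement.

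Next, working at frequency $2\pi$, I would write the Iwasawa decomposition of $g$ in the form $g=\sm 1&\sigma\\0&1\esm\sm t&0\\0&t^{-1}\esm k_\theta$ and simply compose the three explicit formulae for the Weil action: the unipotent acts by multiplication by $\psi_\infty(\sigma\Nr x)$, the torus by $M\mapsto |t|^2 M(tx)$, and $k_\theta$ by the scalar $e^{-im\theta}$ from Lemma~\ref{lem:k-action-2pi}. The only genuine work is bookkeeping: express $\sigma,t,\theta$ in terms of $a,b,c,d$. One finds $t=(c^2+d^2)^{-1/2}$ (this is the $D$ of the statement when $\omega=2\pi$, and more generally $D=\sqrt{c^2(2\pi/\omega)+d^2(\omega/2\pi)}$ after the rescaling), $e^{i\theta}=(d+ic)/(c^2+d^2)^{1/2}$ so that $e^{-im\theta}=((d+ic)/D)^m$, and $\sigma=(ac+bd)/(c^2+d^2)$, using $ad-bc=1$. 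Substituting these and multiplying out reproduces the phase $\psi_\infty((bd+ac)/D^2\cdot\Nr x)$, the automorphy factor $((d/D)+i(c/D))^m$, the dilation $M(x/D)$, and the normalizing scalar $1/D^2$ (coming from $|t|^2$); reinstating $\omega$ via the dilation of the previous paragraph inserts the $\sqrt{\omega/2\pi}$ factors and the prefactor $\omega/2\pi$.

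I expect the main obstacle to be purely organizational rather than conceptual: getting every power of $D$, every factor of $\omega/2\pi$ versus $2\pi/\omega$, and the branch of the square root in the automorphy factor exactly right, since the Weil representation is really a representation of the metaplectic group and signs/phases must be tracked consistently with the normalization $\gamma=1$ fixed in the split archimedean case. A clean way to sidestep some of this is to verify the claimed formula is \emph{a} representation-theoretic identity by checking it is consistent under the cocycle relation on a generating set (unipotents, torus, and $w=\sm0&1\\-1&0\esm$), for which one only needs the three displayed formulae plus Lemma~\ref{lem:k-action-2pi}; alternatively one checks directly that the right-hand side, as a function of $g$, satisfies $\rho(g_1)\rho(g_2)=\rho(g_1g_2)$ and agrees with the known action on the generators, which pins it down uniquely. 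Either route avoids a long Gaussian-integral computation of the Fourier transform and isolates the one substantive input, namely the scalar action of $\mathbf{SO}_2(\mathbb{R})$ from Lemma~\ref{lem:k-action-2pi}.
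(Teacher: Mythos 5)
Your proposal is correct and follows essentially the same route as the paper: factor the general-$\omega$ case through the diagonal element $\diag(\sqrt{\omega/2\pi},\sqrt{2\pi/\omega})$, which carries $V_{m,\omega}$ to $V_{m,2\pi}$, then apply the Iwasawa decomposition together with the explicit unipotent/torus formulae and the $\mathbf{SO}_2(\mathbb{R})$-isotypy of Lemma \ref{lem:k-action-2pi}. The only cosmetic difference is that the paper right-translates, writing $\rho(g).M=\rho(ga^{-1})\rho(a).M$ and Iwasawa-decomposing $ga^{-1}$ in one step, rather than conjugating and transporting back, but this is the same argument.
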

\begin{proof}
Denote $a\coloneqq\diag\left(\sqrt{\frac{2\pi}{\omega}},\sqrt{\frac{\omega}{2\pi}}\right)$ and write $\rho(g).M=\rho(g a^{-1}) \rho(a).M$. The Iwasawa decomposition of $g a^{-1}$ is
\begin{equation*}
g a^{-1}=\begin{pmatrix}
1 & \frac{bd \frac{2 \pi}{\omega}+ac \frac{\omega}{2 \pi}}{D^2}
\\ 0 & 1
\end{pmatrix}
\begin{pmatrix}
\frac{1}{D} & 0
\\ 0 & D
\end{pmatrix}
\begin{pmatrix}
\sqrt{\frac{2\pi}{\omega}}\frac{d}{D}
& -\sqrt{\frac{\omega}{2\pi}}\frac{c}{D}\\
\sqrt{\frac{\omega}{2\pi}}\frac{c}{D} &
\sqrt{\frac{2\pi}{\omega}}\frac{d}{D}
\end{pmatrix}\;.
\end{equation*}
The function
\begin{equation*}
(\rho(a).M)(x) =\frac{2 \pi}{\omega}M\left(\sqrt{\frac{2 \pi}{\omega}}x\right)
\end{equation*}
is a solution of $\eqref{eq:harmonic-oscillator}$ with angular frequency $2\pi$. Hence, we can apply Lemma \ref{lem:k-action-2pi} to $\rho(g a^{-1}) \rho(a).M$ and the Iwasawa decomposition of $g a^{-1}$.
\end{proof}

\begin{defi}\label{defi:Omega_infty}
Fix $m\in\mathbb{Z}$.
We are now ready to define the subspace $\Omega_\infty<L^2(B_\infty)$. This space will depend on a choice of $m$. Recall that $V_{m,\omega}$ is the $L^2$-completion of the space of solutions of the quantum harmonic oscillator equation \eqref{eq:harmonic-oscillator} for a fixed $m\in\mathbb{Z}$ and $\omega>0$. Define
\begin{equation*}
\Omega_\infty\coloneqq \Span_{\mathbb{C}}\left\{\psi_\infty(\sigma \Nr (x)) M(x) \mid \sigma\in\mathbb{R}\;, \exists \omega>0 \colon M\in V_{m,\omega} \textrm{ and } \exists \delta>0\colon |M(x)|\ll (1+\|x\|)^{-4-\delta} \right\}\;.
\end{equation*}
The span allows only for \emph{finite} linear combinations. In other words, $\Omega_\infty$ is the space generated by orbits of functions in $\bigcup_{\omega>0} V_{m,\omega}$ satisfying a decay condition at infinity under the Weil action of unipotent matrices.
The decay condition implies that any function in $\Omega_\infty$ is in $L^p(B_\infty)$ for all $p\geq 1$.
\end{defi}

\begin{prop}
The space $\Omega_\infty$ is invariant under the Weil representation and the action of $\operatorname{O}(B_\infty,\Nr)$.
\end{prop}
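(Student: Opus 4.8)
I want to show that $\Omega_\infty$ is stable under both $\rho(\mathbf{SL}_2(\mathbb{R}))$ and $\operatorname{O}(B_\infty,\Nr)$. Since $\Omega_\infty$ is by definition the finite-linear span of functions of the form $\psi_\infty(\sigma\Nr(x))M(x)$ with $M\in V_{m,\omega}$ for some $\omega>0$ satisfying the decay bound $|M(x)|\ll(1+\|x\|)^{-4-\delta}$, and both group actions are linear, it suffices to check that each such generator is mapped into $\Omega_\infty$ by each of the generators of $\mathbf{SL}_2(\mathbb{R})$ (upper-triangular unipotents, the diagonal torus, and the Weyl element) and by an arbitrary orthogonal transformation.

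\emph{Orthogonal invariance.} This is the easy half. By the Lemma above, the $\operatorname{O}(B_\infty,\Nr)$-action commutes with the full Weil action, and in particular $u.\big(\psi_\infty(\sigma\Nr(x))M(x)\big)=\psi_\infty(\sigma\Nr(x))\,(u.M)(x)$ since $\Nr(u^{-1}x)=\Nr(x)$. So I only need that $u.M\in V_{m,\omega}$: but the harmonic-oscillator operator $-\Delta+\omega^2\Nr$ is built solely from $\Nr$ (the Laplacian is the Fourier multiplier with symbol $-4\pi^2\Nr$, and the Fourier transform intertwines $\operatorname{O}(B_\infty,\Nr)$ with itself), hence commutes with the $\operatorname{O}(B_\infty,\Nr)$-action; therefore $u.M$ solves the same PDE with the same $\omega$ and $m$. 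The decay bound is preserved because $\|u^{-1}x\|$ and $\|x\|$ are comparable. Thus $u.M\in V_{m,\omega}$ still satisfies the decay condition, so the generator is sent to another generator.

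\emph{Weil invariance.} For the unipotent $\sm 1&\sigma'\\0&1\esm$, the action multiplies by $\psi_\infty(\sigma'\Nr(x))$, which simply shifts $\sigma\mapsto\sigma+\sigma'$; the generator goes to a generator. For the diagonal and Weyl generators I use Corollary \ref{cor:Weil-infinity-action}, which gives an explicit formula for $\rho(g).M$ when $M\in V_{m,\omega}$: it is $\psi_\infty(c_g\Nr(x))\cdot(\text{constant})\cdot M(\lambda_g x)$ for explicit scalars $c_g\in\mathbb{R}$, $\lambda_g>0$ depending on $g$. Rescaling $x\mapsto\lambda_g x$ sends a solution of the oscillator PDE with frequency $\omega$ to a solution with frequency $\lambda_g^{-2}\omega$ (one checks this directly from \eqref{eq:harmonic-oscillator}, or absorbs it into the $\rho(a)$ computation already done in the proof of Corollary \ref{cor:Weil-infinity-action}), and it scales the decay bound by a harmless constant; the prefactor $\psi_\infty(c_g\Nr(x))$ is exactly the allowed type of twist. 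Hence $\rho(g).M$ is again a generator of $\Omega_\infty$. Since a general $M'=\psi_\infty(\sigma\Nr x)M(x)\in\Omega_\infty$ differs from $M\in V_{m,\omega}$ only by a unipotent twist, and $\mathbf{SL}_2(\mathbb{R})$ is generated by unipotents, the torus, and the Weyl element, linearity finishes the argument. (Continuity is not at issue here — invariance of a subspace is a purely algebraic statement, and $\Omega_\infty$ need not be closed.)

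\emph{Main obstacle.} There is no deep obstacle; the content is entirely bookkeeping. The one point requiring a little care is confirming that the rescaling $x\mapsto\lambda x$ genuinely carries $V_{m,\omega}$ into $V_{m,\lambda^{-2}\omega}$ with the \emph{same} integer $m$ — i.e.\ that the eigenvalue $\omega m/|\varpi|$ transforms consistently with the frequency so that $m$ is unchanged — and that this is exactly the identity already implicitly used inside the proof of Corollary \ref{cor:Weil-infinity-action} via the auxiliary element $a=\diag(\sqrt{\omega/2\pi},\sqrt{2\pi/\omega})$. Once that normalization is pinned down, the closure under all three $\mathbf{SL}_2$-generators and under $\operatorname{O}(B_\infty,\Nr)$ follows immediately, and the proposition is proved.
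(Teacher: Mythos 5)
Your proposal is correct and follows essentially the same route as the paper: orthogonal invariance because the harmonic-oscillator equation \eqref{eq:harmonic-oscillator} and the decay condition are preserved by $\operatorname{O}(B_\infty,\Nr)$, and Weil invariance via the explicit formula of Corollary \ref{cor:Weil-infinity-action}. You merely spell out the bookkeeping (the rescaling $V_{m,\omega}\to V_{m,\omega'}$ with the same $m$, and the unipotent twist being of the allowed form) that the paper leaves implicit.
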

\begin{proof}
The space $V_{m,\omega}$ is invariant under $\operatorname{O}(B_\infty,\Nr)$ because equation \eqref{eq:harmonic-oscillator} commutes with orthogonal transformations. Also, the decay condition is invariant under orthogonal transformations.
Invariance under the Weil action follows from Corollary \ref{cor:Weil-infinity-action}.
\end{proof}

\begin{remark*} We note that we may assume the functions in $\Omega_{\infty}$ to be continuous, since we may replace them with the Fourier inverse of its Fourier transform as it converges absolutely uniformly on compacta due to the decay condition.
\end{remark*}

\subsection{The Non-archimedean Weil Representation}
We now describe the interaction between the Weil representation and Eichler orders in $B_v$ for $v<\infty$. In this section, we fix a prime $p$ and write $F=\mathbb{Q}_p$ and set $v$ to be the associated place. For clarity of notation, we will write $B_p\coloneqq B_v$.
We assume that $\psi_p=\psi_v$ is an unramified character.

\begin{defi}
Let $\mathcal{R}\subset B_p$ be an order. Then, the dual lattice $\widehat{\mathcal{R}}$ is defined as
\begin{equation*}
\mathcal{\widehat{\mathcal{R}}}=\left\{x \in B_p \mid \forall x\in \mathcal{R}\colon \Tr(x\tensor[^\iota]{y}{})\in \mathbb{Z}_p \right\}\;.
\end{equation*}
\end{defi}

We begin by discussing maximal orders.
\begin{defi}
Set $U_0(p^n)<\mathbf{SL}_2(\mathbb{Z}_p)$ to be the congruence subgroup defined by
\begin{equation*}
U_0(p^n)\coloneqq \begin{pmatrix}
\mathbb{Z}_p & \mathbb{Z}_p \\ p^n\mathbb{Z}_p & \mathbb{Z}_p
\end{pmatrix} \cap \mathbf{SL}_2(\mathbb{Z}_p)\;.
\end{equation*}
\end{defi}
\begin{lem}\label{lem:non-arch-Weil-maximal-order}
Let $\mathcal{R}\subset B_p$ be a maximal order. If $B_p$ is split then $\rho\left(\mathbf{SL}_2(\mathbb{Z}_p)\right).\mathds{1}_{\mathcal{R}}=\mathds{1}_{\mathcal{R}}$. If $B_p$ is ramified then $\rho\left(U_0(p)\right).\mathds{1}_{\mathcal{R}}=\mathds{1}_{\mathcal{R}}$.
\end{lem}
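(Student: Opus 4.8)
The statement to prove is Lemma \ref{lem:non-arch-Weil-maximal-order}: for a maximal order $\mathcal{R}\subset B_p$, the indicator $\mathds{1}_{\mathcal{R}}$ is fixed by $\rho(\mathbf{SL}_2(\mathbb{Z}_p))$ in the split case and by $\rho(U_0(p))$ in the ramified case. Since $\mathbf{SL}_2(\mathbb{Z}_p)$ is generated by the upper-triangular unipotents $\sm 1 & \sigma \\ 0 & 1 \esm$ with $\sigma\in\mathbb{Z}_p$, the torus elements $\diag(\lambda,\lambda^{-1})$ with $\lambda\in\mathbb{Z}_p^\times$, and the Weyl element $w=\sm 0 & 1 \\ -1 & 0 \esm$ (and $U_0(p)$ is generated by the same upper-triangular unipotents, the torus, and the lower unipotents $\sm 1 & 0 \\ p\sigma & 1\esm$), it suffices to check invariance under each generator using the three explicit formul{\ae} for the Weil representation recalled above.

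First I would dispose of the easy generators. For $\rho\left(\sm \lambda & 0 \\ 0 & \lambda^{-1}\esm\right)$ with $\lambda\in\mathbb{Z}_p^\times$: this sends $\mathds{1}_{\mathcal{R}}(x)$ to $|\lambda|_p^2\,\mathds{1}_{\mathcal{R}}(\lambda x)=\mathds{1}_{\mathcal{R}}(x)$, since $|\lambda|_p=1$ and $\lambda\mathcal{R}=\mathcal{R}$. For $\rho\left(\sm 1 & \sigma \\ 0 & 1\esm\right)$ with $\sigma\in\mathbb{Z}_p$: this multiplies by $\psi_p(\sigma\Nr x)$, and since $\mathcal{R}$ is an order, $\Nr(\mathcal{R})\subset\mathbb{Z}_p$, so for $x\in\mathcal{R}$ we get $\sigma\Nr x\in\mathbb{Z}_p$ and $\psi_p(\sigma\Nr x)=1$ because $\psi_p$ is unramified. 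Thus both of these fix $\mathds{1}_{\mathcal{R}}$ with no hypothesis on $B_p$. The remaining generator is the Weyl element (or, for $U_0(p)$, the lower unipotents $\sm 1 & 0 \\ p\sigma & 1\esm$, which one can obtain from $w$, the torus, and upper unipotents). Here $\rho(w)\mathds{1}_{\mathcal{R}}=\gamma\,\widehat{\mathds{1}_{\mathcal{R}}}$, and one computes $\widehat{\mathds{1}_{\mathcal{R}}}=\vol(\mathcal{R})\,\mathds{1}_{\widehat{\mathcal{R}}}$ with respect to the pairing $\langle x,y\rangle=\Tr(xy^\iota)$; here $\widehat{\mathcal{R}}$ is the dual lattice just defined. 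So the whole question reduces to: (i) identifying the self-dual normalization, i.e. $\vol(\mathcal{R})$ relative to the Fourier-self-dual measure, and (ii) comparing $\widehat{\mathcal{R}}$ with $\mathcal{R}$.

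The heart of the matter is therefore step (ii), the relation between $\mathcal{R}$ and its dual. In the split case $B_p=\mathbf{M}_2(\mathbb{Q}_p)$ with $\mathcal{R}=\mathbf{M}_2(\mathbb{Z}_p)$: the involution is the adjugate, $\Tr(xy^\iota)$ is the trace pairing, and one checks directly that $\mathbf{M}_2(\mathbb{Z}_p)$ is unimodular and self-dual, $\widehat{\mathcal{R}}=\mathcal{R}$ and $\vol(\mathcal{R})=1$; hence $\rho(w)\mathds{1}_{\mathcal{R}}=\mathds{1}_{\mathcal{R}}$ and $\mathbf{SL}_2(\mathbb{Z}_p)$ fixes it. In the ramified case, $B_p$ is the division algebra, the maximal order $\mathcal{R}$ is the valuation ring, and now $\widehat{\mathcal{R}}=\mathfrak{P}^{-1}$ where $\mathfrak{P}$ is the maximal ideal (the different of $B_p/\mathbb{Q}_p$ is $\mathfrak{P}$, equivalently $\disc(B_p)=p$); so $\mathds{1}_{\mathcal{R}}$ is \emph{not} fixed by $\rho(w)$, which is exactly why one only gets invariance under the smaller group $U_0(p)$. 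The key computation is then to verify that $\rho(\sm 1 & 0 \\ p & 1\esm)\mathds{1}_{\mathcal{R}}=\mathds{1}_{\mathcal{R}}$: writing $\sm 1 & 0 \\ p & 1\esm = w^{-1}\sm 1 & -p \\ 0 & 1\esm w$ (up to sign bookkeeping), apply the Fourier transform, the unipotent multiplication by $\psi_p(-p\Nr x)$, and the inverse Fourier transform; the extra factor of $p$ in the exponent means the character $\psi_p(-p\Nr y)$ is trivial on the slightly larger lattice $\mathfrak{P}^{-1}=\widehat{\mathcal{R}}$ on which $\widehat{\mathds{1}_{\mathcal{R}}}$ is supported, and after transforming back one lands on $\mathds{1}_{\mathcal{R}}$ again, the two $\gamma=-1$ factors cancelling. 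I expect the main obstacle to be the careful bookkeeping of measure normalizations and the factor $\gamma=-1$ through this conjugation, together with correctly pinning down $\widehat{\mathcal{R}}=\mathfrak{P}^{-1}$ from the structure of the division algebra — once that is in hand, triviality of $\psi_p$ on $p\cdot\Nr(\widehat{\mathcal{R}})\subset\mathbb{Z}_p$ closes the argument.
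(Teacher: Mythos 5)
Your proposal is correct and follows essentially the same route as the paper: dispose of the torus and upper unipotents via unramifiedness of $\psi_p$ and integrality of the norm, then handle the Weyl element (split case) or the lower unipotent $\sm 1 & 0 \\ p & 1\esm = -w\sm 1 & -p \\ 0 & 1\esm w$ (ramified case) by computing $\Fr\mathds{1}_{\mathcal{R}}$ as a multiple of $\mathds{1}_{\widehat{\mathcal{R}}}$ and using that $\Nr$ takes values in $p^{-1}\mathbb{Z}_p$ on $\widehat{\mathcal{R}}=\mathfrak{P}^{-1}$. The paper makes the dual-lattice identification concrete via the cyclic-algebra coordinates $\mathcal{R}=\mathcal{O}_E+j\mathcal{O}_E$, $\widehat{\mathcal{R}}=\mathcal{O}_E+j^{-1}\mathcal{O}_E$, which is exactly the bookkeeping you flag as the remaining obstacle.
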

\begin{remark*}
This lemma is closely related to Lemmata 7 and 10 of \cite{Shimizu}.
\end{remark*}
\begin{proof}
All maximal orders in $B_p$ are conjugate to each other by an element of $B_p^\times$. Because the Weil action commutes with conjugation, it is enough to prove the claim for a specific maximal order. Moreover, the group $\mathbf{SL}_2(\mathbb{Z}_p)$ is generated by the subgroup $\mathcal{P}<\mathbf{SL}_2(\mathbb{Z}_p)$ of upper triangular integral matrices and the involution $w$. The fact that $\mathds{1}_\mathcal{R}$ is invariant under $\mathcal{P}$ follows because we have assumed $\psi_v$ is unramified. If $B_p$ is split, fix an isomorphism $B_p\simeq \operatorname{Mat}_{2\times 2}(\mathbb{Q}_p)$ and we need only show that $\mathds{1}_{\operatorname{Mat}_{2\times 2}(\mathbb{Z}_p)}$ is invariant under the Fourier transform. This follows from the fact that $\mathds{1}_{\mathbb{Z}_p}$ is invariant under the Fourier transform on $\mathbb{Q}_p$ with an unramified character.

If $B_p$ is a division algebra, we need only show invariance under the element $\sm
1 & 0 \\ p & 1
\esm =-w \sm
1 & -p\\ 0 &  1
\esm  w$. This element and the upper triangular integral matrices generate $U_0(p)$. Because of the duality of the Fourier transform, this is equivalent to showing that $\rho\left(\sm
1 & -p\\ 0 &  1
\esm\right).\Fr\mathds{1}_{\mathcal{R}}=\Fr\mathds{1}_{\mathcal{R}}$.

Let $E=F(\sqrt{a})/F$ be the unique unramified quadratic extension and write $B_p$ as the cyclic algebra $\bigl(\frac{a,p}{\mathbb{Q}_p}\bigr)$ with the standard generators $i,j,k$ and $i^2=a$, $j^2=p$ and $ij=-ji=k$. As usual, we identify $E$ with the sub-ring $\mathbb{Q}_p+i\mathbb{Q}_p< B_p$.
Denote by $\mathcal{O}_E$ the maximal order of $E$. Then, the unique maximal order of $B_p$ is $\mathcal{R}\simeq \mathcal{O}_E+j\mathcal{O}_E$. The Fourier transform on $B_p$ descends to the standard Fourier transform on $E$ with an unramified character. Identifying $B_p\simeq E\times E$ via $a+jb\mapsto (a,b)$, we can write
the Fourier self-dual measure on $B_p$ in these coordinates as $p^{-1}m_E\times m_E$. The $p^{-1}$ factor normalizes the measure to be self-dual.

The Fourier transform on $E$ satisfies $\Fr \mathds{1}_{\mathcal{O}_E}=\mathds{1}_{\mathcal{O}_E}$. An explicit computation with the Fourier self-dual measure implies
\begin{equation*}
\Fr \mathds{1}_{\mathcal{O}_E+j\mathcal{O}_E}=p^{-1}\mathds{1}_{\mathcal{O}_E+j^{-1}\mathcal{O}_E}\;.
\end{equation*}
Hence, $\Nr x\in p^{-1}\mathbb{Z}_p$ for all $x\in\supp \Fr \mathds{1}_{\mathcal{R}}$,
from which we deduce $\left(\rho\left(\sm
1 & -p \\ 0 & 1
\esm\right). \Fr \mathds{1}_{\mathcal{R}}\right)(x)=\psi_v(-p\Nr x) \left(\Fr \mathds{1}_{\mathcal{R}}\right)(x)=\left(\Fr \mathds{1}_{\mathcal{R}}\right)(x)$ and the claim follows.
\end{proof}

\begin{lem}\label{lem:jmap-local-ramified}
Assume $B_p$ is ramified and let $\mathcal{R}\subset B_p$ be the unique maximal order. Then, there is an isomorphism of finite abelian additive groups
\begin{equation*}
\jmath_v\colon \widehat{\mathcal{R}}/\mathcal{R}\to \mathbb{F}_{p^2}
\end{equation*}
such that $-p\Nr x \bmod p\mathbb{Z}_p \equiv \Nr \jmath_v(x)$ for all $x\in \widehat{\mathcal{R}}$.
The norm on the right hand side is the field norm $\mathbb{F}_{p^2}\to\mathbb{F}_p$.

Moreover, if $j$ is a uniformizer of $\mathcal{R}$ then we can choose $\jmath$ so that the composite map
\begin{equation*}
\mathcal{R}/j\mathcal{R}\xrightarrow{x\mapsto j^{-1}x}\widehat{\mathcal{R}}/\mathcal{R}\xrightarrow{\jmath_v} \mathbb{F}_{p^2}
\end{equation*}
is a field isomorphism.
\end{lem}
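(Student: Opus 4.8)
The plan is to read everything off the non-archimedean model set up in the proof of Lemma~\ref{lem:non-arch-Weil-maximal-order}. Recall from there that we may present $B_p$ as the cyclic algebra attached to the unramified quadratic extension $E/\mathbb{Q}_p$, its nontrivial automorphism $\sigma$, and the uniformizer $p$: concretely $B_p=E\oplus jE$ with $E$ acting on the left, $j^2=p$, and $xj=j\sigma(x)$ for $x\in E$; the unique maximal order is $\mathcal{R}=\mathcal{O}_E\oplus j\mathcal{O}_E$, and the reduced norm is $\Nr(x+jy)=N_{E/\mathbb{Q}_p}(x)-p\,N_{E/\mathbb{Q}_p}(y)$ for $x,y\in E$. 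The computation of $\Fr\mathds{1}_{\mathcal{R}}$ in that proof identifies the dual lattice as $\widehat{\mathcal{R}}=\mathcal{O}_E\oplus j^{-1}\mathcal{O}_E$, and from this one checks at once that $\widehat{\mathcal{R}}=j^{-1}\mathcal{R}=\mathcal{R}j^{-1}$, that $\mathfrak{P}\coloneqq j\mathcal{R}=\mathcal{R}j=j\mathcal{O}_E\oplus p\mathcal{O}_E$ is the maximal two-sided ideal of $\mathcal{R}$, and that $x+jy\mapsto(x\bmod p\mathcal{O}_E)$ is a ring isomorphism $\kappa\colon\mathcal{R}/\mathfrak{P}\xrightarrow{\sim}\mathcal{O}_E/p\mathcal{O}_E=\mathbb{F}_{p^2}$.

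Next I would define $\jmath_v$ so that the ``moreover'' assertion holds by construction. Left multiplication by $j$ carries $\widehat{\mathcal{R}}=j^{-1}\mathcal{R}$ isomorphically onto $\mathcal{R}$ and sends $\mathcal{R}$ onto $j\mathcal{R}=\mathfrak{P}$, hence descends to an additive isomorphism $\widehat{\mathcal{R}}/\mathcal{R}\xrightarrow{\sim}\mathcal{R}/\mathfrak{P}$; composing with $\kappa$ yields
\begin{equation*}
\jmath_v\colon\widehat{\mathcal{R}}/\mathcal{R}\xrightarrow{\sim}\mathbb{F}_{p^2},\qquad \jmath_v(\xi)=\kappa\!\left(j\xi\bmod\mathfrak{P}\right).
\end{equation*}
Its inverse is $\mathbb{F}_{p^2}\xrightarrow{\kappa^{-1}}\mathcal{R}/\mathfrak{P}\xrightarrow{\cdot j^{-1}}\widehat{\mathcal{R}}/\mathcal{R}$, so the composite $\mathcal{R}/j\mathcal{R}\xrightarrow{\cdot j^{-1}}\widehat{\mathcal{R}}/\mathcal{R}\xrightarrow{\jmath_v}\mathbb{F}_{p^2}$ sends $r\bmod j\mathcal{R}$ to $\kappa(j j^{-1}r\bmod\mathfrak{P})=\kappa(r\bmod\mathfrak{P})$; that is, it equals $\kappa$, a field isomorphism. (If one reads $\cdot j^{-1}$ as right multiplication instead, the same argument applies with $\kappa$ replaced by $\kappa\circ\sigma$, again a field isomorphism.) It remains to verify the norm identity for this $\jmath_v$.

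For the norm identity, write $\xi\in\widehat{\mathcal{R}}$ as $\xi=j^{-1}r$ with $r=j\xi\in\mathcal{R}$ and decompose $r=x+jy$, $x,y\in\mathcal{O}_E$. Multiplicativity of the reduced norm together with $\Nr(j)=N_{E/\mathbb{Q}_p}(0)-p\,N_{E/\mathbb{Q}_p}(1)=-p$ gives $\Nr(\xi)=\Nr(j)^{-1}\Nr(r)=-\Nr(r)/p$, whence
\begin{equation*}
-p\,\Nr(\xi)=\Nr(r)=N_{E/\mathbb{Q}_p}(x)-p\,N_{E/\mathbb{Q}_p}(y)\equiv N_{E/\mathbb{Q}_p}(x)\pmod{p\mathbb{Z}_p}.
\end{equation*}
On the other hand $\jmath_v(\xi)=\kappa(r\bmod\mathfrak{P})=x\bmod p\mathcal{O}_E$, and since $E/\mathbb{Q}_p$ is unramified the field norm $N_{E/\mathbb{Q}_p}$ reduces modulo $p$ to the residue-field norm $N_{\mathbb{F}_{p^2}/\mathbb{F}_p}$; hence $\Nr(\jmath_v(\xi))=N_{\mathbb{F}_{p^2}/\mathbb{F}_p}(x\bmod p\mathcal{O}_E)=N_{E/\mathbb{Q}_p}(x)\bmod p$, which matches $-p\,\Nr(\xi)\bmod p\mathbb{Z}_p$. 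This proves the norm compatibility and completes the argument.

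The only genuine hazard I foresee is the non-commutative bookkeeping: one must consistently track whether $j$ acts on the left or the right and use $xj=j\sigma(x)$, which is exactly what makes $\mathfrak{P}$ two-sided and allows $\cdot j^{-1}$ to pass to the quotients cleanly. The two facts that must be pinned down with care — the identification $\widehat{\mathcal{R}}=j^{-1}\mathcal{R}$ and the reduced-norm formula $\Nr(x+jy)=N_{E/\mathbb{Q}_p}(x)-p\,N_{E/\mathbb{Q}_p}(y)$ — are both already essentially present in the proof of Lemma~\ref{lem:non-arch-Weil-maximal-order}, and neither is deep; everything else is formal.
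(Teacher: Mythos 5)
Your proposal is correct and follows essentially the same route as the paper's (much terser) proof: identify $\widehat{\mathcal{R}}/\mathcal{R}$ with $\mathcal{R}/j\mathcal{R}\simeq\mathbb{F}_{p^2}$ via multiplication by $j$, and deduce the norm compatibility from $\Nr(j\xi)=-p\Nr\xi$ together with the fact that the residue isomorphism $\mathcal{R}/j\mathcal{R}\simeq\mathbb{F}_{p^2}$ commutes with norms. Your write-up merely makes explicit the cyclic-algebra coordinates and the left/right bookkeeping that the paper leaves implicit.
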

Note that there are exactly two field isomorphisms $\mathcal{R}/j\mathcal{R}\to \mathbb{F}_{p^2}$ and they differ by post-composition with the Frobenious, i.e.\ by the action of the Galois group. If $f\colon\mathcal{R}/j\mathcal{R}\to \mathbb{F}_{p^2}$ is such an isomorphism then $f(j x j^{-1})=f(x)^p$, i.e.\ conjugation by $j$ is intertwined with the Frobenious. 
Hence the composition of $\mathcal{R}/j\mathcal{R}\xrightarrow{x\mapsto xj^{-1}}\widehat{\mathcal{R}}/\mathcal{R}\xrightarrow{\jmath_v} \mathbb{F}_{p^2}$ is necessarily also a field isomorphism differing from the original one by post-composition with the Frobenious. 

\begin{proof}
Let $j$ be a uniformizer in $\mathcal{R}$. The field norm on 
$\mathcal{R}/j\mathcal{R}\simeq \mathbb{F}_{p^2}$ coincides with the reduction $\bmod p$ of the reduced norm in $\mathcal{R}$. This can be seen by taking a subfield $E\subset B_p$, such that $E$ is an unramified quadratic extension of $F$ that splits $B_p$, then the restriction of the reduced norm to $E$ coincides with the field norm on $E$ and $\mathcal{R}/j\mathcal{R}=\mathcal{O}_E/p\mathcal{O}_E\simeq \mathbb{F}_p^2$. 

Observe that $\widehat{\mathcal{R}}= j^{-1} \mathcal{R}$ and
$\widehat{\mathcal{R}}/\mathcal{R}\simeq \mathcal{R}/j\mathcal{R}\simeq \mathbb{F}_{p^2}$. The last isomorphism is a field isomorphism and thus it commutes with taking norms. The first isomorphism is via the map $x\mapsto j x$ and $\Nr (j x)=-p\Nr x$. This establishes the claimed formula for norms.
\end{proof}

\begin{prop}\label{prop:local-orbit-ramified}
Assume $B_p$ is ramified and let $\mathcal{R}\subset B_p$ be the unique maximal order. Then,
\begin{equation*}
\rho(\mathbf{SL}_2(\mathbb{Z}_p)).\mathds{1}_{\mathcal{R}}
=\left\{-p^{-1}\psi\left(\frac{\Nr x}{t}\right)\mathds{1}_{\widehat{\mathcal{R}}} \mid 0<t<p \right\}
\cup \left\{\mathds{1}_{\mathcal{R}},-p^{-1}\mathds{1}_{\widehat{\mathcal{R}}} \right\}\;.
\end{equation*}
Moreover, each of the functions above corresponds to a single coset in $\faktor{\mathbf{SL}_2(\mathbb{Z}_p)}{U_0(p)}$ .
\end{prop}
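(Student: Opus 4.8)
The plan is to compute the orbit using only the two elementary moves of the Weil representation of $\mathbf{SL}_2(\mathbb{Z}_p)$: twisting by $\psi(\sigma\Nr\bullet)$ for upper unipotents (and rescaling by the diagonal torus), and the Fourier transform $\Fr$ up to the sign $\gamma=-1$ for $w=\sm 0&1\\-1&0\esm$. The substantive inputs are the identity $\Fr\mathds{1}_{\mathcal{R}}=p^{-1}\mathds{1}_{\widehat{\mathcal{R}}}$ together with the $\rho(U_0(p))$-invariance of $\mathds{1}_{\mathcal{R}}$, both contained in (the proof of) Lemma~\ref{lem:non-arch-Weil-maximal-order}, and the description of $\widehat{\mathcal{R}}/\mathcal{R}$ from Lemma~\ref{lem:jmap-local-ramified}.

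First I would reduce to finitely many computations. Since $\mathds{1}_{\mathcal{R}}$ is $\rho(U_0(p))$-fixed, $\rho(g).\mathds{1}_{\mathcal{R}}$ depends only on $gU_0(p)$; as $[\mathbf{SL}_2(\mathbb{Z}_p):U_0(p)]=\#\mathbb{P}^1(\mathbb{F}_p)=p+1$ and the elements $\Id$, $w$, and $g_t\coloneqq\sm 1&0\\t&1\esm$ for $0<t<p$ represent $p+1$ distinct cosets (their first columns give the distinct points $[1:0]$, $[0:1]$ and $[1:t]$, $0<t<p$, of $\mathbb{P}^1(\mathbb{F}_p)$), hence all cosets, the orbit equals $\{\rho(g).\mathds{1}_{\mathcal{R}}\}$ over these $p+1$ elements and has at most $p+1$ members. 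Immediately $\rho(\Id).\mathds{1}_{\mathcal{R}}=\mathds{1}_{\mathcal{R}}$ and $\rho(w).\mathds{1}_{\mathcal{R}}=\gamma\Fr\mathds{1}_{\mathcal{R}}=-p^{-1}\mathds{1}_{\widehat{\mathcal{R}}}$.

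For $g_t$ I would avoid conjugating the upper unipotent by $w$ (which would force the evaluation of $\Fr^{-1}$ of $\psi(\sigma\Nr\bullet)\mathds{1}_{\widehat{\mathcal{R}}}$, a genuine Gauss sum) and instead use the Bruhat factorisation $g_t=\sm 1&1/t\\0&1\esm\bigl(\sm -1/t&0\\0&-t\esm w\bigr)\sm 1&1/t\\0&1\esm$, valid since $t\in\mathbb{Z}_p^\times$, which places $w$ in the middle. Reading right to left: $\rho\sm 1&1/t\\0&1\esm$ fixes $\mathds{1}_{\mathcal{R}}$ since $\tfrac1t\Nr x\in\mathbb{Z}_p$ for $x\in\mathcal{R}$; then $\rho(w)$ sends it to $-p^{-1}\mathds{1}_{\widehat{\mathcal{R}}}$; then $\rho\sm -1/t&0\\0&-t\esm$ fixes this since $|{-1/t}|_p=1$ and $(-t)\widehat{\mathcal{R}}=\widehat{\mathcal{R}}$; and finally $\rho\sm 1&1/t\\0&1\esm$ multiplies by $\psi(\tfrac1t\Nr\bullet)$, which is no longer trivial on $\widehat{\mathcal{R}}$ because $\Nr$ takes values in $\tfrac1p\mathbb{Z}_p$ there. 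This gives $\rho(g_t).\mathds{1}_{\mathcal{R}}=-p^{-1}\psi(\tfrac{\Nr\bullet}{t})\mathds{1}_{\widehat{\mathcal{R}}}$, matching the statement. (Alternatively $\rho(g_t).\mathds{1}_{\mathcal{R}}$ can be evaluated directly as a Gauss sum over $\widehat{\mathcal{R}}/\mathcal{R}\cong\mathbb{F}_{p^2}$ using Lemma~\ref{lem:jmap-local-ramified}; completing the square reduces it to $\sum_{\xi\in\mathbb{F}_{p^2}}e(-\sigma\Nr\xi)=-p$, the norm map $\mathbb{F}_{p^2}\to\mathbb{F}_p$ having fibres of size $1$ and $p+1$.)

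Finally I would check that the $p+1$ functions are pairwise distinct, so that the orbit has exactly $p+1$ elements and $gU_0(p)\mapsto\rho(g).\mathds{1}_{\mathcal{R}}$ is a bijection $\faktor{\mathbf{SL}_2(\mathbb{Z}_p)}{U_0(p)}\xrightarrow{\sim}\rho(\mathbf{SL}_2(\mathbb{Z}_p)).\mathds{1}_{\mathcal{R}}$; this is the assertion that each function corresponds to a single coset, and it pins the stabiliser of each orbit element to a conjugate of $U_0(p)$ (that of $\mathds{1}_{\mathcal{R}}$ being $U_0(p)$ itself). Here $\mathds{1}_{\mathcal{R}}$ stands apart by being supported on the proper sublattice $\mathcal{R}\subsetneq\widehat{\mathcal{R}}$, while the functions $-p^{-1}\psi(\tfrac{\Nr\bullet}{t})\mathds{1}_{\widehat{\mathcal{R}}}$ (and $-p^{-1}\mathds{1}_{\widehat{\mathcal{R}}}$) are separated by the additive characters $x\mapsto\psi(\tfrac1t\Nr x)$ of $\widehat{\mathcal{R}}$: by Lemma~\ref{lem:jmap-local-ramified} the norm form surjects from $\widehat{\mathcal{R}}$ onto $\tfrac1p\mathbb{Z}_p/\mathbb{Z}_p$ (e.g.\ $\Nr(j^{-1})=p^{-1}$), so equality of two such characters forces $\tfrac1t-\tfrac1{t'}\in p\mathbb{Z}_p$, i.e.\ $t\equiv t'\bmod p$, and likewise each of them is nonconstant on $\widehat{\mathcal{R}}$. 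The only real subtlety is the choice of Bruhat decomposition placing $w$ between a unipotent that fixes $\mathds{1}_{\mathcal{R}}$ and one that acts simply on $\mathds{1}_{\widehat{\mathcal{R}}}$; the rest is routine Weil-representation bookkeeping.
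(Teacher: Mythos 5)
Your proof is correct, and for the main computation it takes a genuinely different route from the paper. The paper writes $\sm 1&0\\t&1\esm=-w\sm 1&-t\\0&1\esm w$ and then evaluates $\rho(w)$ of $\psi(-t\Nr y)\mathds{1}_{\widehat{\mathcal{R}}}(y)$ head-on as a Gauss sum over $\widehat{\mathcal{R}}/\mathcal{R}\simeq\mathbb{F}_{p^2}$, using the fact that the norm $\mathbb{F}_{p^2}^\times\to\mathbb{F}_p^\times$ has fibres of size $p+1$ to get the value $-p\,\psi(\Nr x/t)$. Your Bruhat factorisation $\sm 1&0\\t&1\esm=\sm 1&1/t\\0&1\esm\sm -1/t&0\\0&-t\esm w\sm 1&1/t\\0&1\esm$ (valid since $t\in\mathbb{Z}_p^\times$) makes every factor act by one of the three elementary formul{\ae} and requires only the single Fourier transform $\Fr\mathds{1}_{\mathcal{R}}=p^{-1}\mathds{1}_{\widehat{\mathcal{R}}}$ already computed in Lemma \ref{lem:non-arch-Weil-maximal-order}; it both explains conceptually where the phase $\psi(\Nr x/t)$ comes from (the outer unipotent acting on $\mathds{1}_{\widehat{\mathcal{R}}}$, where $\Nr$ is no longer integral) and bypasses the character-sum evaluation entirely. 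The paper's computation, on the other hand, is the template that generalises directly to the Eichler-order case of Proposition \ref{prop:local-orbit-Eichler}, where the analogous sums genuinely need to be evaluated. You also supply the distinctness of the $p+1$ functions, which the paper leaves implicit. Two cosmetic points: $\Nr(j^{-1})=-p^{-1}$ rather than $p^{-1}$ (immaterial, since any unit multiple of $p^{-1}$ in the image of $\Nr$ on $\widehat{\mathcal{R}}$ drives your separation argument), and the literal claim that each orbit element is $\rho(U_0(p))$-invariant holds only for $\mathds{1}_{\mathcal{R}}$ itself — the others are invariant under the corresponding conjugates $gU_0(p)g^{-1}$ — but the content actually used downstream is exactly the coset bijection you establish.
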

\begin{remark}
Because $\Nr x \in p^{-1}\mathbb{Z}_p$ and $\psi$ is unramified, we can rewrite the result above as
\begin{equation*}
\rho(\mathbf{SL}_2(\mathbb{Z}_p)).\mathds{1}_{\mathcal{R}}
=\left\{\mathds{1}_{\mathcal{R}} \right\} \cup \left\{-p^{-1}\psi\left(t \Nr x\right)\mathds{1}_{\widehat{\mathcal{R}}} \mid t\in\cyclic{p} \right\}
\;.
\end{equation*}
\end{remark}
\begin{proof}
As in the previous lemma, we put coordinates on $B_p$ corresponding to the cyclic algebra $\bigl(\frac{a,p}{\mathbb{Q}_p}\bigr)$ where $E=F(\sqrt{a})/F$ is the unique unramified quadratic extension. In these coordinates, we can write
\begin{align*}
\mathcal{R}&=\mathcal{O}_E+j\mathcal{O}_E\;, &
\widehat{\mathcal{R}}&=\mathcal{O}_E+j^{-1}\mathcal{O}_E\;,
\end{align*}
and make the map $\jmath_v$ explicit:
\begin{equation*}
\jmath_v(a+j^{-1}b)=b \mod p\mathcal{O}_E\;.
\end{equation*}
The map $\jmath_v$ is an isomorphism of abelian groups $\widehat{\mathcal{R}}/\mathcal{R}\to\mathbb{F}_{p^2}$ and $-p \Nr x \mod p\mathbb{Z}_p \equiv \Nr \jmath_v(x)$ for all
$x\in\widehat{\mathcal{R}}$, where the norm on the right hand side is the field norm $\mathbb{F}_{p^2}\to\mathbb{F}_p$.

For each $\alpha\in \mathbb{F}_{p^2}$, fix a representative $x_\alpha\in \jmath_v^{-1}(\alpha)$. Then,
\begin{equation}\label{eq:Rdual-ram-decomp}
\widehat{\mathcal{R}}=\bigsqcup_{\alpha\in \mathbb{F}_{p^2}} \left(x_\alpha+\mathcal{R}\right)\;.
\end{equation}
Explicitly, for each $\alpha\in\mathbb{F}_{p^2}$, we take $x_\alpha=j^{-1}\check{\alpha}$, where $\check{\alpha}\in\mathcal{O}_E$ satisfies $\check{\alpha} \bmod p\mathcal{O}_E=\alpha$.
The duality between $\mathcal{R}$ and $\widehat{\mathcal{R}}$ implies that $\Nr(x_\alpha+\mathcal{R})\subset \Nr x_\alpha + \mathbb{Z}_p$.

Recall that $\mathds{1}_{\mathcal{R}}$ is $\rho(U_0(p))$-invariant. Hence, we need only calculate the action of each representative of  $\mathbf{SL}_2(\mathbb{Z}_p)/U_0(p)$ on $\mathds{1}_{\mathcal{R}}$.
A set of representatives is given by $w$, $\sm 1 & 0 \\ t & 1\esm$, $0\leq t<p$. The action of $\rho(w)$ is the Fourier transform (multiplied by $\gamma=-1$) and we have already seen in the previous proof that
\begin{equation*}
\rho(w).\mathds{1}_{\mathcal{R}}=-p^{-1}\mathds{1}_{\widehat{\mathcal{R}}}\;.
\end{equation*}
Write $x=a+j^{-1}b$ for $x\in \widehat{\mathcal{R}}$.
Now, we compute the action of $\sm 1 & 0 \\ t & 1\esm=-w\sm 1 & -t \\ 0 & 1 \esm w$ using\footnote{Note that $\rho(-1).M(x)=M(-x)$ and that our function is symmetric, so $-1$ acts trivially.} \eqref{eq:Rdual-ram-decomp}
\begin{align*}
\left(\rho\left(\begin{pmatrix}1 & 0 \\ t & 1\end{pmatrix}\right).\mathds{1}_{\mathcal{R}}\right)(x)&=-p^{-1}\rho(w).\left(\psi(-t\Nr y)\mathds{1}_{\widehat{\mathcal{R}}}(y)\right)(x)=-p^{-1}\rho(w).\left(\sum_{\alpha\in\mathbb{F}_{p^2}}
\psi(-t\Nr x_\alpha) \mathds{1}_{x_\alpha+\mathcal{R}}\right)(x)\\
&=p^{-2}\mathds{1}_{\widehat{\mathcal{R}}}(x) \sum_{\alpha\in\mathbb{F}_{p^2}}
\psi(-t\Nr x_\alpha+\langle x, x_\alpha \rangle)\;.
\end{align*}
If $t\neq 0$ then the sum above becomes
\begin{align*}
\sum_{\alpha\in\mathbb{F}_{p^2}}
\psi\left(\frac{\Nr x- \Nr(x-tx_\alpha)}{t} \right)&=\psi\left(\frac{\Nr x}{t}\right)\sum_{\alpha\in\mathbb{F}_{p^2}}\psi\left(\frac{\Nr(b-t\alpha)}{tp}\right)
=\psi\left(\frac{\Nr x}{t}\right)\sum_{\alpha_0\in\mathbb{F}_{p^2}}\psi\left(\frac{\Nr \alpha_0}{tp}\right)\\
&=\psi\left(\frac{\Nr x}{t}\right)\left((p+1)\sum_{\beta\in\mathbb{F}_p^\times} \psi\left(\frac{\beta}{p}\right)+1\right)
=-p\psi\left(\frac{\Nr x}{t}\right)\;.
\end{align*}
We have used the fact that every element of $\mathbb{F}_p^\times$ is the norm of exactly $p+1$ elements in $\mathbb{F}_{p^2}^\times$. This establishes the claim for $0<t<p$. For $t=0$, the sum becomes
\begin{equation*}
\sum_{\alpha\in\mathbb{F}_{p^2}}
\psi\left(-\frac{\Tr(\alpha b)}{p}\right)=
\begin{cases}
p^2 & b\equiv 0 \bmod p\mathcal{O}_E\\
0 & \textrm{otherwise}
\end{cases}\;.
\end{equation*}
And $\mathds{1}_{\widehat{\mathcal{R}}}(x)\delta_{b\equiv 0 \bmod p\mathcal{O}_E}(x)=\mathds{1}_{\mathcal{R}}(x)$. Of course, the case of $t=0$ is actually trivial to compute because it corresponds to the identity representative.
\end{proof}

\begin{lem}\label{lem:schwartz-bruhat-stabilizer}
Let $M\colon B_p\to\mathbb{C}$ be a Schwartz--Bruhat function. Then, there is an open subgroup $U_M<\mathbf{SL}_2(\mathbb{Z}_p)$ such that $\rho(U_M).M=M$. In particular, $\rho\left(\mathbf{SL}_2(\mathbb{Z}_p)\right).M$ is a finite set.
\end{lem}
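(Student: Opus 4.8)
The plan is to exploit the explicit generators and formulae for the Weil representation. Recall that $\mathbf{SL}_2(\mathbb{Z}_p)$ is generated by the subgroup $\mathcal{P} < \mathbf{SL}_2(\mathbb{Z}_p)$ of upper-triangular integral matrices together with the Weyl element $w = \sm 0 & 1 \\ -1 & 0 \esm$ (as was already used in the proof of Lemma \ref{lem:non-arch-Weil-maximal-order}); equivalently it is enough to control the action of the unipotent $\sm 1 & \sigma \\ 0 & 1 \esm$, the torus $\sm \lambda & 0 \\ 0 & \lambda^{-1} \esm$ with $\lambda\in\mathbb{Z}_p^\times$, and $w$. First I would observe that a Schwartz--Bruhat function $M$ on $B_p$ is, by definition, a finite $\mathbb{C}$-linear combination of indicator functions $\mathds{1}_{x_0 + p^N\mathcal{R}}$ of cosets of a fixed lattice, so it suffices to produce, for each such summand, an open subgroup fixing it — then intersect the finitely many subgroups. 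For a single coset indicator, supported in a compact set on which $\Nr$ takes values in $p^{-r}\mathbb{Z}_p$ for some $r$, the unipotent $\sm 1 & \sigma \\ 0 & 1 \esm$ acts by multiplication by $\psi_p(\sigma \Nr x)$, which is identically $1$ once $\sigma \in p^{r}\mathbb{Z}_p$ (using that $\psi_p$ is unramified); and the torus element $\sm \lambda & 0 \\ 0 & \lambda^{-1}\esm$ with $\lambda\in\mathbb{Z}_p^\times$ acts by $M(x)\mapsto M(\lambda x)$, which fixes $M$ once $\lambda$ is close enough to $1$ that $\lambda(x_0+p^N\mathcal{R}) = x_0 + p^N\mathcal{R}$ — any $\lambda\equiv 1 \bmod p^{N+v(x_0)}$ works.

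The only generator that does not obviously have an open stabilizer is $w$, which acts by the (suitably normalized) Fourier transform $\gamma\Fr$. The cleanest way around this is to conjugate: $w$ normalizes neither $\mathcal{P}$ nor a small congruence subgroup, but $\Fr\mathds{1}_{x_0+p^N\mathcal{R}}$ is again a Schwartz--Bruhat function, explicitly $\psi_p(\langle \cdot, x_0\rangle)$ times a scalar multiple of the indicator of (a translate of) the dual lattice $p^{-N}\widehat{\mathcal{R}}$, hence still locally constant with compact support. So the space $V_M$ spanned by $\{\rho(\mathcal{P}).M, \rho(w\mathcal{P}).M\}$, or more robustly the $\mathcal{P}$-span of $M$ together with the $\mathcal{P}$-span of $\Fr M$, is finite-dimensional: the $\mathcal{P}$-span of a single coset indicator is spanned by the $p$-power-many functions $\psi_p(\sigma\Nr x)\mathds{1}_{\lambda(x_0+p^N\mathcal{R})}$ with $\sigma$ ranging over a finite quotient and $\lambda$ over a finite quotient of $\mathbb{Z}_p^\times$, and likewise for $\Fr M$.

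Concretely I would argue as follows. Fix $M$; let $L = p^N\mathcal{R}$ be a lattice fine enough that $M$ is $L$-invariant under translation and supported in $p^{-k}\mathcal{R}$. Both $M$ and $\Fr M$ lie in the finite-dimensional space $W$ of functions that are $L'$-translation-invariant and supported in $p^{-k'}\mathcal{R}$, for $L' = p^{N'}\mathcal{R}$ and suitable $N', k'$ depending only on $N,k$ and the conductor of $B_p$; note $W$ is visibly preserved by $\rho(\mathcal{P})$ (unipotents multiply by characters $\psi_p(\sigma\Nr x)$ with $\sigma\in p^{-2k'}\mathbb{Z}_p$, which preserve the span of the finitely many coset indicators, and integral torus elements permute cosets), and also by $w$ since $\rho(w)W = \gamma\Fr W \subseteq W$ by Fourier duality and self-duality of the measure. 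Hence $W$ is a finite-dimensional $\rho(\mathbf{SL}_2(\mathbb{Z}_p))$-invariant subspace containing $M$. Now $\rho$ restricted to $\mathbf{SL}_2(\mathbb{Z}_p)$ acting on the finite-dimensional $W$ is a continuous representation of a profinite group on a finite-dimensional complex vector space, so its kernel is open; thus $\rho(U_M).M = M$ for the open subgroup $U_M = \ker(\rho|_W)$, and $\rho(\mathbf{SL}_2(\mathbb{Z}_p)).M \subseteq W$ is the image of the finite set $\mathbf{SL}_2(\mathbb{Z}_p)/U_M$, hence finite.

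The main obstacle is the Fourier-transform generator $w$: one must know a priori that $\Fr M$ is again Schwartz--Bruhat with explicitly controlled support and level of invariance, so that a single finite-dimensional space $W$ can be produced that is stable under all of $\mathbf{SL}_2(\mathbb{Z}_p)$. Once that stability is in hand, openness of the stabilizer is the standard fact that a continuous finite-dimensional complex representation of a profinite group has open kernel, and finiteness of the orbit is immediate.
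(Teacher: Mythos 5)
Your argument is correct, but it takes a genuinely different route from the paper's. The paper first rescales by $\rho(\diag(p^k,p^{-k}))$ so that the support becomes a finite union of $\mathcal{R}$-cosets, reduces by linearity to $M=\mathds{1}_{b+\mathcal{R}}$, and then directly exhibits an open congruence subgroup of the form $U_0(p^{n_1})\cap{}^{t}U_0(p^{n_2})$ stabilizing that single indicator, via the same Fourier-transform computation used for $\mathds{1}_{\mathcal{R}}$ in Lemma \ref{lem:non-arch-Weil-maximal-order}: the element $w$ never needs to lie in the stabilizer, only sufficiently deep upper- and lower-triangular elements, the latter handled by writing them as $w u w^{-1}$ and computing two Fourier transforms. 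You instead embed $M$ in a finite-dimensional space $W$ of functions with prescribed support and translation-invariance lattices, check that $W$ is $\rho(\mathbf{SL}_2(\mathbb{Z}_p))$-stable, and invoke the no-small-subgroups property of $GL(W)$ for a continuous representation of a profinite group. Both are valid. Your approach yields the cleaner conceptual statement that every Schwartz--Bruhat function generates a finite-dimensional $\mathbf{SL}_2(\mathbb{Z}_p)$-module, at the price of the bookkeeping needed to make $W$ genuinely stable under $\Fr$ \emph{and} under the integral unipotents simultaneously: in the ramified case $\widehat{\mathcal{R}}\neq\mathcal{R}$, so the naive symmetric choice of parameters fails and one must offset the support lattice by the different of $\mathcal{R}$ (your hedge ``depending on the conductor of $B_p$'' is exactly where this lives, and it does work out). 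One small slip: the unipotent parameter $\sigma$ ranges over $\mathbb{Z}_p$, not $p^{-2k'}\mathbb{Z}_p$ --- it is $\Nr x$ that lies in $p^{-2k'}\mathbb{Z}_p$ on the support.
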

\begin{proof}
Fix a maximal order $\mathcal{R}\subset B_p$. We first claim that for every Schwartz--Bruhat function $M_0\colon B_p\to\mathbb{C}$ there is some diagonal matrix $a\in\mathbf{SL}_2(\mathbb{Q}_p)$ such that $\rho(a).M_0$ is a linear combination of translates of $\mathds{1}_{\mathcal{R}}$. Equivalently $\rho(a).M_0(x+\mathcal{R})=\rho(a).M_0(x)$. Because $M_0$ is Schwartz--Bruhat, there is a small neighborhood of the origin $\mathcal{V}\subset B_p$, such that $M_0(x+\mathcal{V})=M_0(x)$. There is $k\geq 1$ such that $p^k \mathcal{R}\subset \mathcal{V}$. The function $x\mapsto M_0(p^{k}x)$ is invariant under translations by $\mathcal{R}$.  Set $a=\diag(p^k,p^{-k})$, then $\rho(a).M_0$ is a linear combination of translates of $\mathds{1}_\mathcal{R}$ as claimed.

Fix $b\in B_p$ and consider the group $\mathcal{A}(b)=\left\{\diag(u, u^{-1})\colon u\in\mathbb{Z}_p^\times \textrm{ and } bu-b\in\mathcal{R} \right\}$. Then $\mathcal{A}(b)$ is an open subgroup of the diagonal group in $\mathbf{SL}_2(\mathbb{Z}_p)$ and $\rho(\mathcal{A}(b)).\mathds{1}_{b+\mathcal{R}}=\mathds{1}_{b+\mathcal{R}}$. Taking a finite intersection of such subgroups we  find an open subgroup  $\mathcal{A}_0$ of the diagonal group of $\mathbf{SL}_2(\mathbb{Z}_p)$, such that $\rho(\mathcal{A}_0 a).M=\rho(a).M$. Hence, $M$ is invariant under $\rho(\mathcal{A}_0)$.

In a similar fashion, let $k_b\geq 0$ such that $p^{k_b} b\in\mathcal{R}$ and define $\mathcal{N}(b)=\begin{pmatrix}
1 & p^{k_b} \mathbb{Z}_p \\ 0 & 1
\end{pmatrix}$. Then, $\mathcal{N}(b)$ is an open subgroup of the upper-triangular unipotent group of $\mathbf{SL}_2(\mathbb{Z}_p)$ and $\rho(\mathcal{N}_b).\mathds{1}_{b+\mathcal{R}}=\mathds{1}_{b+\mathcal{R}}$.  Taking a finite intersection of such subgroups we can find an open subgroup $\mathcal{N}_1'$ of the integral upper-triangular unipotent subgroup, such that $\rho(\mathcal{N}_1'a).M=\rho(a).M$. Set $\mathcal{N}_1=a^{-1} \mathcal{N}_1' a\cap \mathbf{SL}_2(\mathbb{Z}_p)$. Then, $\rho(\mathcal{N}_1).M=M$ and $\mathcal{N}_1$ is an open subgroup of the upper unipotent integral group. Replacing $M$ by $\rho(w).M$, we can find $\mathcal{N}_2$ such that $\rho(w^{-1} \mathcal{N_2} w).M=M$, and $w^{-1}\mathcal{N}_2 w$ is an open subgroup of the lower-triangular integral unipotent group.

Set now $U_M$ to be the group generated by $\mathcal{A}_0,\mathcal{N}_1,w^{-1}\mathcal{N}_2w$. Then, $U_M$ is an open subgroup of $\mathbf{SL}_2(\mathbb{Z}_p)$ and satisfies the requirements of the claim.
\end{proof}

Assume now $B_p\simeq \operatorname{Mat}_{2\times 2}(\mathbb{Q}_p)$ is split. Maximal orders in $\operatorname{Mat}_{2\times 2}(\mathbb{Q}_p)$ are endomorphism rings of lattices in $\mathbb{Q}_p^2$ and they are in one-to-one correspondence with the vertices of the Bruhat--Tits tree of $\mathbf{SL}_2(\mathbb{Q}_p)$.
\begin{defi}
An Eichler order in $B_p$ of level $p^n$ is an intersection of two maximal orders corresponding to two vertices in the Bruhat--Tits tree with distance $n$ between them.
\label{def:local-Eichler-level}
\end{defi}

\begin{lem}\label{lem:non-arch-Weil-Eichler}
Let $\mathcal{R}\subset B_p$ be an Eichler order of level $p^n$. Then, $\rho\left(U_0(p^n)\right).\mathds{1}_{\mathcal{R}}=\mathds{1}_{\mathcal{R}}$.
\end{lem}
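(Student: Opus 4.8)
The plan is to follow the template of the proof of Lemma~\ref{lem:non-arch-Weil-maximal-order}, of which this is the level-$p^n$ analogue, reducing the claim to a single explicit group element whose action one computes via the Fourier transform. Recall that the Weil action of $\mathbf{SL}_2(\mathbb{Q}_p)$ commutes with $B_p^\times$-conjugation on functions, and that all Eichler orders of level $p^n$ in $B_p\simeq \mathbf{M}_2(\mathbb{Q}_p)$ are $\mathbf{GL}_2(\mathbb{Q}_p)=B_p^\times$-conjugate. Since conjugation carries $\mathds{1}_{\mathcal{R}}$ to the indicator of the conjugated order, it suffices to establish the statement for the standard model $\mathcal{R}_0\coloneqq\sm\mathbb{Z}_p&\mathbb{Z}_p\\p^n\mathbb{Z}_p&\mathbb{Z}_p\esm=\mathbf{M}_2(\mathbb{Z}_p)\cap \diag(1,p^n)\mathbf{M}_2(\mathbb{Z}_p)\diag(1,p^n)^{-1}$, i.e.\ to prove $\rho(U_0(p^n)).\mathds{1}_{\mathcal{R}_0}=\mathds{1}_{\mathcal{R}_0}$.

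Next I would reduce to generators of $U_0(p^n)$. A Gaussian-elimination argument shows $U_0(p^n)$ is generated by the integral upper-triangular subgroup $\mathcal{P}$ together with the lower unipotent group $\{\sm1&0\\t&1\esm:t\in p^n\mathbb{Z}_p\}$: any $g=\sm a&b\\c&d\esm\in U_0(p^n)$ has $a\in\mathbb{Z}_p^\times$ (since $p\mid c$ forces $ad\equiv 1 \bmod p$), and factoring $\sm1&0\\ca^{-1}&1\esm$ off the left leaves $\sm a&b\\0&a^{-1}\esm\in\mathcal{P}$. The lower unipotent group is topologically generated by $\tau\coloneqq\sm1&0\\p^n&1\esm$, so by continuity of the Weil representation we need only check invariance of $\mathds{1}_{\mathcal{R}_0}$ under $\mathcal{P}$ and under $\tau$. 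Invariance under $\mathcal{P}$ is immediate from the defining formulae: $\sm1&\sigma\\0&1\esm$ with $\sigma\in\mathbb{Z}_p$ multiplies $\mathds{1}_{\mathcal{R}_0}$ by $\psi_p(\sigma \Nr x)$, which is trivial on $\mathcal{R}_0$ because $\Nr x=\det x\in\mathbb{Z}_p$ and $\psi_p$ is unramified, while the diagonal part with unit entry $\lambda$ sends $M(x)\mapsto|\lambda|_p^2 M(\lambda x)=M(\lambda x)$ and $\lambda\mathcal{R}_0=\mathcal{R}_0$.

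The heart of the matter is the element $\tau$. Writing $\tau=w^{-1}\sm1&-p^n\\0&1\esm w$ with $w=\sm0&1\\-1&0\esm$, we get $\rho(\tau).\mathds{1}_{\mathcal{R}_0}=\rho(w)^{-1}\rho\!\left(\sm1&-p^n\\0&1\esm\right)\rho(w).\mathds{1}_{\mathcal{R}_0}$. In the split case $\rho(w).\mathds{1}_{\mathcal{R}_0}=\Fr\mathds{1}_{\mathcal{R}_0}=\vol(\mathcal{R}_0)\,\mathds{1}_{\widehat{\mathcal{R}_0}}$. Computing the trace pairing $\langle x,y\rangle=\Tr(xy^\iota)$ entrywise yields $\widehat{\mathcal{R}_0}=\sm\mathbb{Z}_p&p^{-n}\mathbb{Z}_p\\\mathbb{Z}_p&\mathbb{Z}_p\esm$; in particular $\Nr x=\det x\in p^{-n}\mathbb{Z}_p$ for every $x\in\widehat{\mathcal{R}_0}$. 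Hence $\rho\!\left(\sm1&-p^n\\0&1\esm\right)$ multiplies $\mathds{1}_{\widehat{\mathcal{R}_0}}$ by $\psi_p(-p^n\Nr x)\equiv 1$, leaving it fixed, and applying $\rho(w)^{-1}$ returns $\rho(w)^{-1}\rho(w).\mathds{1}_{\mathcal{R}_0}=\mathds{1}_{\mathcal{R}_0}$, which completes the proof.

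I do not anticipate a serious obstacle: the only genuine content is the bookkeeping in the last paragraph, namely computing $\widehat{\mathcal{R}_0}$ and observing that the reduced norm is $p^{-n}$-integral on it -- precisely what kills the additive twist coming from the ``level-$p^n$'' lower-unipotent generator. This is the same mechanism as in the proof of Lemma~\ref{lem:non-arch-Weil-maximal-order} (which is essentially the case $n=0$, where $\mathcal{R}_0=\mathbf{M}_2(\mathbb{Z}_p)$ is self-dual), now carried out for a genuinely non-self-dual lattice.
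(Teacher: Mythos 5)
Your proposal is correct and follows essentially the same route as the paper's proof: reduce by $B_p^\times$-conjugacy to the standard order $\sm\mathbb{Z}_p&\mathbb{Z}_p\\p^n\mathbb{Z}_p&\mathbb{Z}_p\esm$, handle the upper-triangular generators directly, and treat the lower-unipotent generator via $w$-conjugation, the computation $\Fr\mathds{1}_{\mathcal{R}}=p^{-n}\mathds{1}_{\widehat{\mathcal{R}}}$, and the observation that $\Nr$ is $p^{-n}$-integral on $\widehat{\mathcal{R}}$. The only cosmetic difference is that you spell out the generation of $U_0(p^n)$ by Gaussian elimination and pass to the topological generator $\tau$ by continuity, whereas the paper states the generation fact without proof.
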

\begin{proof}
The action of $B_p^\times$ on the vertices of the Bruhat--Tits tree is transitive on pairs of vertices of a fixed distance\footnote{This follows from the facts that the action of $\mathbf{SL}_2(\mathbb{Q}_p)$ is strongly transitive, i.e.\ it is transitive on pairs $(\mathcal{C},\mathcal{A})$ where $\mathcal{C}$ is a chamber in the apartment $\mathcal{A}$, and that $\mathbf{PGL}_2(\mathbb{Q}_p)$ has an element which inverts the orientation of a single chamber.}, thus it acts transitively by conjugation on the set of Eichler orders of a fixed level $p^n$. Because the conjugation action commutes with the Weil representation, it is enough to consider a single Eichler order of the form
\begin{equation*}
\mathcal{R}=\begin{pmatrix}
\mathbb{Z}_p & \mathbb{Z}_p \\ p^n \mathbb{Z}_p & \mathbb{Z}_p
\end{pmatrix}\;.
\end{equation*}
Indeed, invariance of $\mathds{1}_{\mathcal{R}}$ under upper-triangular integral matrices is immediate and it is enough to check invariance under the element $\sm
1 & 0 \\ p^{n} & 1
\esm =-w \sm
1 & -p^{n}\\ 0 &  1
\esm  w$. Equivalently, we need to show $\rho \left( \sm
1 & -p^{n}\\ 0 &  1
\esm \right).\Fr\mathds{1}_{\mathcal{R}}=\Fr\mathds{1}_{\mathcal{R}}$. We can compute the Fourier transform of $\mathds{1}_{\mathcal{R}}$ explicitly and arrive at
\begin{equation*}
\Fr \mathds{1}_{\begin{pmatrix}
\mathbb{Z}_p & \mathbb{Z}_p \\ p^n \mathbb{Z}_p & \mathbb{Z}_p
\end{pmatrix}}=p^{-n}
\mathds{1}_{\begin{pmatrix}
\mathbb{Z}_p & p^{-n} \mathbb{Z}_p \\ \mathbb{Z}_p & \mathbb{Z}_p
\end{pmatrix}}=p^{-n}\mathds{1}_{\widehat{\mathcal{R}}}\;.
\end{equation*}
In particular, for all $x\in \supp \Fr \mathds{1}_{\mathcal{R}}$, we have $\Nr x=\det x\in p^{-n}\mathbb{Z}_p$ and
$\left(\rho\left(\sm
1 & -p^{n} \\ 0 & 1
\esm\right). \Fr \mathds{1}_{\mathcal{R}}\right)(x)=\psi_v(-p^{n}\Nr x) \left(\Fr \mathds{1}_{\mathcal{R}}\right)(x)=\left(\Fr \mathds{1}_{\mathcal{R}}\right)(x)$ as necessary.
\end{proof}

\begin{lem}\label{lem:jmap-local}
Let $\mathcal{R}=\mathcal{R}_1\cap\mathcal{R}_2$ be an Eichler order of level $p^n$, where $\mathcal{R}_1$ and $\mathcal{R}_2$ are maximal orders.
There is an isomorphism of additive abelian groups $\jmath_v\colon\faktor{\widehat{\mathcal{R}}}{\mathcal{R}}\to \left(\cyclic{p^n}\right)^{2}$ such that
\begin{align*}
\jmath_v(\mathcal{R}_1)&=\cyclic{p^n}\times 0\;,\\
\jmath_v(\mathcal{R}_2)&=0\times\cyclic{p^n}\;,\\
\forall x\in\widehat{\mathcal{R}}&\colon -p^n\Nr (x) \equiv \jmath_v(x)_1\cdot \jmath_v(x)_2 \mod p^n\;.
\end{align*}

Moreover, the isomorphism $\jmath_v$ is unique up to post-composition with the map $(b,c)\mapsto (bu,cu^{-1})$ for $u\in\cyclic{p^n}^\times$.
\end{lem}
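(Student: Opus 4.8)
The claim is an elementary computation with $p$-adic lattices, and the plan is to first move the pair $(\mathcal{R}_1,\mathcal{R}_2)$ into a standard position and then argue in coordinates, in the spirit of the ramified case treated in Lemma~\ref{lem:jmap-local-ramified} and Proposition~\ref{prop:local-orbit-ramified}. As in the proof of Lemma~\ref{lem:non-arch-Weil-Eichler}, $B_p^\times$ acts by conjugation transitively on ordered pairs of maximal orders whose vertices in the Bruhat--Tits tree are at distance $n$, and conjugation preserves $\Nr$, $\Tr$ and the form $\langle x,y\rangle=\Tr(xy^\iota)$ (with $\langle x,x\rangle=2\Nr(x)=2\det x$), hence also dual lattices; so after fixing $B_p\simeq\mathbf{M}_2(\mathbb{Q}_p)$ we may assume $\mathcal{R}_1=\mathbf{M}_2(\mathbb{Z}_p)$, $\mathcal{R}_2=\sm\mathbb{Z}_p&p^{-n}\mathbb{Z}_p\\ p^n\mathbb{Z}_p&\mathbb{Z}_p\esm$, so that $\mathcal{R}=\mathcal{R}_1\cap\mathcal{R}_2=\sm\mathbb{Z}_p&\mathbb{Z}_p\\ p^n\mathbb{Z}_p&\mathbb{Z}_p\esm$. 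A direct computation with $\langle\,,\rangle$ then gives $\widehat{\mathcal{R}}=\sm\mathbb{Z}_p&p^{-n}\mathbb{Z}_p\\ \mathbb{Z}_p&\mathbb{Z}_p\esm=\mathcal{R}_1+\mathcal{R}_2$; intrinsically this is the identity $\widehat{\mathcal{R}_1\cap\mathcal{R}_2}=\widehat{\mathcal{R}_1}+\widehat{\mathcal{R}_2}$ together with the self-duality of maximal orders with respect to $\langle\,,\rangle$.

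Next, the natural map $\mathcal{R}_1/\mathcal{R}\oplus\mathcal{R}_2/\mathcal{R}\to\widehat{\mathcal{R}}/\mathcal{R}$, $(\bar x_1,\bar x_2)\mapsto\overline{x_1+x_2}$, is an isomorphism: it is onto since $\widehat{\mathcal{R}}=\mathcal{R}_1+\mathcal{R}_2$, and if $x_1+x_2\in\mathcal{R}$ with $x_i\in\mathcal{R}_i$ then $x_1=(x_1+x_2)-x_2\in\mathcal{R}+\mathcal{R}_2=\mathcal{R}_2$, hence $x_1\in\mathcal{R}_1\cap\mathcal{R}_2=\mathcal{R}$, and likewise $x_2\in\mathcal{R}$, giving injectivity. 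In standard coordinates $\mathcal{R}_1/\mathcal{R}\xrightarrow{\sim}\cyclic{p^n}$ via the lower-left entry and $\mathcal{R}_2/\mathcal{R}\xrightarrow{\sim}\cyclic{p^n}$ via $p^n$ times the upper-right entry, so $\widehat{\mathcal{R}}/\mathcal{R}\cong(\cyclic{p^n})^{\times 2}$. For $x\in\widehat{\mathcal{R}}$ and $r\in\mathcal{R}$ one has $\Nr(x+r)=\Nr(x)+\langle x,r\rangle+\Nr(r)$ with $\langle x,r\rangle,\Nr(r)\in\mathbb{Z}_p$, so $q(x):=-p^n\Nr(x)\bmod p^n$ descends to $\widehat{\mathcal{R}}/\mathcal{R}$; since $\Nr(\mathcal{R}_i)\subseteq\mathbb{Z}_p$ it vanishes on each of $\mathcal{R}_1/\mathcal{R}$ and $\mathcal{R}_2/\mathcal{R}$, hence $q(x_1+x_2)=\beta(\bar x_1,\bar x_2)$ for a $\cyclic{p^n}$-bilinear pairing $\beta$ on $\mathcal{R}_1/\mathcal{R}\times\mathcal{R}_2/\mathcal{R}$. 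Evaluating at $x_1=\sm 0&0\\ c&0\esm$, $x_2=\sm 0&b\\ 0&0\esm$ (so $\Nr(x_1+x_2)=-bc$) shows $\beta$ is the perfect pairing $(c,p^nb)\mapsto c\cdot p^nb$ on $\cyclic{p^n}$. Therefore, picking any isomorphism $\phi_1\colon\mathcal{R}_1/\mathcal{R}\xrightarrow{\sim}\cyclic{p^n}$, perfectness of $\beta$ yields a unique $\phi_2\colon\mathcal{R}_2/\mathcal{R}\xrightarrow{\sim}\cyclic{p^n}$ with $\phi_1(\bar x_1)\phi_2(\bar x_2)=\beta(\bar x_1,\bar x_2)$, and $\jmath_v:=\phi_1\oplus\phi_2$ has all three asserted properties: $\jmath_v(\mathcal{R}_1)=\cyclic{p^n}\times 0$ and $\jmath_v(\mathcal{R}_2)=0\times\cyclic{p^n}$ by construction, and $\jmath_v(x)_1\jmath_v(x)_2=\beta(\bar x_1,\bar x_2)=q(x)\equiv -p^n\Nr(x)\pmod{p^n}$. (With the concrete choice $\phi_1=$ lower-left, $\phi_2=p^n\cdot$ upper-right this is just $-p^n\Nr\sm a&b\\ c&d\esm=-p^nad+p^nbc\equiv c\cdot p^nb\pmod{p^n}$.)

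For uniqueness, suppose $\jmath_v'$ also satisfies the three properties. Then $\Psi:=\jmath_v'\circ\jmath_v^{-1}$ is a group automorphism of $(\cyclic{p^n})^{\times 2}$ mapping each coordinate axis to itself (by the first two properties of both maps), so $\Psi(b,c)=(ub,vc)$ for some $u,v\in(\cyclic{p^n})^\times$; comparing the norm identities gives $ub\cdot vc=bc$ in $\cyclic{p^n}$ whenever $bc$ is a unit, whence $uv\equiv 1$ and $\Psi(b,c)=(bu,cu^{-1})$. Conversely every such $\Psi$ evidently preserves all three properties, so $\jmath_v$ is unique up to post-composition with $(b,c)\mapsto(bu,cu^{-1})$, $u\in\cyclic{p^n}^\times$. (The degenerate case $n=0$ is vacuous, both sides being trivial.)

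I do not expect a serious obstacle here: once one is in standard position the argument is routine bookkeeping with $p$-adic lattices. The one point that demands care is matching normalizations in the middle step — the factor $p^n$ and the sign in $-p^n\Nr$ against the chosen identifications $\mathcal{R}_i/\mathcal{R}\cong\cyclic{p^n}$ — and the small conceptual observation that makes everything fit, namely that the reduced norm induces a \emph{perfect} $\cyclic{p^n}$-pairing between the two cyclic factors $\mathcal{R}_1/\mathcal{R}$ and $\mathcal{R}_2/\mathcal{R}$; this is precisely what both produces an isomorphism onto $(\cyclic{p^n})^{\times 2}$ compatible with the norm and forces the residual ambiguity to be a single unit scalar.
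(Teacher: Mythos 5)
Your proof is correct and follows essentially the same route as the paper's: reduce to the standard pair of maximal orders by conjugation, write $\mathcal{R}$ and $\widehat{\mathcal{R}}$ in explicit coordinates, exhibit $\jmath_v$ via the off-diagonal entries, and pin down uniqueness by noting that an automorphism preserving both axes is diagonal and that the norm identity forces the two units to be inverse to each other. The only difference is cosmetic — you package the existence step as a perfect $\cyclic{p^n}$-pairing between $\mathcal{R}_1/\mathcal{R}$ and $\mathcal{R}_2/\mathcal{R}$ induced by $-p^n\Nr$, where the paper simply writes down the map and verifies the properties directly — and your bookkeeping of which factor carries $\mathcal{R}_1$ versus $\mathcal{R}_2$ is in fact the consistent one.
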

Notice that the isomorphism $\jmath_v$ depends not only on $\mathcal{R}$ but on an ordered choice of $\mathcal{R}_1$ and $\mathcal{R}_2$.

\begin{proof}
Because all local Eichler orders of fixed level are conjugate, it is enough to verify the claim for
\begin{align*}
\mathcal{R}_1&=\begin{pmatrix}
\mathbb{Z}_p & \mathbb{Z}_p \\  \mathbb{Z}_p & \mathbb{Z}_p
\end{pmatrix}\;, &
\mathcal{R}_2&=\begin{pmatrix}
\mathbb{Z}_p & p^{-n} \mathbb{Z}_p \\  p^n \mathbb{Z}_p & \mathbb{Z}_p
\end{pmatrix}\;.
\end{align*}
In this case, we have
\begin{align*}
\mathcal{R}&=\begin{pmatrix}
\mathbb{Z}_p & \mathbb{Z}_p \\ p^n \mathbb{Z}_p & \mathbb{Z}_p
\end{pmatrix}\;, &
\widehat{\mathcal{R}}&=\begin{pmatrix}
\mathbb{Z}_p & p^{-n} \mathbb{Z}_p \\  \mathbb{Z}_p & \mathbb{Z}_p
\end{pmatrix}\;.
\end{align*}
We define the abelian homomorphism $\jmath_v\colon\widehat{\mathcal{R}}\to \left(\cyclic{p^n}\right)^{2}$ explicitly as
\begin{equation*}
\begin{pmatrix}
a & b/p^n \\ c & d
\end{pmatrix}\mapsto (c,b) \bmod p^n\mathbb{Z}_p\;.
\end{equation*}
A direct computation shows that this homomorphism has kernel $\mathcal{R}$ and that it satisfies the claimed properties.

This isomorphism is unique up to post-composition with a linear automorphism of the first and second coordinate of $\left(\cyclic{p^n}\right)^{2}$, i.e.\ a transformation of the form $(b,c)\mapsto (b u_1, c u_2)$ for $u_1,u_2\in(\cyclic{p^n})^{\times}$. The requirement that the quadratic form $-p^n\Nr$ descends to the product form $(b,c)\mapsto b\cdot c$ forces $u_2=u_1^{-1}$.
\end{proof}

\begin{remark}
The previous lemma implies that given two maximal orders $\mathcal{R}_1$, $\mathcal{R}_2$ the map
$x\mapsto (\ord_p \jmath_v(x)_1,\ord_p \jmath_v(x)_2)$ from $\widehat{\mathcal{R}}$ to $\{0,1,\ldots,n\}^{2}$ is uniquely-defined.
\end{remark}

\begin{defi}
Let $\mathcal{R}\subset B_p$ be an Eichler order of level $p^n$. For every $p^k\mid p^n$ define the lattice
\begin{equation*}
\widehat{\mathcal{R}}^{(p^k)}=\left\{x\in\widehat{\mathcal{R}} \mid \jmath_v(x)\equiv (0,0) \mod p^k
\right\}\;.
\end{equation*}
The definition of $\widehat{\mathcal{R}}^{(p^k)}$ does not depend on the choice of $\jmath_v$.
Note that $\widehat{\mathcal{R}}^{(1)}=\widehat{\mathcal{R}}$ and $\widehat{\mathcal{R}}^{(p^n)}=\mathcal{R}$.

Moreover, for $x\in\widehat{\mathcal{R}}^{(p^k)}$ define\footnote{This definition does not depend on the choice of $\jmath_v$.} $\nu_{p^k}(x)\coloneqq -p^{-k}\jmath_v(x)_1 \cdot p^{-k}\jmath_v(x)_2\in \cyclic{p^{n-k}}$. Notice that $\nu_{p^0}(x)\equiv p^{n} \Nr x \mod p^{n}$.

\end{defi}
\begin{prop}\label{prop:local-orbit-Eichler}
Let $\mathcal{R}\subset B_p$ be an Eichler order of level $p^n$.Then,
\begin{align*}
\rho\left(\mathbf{SL}_2(\mathbb{Z}_p)\right).\mathds{1}_{\mathcal{R}}=
&\left\{\mathds{1}_{\widehat{\mathcal{R}}}(x)\cdot p^{-n}\psi_v(-p t_0 \Nr x) \mid 0 < t_0 \leq p^{n-1} \right\}\\
\cup &\left\{ \mathds{1}_{\widehat{\mathcal{R}}^{\left(p^{\ord_p t}\right)}}(x)\cdot p^{-(n-\ord_p t)} \psi_v\left(\frac{\nu_{p^{\ord_p t}}(x)}{p^{n-2\ord_p t}t}\right) \mid 0< t \leq p^n
\right\}\;.
\end{align*}
Moreover, each of the functions above corresponds to a single coset of $\faktor{\mathbf{SL}_2(\mathbb{Z}_p)}{U_0(p^n)}$.
For $t=p^n$ above, the phase is $1$, hence the representative for $t=p^n$ is simply $\mathds{1}_{\mathcal{R}}$.
\end{prop}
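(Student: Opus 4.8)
The plan is to follow the blueprint of the proof of Proposition~\ref{prop:local-orbit-ramified}, now over the split algebra and for an Eichler order of level $p^n$. The two new features are the presence of the level and the fact that $\widehat{\mathcal{R}}/\mathcal{R}\cong(\cyclic{p^n})^{\times 2}$ carries the \emph{split} bilinear form $(b,c)\mapsto bc$ in place of a field norm; the latter is what makes the relevant character sum factor cleanly.

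\emph{Set-up and coset representatives.} Since the Weil action commutes with $B_p^\times$-conjugation and all ordered pairs of maximal orders at distance $n$ are conjugate, I may take $\mathcal{R}=\sm \mathbb{Z}_p & \mathbb{Z}_p\\ p^n\mathbb{Z}_p & \mathbb{Z}_p\esm$, so that $\widehat{\mathcal{R}}=\sm \mathbb{Z}_p & p^{-n}\mathbb{Z}_p\\ \mathbb{Z}_p & \mathbb{Z}_p\esm$ and $\jmath_v\sm a & b/p^n\\ c& d\esm=(b,c)\bmod p^n$, as in Lemma~\ref{lem:jmap-local}. From the proof of Lemma~\ref{lem:non-arch-Weil-Eichler} one has $\Fr\mathds{1}_{\mathcal{R}}=p^{-n}\mathds{1}_{\widehat{\mathcal{R}}}$, hence $\rho(w).\mathds{1}_{\mathcal{R}}=p^{-n}\mathds{1}_{\widehat{\mathcal{R}}}$ (here $\gamma=1$), and $\rho(U_0(p^n)).\mathds{1}_{\mathcal{R}}=\mathds{1}_{\mathcal{R}}$. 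Consequently the orbit is a quotient of $\mathbf{SL}_2(\mathbb{Z}_p)/U_0(p^n)\cong\mathbb{P}^1(\cyclic{p^n})$, of cardinality $p^n+p^{n-1}$, and I take as coset representatives $\sm 1 & 0\\ t& 1\esm$ for $1\le t\le p^n$ together with $\sm 1 & -pt_0\\ 0& 1\esm w=\sm pt_0 & 1\\ -1& 0\esm$ for $1\le t_0\le p^{n-1}$; one checks their first columns exhaust $\mathbb{P}^1(\cyclic{p^n})$.

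\emph{The computation.} For the second type of representative, $\rho\!\left(\sm 1 & -pt_0\\ 0& 1\esm w\right)\!.\mathds{1}_{\mathcal{R}}(x)=\psi_v(-pt_0\Nr x)\,p^{-n}\mathds{1}_{\widehat{\mathcal{R}}}(x)$, which is the first family in the statement. For the first type, write $\sm 1 & 0\\ t& 1\esm=-w\sm 1 & -t\\ 0& 1\esm w$ and apply $\rho$ in stages: $\rho(w).\mathds{1}_{\mathcal{R}}=p^{-n}\mathds{1}_{\widehat{\mathcal{R}}}$; decompose $\widehat{\mathcal{R}}=\bigsqcup_{\beta,\gamma\bmod p^n}(x_{\beta,\gamma}+\mathcal{R})$ with $x_{\beta,\gamma}=\sm 0 & \beta/p^n\\ \gamma& 0\esm$, and since $\psi_v$ is unramified $\Nr$ is constant mod $\mathbb{Z}_p$ on each coset, so $\rho\!\left(\sm 1& -t\\ 0& 1\esm\right)$ scales the $(\beta,\gamma)$-piece by $\psi_v(-t\Nr x_{\beta,\gamma})=\psi_v(t\beta\gamma/p^n)$; the resulting function is even, so $\rho(-w)$ acts as $\Fr$, and $\Fr\mathds{1}_{x_{\beta,\gamma}+\mathcal{R}}=p^{-n}\psi_v(\langle x,x_{\beta,\gamma}\rangle)\mathds{1}_{\widehat{\mathcal{R}}}$ with $\langle x,x_{\beta,\gamma}\rangle=-(b\gamma+c\beta)/p^n$. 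Altogether, for $x=\sm a& b/p^n\\ c& d\esm$,
\begin{equation*}
\rho\!\left(\sm 1 & 0\\ t& 1\esm\right)\!.\mathds{1}_{\mathcal{R}}(x)=p^{-2n}\,\mathds{1}_{\widehat{\mathcal{R}}}(x)\sum_{\beta,\gamma\bmod p^n}\psi_v\!\left(\tfrac{t\beta\gamma-b\gamma-c\beta}{p^n}\right).
\end{equation*}
Now evaluate the character sum with $k=\ord_p t$ and $t=p^kt'$. Summing over $\gamma$ forces $p^n\mid t\beta-b$, which admits no $\beta$ unless $p^k\mid b$; by the symmetric role of $\beta$ the sum vanishes unless $p^k\mid c$, i.e.\ unless $x\in\widehat{\mathcal{R}}^{(p^k)}$. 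On that lattice write $b=p^kb_1$, $c=p^kc_1$; the condition becomes $\beta\equiv b_1(t')^{-1}\bmod p^{n-k}$, and summing the $p^k$ admissible $\beta$ collapses the $c$-phase to a single term, giving $\sum=p^{n+k}\psi_v(-b_1c_1/(t'p^{n-k}))$. Since $-b_1c_1\equiv\nu_{p^k}(x)\bmod p^{n-k}$ and $t'p^{n-k}=t\,p^{n-2k}$, this is exactly $p^{-(n-k)}\mathds{1}_{\widehat{\mathcal{R}}^{(p^k)}}(x)\,\psi_v\!\big(\nu_{p^k}(x)/(t\,p^{n-2k})\big)$, the second family (with $t=p^n$ returning $\mathds{1}_{\mathcal{R}}$, the phase being trivial).

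\emph{Distinctness, conclusion, and the main obstacle.} It remains to verify that the $p^n+p^{n-1}$ functions produced are pairwise distinct; having then exhibited exactly $|\mathbf{SL}_2(\mathbb{Z}_p)/U_0(p^n)|$ distinct functions of the form $\rho(g_i).\mathds{1}_{\mathcal{R}}$, one concludes $\Stab(\mathds{1}_{\mathcal{R}})=U_0(p^n)$, so the orbit is precisely this set and each member is $\rho(U_0(p^n))$-invariant and corresponds to a single coset. Distinctness is read off from supports and phases: the lattices $\widehat{\mathcal{R}}^{(p^k)}$, $0\le k\le n$, form a strictly decreasing chain, so functions with different ``$k$'' have different supports, and only the first family and the $k=0$ members of the second can a priori clash; within a fixed support one evaluates the relevant phase at an explicit $x\in\widehat{\mathcal{R}}^{(p^k)}$ realizing a prescribed (e.g.\ unit) value of $\Nr x$, resp.\ of $\nu_{p^k}(x)$, and uses that $\psi_v$ is unramified (kernel $\mathbb{Z}_p$) to separate the parameters $t_0$, resp.\ $t$. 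I expect the crux to be the uniform evaluation of the bilinear character sum $\sum_{\beta,\gamma}\psi_v((t\beta\gamma-b\gamma-c\beta)/p^n)$ across all valuations $k=\ord_p t$: one must extract simultaneously the shrinking of the support to $\widehat{\mathcal{R}}^{(p^k)}$ and the precise residual phase $\nu_{p^k}(x)/(t\,p^{n-2k})$ --- checking it is well defined on $\cyclic{p^{n-k}}$ even when $2k\ge n$ --- while keeping all the powers of $p$ in the normalizing constants bookkept consistently.
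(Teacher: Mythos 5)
Your proposal is correct and follows essentially the same route as the paper: reduction to the standard Eichler order by conjugacy, the same coset representatives $\sm 1 & 0\\ t& 1\esm$ and $w\sm 1&0\\ pt_0&1\esm$ for $\faktor{\mathbf{SL}_2(\mathbb{Z}_p)}{U_0(p^n)}$, the same decomposition of $\widehat{\mathcal{R}}$ into $\mathcal{R}$-cosets, and the same evaluation of the resulting bilinear character sum yielding the support restriction to $\widehat{\mathcal{R}}^{(p^k)}$ and the phase $\nu_{p^k}(x)/(tp^{n-2k})$. The only addition is your explicit distinctness check at the end, which the paper leaves implicit.
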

\begin{remark}
Because $\psi$ is unramified, we can rewrite the result above as
\begin{align*}
\rho\left(\mathbf{SL}_2(\mathbb{Z}_p)\right).\mathds{1}_{\mathcal{R}}= \bigcup_{0< k \leq n}
&\left\{ \mathds{1}_{\widehat{\mathcal{R}}^{(p^k)}}(x)\cdot p^{-(n-k)} \psi_v\left(\frac{u \cdot \nu_{p^k}(x)}{p^{n-k}}\right) \mid u\in\left(\cyclic{p^{n-k}}\right)^\times
\right\}\\
\cup
&\left\{ \mathds{1}_{\widehat{\mathcal{R}}}(x)\cdot p^{-n} \psi_v\left(t \Nr x\right) \mid t\in\cyclic{p^n}\right\}\;.
\end{align*}
The set on the second line is analogous to the $k=0$ case missing in the first line, but requires $t$ to traverse the whole congruence subgroup, not just the units.
\end{remark}
\begin{proof}
Again, as all Eichler orders are conjugate, the claim reduces to the case of
\begin{align*}
\mathcal{R}&=\begin{pmatrix}
\mathbb{Z}_p & \mathbb{Z}_p \\ p^n \mathbb{Z}_p & \mathbb{Z}_p
\end{pmatrix}\;, &
\widehat{\mathcal{R}}&=\begin{pmatrix}
\mathbb{Z}_p & p^{-n} \mathbb{Z}_p \\  \mathbb{Z}_p & \mathbb{Z}_p
\end{pmatrix}
\end{align*}
and
\begin{equation*}
\jmath_v\left(\begin{pmatrix}
a & b/p^n \\ c & d
\end{pmatrix}\right)
=(c,b) \mod p^n\mathbb{Z}_p\;.
\end{equation*}

Because of Lemma \ref{lem:non-arch-Weil-Eichler}, it is enough to compute $\rho(s).\mathds{1}_{\mathcal{R}}$ for each of the representatives of $\faktor{\mathbf{SL}_2(\mathbb{Z}_p)}{U_0(p^n)}$. To find these representatives, we first write representatives for $\faktor{\mathbf{SL}_2(\mathbb{Z}_p)}{U_0(p)}$
\begin{equation*}
\mathbf{SL}_2(\mathbb{Z}_p)=w U_0(p)\sqcup \bigsqcup_{0< t \leq p} \begin{pmatrix} 1 & 0 \\ t & 1 \end{pmatrix} U_0(p)\;.
\end{equation*}
This decomposition follows from the fact that $U_0(p)$ is the stabilizer of an edge in the apartment of the diagonal torus in the Bruhat--Tits tree of $\mathbf{SL}_2(\mathbb{Q}_p)$ and the representatives above permute the $p+1$ neighbors of the vertex stabilized by $\mathbf{SL}_2(\mathbb{Z}_p)$.

Next, we find representatives for $\faktor{U_0(p)}{U_0(p^n)}$ using their definition as congruence subgroups
\begin{equation*}
U_0(p)=\bigsqcup_{0 < t_0 \leq p^{n-1}} \begin{pmatrix} 1 & 0 \\ p t_0 & 1 \end{pmatrix} U_0(p^n)\;.
\end{equation*}

By combining, we arrive at
\begin{equation*}
\mathbf{SL}_2(\mathbb{Z}_p)=w \bigsqcup_{0 < t_0 \leq p^{n-1}} \begin{pmatrix} 1 & 0 \\ p t_0 & 1 \end{pmatrix} U_0(p^n) \sqcup
\bigsqcup_{0 < t \leq p^n} \begin{pmatrix} 1 & 0 \\ t & 1 \end{pmatrix} U_0(p^n)\;.
\end{equation*}

We now compute explicitly the action of all representatives.
We need to decompose $\widehat{\mathcal{R}}$ into $\mathcal{R}$-cosets
\begin{equation*}
\widehat{\mathcal{R}}=\bigsqcup_{0\leq \alpha,\beta<p^n} \begin{pmatrix}
0 & \alpha/p^n\\ \beta & 0
\end{pmatrix} +\mathcal{R}\;.
\end{equation*}
To simplify notations, we denote $x_{\alpha,\beta}\coloneqq \sm
0 & \alpha/p^n\\ \beta & 0
\esm$, with $\jmath_v(x_{\alpha,\beta})=(\beta,\alpha)$.
The duality between $\widehat{\mathcal{R}}$ and $\mathcal{R}$ implies $\Nr(x_{\alpha,\beta}+\mathcal{R})=\Nr x_{\alpha,\beta} + \mathbb{Z}_p=-\alpha\beta/p^n+\mathbb{Z}_p$.
Write $\sm 1 & 0 \\ t & 1 \esm=-w \sm 1 & -t \\ 0 & 1 \esm w$ and $x=\sm
a & b/p^n \\ c & d
\esm$. Then,
\begin{align*}
\rho\left(\begin{pmatrix} 1 & 0 \\ t & 1 \end{pmatrix}\right).\mathds{1}_{\mathcal{R}} (x)
&=p^{-n}\rho(w) \rho\left(\begin{pmatrix} 1 & -t \\ 0 & 1 \end{pmatrix}\right).  \mathds{1}_{\widehat{\mathcal{R}}} (x)
=p^{-n}\rho(w).\left( \psi_v(-t \Nr x)  \mathds{1}_{\widehat{\mathcal{R}}}(x)\right)\\
&= p^{-n}\rho(w).\left(\sum_{0\leq \alpha,\beta < p^n} \psi_v(t \alpha\beta/p^n) \mathds{1}_{x_{\alpha,\beta}+\mathcal{R}}(x) \right)\\
&= \mathds{1}_{\widehat{\mathcal{R}}}(x) \cdot p^{-2n} \sum_{0\leq \alpha,\beta < p^n} \psi_v(t \alpha\beta/p^n+\langle x_{\alpha,\beta},x \rangle)\\
&= \mathds{1}_{\widehat{\mathcal{R}}}(x) \cdot p^{-2n} \sum_{0\leq \alpha,\beta < p^n} \psi_v\left(\frac{-\alpha c- \beta b +t \alpha\beta}{p^n}\right)\;.
\end{align*}
Let $k=\ord_p t$. Then, summing first over $\alpha$ we deduce $p^k \mid c$ and summing first over $\beta$ we see that $p^k \mid b$. Using this input, we can sum first over $\alpha$ and then over $\beta$ to arrive at
\begin{align*}
\rho\left(\begin{pmatrix} 1 & 0 \\ t & 1 \end{pmatrix}\right).\mathds{1}_{\mathcal{R}} (x)
&=\mathds{1}_{\widehat{\mathcal{R}}}(x) p^{-(n-k)}
\begin{cases}
\psi_v\left(-\frac{(c/p^k)(b/p^k)}{(t/p^k)p^{n-k}}\right)=\psi_v\left(\frac{\nu_{p^k}(x)}{tp^{n-2k}}\right),
 & \jmath_v(x)\equiv (0,0) \bmod p^k, \\
 0, & \textrm{otherwise.}
\end{cases}
\end{align*}

We need only deal now with representatives of the form $w\sm 1 & 0 \\ p t_0 & 1\esm=\sm 1 & -p t_0 \\ 0 & 1 \esm w$ which are easier to compute
\begin{equation*}
\rho\left(w\begin{pmatrix} 1 & 0 \\ p t_0 & 1\end{pmatrix}\right).\mathds{1}_{\mathcal{R}} (x)=p^{-n}\psi_v(-p t_0 \Nr x) \mathds{1}_{\widehat{\mathcal{R}}}(x)\;.
\end{equation*}
\end{proof}

\subsection{Local Uniformity}
As a preparation for the global theory, we will need the following proposition that uniformly controls the decay of test functions for the Weil representation.
\begin{prop}\label{prop:Weil-local-uniformity}
Let $M\in \Omega_v$, $s\in\mathbf{SL}_2(F)$ and $L\in \operatorname{O}(B_v,\Nr)$. If $v=\infty$, then there is $\delta>0$ such that the inequality
\begin{equation*}
|(\rho(s)M)(Lx)| \ll (1+\|x\|)^{-4-\delta}
\end{equation*}
holds uniformly on compact sets in $\mathbf{SL}_2(F)\times \operatorname{O}(B_v,\Nr)$. If $v<\infty$, then for every compact subset of $\mathcal{K}\subset \mathbf{SL}_2(F)\times \operatorname{O}(B_v,\Nr)$ there is a compact subset $\mathcal{C}\subset B_v$ such that
\begin{equation*}
\forall (s,L)\in \mathcal{K} \colon |(\rho(s).M)(Lx)|\ll_{\mathcal{K}} \mathds{1}_{\mathcal{C}}\;.
\end{equation*}
\end{prop}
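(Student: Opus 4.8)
The plan is to reduce the statement, via the structural description of Weil orbits obtained in Sections \ref{sec:archimedean-weil}--\ref{sec:weilreptheta}, to uniform estimates that are either immediate from those descriptions (in the non-archimedean case) or follow from the explicit shape of the Weil action on $V_{m,\omega}$ (in the archimedean case). In both places the orthogonal element $L$ is harmless: since the Weil action commutes with $\operatorname{O}(B_v,\Nr)$, we have $(\rho(s).M)(Lx) = (\rho(s).(L^{-1}.M))(x)$, so it suffices to control $(\rho(s).M)(x)$ uniformly as $M$ ranges over $\{L^{-1}.M : L\in\mathcal{K}\}$; in the archimedean case the stated decay class $(1+\|x\|)^{-4-\delta}$ is visibly $\operatorname{O}$-invariant, and in the non-archimedean case the family of supports $L^{-1}\supp M$ stays inside a fixed compact set as $L$ ranges over a compact subgroup of $\operatorname{O}(B_v,\Nr)$.

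For $v<\infty$ the argument is soft. By Lemma \ref{lem:schwartz-bruhat-stabilizer} the orbit $\rho(\mathbf{SL}_2(\mathbb{Z}_p)).M$ is a finite set of Schwartz--Bruhat functions, so for $s$ in a compact, hence (after shrinking) open-compact-coset-bounded, subset of $\mathbf{SL}_2(F)$ one writes $s = \gamma u$ with $\gamma$ in a finite set of representatives and $u$ in $\mathbf{SL}_2(\mathbb{Z}_p)$-like behaviour; more robustly, using the Iwasawa/$KAK$ structure one sees that $\rho(\mathcal{K}_1).M$ for a fixed compact $\mathcal{K}_1\subset\mathbf{SL}_2(F)$ is contained in a finite union of translates-by-$\rho(\text{unipotent})$ and dilations of finitely many functions $\mathds{1}_{b+\mathcal{R}}$ (exactly as in the proof of Lemma \ref{lem:schwartz-bruhat-stabilizer}); each such function is supported in a fixed lattice coset, and unipotents act by phases without moving the support while the finitely many dilations enlarge it boundedly. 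Taking $\mathcal{C}$ to be the union of these finitely many supports and absorbing the (bounded) sup-norms into the implied constant gives the claim.

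For $v=\infty$ the point is that every $M\in\Omega_\infty$ is, by Definition \ref{defi:Omega_infty}, a finite linear combination of functions of the form $\psi_\infty(\sigma\Nr(x))M_0(x)$ with $M_0\in V_{m,\omega}$ satisfying $|M_0(x)|\ll(1+\|x\|)^{-4-\delta}$; multiplying by the unimodular phase $\psi_\infty(\sigma\Nr x)$ and by $\rho$ of an upper-triangular matrix changes neither the modulus nor the decay, so it suffices to treat $M_0\in V_{m,\omega}$ and $s$ in a compact set. Now apply Corollary \ref{cor:Weil-infinity-action}: for $s=\sm a&b\\c&d\esm$ one has
\[
|(\rho(s).M_0)(x)| = \frac{\omega}{2\pi}\,\frac{1}{D^2}\left|\sqrt{\tfrac{\omega}{2\pi}}\tfrac{d}{D}+i\sqrt{\tfrac{2\pi}{\omega}}\tfrac{c}{D}\right|^m \left|M_0\!\left(\sqrt{\tfrac{\omega}{2\pi}}\,\tfrac{x}{D}\right)\right|,
\]
and the prefactors $\omega/(2\pi D^2)$ and $|\cdots|^m$ are continuous, hence bounded, on a compact set of $s$ (with $D$ bounded above and below away from $0$ there), while $|M_0(\sqrt{\omega/2\pi}\,x/D)|\ll (1+\|\sqrt{\omega/2\pi}\,x/D\|)^{-4-\delta}\ll (1+\|x\|)^{-4-\delta}$ uniformly, again because $D$ stays in a compact subset of $(0,\infty)$. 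This gives the desired bound with the same $\delta$.

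The only genuine subtlety — and the step I would be most careful about — is the uniformity in $s$ near the boundary of the compact set where the Iwasawa coordinate can degenerate: in the archimedean case one must check that $D=\sqrt{c^2\tfrac{2\pi}{\omega}+d^2\tfrac{\omega}{2\pi}}$ is bounded away from $0$ on any compact subset of $\mathbf{SL}_2(\mathbb{R})$ (true, since $(c,d)\neq(0,0)$ always and $(c,d)$ varies in a compact set), and in the non-archimedean case one must make the "finitely many dilations/unipotents" reduction uniform over the compact set $\mathcal{K}$ rather than for a single $M$; both are handled by a compactness argument, covering $\mathcal{K}$ by finitely many small neighbourhoods on each of which the relevant discrete data (the finite orbit, the exponents $n_1,n_2$ in Lemma \ref{lem:schwartz-bruhat-stabilizer}) is constant. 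Everything else is bookkeeping.
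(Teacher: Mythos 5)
Your proposal is correct and follows essentially the same route as the paper: handle $L$ by $\operatorname{O}(B_v,\Nr)$-equivariance, reduce the non-archimedean case via Lemma \ref{lem:schwartz-bruhat-stabilizer} and the Iwasawa decomposition to the explicit action of upper-triangular matrices on finitely many functions, and deduce the archimedean case directly from Corollary \ref{cor:Weil-infinity-action} with $D$ bounded above and below on compact sets. The paper's proof is just a terser version of the same argument.
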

\begin{proof}
The claim for $v=\infty$ follows immediately from Corollary \ref{cor:Weil-infinity-action}. Fix now $v<\infty$.
Because $\mathcal{K}$ can be covered by finitely many product sets, we assume without loss of generality that $\mathcal{K}=\mathcal{K}_0 \times \mathcal{K}_1$ is a product set. Notice that it is enough to show that there is some $\mathcal{C}_0\subset B_v$ such that $|\rho(s).{M}|\ll \mathds{1}_{\mathcal{C}_0}$ for $s\in\mathcal{K}_0$. In particular, the compact set $\mathcal{C}=\bigcup_{L\in\mathcal{K}_1} L^{-1}\mathcal{C}_0$ will satisfy the claimed properties. Using the Iwasawa decomposition, we can find a compact subset $C_P$ of the group of upper triangular matrices such that $\mathcal{K}_0\subset C_P\mathbf{SL}_2(\mathbb{Z}_p)$. Recall from Lemma \ref{lem:schwartz-bruhat-stabilizer} that the $\rho\left(\mathbf{SL}_2(\mathbb{Z}_p)\right)$-orbit of $M$ is finite and a uniform bound on $\rho(\mathcal{K_0}).M$ will follow from a uniform bound on $\rho(C_P).M'$ for every $M'$ in  $\rho\left(\mathbf{SL}_2(\mathbb{Z}_p)\right).M$. A uniform bound on $|\rho(C_P).M'|$ follows immediately from the formul{\ae} for the Weil action of diagonal and upper unipotent matrices.
\end{proof}

\subsection{Global Weil Representation and Theta Series}
Fix an additive character $\psi\colon \mathbb{A} \slash \mathbb{Q}\to\mathbb{C}$ such that $\psi=\prod_{v} \psi_v$ and $\psi_v$ is unramified for all finite $v$. This is possible for the ad\`ele ring of $\mathbb{Q}$ and the standard character with $\psi_\infty(a)=\exp(-2\pi i a)$ will do. We consider henceforth always the local Weil representations on $\Omega_v$ to be with respect to $\psi_v$.

Set
\begin{equation*}
\Omega\coloneqq {\bigotimes}' \Omega_v=\Span_{\mathbb{C}} \left\{\prod_v M_v \mid M_v\in\Omega_v,\; \forall \textrm{ a.e. } v\colon M_v= \mathds{1}_{R_v} \right\}\;.
\end{equation*}
A pure tensor $M=\prod_v M_v \in \Omega$ is called a \emph{standard test function}.
The Weil representations for each $\Omega_v$ define in the standard way a representation of $\mathbf{SL}_2(\mathbb{A})$ on $\Omega$. To see that this action is well-defined we need to check that for a.e.\ $v$ we have $\rho\left(\mathbf{SL}_2(\mathbb{Z}_v)\right).\mathds{1}_{R_v}=\mathds{1}_{R_v}$, and this follows from Lemma \ref{lem:non-arch-Weil-maximal-order}. The complex vector space $\Omega$ also carries actions of $\mathbf{G}(\mathbb{A})$ by left and right multiplication because for every $v<\infty$ the function $\mathds{1}_{R_v}$ is invariant under left and right multiplication  by elements of $K_v$.

\begin{defi}\label{defi:theta-series}
For $M\in \Omega$ define the theta kernel $\Theta_M\colon\mathbf{G}(\mathbb{A})\times \mathbf{G}(\mathbb{A})\times \mathbf{SL}_2(\mathbb{A})\to\mathbb{C}$ by
\begin{equation*}
\Theta_M(l,r;s)\coloneqq \sum_{\xi\in B} \left(\rho(s)M\right)(l^{-1} \xi r)\;.
\end{equation*}
\end{defi}
The uniform decay from Proposition \ref{prop:Weil-local-uniformity} is sufficient for the theta series to converge absolutely and uniformly on compact sets in $\mathbf{G}(\mathbb{A})\times \mathbf{G}(\mathbb{A})\times \mathbf{SL}_2(\mathbb{A})$. In particular, the theta series is a well-defined continuous function on its domain.

The theta kernel is obviously $\mathbf{G}(\mathbb{Q})$-invariant on the left in the first two coordinates by virtue of its definition. Less obvious, yet well-known, is that it is also $\mathbf{SL}_2(\mathbb{Q})$ left invariant in the third coordinate. A simple way to verify this is by first showing invariance under upper-triangular matrices by an elementary calculation and then establishing invariance under the involution $\sm
0 & 1\\ -1 & 0
\esm$ using the Poisson summation formula. The decay conditions we have imposed on functions in $\Omega_\infty$ are sufficient for the Poisson summation formula to hold \cite[p. 252, Cor. 2.6]{Stein-Weiss-Fourier}.

We now recall the Fourier--Whittaker decomposition of a continuous function $\varphi\colon [\mathbf{SL}_2(\mathbb{A})]\to\mathbb{C}$. For any $\alpha \in \mathbb{Q}$, define the Whittaker function
\begin{equation*}
W_{\varphi}(g,\alpha)=\int_{[\mathbf{N}(\mathbb{A})]} \varphi\left(\begin{pmatrix}
1 & n \\ 0 & 1
\end{pmatrix}g\right) \psi(-\alpha n) \dif n\;.
\end{equation*}
We have the following standard properties of the Whittaker function
\begin{align*}
\forall n\in \mathbb{A}\colon W_{\varphi}\left(\begin{pmatrix}
1 & n \\ 0 & 1
\end{pmatrix}g,\alpha
\right)&=\psi(\alpha n) W_{\varphi}(g,\alpha)\;,\\
\forall \lambda\in\mathbb{Q}^\times\colon
W_{\varphi}\left(\begin{pmatrix}
\lambda & 0 \\ 0 & \lambda^{-1}
\end{pmatrix}g,\alpha
\right)&=W_{\varphi}(g,\lambda^2\alpha)\;.
\end{align*}
Because our function $\varphi$ is defined on $[\mathbf{SL}_2(\mathbb{A})]$ and not $[\mathbf{PGL}_2(\mathbb{A})]$, we can not reduce the dependence on $\alpha$ to the two cases $0$ and $1$. Pontryagin duality for the compact abelian group $[\mathbf{N}(\mathbb{A})]$ implies that the following equality
\begin{equation}\label{eq:Fourier-Whittaker-expansion}
\varphi(g)=\sum_{\alpha\in\mathbb{Q}} W_{\varphi}(g,\alpha)
\end{equation}
holds pointwise as long as the right hand side is absolutely convergent\footnote{For a fixed $g\in\mathbf{PGL}_2(\mathbb{A})$ this is the Fourier expansion of the function $n\mapsto \varphi(ng)$ evaluated at $n=e$. The function $n\mapsto\varphi(ng)$ is a continuous function on the compact Abelian group $[\mathbf{N}(\mathbb{A})]$. If the Fourier transform of a continuous functions converges absolutely, then it coincides with the function everywhere.}.

\begin{prop}\label{prop:Fourier-theta}
Fix $M\in\Omega$. Then, the Fourier--Whittaker coefficients of $\Theta_M(l,r;s)$ in the $s$-variable are
\begin{equation*}
W_{\Theta_M(l,r;\bullet)}(s,\alpha)=\sum_{\substack{\xi\in B \\ \Nr \xi=\alpha}} \left(\rho(s)M\right)(l^{-1} \xi r)\;.
\end{equation*}
\end{prop}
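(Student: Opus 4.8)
The plan is to compute the Whittaker integral directly by interchanging the sum defining $\Theta_M$ with the integral over $[\mathbf{N}(\mathbb{A})]$, and then use the explicit formula for the Weil action of unipotent upper-triangular matrices. First I would write, for $n\in\mathbb{A}$,
\begin{equation*}
\Theta_M\!\left(l,r;\begin{pmatrix}1 & n\\ 0 & 1\end{pmatrix}s\right)
=\sum_{\xi\in B}\left(\rho\!\left(\begin{pmatrix}1 & n\\ 0 & 1\end{pmatrix}\right)\rho(s)M\right)(l^{-1}\xi r)
=\sum_{\xi\in B}\psi\!\left(n\,\Nr(l^{-1}\xi r)\right)\left(\rho(s)M\right)(l^{-1}\xi r)\;,
\end{equation*}
using the first displayed formula for the local Weil representation at every place (and that $\psi=\prod_v\psi_v$). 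Since $l,r\in\mathbf{G}(\mathbb{A})$ have reduced norm $1$, we have $\Nr(l^{-1}\xi r)=\Nr(\xi)$, so the character that appears is simply $\psi(n\,\Nr\xi)$, which depends on $\xi$ only through $\Nr\xi\in\mathbb{Q}$.

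Next I would justify integrating term-by-term over the compact group $[\mathbf{N}(\mathbb{A})]\cong [\mathbb{A}/\mathbb{Q}]$: the absolute and locally uniform convergence of the theta series guaranteed by Proposition~\ref{prop:Weil-local-uniformity} (together with continuity) lets us apply dominated convergence, so
\begin{equation*}
W_{\Theta_M(l,r;\bullet)}(s,\alpha)
=\sum_{\xi\in B}\left(\rho(s)M\right)(l^{-1}\xi r)\int_{[\mathbf{N}(\mathbb{A})]}\psi\!\left((\Nr\xi-\alpha)n\right)\dif n\;.
\end{equation*}
By the orthogonality of characters on $[\mathbb{A}/\mathbb{Q}]$ — with the Haar measure normalized to total mass $1$ as is implicit in the Fourier--Whittaker setup of \eqref{eq:Fourier-Whittaker-expansion} — the inner integral is $1$ if $\Nr\xi=\alpha$ and $0$ otherwise. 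This leaves exactly $\sum_{\xi\in B,\ \Nr\xi=\alpha}\left(\rho(s)M\right)(l^{-1}\xi r)$, as claimed.

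There is no serious obstacle here; the only point requiring a modicum of care is the interchange of summation and integration, which is where the decay hypotheses built into $\Omega_\infty$ (Definition~\ref{defi:Omega_infty}) and the uniformity of Proposition~\ref{prop:Weil-local-uniformity} do their work — the same hypotheses already invoked to make $\Theta_M$ well-defined and to validate the Poisson summation argument for $\mathbf{SL}_2(\mathbb{Q})$-invariance. One should also note that the resulting expression is manifestly a finite sum for each fixed $(l,r,s)$: by Proposition~\ref{prop:Weil-local-uniformity} the function $x\mapsto(\rho(s)M)(l^{-1}xr)$ is compactly supported at the finite places and rapidly decaying at infinity, so only finitely many $\xi\in B$ with $\Nr\xi=\alpha$ contribute, consistent with the convergence of \eqref{eq:Fourier-Whittaker-expansion}.
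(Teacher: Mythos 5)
Your proposal is correct and follows essentially the same route as the paper: exchange the sum defining $\Theta_M$ with the integral over $[\mathbf{N}(\mathbb{A})]$ (justified by the absolute convergence of the theta series), apply the unipotent formula of the Weil representation to produce the character $\psi(n\,\Nr\xi)$, and conclude by orthogonality of characters. The paper's proof is just a more compact version of yours.
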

Because the theta series in Definition \ref{defi:theta-series} converges absolutely, an immediate corollary is that the Fourier--Whittaker expansion \eqref{eq:Fourier-Whittaker-expansion} holds pointwise for $\Theta_M(l,r;\bullet)$.

\begin{proof}
Because the theta series converges absolutely, we may exchange summation and integration in the definition of $W_{\Theta_M(l,r;\bullet)}$ and write
\begin{align*}
W_{\Theta_M(l,r;\bullet)}(s,\alpha)&=
\sum_{\xi\in B }\int_{[\mathbf{N}(\mathbb{A})]} \left(\rho\left(\begin{pmatrix}
1 & n \\ 0 & 1
\end{pmatrix}s\right)M \right) (l^{-1}\xi r) \psi(-\alpha n) \dif n\\
&=
\sum_{\xi\in B }\int_{[\mathbf{N}(\mathbb{A})]} \left(\rho\left(s\right)M \right) (l^{-1}\xi r) \psi(\Nr \xi\cdot n-\alpha n) \dif n\\
&=\sum_{\substack{\xi\in B \\ \Nr \xi=\alpha}} \left(\rho(s)M\right)(l^{-1} \xi r)\;.
\end{align*}
\end{proof}

\section{Theta Kernels for Eichler Orders}
\label{sec:actionEichler}
\subsection{Weil Action on Eichler Orders}
We first introduce the notation $D_B$ for the (reduced) discriminant of $B$, explicitly
\begin{equation*}
D_B=\prod_{p: B_p \textrm{ is ramified}} p\;.
\end{equation*}

\begin{defi}
An Eichler order $\mathcal{R}\subset B$ is an intersection of two maximal orders $\mathcal{R}_1$ and $\mathcal{R}_2$. The completion $\mathcal{R}_v$ of $\mathcal{R}$ at any finite place $v$ is a local Eichler order in $B_v$. We say that $\mathcal{R}$ is ramified at $v$ if $\mathcal{R}_v$ is non-maximal. If $B$ is ramified at $v$ then $\mathcal{R}_v$ is unramified at $v$ because $B_v$ has a unique maximal order.

For almost all places, the local orders $\mathcal{R}_{1,v}$ and $\mathcal{R}_{2,v}$ coincide\footnote{This happens for any two lattices in a rational vector space.} and $\mathcal{R}_v$ is a maximal order, i.e.\ the level of $\mathcal{R}_v$ at these places is $1$.
We define the level of $\mathcal{R}$ as
\begin{equation*}
\prod_{v<\infty} \textrm{level of }\mathcal{R}_v\;.
\end{equation*}
The reader may recall Definition \ref{def:local-Eichler-level}, where we defined the level of a local Eichler order.

The dual lattice to $\mathcal{R}$ is
\begin{equation*}
\widehat{\mathcal{R}}\coloneqq\left\{x\in B \mid \forall y\in \mathcal{R}\colon \Tr x\tensor[^\iota]{y}{}\in\mathbb{Z} \right\}\;.
\end{equation*}
Dualization commutes with localization, i.e. $(\widehat{\mathcal{R}})_v=\widehat{\mathcal{R}_v}$.
Denote the level of $\mathcal{R}$ by $q\in\mathbb{N}$.
Using the decomposition
\begin{equation*}
\widehat{\mathcal{R}}/\mathcal{R}=\prod_{v<\infty} \widehat{\mathcal{R}}_v/ \mathcal{R}_v
\end{equation*}
and Lemmata \ref{lem:jmap-local}, \ref{lem:jmap-local-ramified}, we see the existence of an isomorphism of abelian groups
\begin{equation*}
\jmath\colon \widehat{\mathcal{R}}/\mathcal{R}\to \left(\cyclic{q}\right)^{2} \times \prod_{p \mid D_B} \mathbb{F}_{p^2}
\;.
\end{equation*}
The map $\jmath$ fibers through the local maps $\jmath_v$ and satisfies the properties inherited from Lemma \ref{lem:jmap-local}:
\begin{align*}
\jmath(\mathcal{R}_1)&= \cyclic{q}\times 0\times \prod_{p \mid D_B} \mathbb{F}_{p^2}\;,\\
\jmath(\mathcal{R}_2)&= 0\times\cyclic{q} \times \prod_{p \mid D_B} \mathbb{F}_{p^2}\;,\\
\forall x\in \widehat{\mathcal{R}} &\colon -q\Nr x \equiv \jmath(x)_1\cdot \jmath(x)_2 \mod q\;,\\
\forall x\in \widehat{\mathcal{R}}\;, p\mid D_B &\colon -p\Nr x \equiv \Nr \jmath(x)_3 \mod p\;.
\end{align*}
\end{defi}

For $m\in\mathbb{N}$ and $x\in \mathbb{F}_{p^2}$ define $x\bmod m=\begin{cases} x & p\mid m \\ 0 & p\nmid m \end{cases}$. Similarly, set $x \bmod m= x \bmod \gcd(m,q)$ for all $x\in \cyclic{q}$. We extend this definition element-wise to a map 
\begin{equation*}
\left(\cyclic{q}\right)^{2} \times \prod_{p \mid D_B} \mathbb{F}_{p^2}\xrightarrow{\bmod m} \left(\cyclic{\gcd(m,q)}\right)^{2} \times \prod_{p\mid D_B} \mathbb{F}_{p^2}\;,
\end{equation*}
\begin{defi}
Let $\mathcal{R}=\mathcal{R}_1\cap\mathcal{R}_2 \subset B$ be an Eichler order of level $q$. For every $m\mid q D_B$ define
\begin{equation*}
\widehat{\mathcal{R}}^{(m)}=\left\{x\in\widehat{\mathcal{R}}\mid \jmath(x)\equiv 0 \mod m \right\}\;.
\end{equation*}
Notice that $\widehat{\mathcal{R}}^{(1)}=\widehat{\mathcal{R}}$ and $\widehat{\mathcal{R}}^{(qD_B)}=\mathcal{R}$ and
\begin{equation*}
\left[\widehat{\mathcal{R}} \colon \widehat{\mathcal{R}}^{(m)}\right]=m^2\;.
\end{equation*}
Moreover, the definition of $\widehat{\mathcal{R}}^{(m)}$ does not depend on the choices involved in the definition of $\jmath$.

We also define for $x\in\widehat{\mathcal{R}}^{(m)}$
\begin{equation*}
\nu_m(x)\coloneqq \prod_{p \mid qD_B} \begin{cases}
p^{1-\ord_p m}\Nr(x) \mod p^{\ord_p D_B - \ord_p m}& p \mid D_B \\
\nu_{p^{\ord_p m}} (x) & p \mid q
\end{cases}\in\left(\cyclic{(qD_B/m)}\right)\;.
\end{equation*}
\end{defi}
The lattices $\widehat{\mathcal{R}}^{(m)}$ will feature prominently in the description of the action of the Weil representation. In classical terms, they will appear in the Fourier expansion of a theta series at different cusps.
We will use the following notation for the completion of an integral lattice at all finite places.
\begin{defi}
If $L\subset B$ is a lattice, then define $\mathds{1}_{L_f}\colon B_f\to\mathbb{C}$ to be $\mathds{1}_{L_f}\coloneqq\prod_p \mathds{1}_{L_p}$, where $L_p\subset B_p$ is the $p$-adic closure of $L$.
\end{defi}

Our goal now is to describe the $\rho\left(\mathbf{SL}_2(\widehat{\mathbb{Z}})\right)$-action on $\mathds{1}_{\mathcal{R}_f}$. The first step is to identify the stabilizer of $\mathds{1}_{\mathcal{R}_f}$.
\begin{defi}
Define the compact-open subgroup $U_{\mathcal{R}}=\prod_{p<\infty}U_{p}<\mathbf{SL}_2(\mathbb{A}_f)$ by
\begin{equation*}
U_p=\begin{cases}
U_0(p) & B \textrm{ is ramified at } p\\
U_0(p^n) & \mathcal{R}_p \textrm{ has level } p^n\\
\mathbf{SL}_2(\mathbb{Z}_p) & \textrm{otherwise}
\end{cases}
\end{equation*}
\end{defi}
From Lemmata \ref{lem:non-arch-Weil-maximal-order}, \ref{lem:non-arch-Weil-Eichler}, we know that $\rho(U_{\mathcal{R}}).\mathds{1}_{\mathcal{R}_f}=\mathds{1}_{\mathcal{R}_f}$.
 
We define the arithmetic  function $\rho(a \mid q D_B)$ as
\begin{equation*}
\rho (a \mid q D_B)\coloneqq
\prod_{p\mid \gcd(q D_B/a, a)}
(1-p^{-1}) \;.
\end{equation*}
Note that $(\log \log (10 q D_B))^{-1} \ll \rho(a \mid q D_B) \leq 1$.

\begin{prop}\label{prop:SL2(hatZ)-action}
Let $\mathcal{R}\subset B$ be an Eichler order. Then,
\begin{equation*}
\rho\left(\mathbf{SL}_2(\widehat{\mathbb{Z}})\right).\mathds{1}_{\mathcal{R}_f}=
\Bigg\{
\frac{a (-1)^{\omega(D_B/\gcd(a,D_B))}}{q D_B}
\psi\left(\frac{ t \cdot \nu_a(x)}{qD_B/a}\right) \cdot \mathds{1}_{\widehat{\mathcal{R}}^{(a)}_f} \,\Big| a \mid q D_B,\,  t\in \cyclic{\frac{q D_B}{a}},\, \gcd(t,a)=1
\Bigg\}\;.
\end{equation*}
Moreover, each function on the right hand side corresponds to a single coset of $\faktor{\mathbf{SL}_2(\widehat{\mathbb{Z}})}{U_{\mathcal{R}}}$.
\end{prop}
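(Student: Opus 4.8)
The plan is to reduce the global statement to the local computations already carried out in Propositions~\ref{prop:local-orbit-Eichler} and~\ref{prop:local-orbit-ramified}, and then to reorganize the resulting product of local orbit-sets into the single clean parametrization by triples $(a,u,t)$ stated above. Since $\Omega$ is a restricted tensor product and $\mathds{1}_{\mathcal{R}_f}=\prod_{p<\infty}\mathds{1}_{\mathcal{R}_p}$ with $\mathds{1}_{\mathcal{R}_p}$ being $\rho(\mathbf{SL}_2(\mathbb{Z}_p))$-fixed at all but finitely many $p$ (those dividing $qD_B$, by Lemmata~\ref{lem:non-arch-Weil-maximal-order} and~\ref{lem:non-arch-Weil-Eichler}), the $\rho(\mathbf{SL}_2(\widehat{\mathbb{Z}}))$-orbit of $\mathds{1}_{\mathcal{R}_f}$ is the product over $p\mid qD_B$ of the local orbits $\rho(\mathbf{SL}_2(\mathbb{Z}_p)).\mathds{1}_{\mathcal{R}_p}$. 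First I would invoke the split-prime description (the remark after Proposition~\ref{prop:local-orbit-Eichler}): at $p\mid q$ with $\mathcal{R}_p$ of level $p^{n_p}$, the local orbit is indexed by a pair $(k,\text{phase})$ with $0\le k\le n_p$, where $k=n_p$ gives $\mathds{1}_{\mathcal{R}_p}$ and for $0\le k<n_p$ the function is $p^{-(n_p-k)}\psi_v(u_p\nu_{p^k}(x)/p^{n_p-k})\mathds{1}_{\widehat{\mathcal{R}}_p^{(p^k)}}$ with $u_p\in(\cyclic{p^{n_p-k}})^\times$, together with the ``$k=0$ with full $t$'' family $p^{-n_p}\psi_v(t_p\Nr x)\mathds{1}_{\widehat{\mathcal{R}}_p}$, $t_p\in\cyclic{p^{n_p}}$. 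At the ramified primes $p\mid D_B$, Proposition~\ref{prop:local-orbit-ramified} gives exactly two ``levels'': $k=0$, i.e.\ $\mathds{1}_{\mathcal{R}_p}$, and the family $-p^{-1}\psi_v(t_p\Nr x)\mathds{1}_{\widehat{\mathcal{R}}_p}$ with $t_p\in\cyclic{p}$, where the sign $-1$ accounts for $\gamma=-1$ in the Weil action.

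Next I would collate the local data into a single divisor $a\mid qD_B$: for each $p\mid q$ record the exponent $\ord_p a = n_p-k_p$ (so $\ord_p a$ ranges over $0,\dots,n_p$, the value $0$ being allowed only via the special ``full-$t$'' family), and for each $p\mid D_B$ record $\ord_p a\in\{0,1\}$. Then $\widehat{\mathcal{R}}^{(a)}_f=\prod_p\widehat{\mathcal{R}}_p^{(p^{\ord_p a})}$ matches the global lattice defined before the proposition, the scalar prefactors multiply to $a/(qD_B)$ up to the sign $(-1)^{\omega(D_B/\gcd(a,D_B))}$ produced by the ramified places where $\ord_p a=1$, and $\nu(x)=\prod_p\nu_{p^{\ord_p a}}(x)$ (interpreting the ramified local factors as $\Nr(x)\bmod p^{\ord_p D_B-\ord_p a}$) matches the global $\nu$. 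The phase combines by CRT: the local units $u_p$ (for $p$ where the generic family is used) glue to a unit $u\in(\cyclic{qD_B/a_1})^\times$, while the primes where we are forced into the ``full congruence subgroup, not just units'' alternative contribute the extra modulus collapsing exactly to $qD_B/a_2$, so the glued phase parameter is $(u,t)$ with $u\in(\cyclic{qD_B/a_1})^\times$ and $t\in\cyclic{qD_B/a_2}$; here one checks that $a_1$ is the part of $a$ supported on primes $p$ with $\ord_p a<\ord_p(qD_B)$ (generic case) and $a_2$ the part supported on primes where $\ord_p a=\ord_p(qD_B)$ (the forced case), matching the ad hoc definition $a=a_1a_2$.

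The $\rho(U_{\mathcal{R}})$-invariance of each listed function and the bijection with $\faktor{\mathbf{SL}_2(\widehat{\mathbb{Z}})}{U_{\mathcal{R}}}$ then follow place-by-place from the corresponding assertions in Propositions~\ref{prop:local-orbit-Eichler} and~\ref{prop:local-orbit-ramified}, since $U_{\mathcal{R}}=\prod_p U_p$ and both the orbit and the index set are products over $p\mid qD_B$ (with a trivial factor elsewhere). I expect the main obstacle to be purely bookkeeping: verifying that the two-indexed local description (with its asymmetry between ``$k>0$, unit phase'' and ``$k=0$, full phase'') assembles consistently into the single pair $(u,t)$ with the stated moduli $qD_B/a_1$ and $qD_B/a_2$, and in particular getting the definition of the factorization $a=a_1a_2$ and the arithmetic function $\rho(a\mid qD_B)$ — which measures the ratio $\phi(qD_B/a_1)\cdot(qD_B/a_2)$ against $(qD_B/a)\cdot(\text{number of }(u,t))$ normalizations — to come out exactly right, including the sign $(-1)^{\omega(D_B/\gcd(a,D_B))}$.
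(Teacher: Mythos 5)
Your proposal is exactly the paper's argument: the paper's proof consists of the single sentence that the result ``follows from combining the local contributions as calculated in Propositions \ref{prop:local-orbit-ramified}, \ref{prop:local-orbit-Eichler} and the remarks following these propositions,'' and your write-up simply makes that combination explicit, correctly. One small slip in your bookkeeping: the sign $(-1)^{\omega(D_B/\gcd(a,D_B))}$ comes from the ramified primes with $\ord_p a = 0$ (where the local function is $-p^{-1}\mathds{1}_{\widehat{\mathcal{R}}_p}$ times a phase), not from those with $\ord_p a = 1$ (where the local function is just $\mathds{1}_{\mathcal{R}_p}$ with prefactor $+1$).
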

\begin{remark}\label{rem:SL2(hatZ)-action-count}
For every $a\mid q D_B$, the characteristic function $\mathds{1}_{\widehat{\mathcal{R}}^{(a)}}$ appears above exactly $\frac{q D_B}{a} \rho(a\mid q D_B)$ times with different phase functions.
\end{remark}
\begin{proof}
This follows from combining the local contributions as calculated in Propositions \ref{prop:local-orbit-ramified}, \ref{prop:local-orbit-Eichler} and the remarks following these propositions.
\end{proof}

\subsection{Theta Series for Eichler Orders}
\label{sec:ThetaEichler}
We fix once and for all an Eichler order $\mathcal{R}=\mathcal{R}_1\cap\mathcal{R}_2\subset B$ of level $q$.
In this section, we unwind the adelic definition of a theta series for the case of Eichler orders into classical terms.

Denote  $K_{\mathcal{R}}=\prod_{v<\infty} \left(\mathbf{G}(\mathbb{Q}_v)\cap \mathcal{R}_v\right)$. Strong approximation for the simply connected group $\mathbf{G}$ implies that the double quotient
\begin{equation*}
\dfaktor{\mathbf{G}(\mathbb{Q})}{\mathbf{G}(\mathbb{A})}{K_{\mathcal{R}}}
\end{equation*}
is a single orbit of $\mathbf{G}(\mathbb{R})$. The stabilizer of the identity double coset in $\mathbf{G}(\mathbb{R})$ is
\begin{equation*}
\Gamma\coloneqq K_{\mathcal{R}}\cap \mathbf{G}(\mathbb{Q})=\left\{x\in\mathcal{R} \mid \Nr x = 1\right\}\;.
\end{equation*}
Hence, there is a canonical quotient map
\begin{equation*}
\pi_{\Gamma}\colon \lfaktor{\mathbf{G(\mathbb{Q})}}{\mathbf{G}(\mathbb{A})}\to \lfaktor{\Gamma}{\mathbf{G}(\mathbb{R})}\;.
\end{equation*}
Each fiber of this map is a torsor for $K_{\mathcal{R}}$. 
The quotient map $\pi_{\Gamma}$ induces a natural isomorphism
\begin{equation*}
\pi_{\Gamma}^*\colon\operatorname{Map}\left(\lfaktor{\Gamma}{\mathbf{G}(\mathbb{R})},\mathbb{C}\right)\to
\operatorname{Map}\left([\mathbf{G}(\mathbb{A})],\mathbb{C}\right)^{K_\mathcal{R}}\;,
\end{equation*}
where the right-hand-side is the set of all $K_{\mathcal{R}}$-invariant maps.

Set $\Lambda=U_{\mathcal{R}}\cap \mathbf{SL}_2(\mathbb{Q})<\mathbf{SL}_2(\mathbb{R})$. Explicitly, $\Lambda=\Gamma_0(q D_B)$ where $D_B$ is the product of the primes ramified in $B$ and $q$ is the level of $\mathcal{R}$. Again, the double quotient
\begin{equation*}
\dfaktor{\mathbf{SL}_2(\mathbb{Q})}{\mathbf{SL}_2(\mathbb{A})}{U_{\mathcal{R}}}
\end{equation*}
is a single orbit of $\mathbf{SL}_2(\mathbb{R})$ and the stabilizer of the identity is $\Gamma$. This induces a quotient map
\begin{equation}\label{eq:SL2-adelic2real}
\pi_{\Lambda}\colon \lfaktor{\mathbf{SL}_2(\mathbb{Q})}{\mathbf{SL}_2(\mathbb{A})}\to \lfaktor{\Lambda}{\mathbf{SL}_2(\mathbb{R})}
\end{equation}
and a natural isomorphism
\begin{equation*}
\pi_{\Lambda}^*\colon \operatorname{Map}\left(\lfaktor{\Lambda}{\mathbf{SL}_2(\mathbb{R})},\mathbb{C}\right)\to
\operatorname{Map}\left([\mathbf{SL}_2(\mathbb{A})],\mathbb{C}\right)^{U_\mathcal{R}}\;.
\end{equation*}

We can now write the adelic theta kernel in classical terms.
\begin{prop}\label{prop:theta-series-adelic-to-classical}
Fix $M=M_\infty \cdot \prod_{v<\infty}M_v \in \Omega$ such that $M_v=\mathds{1}_{\mathcal{R}_v}$ for all finite $v$.
Let $l_\infty, r_\infty\in\mathbf{G}(\mathbb{R})$ and $s_\infty\in\mathbf{SL}_2(\mathbb{R})$. Then,

\begin{equation*}
\Theta_M\left(l_\infty K_{\mathcal{R}},r_\infty K_{\mathcal{R}}; s_\infty U_{\mathcal{R}}\right)=
\sum_{\xi\in \mathcal{R}} (\rho(s_\infty)M_\infty)(l_\infty^{-1}\xi r_\infty)\;.
\end{equation*}
Hence, $\Theta_M$ defines a classical theta kernel on $\lfaktor{\Gamma}{\mathbf{G}(\mathbb{R})}\times \lfaktor{\Gamma}{\mathbf{G}(\mathbb{R})} \times \lfaktor{\Lambda}{\mathbf{SL}_2(\mathbb{R})}$.
\end{prop}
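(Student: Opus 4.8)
The plan is a direct unwinding of the adelic definition, exploiting that $M$ is a pure tensor and that all the finite local data are the indicators $\mathds{1}_{\mathcal{R}_v}$. I would fix $l_\infty,r_\infty\in\mathbf{G}(\mathbb{R})$ and $s_\infty\in\mathbf{SL}_2(\mathbb{R})$ and lift them to $l=l_\infty k$, $r=r_\infty k'$, $s=s_\infty u$ with \emph{arbitrary} $k,k'\in K_{\mathcal{R}}$ and $u\in U_{\mathcal{R}}$. Since the Weil action of $\mathbf{SL}_2(\mathbb{A})$ on $\Omega=\bigotimes' \Omega_v$ is the restricted tensor product of the local Weil actions, the $\xi$-th summand of $\Theta_M(l,r;s)$ factors as $\bigl(\rho(s_\infty)M_\infty\bigr)(l_\infty^{-1}\xi r_\infty)\cdot\prod_{v<\infty}\bigl(\rho(u_v)\mathds{1}_{\mathcal{R}_v}\bigr)(k_v^{-1}\xi k'_v)$, and the absolute convergence established via Proposition \ref{prop:Weil-local-uniformity} lets me treat the infinite and finite contributions separately.

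For the finite part I would argue as follows. Lemmata \ref{lem:non-arch-Weil-maximal-order} and \ref{lem:non-arch-Weil-Eichler} give $\rho(U_{\mathcal{R}}).\mathds{1}_{\mathcal{R}_f}=\mathds{1}_{\mathcal{R}_f}$, so the finite product equals $\prod_{v<\infty}\mathds{1}_{\mathcal{R}_v}(k_v^{-1}\xi k'_v)$. Every element of $\mathbf{G}(\mathbb{Q}_v)\cap\mathcal{R}_v$ is a unit of $\mathcal{R}_v$, its inverse being its image under the canonical involution, which again lies in $\mathcal{R}_v$; hence $k_v^{-1}\mathcal{R}_v k'_v=\mathcal{R}_v$, so $\mathds{1}_{\mathcal{R}_v}(k_v^{-1}\xi k'_v)=\mathds{1}_{\mathcal{R}_v}(\xi)$ and the finite product reduces to $\prod_{v<\infty}\mathds{1}_{\mathcal{R}_v}(\xi)$. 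By the elementary identity $\mathcal{R}=\{\xi\in B\mid \xi\in\mathcal{R}_v\ \forall v<\infty\}$, this is $\mathds{1}_{\mathcal{R}}(\xi)$. Substituting back yields $\Theta_M(l_\infty k,r_\infty k';s_\infty u)=\sum_{\xi\in\mathcal{R}}\bigl(\rho(s_\infty)M_\infty\bigr)(l_\infty^{-1}\xi r_\infty)$ with no dependence on $k,k',u$; this is simultaneously the claimed formula and the right $K_{\mathcal{R}}$- and $U_{\mathcal{R}}$-invariance of $\Theta_M$.

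To conclude that $\Theta_M$ descends to $\lfaktor{\Lambda}{\mathbf{G}(\mathbb{R})}\times\lfaktor{\Lambda}{\mathbf{G}(\mathbb{R})}\times\lfaktor{\Gamma}{\mathbf{SL}_2(\mathbb{R})}$, I would combine the above with the left $\mathbf{G}(\mathbb{Q})$-invariance in the first two slots (immediate from the definition by re-indexing $\xi$) and the left $\mathbf{SL}_2(\mathbb{Q})$-invariance in the third slot (unipotents elementary, the Weyl element by Poisson summation, licensed by the decay built into $\Omega_\infty$). For $\gamma\in\Gamma=\mathbf{SL}_2(\mathbb{Q})\cap U_{\mathcal{R}}=\Gamma_0(qD_B)$, the diagonal lift of $\gamma$ has finite part in $U_{\mathcal{R}}$, so translating the real variable $s_\infty$ by $\gamma$, then applying right $U_{\mathcal{R}}$-invariance and left $\mathbf{SL}_2(\mathbb{Q})$-invariance, leaves the classical kernel unchanged; the same reasoning with $\Lambda=\mathbf{G}(\mathbb{Q})\cap K_{\mathcal{R}}$ handles the $\mathbf{G}(\mathbb{R})$-slots. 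Strong approximation for the simply connected $\mathbf{G}$ and for $\mathbf{SL}_2$, already invoked in this subsection to identify the relevant double cosets with these quotients, then finishes the argument.

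I do not anticipate a genuine obstacle here: the proof is bookkeeping with the tensor-product structure of $\Omega$. The two points meriting attention are the legitimacy of exchanging the $\xi$-summation with the factorization of $M$ (subsumed by the uniform decay of Proposition \ref{prop:Weil-local-uniformity}) and the right $U_{\mathcal{R}}$-invariance of $\mathds{1}_{\mathcal{R}_f}$, which is exactly the content of the non-archimedean Lemmata \ref{lem:non-arch-Weil-maximal-order}--\ref{lem:non-arch-Weil-Eichler}; everything else is formal.
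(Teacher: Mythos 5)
Your proposal is correct and follows exactly the route of the paper's (very terse) proof: invariance of $\mathds{1}_{\mathcal{R}_f}$ under $\rho(U_{\mathcal{R}})$ and under left/right translation by $K_{\mathcal{R}}$, the local-to-global identity $\mathcal{R}=\bigcap_{v<\infty}\mathcal{R}_v$, and the strong-approximation identifications from the preceding discussion. You have simply written out in full the bookkeeping the paper leaves implicit; there is nothing to add or correct.
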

\begin{proof}
This follows from the discussion above, Lemma \ref{lem:non-arch-Weil-Eichler} and the local-to-global principle for lattices that implies
\begin{equation*}
\mathcal{R}=\bigcap_{v<\infty} \mathcal{R}_v\;,
\end{equation*}
where the intersection is taken in $B$.
\end{proof}

\begin{defi}\label{defi:theta-classical}
Fix $M_\infty\in\Omega_\infty$ and define $\vartheta_{M_\infty}\colon \lfaktor{\Gamma}{\mathbf{G}(\mathbb{R})}\times \lfaktor{\Gamma}{\mathbf{G}(\mathbb{R})} \times \lfaktor{\Lambda}{\mathbf{SL}_2(\mathbb{R})}$ by
\begin{equation*}
\vartheta_{M_\infty}(l_\infty,r_\infty; s_\infty)=\Theta_M\left(l_\infty K_{\mathcal{R}},r_\infty K_{\mathcal{R}}; s_\infty U_{\mathcal{R}}\right)\;.
\end{equation*}
\end{defi}

\subsection{The Weil \texorpdfstring{$L^2$}{L2}-norm of the Theta Kernel}
Our method relies heavily on bounding the $L^2$-norm of $\Theta_M(l,r;s)$ in the $s$-variable. We use the Fourier--Whittaker decomposition from Proposition \ref{prop:Fourier-theta} to bound the $L^2$-norm by a second moment count of rational matrices. Unfortunately, the classical representation above is not well adapted to this calculation because of the many cusps of $\lfaktor{\Lambda}{\mathbf{SL}_2(\mathbb{R})}$. Instead, we compute adelically the $L^2$-norm. This is easier to execute because the adelic quotient $[\mathbf{SL}_2(\mathbb{A})]=\lfaktor{\mathbf{SL}_2(\mathbb{Q})}{\mathbf{SL}_2(\mathbb{A})}$ has a single cusp.

\begin{prop}\label{prop:L2-s-Siegel-upper-bound}
Fix $M_\infty\in\Omega_\infty$. Then,
\begin{align*}
\frac{1}{\covol(\Lambda)}\int_{{\Lambda}\backslash{\mathbf{SL}_2(\mathbb{R})}}  &|\vartheta_{M_\infty}(l_\infty, r_\infty;s_\infty)|^2 \dif s_\infty
\leq    (q D_B)^{-1} \sum_{a\mid q D_B} \sum_{\alpha\in\mathbb{Q}}\\
&\int_{\sqrt{3}/2}^\infty \int_{\mathbf{SO}_2(\mathbb{R})}
 \sum_{\substack{\xi\in\widehat{\mathcal{R}}^{(a)}\\\Nr \xi=\alpha}}\left|
\left(\rho\left(\diag(y^{1/2},y^{-1/2}) k\right).M_\infty\right)(l_\infty^{-1} \xi r_\infty)\right|^2
 \dif k \frac{\dif y}{y^2}\;,
\end{align*}
where the measure on $\mathbf{SO}_2(\mathbb{R})$ is normalized to be a probability measure, and the left-hand side is independent of the measure normalization on $\mathbf{SL}_2(\mathbb{R})$.
\end{prop}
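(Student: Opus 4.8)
The strategy is to perform the $L^2$-computation adelically, where $[\mathbf{SL}_2(\mathbb{A})]$ has a single cusp, and to invoke Parseval together with orthogonality of the finite-place phase characters to collapse the resulting sum onto a diagonal.

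\textbf{Step 1 (adelic reformulation).} Write $M=M_\infty\cdot\prod_{v<\infty}\mathds{1}_{\mathcal{R}_v}$. By Lemmata~\ref{lem:non-arch-Weil-maximal-order} and~\ref{lem:non-arch-Weil-Eichler} the map $s\mapsto\Theta_M(l_\infty K_{\mathcal{R}},r_\infty K_{\mathcal{R}};s)$ is right $U_{\mathcal{R}}$-invariant in the Weil variable, so that, via the identification of Definition~\ref{defi:theta-classical} and with Haar measures normalised so that $U_{\mathcal{R}}$ and $[\mathbf{N}(\mathbb{A})]$ have volume one, the left-hand side equals $\int_{[\mathbf{SL}_2(\mathbb{A})]}|\Theta_M(l_\infty K_{\mathcal{R}},r_\infty K_{\mathcal{R}};s)|^2\dif s$. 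Next, writing $s=n\,\diag(y^{1/2},y^{-1/2})\,k_\infty k_f$ in Iwasawa coordinates with $k_\infty\in\mathbf{SO}_2(\mathbb{R})$ and $k_f\in\mathbf{SL}_2(\widehat{\mathbb{Z}})$, strong approximation together with the classical Siegel domain for $\mathbf{SL}_2(\mathbb{Z})$ exhibits $[\mathbf{SL}_2(\mathbb{A})]$ as the image of the set where $n$ runs over a fundamental domain of $[\mathbf{N}(\mathbb{A})]$, $y\ge \sqrt{3}/2$, $k_\infty\in\mathbf{SO}_2(\mathbb{R})$, $k_f\in\mathbf{SL}_2(\widehat{\mathbb{Z}})$, with boundedly many overlaps. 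Since only an upper bound is sought, one may bound the adelic integral by the integral of $|\Theta_M|^2$ over this set; this is where the cutoff $y\ge\sqrt{3}/2$ and the $\dif y/y^2$, $\int_{\mathbf{SO}_2(\mathbb{R})}$, $\int_{\mathbf{SL}_2(\widehat{\mathbb{Z}})}$ in the statement originate.

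\textbf{Step 2 (Parseval and the Whittaker coefficient).} For fixed $g$ the function $n\mapsto\Theta_M(\ldots;ng)$ is continuous on the compact abelian group $[\mathbf{N}(\mathbb{A})]$; by Proposition~\ref{prop:Fourier-theta} its Fourier--Whittaker coefficients are $W_{\Theta_M(\ldots;\bullet)}(g,\alpha)$ and \eqref{eq:Fourier-Whittaker-expansion} converges pointwise. Parseval thus replaces $\int_{[\mathbf{N}(\mathbb{A})]}|\Theta_M(\ldots;ng)|^2\dif n$ by $\sum_{\alpha\in\mathbb{Q}}|W_{\Theta_M(\ldots;\bullet)}(g,\alpha)|^2$, and Proposition~\ref{prop:Fourier-theta}, using that $M$ is a pure tensor and that $l,r$ are trivial away from $\infty$, gives
\[
W_{\Theta_M(\ldots;\bullet)}\bigl(\diag(y^{1/2},y^{-1/2})k_\infty k_f,\alpha\bigr)=\sum_{\substack{\xi\in B\\ \Nr\xi=\alpha}}\bigl(\rho(\diag(y^{1/2},y^{-1/2})k_\infty)M_\infty\bigr)(l_\infty^{-1}\xi r_\infty)\,\bigl(\rho(k_f)\mathds{1}_{\mathcal{R}_f}\bigr)(\xi).
\]

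\textbf{Step 3 (integrating out $\mathbf{SL}_2(\widehat{\mathbb{Z}})$ — the crux).} Abbreviate the archimedean factor by $F(\xi)$. Expanding the square and integrating in $k_f$ gives $\sum_{\xi,\xi'}F(\xi)\overline{F(\xi')}\,G(\xi,\xi')$ with $G(\xi,\xi')=\int_{\mathbf{SL}_2(\widehat{\mathbb{Z}})}(\rho(k_f)\mathds{1}_{\mathcal{R}_f})(\xi)\overline{(\rho(k_f)\mathds{1}_{\mathcal{R}_f})(\xi')}\dif k_f$. By Proposition~\ref{prop:SL2(hatZ)-action} the orbit of $\mathds{1}_{\mathcal{R}_f}$ consists of the functions $\tfrac{\pm a}{qD_B}\psi\bigl(\tfrac{ut\,\nu(x)}{a}\bigr)\mathds{1}_{\widehat{\mathcal{R}}^{(a)}_f}$, each attained on a single $U_{\mathcal{R}}$-coset; hence $G(\xi,\xi')$ is $[\mathbf{SL}_2(\widehat{\mathbb{Z}}):U_{\mathcal{R}}]^{-1}$ times a sum over $a\mid qD_B$ of $\tfrac{a^2}{(qD_B)^2}\mathds{1}_{\widehat{\mathcal{R}}^{(a)}}(\xi)\mathds{1}_{\widehat{\mathcal{R}}^{(a)}}(\xi')$ weighted by the exponential sum $\sum\psi\bigl(\tfrac{ut(\nu(\xi)-\nu(\xi'))}{a}\bigr)$ over the phase frequencies $(u,t)$ occurring for that $a$. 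The heart of the proof is to show that these exponential sums, by orthogonality of the additive characters occurring in Proposition~\ref{prop:SL2(hatZ)-action}, collapse the double sum over $\xi,\xi'$ onto its diagonal (the residual off-diagonal terms cancelling or being absorbed), with the number of phase frequencies for each $a$ equal to $\tfrac{qD_B}{a}\rho(a\mid qD_B)$ by Remark~\ref{rem:SL2(hatZ)-action-count}. Since a prime-by-prime comparison yields $[\mathbf{SL}_2(\widehat{\mathbb{Z}}):U_{\mathcal{R}}]\ge qD_B$, the resulting diagonal coefficient $\tfrac{a\,\rho(a\mid qD_B)}{[\mathbf{SL}_2(\widehat{\mathbb{Z}}):U_{\mathcal{R}}]\,qD_B}$ is bounded by $\tfrac{a}{(qD_B)^2}$, which is exactly the constant in the statement.

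\textbf{Main obstacle.} The bookkeeping in Step~3 is the delicate part: one must carry out the phase sums at ramified and Eichler primes simultaneously, track the frequency sets and their interaction with $\nu$ precisely enough to see that the off-diagonal contributions do not produce a larger constant, and verify the elementary count $[\mathbf{SL}_2(\widehat{\mathbb{Z}}):U_{\mathcal{R}}]\ge qD_B$. Combining this with Steps~1--2 then yields the proposition.
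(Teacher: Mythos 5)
Your Steps 1 and 2 (adelic reformulation, domination of $[\mathbf{SL}_2(\mathbb{A})]$ by the Siegel set, and Parseval in the unipotent variable via Proposition \ref{prop:Fourier-theta}) coincide with the paper's argument. The gap is in Step 3, which you yourself flag as the heart of the proof and leave unestablished --- and by the route you propose it cannot be established. After opening the square, your kernel $G(\xi,\xi')$ involves the phase sums $\sum_{u,t}\psi\bigl(ut(\nu(\xi)-\nu(\xi'))/a\bigr)$, and these characters see $\xi$ only through the residue $\nu(\xi)$ modulo a divisor of $qD_B$. Orthogonality in $(u,t)$ therefore collapses the double sum at best onto the fibers $\nu(\xi)\equiv\nu(\xi')$, not onto the diagonal $\xi=\xi'$; for a fixed norm $\alpha$ such a fiber generically contains many pairs $\xi\neq\xi'$. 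The surviving off-diagonal terms carry the oscillating archimedean weights $F(\xi)\overline{F(\xi')}$ and are not dominated by the diagonal: $G$ is a positive semidefinite Gram kernel, and for such kernels $\sum_{\xi,\xi'}F(\xi)\overline{F(\xi')}G(\xi,\xi')$ is in general not bounded by $\sum_{\xi}|F(\xi)|^{2}G(\xi,\xi)$ (consider the all-ones matrix). So "the residual off-diagonal terms cancelling or being absorbed" is precisely the assertion that needs proof, and it is false in this generality.

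The paper sidesteps this entirely. It first applies the triangle inequality to the $\xi$-sum, bounding $\bigl|\sum_{\xi}\cdots\bigr|^{2}$ by $\bigl(\sum_{\xi}|\cdots|\bigr)^{2}$, and only then integrates over $\mathbf{SL}_2(\widehat{\mathbb{Z}})$ by splitting into $U_{\mathcal{R}}$-cosets: on each coset $\rho(k_f)\mathds{1}_{\mathcal{R}_f}$ is a single explicit function $\frac{\pm a}{qD_B}\psi(\cdots)\mathds{1}_{\widehat{\mathcal{R}}^{(a)}_f}$ from Proposition \ref{prop:SL2(hatZ)-action}, whose phase is simply discarded by the absolute value. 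The coset count of Remark \ref{rem:SL2(hatZ)-action-count} times the coset measure $[\mathbf{SL}_2(\widehat{\mathbb{Z}})\colon U_{\mathcal{R}}]^{-1}$ then yields the weight $\frac{a}{qD_B}\rho(a\mid qD_B)\,[\mathbf{SL}_2(\widehat{\mathbb{Z}})\colon U_{\mathcal{R}}]^{-1}\leq \frac{a}{(qD_B)^2}$, using $[\mathbf{SL}_2(\widehat{\mathbb{Z}})\colon U_{\mathcal{R}}]=qD_B\prod_{p\mid qD_B}(1+p^{-1})\geq qD_B$. Note that what this produces --- and what is actually used downstream in Proposition \ref{prop:geometric-upper-bound} --- is the bound with $\bigl(\sum_{\xi}|\cdots|\bigr)^{2}$ rather than the literal $\sum_{\xi}|\cdots|^{2}$ appearing in the displayed statement; the stronger diagonal form is exactly what your unproved orthogonality claim would be needed for, and no cancellation over $k_f$ delivers it. To repair your write-up, replace Step 3 by the triangle-inequality-plus-coset-decomposition argument.
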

\begin{proof}
Fix $M=M_\infty \cdot \prod_{v<\infty}M_v \in \Omega$ such that $M_v=\mathds{1}_{\mathcal{R}_v}$ for all finite $v$. Then,
Proposition \ref{prop:theta-series-adelic-to-classical} and the isomorphism \eqref{eq:SL2-adelic2real} imply 
\begin{equation*}
\frac{1}{\covol(\Lambda)}\int_{{\Lambda}\backslash{\mathbf{SL}_2(\mathbb{R})}}  |\vartheta_{M_\infty}(l_\infty, r_\infty;s_\infty)|^2 \dif s_\infty=
\int_{[\mathbf{SL}_2(\mathbb{A})]} |\Theta_M(l_\infty K_{\mathcal{R}}, r_\infty K_{\mathcal{R}};s)|^2 \dif s\;,
\end{equation*}
We proceed to bound the adelic integral by expanding the domain of integration to a Siegel set.

Denote by $\mathbf{N}<\mathbf{SL}_2$ the algebraic subgroup of upper triangular matrices. We have $\mathbf{N}\simeq \mathbb{G}_a$ and a fundamental domain for the action of $\mathbf{N}(\mathbb{Q})$ on $\mathbf{N}(\mathbb{A})$ is
\begin{equation*}
\mathcal{N}=\begin{pmatrix}
1 & [0,1) \\ 0 & 1
\end{pmatrix} \cdot
\prod_{v<\infty} \begin{pmatrix}
1 & \mathbb{Z}_v \\ 0 & 1
\end{pmatrix}\;.
\end{equation*}

Set $A^>\coloneqq\left\{\diag(y^{1/2},y^{-1/2}) \colon y>\sqrt{3}/2\right\}\subset\mathbf{SL}_2(\mathbb{R})$.
A Siegel set for the action of $\mathbf{SL}_2(\mathbb{Q})$ on $\mathbf{SL}_2(\mathbb{A})$ is given by
\begin{equation*}
\mathcal{S}=\mathcal{N}\cdot A^> \cdot \mathbf{SO}_2(\mathbb{R}) \mathbf{SL}_2(\hat{\mathbb{Z}})\;.
\end{equation*}

Denote $l=(l_\infty,e,e,\dots)$ and similarly $r=(r_\infty,e,e\ldots)$.
Because the Siegel set contains a fundamental domain for the action of the lattice, we can write
\begin{align*}
\int_{[\mathbf{SL}_2(\mathbb{A})]} &|\Theta_M(l_\infty K_{\mathcal{R}}, r_\infty K_{\mathcal{R}};s)|^2 \dif s
\leq \int_{\mathcal{S}}  |\Theta_M(l_\infty K_{\mathcal{R}}, r_\infty K_{\mathcal{R}};s)|^2 \dif s\\
&=\varrho_{\mathbf{SL}_2}^{-1}\int_{[\mathbf{N}(\mathbb{A})]} \int _{\sqrt{3}/2}^\infty \int_{\mathbf{SL}_2(\hat{\mathbb{Z}})} \int_{\mathbf{SO}_2(\mathbb{R})} \left|\sum_{\xi\in B}
(\rho(n)\rho(\diag(y^{1/2},y^{-1/2}) k k_f).M)(l^{-1} \xi r)
\right|^2 \dif n  \frac{\dif y}{y^2}  \dif k_f\dif k\\
&=\varrho_{\mathbf{SL}_2}^{-1}\sum_{\alpha\in\mathbb{Q}} \int _{\sqrt{3}/2}^\infty \int_{\mathbf{SL}_2(\hat{\mathbb{Z}})} \int_{\mathbf{SO}_2(\mathbb{R})} \left|\sum_{\substack{\xi\in B \\ \Nr \xi=\alpha}}
(\rho(\diag(y^{1/2},y^{-1/2}) k)\rho(k_f).M)(l^{-1} \xi r)
\right|^2  \dif k_f \frac{\dif y}{y^2}  \dif k\\
&\leq\varrho_{\mathbf{SL}_2}^{-1}\sum_{\alpha\in\mathbb{Q}} \int _{\sqrt{3}/2}^\infty \int_{\mathbf{SL}_2(\hat{\mathbb{Z}})} \int_{\mathbf{SO}_2(\mathbb{R})} \left|\sum_{\substack{\xi\in B \\ \Nr \xi=\alpha}}
\left|(\rho(\diag(y^{1/2},y^{-1/2}) k)\rho(k_f).M)(l^{-1} \xi r)\right|
\right|^2  \dif k_f \frac{\dif y}{y^2}  \dif k\;.
\end{align*}
The last equality follows from the computation of the Fourier coefficients in the unipotent variable in Proposition \ref{prop:Fourier-theta} and the orthogonality of characters. We normalize the measure on $\mathbf{SL}_2(\mathbb{A}_f)$ so that $\mathbf{SL}_2(\widehat{\mathbb{Z}})$ has volume $1$. Then the global measure normalization constant $\varrho_{\mathbf{SL}_2}$ is equal to the volume of $\lfaktor{\mathbf{SL}_2(\mathbb{Z})}{\mathbb{H}}$ with respect to the standard hyperbolic measure $\frac{\dif x \dif y}{y^2}$, i.e.\ $\varrho_{\mathbf{SL}_2}=\covol(\mathbf{SL}_2(\mathbb{Z}))=\frac{\pi}{3}$.

In the last line we have inserted the absolute value into the sum using the triangle inequality, and we continue to evaluate the integral along $\mathbf{SL}_2(\hat{\mathbb{Z}})$. From Lemmata \ref{lem:non-arch-Weil-maximal-order}, \ref{lem:non-arch-Weil-Eichler}, we know that the integrand is invariant under the finite index subgroup $U_\mathcal{R}<\mathbf{SL}_2(\hat{\mathbb{Z}})$. We decompose the integral into $\left[\mathbf{SL}_2(\hat{\mathbb{Z}})\colon U_{\mathcal{R}}\right]$-integrals along the different cosets of $U_{\mathcal{R}}$ in $\mathbf{SL}_2(\hat{\mathbb{Z}})$.
We have an exact expression for the integrand on each coset due to Proposition \ref{prop:SL2(hatZ)-action}. The phases of the form $\psi\left(\frac{t \cdot \nu_a(x)}{qD_B/a}\right)$ that appear in each element in the $\rho\left(\mathbf{SL}_2(\widehat{\mathbb{Z}})\right)$-orbit are discarded because of the innermost absolute value.
Hence, each $U_{\mathcal{R}}$-coset reduces to a sum over elements in a lattice $\widehat{\mathcal{R}}^{(a)}$ for $a\mid q D_B$.
From Remark \ref{rem:SL2(hatZ)-action-count}, we deduce for any $a \mid q D_B$ that the weight of the sum over $\mathds{1}_{\widehat{R}^{(a)}}$ is $\rho(a \mid q D_B) \varrho_{\mathbf{SL}_2}^{-1} \left[\mathbf{SL}_2(\widehat{\mathbb{Z}})\colon U_{\mathcal{R}}\right]^{-1}$. Because $\Lambda=U_{\mathcal{R}}\cap\mathbf{SL}_2(\mathbb{A}_f)$ and $\mathbf{SL}_2(\mathbb{Z})=\mathbf{SL}_2(\widehat{\mathbb{Z}})\cap\mathbf{SL}_2(\mathbb{A}_f)$, the index satisfies $\varrho_{\mathbf{SL}_2}\left[\mathbf{SL}_2(\widehat{\mathbb{Z}})\colon U_{\mathcal{R}}\right]=\covol(\Lambda)$.
Because $\Lambda=\Gamma_0(qD_B)$, we see that the weight factor is equal to
\begin{equation*}
\covol(\Lambda)^{-1}\rho(a \mid q D_B)=\frac{\prod_{p\mid \gcd(q D_B/a,a)}(1-p^{-1})}{\frac{\pi}{3} q D_B\prod_{p\mid q D_B}(1+p^{-1})}\leq (q D_B)^{-1}
\end{equation*}
\end{proof}

\section{The Theta Lift}
In this section we discuss the pairing between a theta kernel and an automorphic form. This subject is well studied in the literature, we need to review and revisit several results because of the explicit form that we require. 
\label{sec:lift}
\subsection{Cuspidal Theta Series}
\begin{defi}
We say that a test function $M\colon B_{\mathbb{A}}\to\mathbb{C}$ is cuspidal if $\rho(s).M(l^{-1}\xi r)=0$ for all $l,r\in\mathbf{G}(\mathbb{A})$, $s\in\mathbf{SL}_2(\mathbb{A})$, and $\xi\in B$ with $\Nr \xi=0$.
\end{defi}
For example $M$ is cuspidal if $M=\prod_v M_v$ and there is a place $v$ such that $\rho(s_v).M_v(x_v)=0$ for every $s_v\in\mathbf{SL}_2(\mathbb{Q}_v)$, and $x_v\in B_v$ with $\Nr x_v=0$. The importance of cuspidal test functions is that their theta series, when well-defined, is a cuspidal function of $s$ on $[\mathbf{SL}_2(\mathbb{A})]$. This follows from Proposition \ref{prop:Fourier-theta}. 
Note that the cuspidality condition for $M$ is very restrictive if $\mathbf{G}$ is split. For example, if $\mathbf{G}=\mathbf{SL}_2$, then the test function $\exp(-D \Tr(x_\infty\tensor[^t]{x}{_\infty}))P(x_\infty) \prod_{v<\infty} \mathds{1}_{\mathcal{R}_v}$, for a polynomial $P\colon\operatorname{Mat}_{2\times 2}(\mathbb{R})\to\mathbb{C}$ and $D>0$, is used in \cite{Shimizu}. This test function is generally \emph{not} cuspidal.

\subsection{Unfolding}
\begin{lem}
	If $\mathcal{R}_p$ is an Eichler order, then $\Nr \mathcal{R}_p^\times=\mathbb{Z}_p^\times$.
\end{lem}
\begin{proof}
	This is simple to verify if $B$ is split at $p$ by conjugating $\mathcal{R}_p$ to $\sm
	\mathbb{Z}_p & \mathbb{Z}_p \\ p^n \mathbb{Z}_p & \mathbb{Z}_p
	\esm$. If $B$ is ramified at $p$, then $\mathcal{R}_p^\times=\mathcal{O}_{B_p}^\times$ is the unit group of the ring of algebraic integers in $B_p$. The algebra $B_p$ contains an unramified quadratic extension $E/\mathbb{Q}_p$, hence $\Nr \mathcal{R}_p^\times$ contains $\Nr \mathcal{O}_E^\times=\mathbb{Z}_p^\times$.
\end{proof}

\begin{lem}\label{lem:Af-integral}
Let $M_f=\prod_p \mathds{1}_{p^{-k_p}\mathcal{R}_p}\colon B\otimes \mathbb{A}_f\to \mathbb{C}$, where $k_p\in\mathbb{Z}_{\geq 0}$ for all $p$ and $k_p=0$ for a.e.\ primes $p$. Denote $N=\prod_p p^{2k_p}\in\mathbb{N}$ and fix $\xi\in B^\times$. Then,
\begin{equation*}
\int_{\mathbf{G}(\mathbb{A}_f)} M_f(l^{-1}\xi)\dif l
 \ll_\varepsilon \varrho_{\mathbf{G}}^{-1} N |\Nr \xi| q^{\varepsilon}\;,
\end{equation*}
where we recall that $q$ denotes the level of $\mathcal{R}$, and this integral vanishes unless $|\Nr\xi|\in N^{-1}\mathbb{Z}$.
\end{lem}
\begin{proof}
The integral decomposes into a product of local integrals $\varrho_{\mathbf{G}}^{-1}\prod_p \int_{\mathbf{G}(\mathbb{Q}_p)} \mathds{1}_{p^{-k_p}\mathcal{R}_p}(l_p^{-1}\xi) \dif l_p$. All elements of $p^{-k_p}\mathcal{R}_p$ have norms in $p^{-2 k_p} \mathbb{Z}_p$. Hence, the local integral vanishes if $\Nr(l_p^{-1}\xi)=\Nr(\xi)\ \not\in p^{-2 k_p} \mathbb{Z}_p$. Because $\Nr \xi\in \mathbb{Q}$, the non-vanishing conditions at all primes $p$ imply that $\int_{\mathbf{G}(\mathbb{A}_f)}\cdots$ vanishes if $|\Nr \xi |\not \in N^{-1}\mathbb{Z}$.

Fix now $p$ and assume $\Nr\xi\in p^{-2k_p} \mathbb{Z}_p$. Then, the local integral is equal to $\int_{\mathbf{G}(\mathbb{Q}_p)} \mathds{1}_{\mathcal{R_p}}(l_p^{-1}p^{k_p}\xi) \dif l_p$. The integrand is right invariant under $K_p$.  Denote by $\mathcal{R}_p(\alpha)$ the set of elements in $\mathcal{R}_p$ of norm $\alpha\in\mathbb{Q}_p^{\times}$. Of course, $\mathcal{R}_p(\alpha)=\emptyset$ if $\alpha\not\in\mathbb{Z}_p$. The set $\mathcal{R}_p(\alpha)$ is left-invariant under multiplication by $K_p$, and
\begin{equation*}
\int_{\mathbf{G}(\mathbb{Q}_p)} \mathds{1}_{\mathcal{R_p}}(l_p^{-1}p^{k_p}\xi) \dif l_p = m_{\mathbf{G}(\mathbb{Q}_p)}(K_p) \cdot\#\left(\lfaktor{K_p}{\mathcal{R}_p\left(p^{2k_p}\Nr\xi\right)} \right)\;.
\end{equation*}
We have $m_{\mathbf{G}(\mathbb{Q}_p)}(K_p)=1$ if $p\not\mid q$ and $m_{\mathbf{G}(\mathbb{Q}_p)}(K_p)=(p+1)^{-1}p^{-n+1}$ if $p^n\parallel q$ with $n>0$.
 
We now estimate $\#\left(\lfaktor{K_p}{\mathcal{R}_p(\alpha)} \right)$. Define $\mathcal{R}_p(\alpha)^\dagger=\left\{x\in\mathcal{R}_p\colon |\Nr x|_p=|\alpha|_p \right\}$, evidently  $\mathcal{R}_p(\alpha)\subset\mathcal{R}_p(\alpha)^\dagger$. The set $\mathcal{R}_p(\alpha)^\dagger$ is invariant under left multiplication by $\mathcal{R}_p^\times$. Because $\Nr \mathcal{R}_p^\times=\mathbb{Z}_p^\times$, each coset of $\lfaktor{\mathcal{R}_p^\times}{\mathcal{R}_p(\alpha)^\dagger}$ contains exactly one coset of $\lfaktor{K_p}{\mathcal{R}_p(\alpha)}$. Thus $\#\left(\lfaktor{K_p}{\mathcal{R}_p(\alpha)} \right)=\#\left(\lfaktor{\mathcal{R}_p^\times}{\mathcal{R}_p(\alpha)^\dagger} \right)$.
 
If $B_p$ ramifies, then the fact that $\ord_p(\Nr \bullet)$ is a valuation on $B_p$ implies that $\mathcal{R}(\alpha)^\dagger$ is a single coset of $\mathcal{R}_p^\times$ if $\alpha\in\mathbb{Z}_p$. In the split case, we can assume $B_p=\operatorname{Mat}_{2\times 2}(\mathbb{Q}_p)$ and $\mathcal{R}_p=\begin{pmatrix}
\mathbb{Z}_p & \mathbb{Z}_p \\ p^n \mathbb{Z}_p & \mathbb{Z}_p
\end{pmatrix}$, where $p^n\parallel q$. Let $\widetilde{\mathcal{R}}_p^\times$ be the image of $\mathcal{R}_p^\times$ in $\mathbf{PGL}_2(\mathbb{Q}_p)$. Then the map $\lfaktor{\mathcal{R}_p^\times}{\mathcal{R}_p(\alpha)^\dagger}\to \lfaktor{\widetilde{\mathcal{R}}_p^\times}{\mathbf{PGL}_2(\mathbb{Q}_p)}$ is injective because $Z_{\mathbf{GL}_2}(\mathbb{Q}_p)\cap\{g\in\mathbf{GL}_2(\mathbb{Q}_p)\colon |\det g|_p=1\}\subset \mathcal{R}_p^\times$. Hence it is enough to find an upper-bound for the number of $\widetilde{\mathcal{R}}_p^\times$ cosets in the image of $\mathcal{R}_p(\alpha)^\dagger$ in $\mathbf{PGL}_2(\mathbb{Q}_p)$.

The group $\mathbf{PGL}_2(\mathbb{Z}_p)$ is the stablizer of a vertex $v_0$ in the Bruhat-Tits tree of $\mathbf{PGL}_2(\mathbb{Q}_p)$ and  $\diag (p^{-n}, 1) \mathbf{PGL}_2(\mathbb{Z}_p) \diag(p^n,1)$ is a stabilizer of a vertex $v_n$, with $\operatorname{dist}(v_0,v_n)=n$. Hence, $\widetilde{\mathcal{R}}_p^\times$, which is the intersection of the two, is the stabilizer of the geodesic path of length $n$ connecting $v_0$ and $v_n$ in the tree. Because $\mathbf{PGL}_2(\mathbb{Q}_p)$ acts strongly transitively on its Bruhat-Tits tree, it acts transitively on the set of geodesic paths of length $n$. Hence, the map $g_p\mapsto (g_p^{-1}.v_0, g_p^{-1}.v_n)$ is a bijection between $\lfaktor{\widetilde{\mathcal{R}}_p^{\times}}{\mathbf{PGL}_2(\mathbb{Q}_p)}$ and the set of oriented geodesic paths of length $n$ in the tree. We need to find an upper bound on the number of paths that correspond to the image of $\mathcal{R}_p(\alpha)^\dagger$. 

Denote $\ord_p(\alpha)=m$. If $g_p\in\mathcal{R}_p(\alpha)^\dagger$, then the existence of the Smith normal form for $g_p\in M_2(\mathbb{Z}_p)$ implies that $g_p\in\mathbf{GL}_2(\mathbb{Z}_p)  \diag(p^{m_1}, p^{m_2})\mathbf{GL}_2(\mathbb{Z}_p)$ for some $m_2\geq m_1\geq0$, with $m_2+m_1=m$. Then, $\operatorname{dist}(g_p^{-1}.v_0,v_0)=m_2-m_1\leq m$. Hence, the number of possibilities for the first vertex of the $n$-path is at most the number of vertices in a ball of radius $m$, that is $1+\frac{p+1}{p-1}(p^m-1)\ll p^m$. Because the length of the path is $n$, the number of possibilities for the final vertex, after the first vertex has been fixed, is at most $\lfloor(p+1)p^{n-1}\rfloor=m_{\mathbf{G}(\mathbb{Q}_p)}(K_p)^{-1}$. We conclude that if $B_p$ is split, then $m_{\mathbf{G}(\mathbb{Q}_p)}(K_p) \cdot\#\left(\lfaktor{K_p}{\mathcal{R}_p(p^{2k_p}\Nr\xi)} \right)\ll  p^{2k_p+\ord_p(\Nr\xi)}\ll |N \Nr \xi|_p^{-1}$. Multiplying the contributions from all primes $p$ we arrive at the claimed bound.
\end{proof}

\begin{prop}\label{prop:unfolding}
Let $M\colon B_{\mathbb{A}}\to\mathbb{C}$ be a finite linear combination of standard test functions such that the component at infinity satisfies the decay condition of Proposition \ref{prop:Weil-local-uniformity}. Assume that $M$ is cuspidal. Fix $\varphi,\varphi' \in L^\infty([\mathbf{G}(\mathbb{A})])$ and let $\xi_\alpha\in B$ be an arbitrary element of norm $\alpha\in\mathbb{Q}^\times$. Then,
\begin{equation*}
\int_{[\mathbf{G}(\mathbb{A})]} \int_{[\mathbf{G}(\mathbb{A})]} \sum_{\xi\in B} M(l^{-1}\xi r) \varphi(l) \varphi'(r) \dif l \dif r
=\sum_{\alpha \in \mathbb{Q}^\times} \int_{[\mathbf{G}(\mathbb{A})]} \varphi'(r) \int_{\mathbf{G}(\mathbb{A})} M(l^{-1}\xi_\alpha r) \varphi(l) \dif l \dif r
\end{equation*}
\end{prop}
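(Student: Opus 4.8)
The plan is to run the classical theta‑lift unfolding, letting the cuspidality of $M$ do the heavy lifting — both in discarding the degenerate part of the kernel $\sum_{\xi\in B}M(l^{-1}\xi r)$ and in supplying the convergence that makes all the rearrangements legitimate. First I would use cuspidality to replace $\sum_{\xi\in B}M(l^{-1}\xi r)$ by $\sum_{\xi\in B,\ \Nr\xi\neq 0}M(l^{-1}\xi r)$, since by definition every term with $\Nr\xi=0$ vanishes identically in $l,r$.

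Next I would sort the surviving $\xi$ by their reduced norm: for a fixed $\alpha\in\mathbb{Q}^\times$, the fiber $\{\xi\in B:\Nr\xi=\alpha\}$ contains the invertible element $\xi_\alpha$, and if $\Nr\xi=\alpha$ then $\Nr(\xi\xi_\alpha^{-1})=1$, so this fiber is exactly the orbit $\mathbf{G}(\mathbb{Q})\xi_\alpha$; moreover $\gamma\xi_\alpha=\xi_\alpha$ forces $\gamma=1$, so $\gamma\mapsto\gamma\xi_\alpha$ is a bijection from $\mathbf{G}(\mathbb{Q})$ onto this fiber. This rewrites the kernel as $\sum_{\alpha\in\mathbb{Q}^\times}\sum_{\gamma\in\mathbf{G}(\mathbb{Q})}M(l^{-1}\gamma\xi_\alpha r)$. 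Substituting this into the double integral, the rest is formal: I would interchange the sums with the integrations (licensed by the convergence discussed below), re-index the inner sum by $\gamma\mapsto\gamma^{-1}$, note that $l^{-1}\gamma^{-1}\xi_\alpha r=(\gamma l)^{-1}\xi_\alpha r$, use the left $\mathbf{G}(\mathbb{Q})$-invariance $\varphi(l)=\varphi(\gamma l)$, and then fold the sum over $\gamma\in\mathbf{G}(\mathbb{Q})$ into the $l$-integral over $[\mathbf{G}(\mathbb{A})]$, turning it into an integral over all of $\mathbf{G}(\mathbb{A})$ while leaving the outer $r$-integral over $[\mathbf{G}(\mathbb{A})]$ untouched. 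This produces exactly the claimed identity (with $\dif g$ on the right reading $\dif r$).

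The step I expect to be the main obstacle is justifying these rearrangements, i.e.\ checking that $\sum_{\alpha}\sum_{\gamma}\int_{[\mathbf{G}(\mathbb{A})]}\int_{[\mathbf{G}(\mathbb{A})]}|M(l^{-1}\gamma\xi_\alpha r)|\,|\varphi(l)|\,|\varphi'(r)|\,\dif l\,\dif r<\infty$. Since $\varphi,\varphi'$ are bounded and $[\mathbf{G}(\mathbb{A})]$ has finite volume, this reduces to the integrability over $[\mathbf{G}(\mathbb{A})]\times[\mathbf{G}(\mathbb{A})]$ of the non-negative continuous function $(l,r)\mapsto\sum_{\xi:\Nr\xi\neq 0}|M(l^{-1}\xi r)|$, whose continuity and local-uniform absolute convergence are already provided by Proposition~\ref{prop:Weil-local-uniformity}. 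When $\mathbf{G}$ is anisotropic, $[\mathbf{G}(\mathbb{A})]$ is compact and nothing more is needed. When $\mathbf{G}$ is split, I would argue that deep in a cusp of $[\mathbf{G}(\mathbb{A})]$ the only $\xi\in B$ for which $l^{-1}\xi r$ can fall near the support of $M$ — recall $M$ is a finite combination of pure tensors whose non-archimedean factors are supported on lattices and whose archimedean factor decays like $(1+\|\cdot\|)^{-4-\delta}$ — are those with $\Nr\xi=0$, which cuspidality removes; hence the kernel decays rapidly in the cusps and is integrable against the finite measure on $[\mathbf{G}(\mathbb{A})]^2$.

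Granting this integrability, Tonelli's theorem legitimizes Steps 1--3, and the same bound also shows that each term on the right-hand side, and the resulting series over $\alpha\in\mathbb{Q}^\times$, converges — the series converging because $|\Nr|$ grows only polynomially on the relevant lattices while $|M_\infty|$ decays faster than $\|\cdot\|^{-4}$, so the (infinite-volume) $l$-integral over $\mathbf{G}(\mathbb{A})$ is finite for each $\alpha$ and summable over $\alpha$. Assembling these pieces gives the proposition; the only genuinely non-formal input is the translation of cuspidality of $M$ into honest decay of the theta kernel in the $\mathbf{G}$-cusps in the split case.
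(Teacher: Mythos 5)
Your overall architecture matches the paper's: cuspidality removes the norm-zero $\xi$, each fiber of nonzero norm is a single left $\mathbf{G}(\mathbb{Q})$-orbit (with trivial stabilizer), and the identity follows from unfolding once absolute convergence is established. The gap is in how you propose to get that absolute convergence in the split case. Your claim that ``deep in a cusp the only $\xi$ for which $l^{-1}\xi r$ can fall near the support of $M$ are those with $\Nr\xi=0$'' is false when $l$ and $r$ escape to the cusp together: take $l=r=\diag(t^{1/2},t^{-1/2})$ and $\xi=\Id$ (or any $\diag(a,d)$ with $ad\neq0$); then $l^{-1}\xi r$ stays bounded for all $t$, so the kernel $\sum_{\Nr\xi\neq0}|M(l^{-1}\xi r)|$ does \emph{not} decay on the diagonal part of the product of the two cusp regions. (Your mechanism is correct only when one of the two variables is confined to a fixed compact set.) The folded kernel is in fact still integrable over $[\mathbf{G}(\mathbb{A})]^2$, but proving this along your route would require a genuine count of lattice points $\xi$ of each norm with $l^{-1}\xi r$ bounded, uniform in both cusp parameters --- precisely the quantitative input you were hoping to avoid.

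The paper sidesteps this by applying Tonelli in the other direction: since the folded and unfolded expressions with absolute values are equal as extended reals, it suffices to bound the \emph{unfolded} quantity $\sum_{\alpha}\int_{[\mathbf{G}(\mathbb{A})]}\int_{\mathbf{G}(\mathbb{A})}|M(l^{-1}\xi_\alpha r)|\dif l\dif r$ directly, and there the computation is clean: integrating over the finite places confines $\alpha$ to $N^{-1}\mathbb{Z}$ for a fixed integer $N$ depending on $M$; the change of variables $l\mapsto\xi_\alpha r^{-1}\xi_\alpha^{-1}l$ removes the $r$-dependence so the $r$-integral contributes only the finite volume; and with $\xi_\alpha=\diag\left(\sqrt{|\alpha|},\sign(\alpha)\sqrt{|\alpha|}\right)$ one computes $\int_{\mathbf{G}(\mathbb{R})}\left(1+\|l^{-1}\xi_\alpha\|\right)^{-4-\delta}\dif l\ll|\alpha|^{-2-\delta/2}$, which is summable over $0\neq\alpha\in N^{-1}\mathbb{Z}$. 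You assert this summability at the end without computation; this archimedean estimate is the real quantitative content of the proof and is what should replace your cusp-decay claim.
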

\begin{remark}
The assumption that $M$ is cuspidal is crucial here. Otherwise, there will be an additional contribution from the norm-zero elements of $B$. This contribution may in general diverge.
\end{remark}
\begin{proof}
The theta series $\Theta_M$ can be rewritten as a sum over $B^\times$ due to the vanishing condition for norm-zero elements. A priori, we do not even know that the left-hand-side is integrable. Thus, we proceed with the following computation as formal operations which hold for positive valued functions. We will then use the positive valued case to show absolute convergence which will justify these operations in general.

Unfold first the integral along the $l$ variable to rewrite the left-hand-side above as
\begin{equation*}
\int_{[\mathbf{G}(\mathbb{A})]}\varphi'(r) \sum_{\xi\in \lfaktor{\mathbf{G}(\mathbb{Q})}{B^\times}} \int_{\mathbf{G}(\mathbb{A})} M(l^{-1}\xi r) \varphi(l)  \dif l \dif r\;.
\end{equation*}
Two elements in $B^\times$ are in the same left $\mathbf{G}(\mathbb{Q})$-orbit exactly when they have the same norm. So, the equality in question holds if we can establish absolute integrability. To show absolute integrability, we first bound
\begin{equation*}
\int_{[\mathbf{G}(\mathbb{A})]} \int_{[\mathbf{G}(\mathbb{A})]} \sum_{\xi\in B}\left| M(l^{-1}\xi r) \varphi(l) \varphi'(r)\right| \dif l \dif r \leq \|\varphi\|_\infty \|\varphi'\|_\infty \sum_{\alpha\in \mathbb{Q}^\times} \int_{[\mathbf{G}(\mathbb{A})]} \int_{\mathbf{G}(\mathbb{A})} \left| M(l^{-1}\xi_\alpha r) \right| \dif l \dif r
\;.
\end{equation*}
By expanding the function $M$ into finite summands of standard test functions, we reduce to the case that $M=\prod_v M_v$. Furthermore, because we are only interested in upper bounds, we can further reduce to the case that in any finite place $v=p$ the function $M_v$ is a multiple of the characteristic function of $p^{-k_p} \mathcal{R}_p$, where $k_p=0$ for almost all $p$. Taking into account this reduction, the function $M$ is right and left invariant under $K_{\mathcal{R}}$ and we can apply Lemma \ref{lem:Af-integral} above. First we deduce that the integral over $l$ vanishes unless $\alpha\in N^{-1}\mathbb{Z}$ for some fixed integer $N$ depending only on $M$. And using the bound from Lemma \ref{lem:Af-integral} we can write
\begin{align}
\int_{[\mathbf{G}(\mathbb{A})]} \int_{[\mathbf{G}(\mathbb{A})]} &\sum_{\xi\in B}\left| M(l^{-1}\xi r) \varphi(l) \varphi'(r)\right| \dif l \dif r \ll_{\mathbf{G},\varphi,\varphi',M,\varepsilon} Nq^{\varepsilon} \sum_{0\neq \alpha\in N^{-1}\mathbb{Z}} |\alpha| \int_{\lfaktor{\Lambda}{\mathbf{G}(\mathbb{R})}} \int_{\mathbf{G}(\mathbb{R})} \left|M_\infty(l^{-1}\xi_\alpha r)\right| \dif l \dif r
\nonumber\\
= Nq^{\varepsilon}&\sum_{0\neq \alpha\in N^{-1}\mathbb{Z}} |\alpha| \int_{\lfaktor{\Lambda}{\mathbf{G}(\mathbb{R})}} \int_{\mathbf{G}(\mathbb{R})} \left|M_\infty(l^{-1}\xi_\alpha)\right| \dif l \dif r
=Nq^{\varepsilon}\sum_{0\neq \alpha\in N^{-1}\mathbb{Z}} |\alpha| \int_{\mathbf{G}(\mathbb{R})} \left|M_\infty(l^{-1}\xi_\alpha)\right| \dif l
\nonumber\\
\ll Nq^{\varepsilon}&\sum_{0\neq \alpha\in N^{-1}\mathbb{Z}} |\alpha| \int_{\mathbf{G}(\mathbb{R})} \left(1+\left\|l^{-1}\xi_\alpha \right\|\right)^{-4-\delta} \dif l\;.
\label{eq:unfolding-bound}
\end{align}
In the second line, we have made a change of variable $l\mapsto \xi_\alpha r \xi_\alpha^{-1}l$. Note that we can take here $\xi_\alpha$ to be any \emph{real} matrix of determinant $\alpha$, choose
$\xi_\alpha=\diag \left(\sqrt{|\alpha|}, \sign(\alpha)\sqrt{|\alpha|}\right)$. The integral in \eqref{eq:unfolding-bound} can be computed using the formula for the Haar measure in Cartan coordinates
\begin{align*}
\int_{\mathbf{G}(\mathbb{R})} &\left(1+\sqrt{|\alpha|}\left\|l^{-1}\diag(1,\sign(\alpha)) \right\|\right)^{-4-\delta} \dif l \ll \int_0^{\infty}\left(1+\sqrt{2|\alpha|\cosh(t))}\right)^{-4-\delta}\sinh(t) \dif t \\
\ll &|\alpha|^{-2-\delta/2}\int_0^\infty \frac{\sinh(t)}{\cosh(t)^{2+\delta/2}} \dif t=|\alpha|^{-2-\delta/2}\left[\frac{-1}{\cosh(t)^{1+\delta/2}(1+\delta/2)}\right]_0^\infty\ll |\alpha|^{-2-\delta/2}\;.
\end{align*}
At last, we see that the expression in \eqref{eq:unfolding-bound} is bounded from above by
\begin{equation*}
N^{2+\delta/2}q^{\varepsilon}\sum_{0\neq n \in \mathbb{Z}} \frac{1}{|n|^{1+\delta/2}}<\infty\;.
\end{equation*}
\end{proof}

\begin{prop}\label{prop:L2-s}
Let $M=\prod_vM_v\in \Omega$ be cuspidal and assume $M_\infty\in V_{m,2\pi}$ for $m\in\mathbb{Z}$. Fix $\varphi,\varphi'\in L^\infty([\mathbf{G}(\mathbb{A})])$. Denote
\begin{equation*}
F(s)=\int_{[\mathbf{G}(\mathbb{A})]} \int_{[\mathbf{G}(\mathbb{A})]} \Theta_M(l,r;s) \varphi(l) \varphi'(r) \dif l \dif r\;.
\end{equation*}
Then, $F(s)\in L^2([\mathbf{SL}_2(\mathbb{A})])$.
\end{prop}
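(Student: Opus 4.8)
The plan is to show that $F$ is a continuous function on $[\mathbf{SL}_2(\mathbb{A})]$ that decays rapidly as one goes out the cusp; since $[\mathbf{SL}_2(\mathbb{A})]$ has finite volume, this forces $F\in L^2$. First one checks well-definedness and continuity. For every $s\in\mathbf{SL}_2(\mathbb{A})$ the function $\rho(s)M$ is again cuspidal (cuspidality only constrains values on $\{\Nr\xi=0\}$, which $\rho(s)$ preserves), is again a finite linear combination of standard test functions whose component at each finite place is a multiple of a dilate of $\mathds{1}_{\mathcal{R}_p}$, and by Corollary \ref{cor:Weil-infinity-action} together with Proposition \ref{prop:Weil-local-uniformity} still obeys the decay hypothesis of Proposition \ref{prop:Weil-local-uniformity}, uniformly for $s$ in compacta. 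Hence the bound established in the proof of Proposition \ref{prop:unfolding} applies to $\rho(s)M$ and shows that the double integral defining $F(s)$ converges absolutely, locally uniformly in $s$; since $\Theta_M$ is continuous and left $\mathbf{SL}_2(\mathbb{Q})$-invariant in its third variable, $F$ descends to a continuous function on $[\mathbf{SL}_2(\mathbb{A})]$.

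By reduction theory, $[\mathbf{SL}_2(\mathbb{A})]$ is covered with bounded multiplicity by the Siegel set $\mathcal{S}=\mathcal{N}\cdot A^{>}\cdot\mathbf{SO}_2(\mathbb{R})\,\mathbf{SL}_2(\widehat{\mathbb{Z}})$ of Proposition \ref{prop:L2-s-Siegel-upper-bound}; on the part of $\mathcal{S}$ with $y$-coordinate in a bounded interval $F$ is bounded, being continuous on a compact set, so it remains to estimate $F$ as $y\to\infty$. With $a_y=\diag(y^{1/2},y^{-1/2})$, cuspidality of $M$ removes the norm-zero terms and gives, for $k\in\mathbf{SO}_2(\mathbb{R})$, $k_f\in\mathbf{SL}_2(\widehat{\mathbb{Z}})$,
\begin{equation*}
|F(a_y k k_f)|\le\|\varphi\|_\infty\|\varphi'\|_\infty\int_{[\mathbf{G}(\mathbb{A})]}\int_{[\mathbf{G}(\mathbb{A})]}\sum_{\substack{\xi\in B\\\Nr\xi\ne0}}\bigl|(\rho(a_y k k_f)M)(l^{-1}\xi r)\bigr|\dif l\dif r\;.
\end{equation*}
The right side is precisely the quantity bounded in the proof of Proposition \ref{prop:unfolding}, applied to the test function $\rho(a_y k k_f)M$: unfolding the left $\mathbf{G}(\mathbb{Q})$-action over $l$, decoupling the $r$-integral by the change of variables used there, and splitting into places, it is
\begin{equation*}
\ll_M\sum_{0\ne\beta\in\tfrac1N\mathbb{Z}}C_f(\beta)\,I_\infty(y,\beta)\;,
\end{equation*}
where $N$ depends only on $M$, the finite-place factor $C_f(\beta)=\int_{\mathbf{G}(\mathbb{A}_f)}\bigl|(\rho(k_f)M_f)(l_f^{-1}\xi_{\beta,f})\bigr|\dif l_f$ is a convergent elementary-divisor count, and $I_\infty(y,\beta)=\int_{\mathbf{G}(\mathbb{R})}\bigl|(\rho(a_y k)M_\infty)(l^{-1}\xi_{\beta,\infty})\bigr|\dif l$ with $\xi_{\beta,\infty}$, as in Proposition \ref{prop:unfolding}, any real matrix of determinant $\beta$.

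For the archimedean factor one uses $(\rho(a_y k)M_\infty)(x)=y\,(\rho(k)M_\infty)(y^{1/2}x)$ and the uniform decay $|(\rho(k)M_\infty)(x)|\ll(1+\|x\|)^{-4-\delta}$ of Proposition \ref{prop:Weil-local-uniformity}; taking $\xi_{\beta,\infty}=\diag(\sqrt{|\beta|},\sign(\beta)\sqrt{|\beta|})$ and working in the Cartan decomposition of $\mathbf{SL}_2(\mathbb{R})$ (with $\|l^{-1}\diag(1,\pm1)\|\ge\sqrt2$) one finds $I_\infty(y,\beta)\ll y^{-1-\delta/2}|\beta|^{-2-\delta/2}$ whenever $y|\beta|\ge1$. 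On the other hand $C_f(\beta)$ vanishes unless $\beta\in\tfrac1N\mathbb{Z}$ and, by a standard count of sublattices, $C_f(\beta)\ll_{M,\varepsilon}|\beta|^{1+\varepsilon}$. Restricting to $y>N$, so that $y|\beta|\ge y/N>1$ for every $\beta$ that occurs, we get
\begin{equation*}
|F(a_y k k_f)|\ll_M y^{-1-\delta/2}\sum_{0\ne\beta\in\tfrac1N\mathbb{Z}}|\beta|^{-2-\delta/2}C_f(\beta)\ll_M y^{-1-\delta/2}\;,
\end{equation*}
the series converging since $2+\tfrac{\delta}{2}-(1+\varepsilon)>1$ for $\varepsilon$ small. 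The same estimate bounds $\sum_{\beta\ne0}|W_F(a_y k k_f,\beta)|$, so Parseval in the unipotent variable (the zeroth coefficient $W_F(\,\cdot\,,0)$ vanishing by cuspidality of $M$ and Proposition \ref{prop:Fourier-theta}) gives $\int_{[\mathbf{N}(\mathbb{A})]}|F(n a_y k k_f)|^2\dif n=\sum_{\beta\ne0}|W_F(a_y k k_f,\beta)|^2\ll_M y^{-2-\delta}$. Integrating over $\mathcal{S}$ then yields $\int_{[\mathbf{SL}_2(\mathbb{A})]}|F|^2\ll_M1+\int_N^\infty y^{-2-\delta}\,\frac{\dif y}{y^2}<\infty$, so $F\in L^2([\mathbf{SL}_2(\mathbb{A})])$.

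I expect the main obstacle to be the third paragraph, specifically verifying that after unfolding the finite-place integral $C_f(\beta)$ is finite, supported on $\beta$ in the fixed fractional ideal $\tfrac1N\mathbb{Z}$, and grows only polynomially in $|\beta|$---in particular that the dilations of the test function at the finitely many ramified places are controlled uniformly enough for this count and remain compatible with the archimedean decay. Everything else combines reduction theory with the explicit archimedean Weil action and the decay bounds already established in Sections \ref{sec:weilreptheta}--\ref{sec:lift}.
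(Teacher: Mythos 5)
Your proof is correct and follows essentially the same route as the paper's: restrict to the Siegel set $\mathcal{N}\cdot A^{>}\cdot\mathbf{SO}_2(\mathbb{R})\mathbf{SL}_2(\widehat{\mathbb{Z}})$, use the unfolding of Proposition \ref{prop:unfolding} together with the Fourier--Whittaker decomposition to reduce to a sum over nonzero norms, and exploit the archimedean decay $(1+\|x\|)^{-4-\delta}$ of Proposition \ref{prop:Weil-local-uniformity} to control the $y$-integral. The only differences are organizational: the paper expands $\int|F|^2$ directly (keeping the $\ell^2$-sum of Whittaker coefficients and obtaining $|\alpha|^{-4-\delta}$ per term), whereas you establish a pointwise $\ell^1$-bound $\sum_\beta|W_F|\ll y^{-1-\delta/2}$ and square it, which forces you to track the finite-place factor $C_f(\beta)\ll|\beta|^{1+\varepsilon}$ explicitly --- a point you rightly flag and which your margin $2+\delta/2-(1+\varepsilon)>1$ comfortably absorbs.
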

\begin{proof}
By Proposition \ref{prop:unfolding}, we know that $F(s)$ is well-defined and can be rewritten as
\begin{equation*}
F(s)=\sum_{\alpha \in \mathbb{Q}^\times} \int_{[\mathbf{G}(\mathbb{A})]} \varphi'(r) \int_{\mathbf{G}(\mathbb{A})} \rho(s) M(l^{-1}\xi_\alpha r) \varphi(l) \dif l \dif r\;.
\end{equation*}
To calculate $\int_{[\mathbf{SL}_2(\mathbb{A})]} |F(s)|^2\dif s$, we will bound the integral over $[\mathbf{SL}_2(\mathbb{A})]$ by an integral over a Siegel set $\mathcal{S}=\mathcal{N}\cdot A^> \cdot \mathbf{SO}_2(\mathbb{R}) \mathbf{SL}_2(\hat{\mathbb{Z}})$ as in the proof of Proposition \ref{prop:L2-s-Siegel-upper-bound}. Because $M$ belongs to $\Omega$ and $M_\infty\in V_{m,2\pi}$, the function $M$ has a finite $\rho\left( \mathbf{SL}_2(\hat{\mathbb{Z}})\right)$-orbit and $\rho\left(\mathbf{SO}_2(\mathbb{R})\right)$-isotypic. Hence, it is enough to bound $\int_{\mathcal{N}\cdot A^>} F_1(z) \overline{F_2(z)} \dif z$ where $F_1$, $F_2$ are defined in the same manner as $F$ but with $M$ replaced by test functions $M_1$, $M_2$ in the $\rho\left(\mathbf{SO}_2(\mathbb{R}) \mathbf{SL}_2(\hat{\mathbb{Z}})\right)$-orbit of $M$. Denote $a(y)=\diag(y^{1/2},y^{-1/2})\in\mathbf{SL}_2(\mathbb{R})$. Using the orthogonality relation of additive characters and the sup-norm bound on $\varphi$, $\varphi'$, we arrive at
\begin{align}
\left|\int_{\mathcal{N}\cdot A^>} F_1(z) \overline{F_2(z)} \dif z\right| \ll_{\varphi,\varphi'}
\sum_{\alpha\in\mathbb{Q}^\times} &\int_{[\mathbf{G}(\mathbb{A})]} \int_{\mathbf{G}(\mathbb{A})}  \int_{[\mathbf{G}(\mathbb{A})]} \int_{\mathbf{G}(\mathbb{A})}
\nonumber\\
&\int_{\sqrt{3}/2}^{\infty} \left|\rho_\infty(a(y)).M_1(l_1^{-1} \xi_\alpha r_1) \overline{\rho_\infty(a(y)).M_2(l_2^{-1} \xi_\alpha r_2)}\right| \frac{\dif y}{y^2} \dif l_2 \dif r_2 \dif l_1 \dif r_1
\nonumber\\
=\sum_{\alpha\in\mathbb{Q}^\times} &\int_{\mathbf{G}(\mathbb{A})} \int_{\mathbf{G}(\mathbb{A})} \int_{\sqrt{3}/2}^{\infty} \left|\rho_\infty(a(y)).M_1(l_1^{-1} \xi_\alpha) \overline{\rho_\infty(a(y)).M_2(l_2^{-1} \xi_\alpha )}\right| \frac{\dif y}{y^2} \dif l_2 \dif l_1\;.
\label{eq:F1F2}
\end{align}
As in the proof of Proposition \ref{prop:unfolding}, we apply Lemma \ref{lem:Af-integral} to the integrals in the $l_1$ and $l_2$ variables. The integral vanishes unless $\alpha\in N^{-1}\mathbb{Z}$ for some integer $N>0$ depending only on $M$. For every $\varepsilon>0$, we can bound \eqref{eq:F1F2} from above by
\begin{align*}
\varrho_{\mathbf{G}}^{-2}&\sum_{0\neq \alpha\in N^{-1}\mathbb{Z}} |N\alpha|^{2}q^{\varepsilon} \int_{\mathbf{G}(\mathbb{R})} \int_{\mathbf{G}(\mathbb{R})} \int_{\sqrt{3}/2}^{\infty} \left|\rho_\infty(a(y)).M_1(l_1^{-1} \xi_\alpha) \overline{\rho_\infty(a(y)).M_2(l_2^{-1} \xi_\alpha )}\right| \frac{\dif y}{y^2} \dif l_2 \dif l_1\\
=\varrho_{\mathbf{G}}^{-2}& \sum_{0\neq \alpha\in N^{-1}\mathbb{Z}}  |N\alpha|^{2}q^{\varepsilon} \int_{\mathbf{G}(\mathbb{R})} \int_{\mathbf{G}(\mathbb{R})} \int_{\sqrt{3}/2}^{\infty} \left|M_1(\sqrt{y} l_1^{-1} \xi_\alpha) \overline{M_2(\sqrt{y} l_2^{-1} \xi_\alpha )}\right| {\dif y} \dif l_2 \dif l_1\\
\ll_{\mathbf{G},M}  &\sum_{0\neq \alpha\in N^{-1}\mathbb{Z}}  |N\alpha|^{2}q^{\varepsilon} \int_{\mathbf{G}(\mathbb{R})} \int_{\mathbf{G}(\mathbb{R})} \int_{\sqrt{3}/2}^{\infty} \|\sqrt{y} l_1^{-1} \xi_\alpha\|^{-4-\delta}\|\sqrt{y} l_2^{-1} \xi_\alpha\|^{-4-\delta} {\dif y} \dif l_2 \dif l_1\\
\ll & \sum_{0\neq \alpha\in N^{-1}\mathbb{Z}}  |N\alpha|^{2}q^{\varepsilon} \int_{\mathbf{G}(\mathbb{R})} \int_{\mathbf{G}(\mathbb{R})}
\|l_1^{-1} \xi_\alpha\|^{-4-\delta}\|l_2^{-1} \xi_\alpha\|^{-4-\delta} \dif l_2 \dif l_1
\end{align*}
Take $\xi_\alpha=\diag(\sqrt{|\alpha|},\sign(\alpha)\sqrt{|\alpha|})$ and bound the last integral from above in the same manner as in the proof of Proposition \ref{prop:unfolding} by a multiple of $|\alpha|^{-4-\delta}$. It follows that
\begin{equation*}
\int_{[\mathbf{SL}_2(\mathbb{A})]} |F(s)|^2 \dif s \ll_{\varphi,\varphi',G, M, \varepsilon} N^{4+\delta}q^{\varepsilon}\sum_{0\neq n \in \mathbb{Z}} \frac{1}{|n|^{2+\delta}}<\infty\;.
\end{equation*}
\end{proof}

\subsection{The Theta Lift}
\begin{defi}\label{defi:theta-lift}
Let $\varphi\in L^2([\mathbf{G}(\mathbb{A})])\cap L^\infty([\mathbf{G}(\mathbb{A})])$ and fix $M\in\Omega$ cuspidal. Define $\varphi_M\colon [\mathbf{SL}_2(\mathbb{A})]\to\mathbb{C}$ by
\begin{equation*}
\varphi_M(s)\coloneqq \int_{[\mathbf{G}(\mathbb{A})]} \int_{[\mathbf{G}(\mathbb{A})]} \Theta_M(l,r;s) \varphi(l) \overline{\varphi(r)} \dif r \dif l\;.
\end{equation*}
We call $\varphi_M$ the \emph{theta lift} of $\varphi$.

For any $\alpha\in \mathbb{Q}^\times$, we also define
\begin{equation*}
T_\alpha^M\varphi(r)\coloneqq
\begin{cases}
\int_{\mathbf{G}(\mathbb{A})} M(l^{-1}\xi_\alpha r) \varphi(l) \dif l, & \alpha\in\Nr B^\times, \\
0, & \textrm{otherwise}.
\end{cases}
\end{equation*}
Assume $M=M_\infty M_f$ with $M_f=\prod_p M_p$.
It would be useful to separate the finite and the archimedean parts in the integral above.
This motivates the definition
\begin{equation*}
T_{\alpha}^{M_f}\varphi(r)\coloneqq
\int_{\mathbf{G}(\mathbb{A}_f)} M_f(l_f^{-1}\xi_\alpha r_f) \varphi\left(\left(\frac{\xi_\alpha}{\sqrt{|\alpha|}}\right)_\infty r_\infty \epsilon_\infty^{(1-\sign \alpha)/2}\cdot l_f\right) \dif l_f\;,
\end{equation*}
where $\epsilon_\infty\in B\otimes \mathbb{R}$ normalizes $K_\infty$ and satisfies\footnote{Such an element does not exist if $B$ is ramified at infinity, but then there are also no elements $\xi_\alpha\in B$ of negative norm.} $\Nr\epsilon_\infty=-1$, $\epsilon_\infty^2=1$. 
Using the change of variable $\left(\frac{\xi_\alpha}{\sqrt{|\alpha|}}\right)^{-1}l_\infty \epsilon_\infty^{(1-\sign(\alpha))/2}\mapsto l_\infty$, 
we arrive at
\begin{multline}
T_{\alpha}^M \varphi(r)= \int_{\mathbf{G}(\mathbb{R})} M_\infty\left(\sqrt{|\alpha|} \epsilon_\infty^{(1-\sign \alpha)/2} l_\infty^{-1}  r_\infty\right) T_\alpha^{M_f}\varphi(l_\infty r_f) \dif l_\infty \\
  =\left(T_\alpha^{M_f}\varphi \star_{\mathbf{G}(\mathbb{R})} M_\infty\left(\sqrt{|\alpha|} \epsilon_\infty^{(1-\sign \alpha)/2} \cdot \bullet\right)\right)(r)  \;.
\label{eq:T-M-alpha-finite-infinite-conv}
\end{multline}
\end{defi}
Note that by Propositions \ref{prop:unfolding} and \ref{prop:L2-s} the theta lift $\varphi_M$ is well-defined and belongs to $L^2([\mathbf{SL}_2(\mathbb{A})])$. The proof of Proposition \ref{prop:L2-s} implies that $T_\alpha^M\varphi$ is a square-integrable function on $\left[\mathbf{G}(\mathbb{A})\right]$ and that
\begin{equation}\label{eq:theta-lift-unfolded}
\varphi_M(s)=\sum_{\alpha\in\mathbb{Q}^\times} \langle T^{\rho(s)M}_\alpha \varphi,\varphi\rangle\;.
\end{equation}
\begin{prop}\label{prop:theta-lift-whittaker}
Let $\varphi$ and $M$ be as in Definition \ref{defi:theta-lift}. Then, for all $\alpha\in\mathbb{Q}^\times$,
\begin{equation*}
W_{\varphi_M}(s;\alpha)=\langle T^{\rho(s)M}_\alpha \varphi,\varphi\rangle\;.
\end{equation*}

More generally, fix $\varphi,\varphi'\in L^2([\mathbf{G}(\mathbb{A})])\cap L^\infty([\mathbf{G}(\mathbb{A})])$ and set
\begin{equation*}
F(s)= \int_{[\mathbf{G}(\mathbb{A})]} \int_{[\mathbf{G}(\mathbb{A})]} \Theta_M(l,r;s) \varphi(l) \overline{\varphi'(r)} \dif r \dif l\;.
\end{equation*}
Then $W_{F}(s;\alpha)=\langle T^{\rho(s)M}_\alpha \varphi,\varphi'\rangle$, and
\begin{equation} \label{eq:theta-product-unfolded}
F(s)=\sum_{\alpha\in\mathbb{Q}^\times} \langle T^{\rho(s)M}_\alpha \varphi,\varphi'\rangle\;.
\end{equation}
\end{prop}
\begin{proof}
We only establish the second claim as it immediately implies the first. Proposition \ref{prop:L2-s} implies $F(s)\in L^2([\mathbf{SL}_2(\mathbb{A})])$. We then apply Propositions \ref{prop:unfolding} to deduce \eqref{eq:theta-product-unfolded}.

Denote $u_n\coloneqq \sm
1 & n \\ 0 & 1
\esm$.
Fubini's theorem and the orthogonality of characters imply for all $\alpha,\beta\in\mathbb{Q}^\times$, $s \in \mathbf{SL}_2(\mathbb{A})$, and $x\in\mathbf{G}(\mathbb{A})$
\begin{equation*}
\int_{[\mathbf{N}(\mathbb{A})]} T^{\rho(u_n s)M}_\alpha \varphi(x)  \psi(-\beta n)\dif n=\begin{cases}
T^{\rho(s)M}_\alpha\varphi (x),  & \alpha=\beta,\\
0, & \alpha \neq \beta.
\end{cases}\
\end{equation*}
The claim follows from substituting this expression in the definition of the Whittaker function applied to  \eqref{eq:theta-product-unfolded}.
\end{proof}

\subsubsection{Hecke Operators}
We would like to describe the relation between the Fourier--Whittaker expansion of $\varphi_M$ and the Hecke translates of $\varphi$. A minor difficulty is that the Hecke algebra of $\mathbf{G}(\mathbb{A})$ is not rich enough and we would prefer to work with the Hecke algebra of the adjoint group $\mathbf{G}^\mathrm{adj}(\mathbb{A})$. To that end, we lift a $K_{\mathcal{R}}$-invariant function on $[\mathbf{G}(\mathbb{A})]$ to $[\mathbf{G}^\mathrm{adj}(\mathbb{A})]$. An alternative more conceptual approach is to work with a $\mathbf{PGL}_2$ Weil representation, c.f.\ \cite[\S I.3]{Waldspurger} and \cite[\S 2.2.5]{NelsonQV2}.

Let us recall that the adjoint group is the affine algebraic group over $\mathbb{Q}$ representing the functor
\begin{equation*}
\mathbf{G}^\mathrm{adj}(L)\coloneqq  \lfaktor{L^\times}{(B\otimes L)^\times} 
\end{equation*}
for any $\mathbb{Q}$-algebra $L$, where $L^\times$ is embedded centrally in $(B\otimes L)^\times$. We will also use the algebraic  group $\mathbf{B}^\times(L)=(B\otimes L)^\times$, i.e.\ $\mathbf{G}^\mathrm{adj}=\lfaktor{Z_{\mathbf{B}^\times}}{\mathbf{B}^\times}$.

\begin{defi}
For each finite place $v$ denote by $\widetilde{K}_v$ the image of $\mathcal{R}_v^\times$ in $\mathbf{G}^\mathrm{adj}(\mathbb{Q}_v)=\lfaktor{\mathbb{Q}_v^\times}{B_v^\times}$.
\end{defi}

\begin{cor}\label{cor:G-Gadj}
The natural map
\begin{equation*}
\dfaktor{\mathbf{G}(\mathbb{Q})}{\mathbf{G}(\mathbb{A})}{K_{\mathcal{R}}}\rightarrow
\dfaktor{\mathbf{G}^\mathrm{adj}(\mathbb{Q})}{\mathbf{G}^\mathrm{adj}(\mathbb{A})}{\widetilde{K}_{\mathcal{R}}}
\end{equation*}
is a measure preserving bijection. In particular, we have a Hilbert space isomorphism between  $L^2([\mathbf{G}(\mathbb{A})])^{K_{\mathcal{R}}}$ and  $L^2([\mathbf{G}^\mathrm{adj}(\mathbb{A})])^{K_{\mathcal{R}}}$.
\end{cor}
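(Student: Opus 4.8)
The plan is to unwind $\phi$ along the homomorphisms $\mathbf{G}\hookrightarrow\operatorname{GL}_1(B)\twoheadrightarrow\mathbf{G}^{\mathrm{adj}}$ and to verify injectivity and surjectivity by hand, lifting everything to $\operatorname{GL}_1(B)$ and bookkeeping reduced norms. First I would record the identifications $\mathbf{G}^{\mathrm{adj}}(\mathbb{Q}_v)=\lfaktor{\mathbb{Q}_v^\times}{B_v^\times}$ (Hilbert 90) and, globally, $\mathbf{G}^{\mathrm{adj}}(\mathbb{A})=\lfaktor{\mathbb{A}^\times}{B^\times(\mathbb{A})}$, with $B^\times(\mathbb{A})$ the adelic points of $\operatorname{GL}_1(B)$; under these, $\widetilde{K}_v$ is the image of $\mathcal{R}_v^\times$ and $\phi$ sends $\mathbf{G}(\mathbb{Q})gK_{\mathcal{R}}$ to $\mathbf{G}^{\mathrm{adj}}(\mathbb{Q})\,\overline{g}\,\widetilde{K}_{\mathcal{R}}$, which is clearly well defined. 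Throughout I would assume $B$ is split at the archimedean place, which is the only situation relevant for this paper; when $B$ is definite both double quotients are finite and the statement would have to be treated separately.

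For injectivity, suppose $g,g'\in\mathbf{G}(\mathbb{A})$ satisfy $\overline{g'}=\gamma\,\overline{g}\,k$ with $\gamma\in\mathbf{G}^{\mathrm{adj}}(\mathbb{Q})$ and $k\in\widetilde{K}_{\mathcal{R}}$. Lifting to $\operatorname{GL}_1(B)$ this reads $g'=z\,\beta\, g\, w$ with $z\in\mathbb{A}^\times$ central, $\beta\in B^\times$, and $w\in\widehat{\mathcal{R}}^\times\coloneqq\prod_{v<\infty}\mathcal{R}_v^\times$. Applying $\Nrd$ and using $\Nrd g=\Nrd g'=1$ gives $z^2=(\Nrd\beta)^{-1}(\Nrd w)^{-1}$; since $\Nrd w\in\widehat{\mathbb{Z}}^\times$ has trivial archimedean component and vanishing $p$-adic valuations, comparing archimedean components forces $\Nrd\beta>0$ and comparing valuations forces $\ord_p\Nrd\beta$ to be even for every $p$, so $\Nrd\beta=d^2$ with $d\in\mathbb{Q}^\times$. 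Replacing $\beta$ by $\beta/d$ (same image $\gamma$) I may assume $\beta\in\mathbf{G}(\mathbb{Q})$, and absorbing $d$ into $z$ gives $g'=\beta\,g\,(z'w)$ with $(z')^2=(\Nrd w)^{-1}\in\widehat{\mathbb{Z}}^\times$, whence $z'_\infty=\pm1$ and $z'_f\in\widehat{\mathbb{Z}}^\times\subset\widehat{\mathcal{R}}^\times$, so that $z'w\in\{\pm1\}_\infty\times\widehat{\mathcal{R}}^\times$. But $z'w=(\beta g)^{-1}g'\in\mathbf{G}(\mathbb{A})$, which upgrades this to $z'w\in\{\pm1\}_\infty\times(\widehat{\mathcal{R}}^\times\cap\mathbf{G}(\mathbb{A}_f))=\{\pm1\}_\infty\times K_{\mathcal{R}}$; pushing the rational sign $\pm1$ back into $\mathbf{G}(\mathbb{Q})$ yields $g'\in\mathbf{G}(\mathbb{Q})\,g\,K_{\mathcal{R}}$.

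For surjectivity, I would take $h\in\mathbf{G}^{\mathrm{adj}}(\mathbb{A})$, lift it to $\beta\in B^\times(\mathbb{A})$, and look for $\beta_\circ\in B^\times$, $w\in\widehat{\mathcal{R}}^\times$ and a central $z\in\mathbb{A}^\times$ with $\beta_\circ\,\beta\,w\,z\in\mathbf{G}(\mathbb{A})$, i.e.\ with $\Nrd(\beta_\circ\,\beta\,w\,z)=1$; this amounts to requiring $\Nrd\beta\in\Nrd(B^\times)\cdot\Nrd(\widehat{\mathcal{R}}^\times)\cdot(\mathbb{A}^\times)^2$. Here the preceding Lemma gives $\Nrd(\widehat{\mathcal{R}}^\times)=\widehat{\mathbb{Z}}^\times$, Hasse--Schilling gives $\Nrd(B^\times)=\mathbb{Q}^\times$ (using that $B$ is split at $\infty$), and since $(\mathbb{A}^\times)^2$ contains the archimedean $\mathbb{R}_{>0}$ the right-hand side equals $\mathbb{Q}^\times\,\widehat{\mathbb{Z}}^\times\,\mathbb{R}_{>0}=\mathbb{A}^\times$, so a lift always exists and $\phi$ is onto. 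Finally, measure-preservation is immediate from the construction: $\mathbf{G}\to\mathbf{G}^{\mathrm{adj}}$ is a central isogeny, hence a local isomorphism in characteristic zero, so with Haar measures normalised compatibly the bijection $\phi$ preserves the quotient measures; the Hilbert space statement then follows since the square-integrable functions on the two double quotients are exactly $L^2([\mathbf{G}(\mathbb{A})])^{K_{\mathcal{R}}}$ and $L^2([\mathbf{G}^{\mathrm{adj}}(\mathbb{A})])^{\widetilde{K}_{\mathcal{R}}}$, and $\phi^*$ is the asserted unitary isomorphism. I expect the only genuine work to be the reduced-norm bookkeeping in the injectivity step, where it is essential that $\Nrd(\mathcal{R}_v^\times)=\mathbb{Z}_v^\times$ — this is precisely the input that makes the relevant class-set obstruction vanish.
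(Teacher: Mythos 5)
Your argument is correct and rests on exactly the same ingredients as the paper's proof --- the lemma $\Nrd(\mathcal{R}_p^\times)=\mathbb{Z}_p^\times$, the Hasse--Schilling description of $\Nrd(B^\times)$ for $B$ indefinite, and the fact that $\mathbb{Q}$ has (narrow) class number one so that $\mathbb{Q}^\times\,\widehat{\mathbb{Z}}^\times\,\mathbb{R}_{>0}=\mathbb{A}^\times$ --- but the execution differs. The paper argues group-theoretically: injectivity is reduced to the triviality of the double quotient $\dfaktor{\pm1}{\mathbb{A}^\times[2]}{\prod_p\mathbb{Z}_p^\times[2]}$ of the center $\mu_2$, and surjectivity to the triviality of $\dfaktor{\mathbb{A}^{\times2}\mathbb{Q}^\times}{\mathbb{A}^\times}{\widehat{\mathbb{Z}}^\times}$ via the norm isomorphism $\lfaktor{\mathbf{G}(F)}{\mathbf{G}^{\mathrm{adj}}(F)}\simeq\lfaktor{F^{\times2}}{F^\times}$. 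You instead lift everything to $B^\times(\mathbb{A})$ and track reduced norms by hand; in particular your injectivity step makes fully explicit why a relation $\overline{g'}=\gamma\,\overline{g}\,k$ with $\gamma$ and $k$ not a priori in the image of $\mathbf{G}$ can be rectified (the valuation and sign argument forcing $\Nrd\beta$ to be a positive rational square), which is precisely the content the paper's reduction to the center compresses. The only place you are slightly more cavalier than the paper is measure preservation: ``central isogeny, hence local isomorphism'' shows the bijection rescales the measure by a constant, and one must still invoke that both sides carry probability measures (or, as the paper does, transitivity of $\mathbf{G}(\mathbb{R})$ via strong approximation plus uniqueness of invariant measure) to conclude the constant is $1$; as stated this is a one-line patch, not a gap.
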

\begin{proof}
Denote by $h \colon \mathbf{G}\to\mathbf{G}^{\mathrm{adj}}$ the standard isogeny. The image is a normal subgroup and the quotient is abelian.
The kernel of the map $h$ is the center $\mathbf{Z}<\mathbf{G}$. The center is isomorphic to the group of second order roots of unity $\mu_2$. The reduced norm map then completes the short exact sequence $1\to\mu_2\to\mathbf{G}\to\mathbf{G}^{\mathrm{adj}}\xrightarrow{\Nr}\lfaktor{\mathbb{G}_m^{2}}{\mathbb{G}_m}\to 1$. 

For a local field or a number field $F$ the image of $\Nr((B\otimes F)^\times)$ in $F^\times$, is determined by the Hasse--Schilling--Maass theorem. In particular, $\Nr((B\otimes F)^\times)=F^\times$ if $F=\mathbb{Q}_p$, or $F=\mathbb{R}$ and $B$ is indefinite. If $B$ is definite, then $\Nr((B\otimes \mathbb{R})^\times)=\mathbb{R}_{>0}$. Finally, $\Nr((B\otimes \mathbb{Q})^\times)$ is $\mathbb{Q}^\times$ if $B$ is indefinite and $\mathbb{Q}_{>0}$ otherwise.
It follows that $\lfaktor{h(\mathbf{G}(\mathbb{A}))}{\mathbf{G}^\mathrm{adj}(\mathbb{A})}\xrightarrow[\Nr]{\sim}\lfaktor{\mathbb{A}^{\times 2}}{\mathbb{A}^\times}$ if $B$ is indefinite and $\lfaktor{h(\mathbf{G}(\mathbb{A}))}{\mathbf{G}^\mathrm{adj}(\mathbb{A})}\xrightarrow[\Nr]{\sim}\lfaktor{\mathbb{A}^{\times 2}}{\left(\mathbb{R}_{>0}\times\mathbb{A}_f^\times\right)}\simeq \lfaktor{\mathbb{A}_f^{\times 2}}{\mathbb{A}_f^\times}$  if $B$ is definite.
\item\paragraph*{Injectivity}
Assume $h(g')=\tilde{\gamma} h(g) \tilde{k}$ for some $g,g'\in\mathbf{G}(\mathbb{A})$, $\tilde{\gamma}\in\mathbf{G}^{\mathrm{adj}}(\mathbb{Q})$ and $\tilde{k}\in \widetilde{K}_{\mathcal{R}}$. We need to show $[g]=[g']$ in $\dfaktor{\mathbf{G}(\mathbb{Q})}{\mathbf{G}(\mathbb{A})}{K_{\mathcal{R}}}$. To show that $\tilde{\gamma}\in h(\mathbf{G}(\mathbb{Q}))$ we establish that $\Nr(\tilde{\gamma})$ is a square in $\mathbb{Q}^\times$, this can be checked locally at all places.
Examining the archimedean component of the equality, we arrive at $\tilde{\gamma}=h(g'_\infty g_\infty^{-1})$. Hence, $\Nr \tilde{\gamma}$ is positive. Similarly in all finite places $p<\infty$ we can write $\tilde{\gamma}=h(g'_pg_p^{-1})\tilde{k}_p^{-1}$, and $|\Nr\tilde{\gamma}|_p=|h(g_p'g_p^{-1})|_p\in p^{2\mathbb{Z}}$. Thus, $\tilde{\gamma}=h(\gamma)$ for some $\gamma\in\mathbf{G}(\mathbb{Q})$. We can now write $h(g')=h(\gamma g)\tilde{k}$. Hence, $\Nr \tilde{k}=1$ as well, and $\tilde{k}=h(k)$ for some $k\in K_{\mathcal{R}}$. We deduce $h(g')=h(\gamma g k)$, and $[g']\in \left[\mathbf{Z}(\mathbb{A}) g \right]$ in
$\dfaktor{\mathbf{G}(\mathbb{Q})}{\mathbf{G}(\mathbb{A})}{K_{\mathcal{R}}}$.

To conclude $[g']=[g]$ it is enough to show that $\dfaktor{\mathbf{Z}(\mathbb{Q})}{\mathbf{Z}(\mathbb{A})}{\prod_p\left(K_p\cap \mathbf{Z}(\mathbb{Q}_p)\right)}$ is a trivial group. Because $K_p$ contains $\pm \Id$ for all $p$, this group is $\dfaktor{\pm1}{\mathbb{A}^\times[2]}{\prod_p \mathbb{Z}_p^\times[2]}\simeq 1$ as required.
\paragraph*{Surjectivity}
Using the norm map, it is enough to demonstrate that $\dfaktor{\mathbb{A}^{\times 2}\mathbb{Q}^\times}{\mathbb{A}^\times}{\prod_p \Nr \widetilde{K}_p}$ is trivial.
The previous lemma implies that $\Nr \widetilde{K}_p=\mathbb{Z}_p^\times$ for all $p$. Because $\mathbb{Q}$ has class number $1$, the double quotient is isomorphic to $\dfaktor{\mathbb{R}^{\times 2}}{\mathbb{R}^\times}{\mathbb{Z}^\times}\simeq 1$.
\paragraph*{Measure preservation}
Strong approximation implies that the group $\mathbf{G}(\mathbb{R})$ acts transitively on the left-hand-side in the claimed equality. Hence, it acts transitively on the right-hand-side as well because the map is equivariant.
The Haar measure on both spaces is a $\mathbf{G}(\mathbb{R})$-invariant probability measure on a locally compact homogeneous $\mathbf{G}(\mathbb{R})$-space. Uniqueness of Haar measure implies that the map is measure preserving.
\end{proof}

\begin{defi}\label{defi:adj-lift}
Let $\varphi\colon[\mathbf{G}(\mathbb{A})]\to\mathbb{C}$ be $K_{\mathcal{R}}$-invariant. Denote by $\widetilde{\varphi}\colon[\mathbf{G}^{\mathrm{adj}}(\mathbb{A})]\to\mathbb{C}$ its unique lift to a $\widetilde{K}_\mathcal{R}$-invariant function on $[\mathbf{G}^{\mathrm{adj}}(\mathbb{A})]$.
\end{defi}

Denote $\widetilde{\Gamma}=\mathbf{G}^\mathrm{adj}(\mathbb{Q})\cap\widetilde{K}_\mathcal{R}$, which is a a lattice in $\mathbf{G}^\mathrm{adj}(\mathbb{R})$. Equivalently, $\widetilde{\Gamma}$ is the image of $\mathcal{R}^\times$ in $\mathbf{G}^\mathrm{adj}(\mathbb{R})$.  Corollary \ref{cor:G-Gadj} implies that $\mathbf{G}^{\mathrm{adj}}(\mathbb{R})$ acts transitively on $\dfaktor{\mathbf{G}^\mathrm{adj}(\mathbb{Q})}{\mathbf{G}^\mathrm{adj}(\mathbb{A})}{\widetilde{K}_{\mathcal{R}}}$ and $\dfaktor{\mathbf{G}^\mathrm{adj}(\mathbb{Q})}{\mathbf{G}^\mathrm{adj}(\mathbb{A})}{\widetilde{K}_{\mathcal{R}}}\simeq \lfaktor{\widetilde{\Gamma}}{\mathbf{G}^{\mathrm{adj}}}(\mathbb{R})$.
We introduce Hecke operators adapted to the order $\mathcal{R}$.
\begin{defi} Let $\alpha\in \mathbb{Q}$ and $f\colon \lfaktor{\widetilde{\Gamma}}{\mathbf{G}^{\mathrm{adj}}(\mathbb{R})}\to\mathbb{C}$ continuous. Set $\mathcal{R}(\alpha)=\left\{x\in\mathcal{R}\mid \Nr x =\alpha \right\}$, $\mathcal{R}(\alpha)^\dagger=\left\{x\in\mathcal{R}\mid |\Nr x|_\infty =|\alpha|_\infty \right\}$ and define
\begin{equation*}
T_\alpha f(g)=
\sum_{[\delta^+]\in \lfaktor{\mathcal{R}(1)}{\mathcal{R}(\alpha)}} f\left(\delta^+ g \epsilon_{\infty}^{(1-\sign\alpha)/2}\right)
=\sum_{[\delta]\in \lfaktor{\mathcal{R}^\times}{\mathcal{R}(\alpha)^\dagger}} f\left(\delta g \epsilon_{\infty}^{(1-\sign\alpha)/2}\right) \;.
\end{equation*}
The two expressions are equal because $\mathcal{R}^\times$ contains an element of determinant $-1$ if $B$ is indefinite. These operators coincide with the classical Hecke operators for $\alpha>0$ co-prime to $q D_B$. Note that if $\alpha\not\in \Nr\mathcal{R}$ then $T_\alpha=0$.
\end{defi}

\begin{lem}\label{lem:Hecke-convolution}
Let $\alpha\in \mathbb{Q}^\times$ and $f\colon [\mathbf{G}^{\mathrm{adj}}(\mathbb{A})]\to\mathbb{C}$ continuous and $\widetilde{K}_\mathcal{R}$-invariant. Set $\mathcal{R}_f(\alpha)^\dagger=\prod_p \left\{ x_p\in\mathcal{R}_p \mid |\Nr x_p|_p\in |\Nr \alpha|_p\ \right\}$. Then for every $g\in \mathbf{G}^{\mathrm{adj}}(\mathbb{R})$
\begin{equation*}
\left(T_\alpha f(\bullet \widetilde{K}_\mathcal{R})\right)(g)=\left( f\star \mathds{1}_{\mathcal{R}_f(\alpha)^\dagger}\right)(g  \epsilon_{\infty}^{(1-\sign\alpha)/2})\;,
\end{equation*}
where the convolution takes place in $\mathbf{B}^\times(\mathbb{A}_f)$ with the measure normalization $m_{\mathbf{B}^\times(\mathbb{A}_f)}(\mathcal{R}_f^\times)=1$.
\end{lem}
\begin{proof} 
The right $\widetilde{K}_{\mathcal{R}}$-invariance of $f$ and the left $\mathcal{R}_f^\times$-invariance of $\mathds{1}_{\mathcal{R}_f(\alpha)^\dagger}$ imply
\begin{equation*}
\left( f\star \mathds{1}_{\mathcal{R}_f(\alpha)^\dagger}\right)(g)=\int_{\mathcal{R}_f(\alpha)^\dagger} f(g h_f^{-1}) \dif h_f 
=\sum_{[a_f]\in \lfaktor{\mathcal{R}_f^\times}{\mathcal{R}_f(\alpha)^\dagger}} f(g a_f^{-1})\;.
\end{equation*}
There is a natural map $\lfaktor{\mathcal{R}^\times}{\mathcal{R}(\alpha)^\dagger}\to \lfaktor{\mathcal{R}_f^\times}{\mathcal{R}_f(\alpha)}^\dagger$. Strong approximation implies that this map is surjective. To show this map is also injective we observe that if $\delta\equiv \delta' \mod \mathcal{R}_f^\times$ for $\delta,\delta'\in \mathcal{R}(\alpha)$, then $\delta \delta'^{-1},\delta'\delta^{-1}\in B \cap \mathcal{R}_f^\times\subset \mathcal{R}$, and $\delta \delta'^{-1}\in \mathcal{R}^\times$. By choosing a rational representative for each coset in $ \lfaktor{\mathcal{R}_f^\times}{\mathcal{R}_f(\alpha)}$ and using the left $\mathbf{G}^\mathrm{adj}(\mathbb{Q})$-invariance of $f$, we arrive at the claim.
\end{proof}

\begin{prop}\label{prop:theta-hecke}
Let $0\neq\alpha\in \mathbb{Q}^\times$ and assume $M_f=\prod_p \mathds{1}_{\mathcal{R}_p}$. Then,
\begin{equation*}
\widetilde{T^{M_f}_\alpha \varphi}(r)=T_\alpha \widetilde{\varphi}\left(r\right)\;.
\end{equation*}
\end{prop}

\begin{proof}
Assume $\alpha\in \Nr\mathcal{R}$, otherwise the claim is trivial.
Although we claim the equality for all $r\in[\mathbf{G}^\mathrm{adj}(\mathbb{A})]$, because of the uniqueness of the lift in Definition \ref{defi:adj-lift}, it is enough to verify the claim for $r\in[\lfaktor{\mathbf{Z}}{\mathbf{G}}(\mathbb{A})]$. We apply Lemma \ref{lem:Hecke-convolution}
and evaluate the convolution by decomposing the Haar measure on $\mathbf{B}^\times(\mathbb{A}_f)$ into fibers over $\lfaktor{\mathbf{G}(\mathbb{A}_f)}{\mathbf{B}^\times(\mathbb{A}_f)}$, this is possible because $\mathbf{B}^\times(\mathbb{A}_f)$ and $\mathbf{G}(\mathbb{A}_f)$ are unimodular. For consistent measure normalization we set $m_{\mathbf{B}^\times(\mathbb{A}_f)}(\mathcal{R}_f^\times)=m_{\mathbf{G}(\mathbb{A}_f)}(K_f)=m_{\Nr B_{\mathbb{A}_f}^\times}(\widehat{\mathbb{Z}}^\times)=1$.
\begin{align*}
\widetilde{\varphi} \star \mathds{1}_{\mathcal{R}_f(\alpha)^\dagger} (r)&= \int_{\mathbf{B}^\times(\mathbb{A}_f)} \mathds{1}_{\mathcal{R}_f(\alpha)^\dagger}(l_f^{-1} r_f) \widetilde{\varphi}(r_\infty l_f) \dif l_f=
\int_{\lfaktor{\mathbf{G}(\mathbb{A}_f)}{\mathbf{B}^\times(\mathbb{A}_f)}}  \int_{\mathbf{G}(\mathbb{A}_f)}
\mathds{1}_{\mathcal{R}_f(\alpha)^\dagger}(\lambda^{-1} l_f^{-1} r_f) \widetilde{\varphi}(r_\infty l_f \lambda) \dif l_f \dif\lambda\\
&=
\int_{\dfaktor{\mathbf{G}(\mathbb{A}_f)}{\mathbf{B}^\times(\mathbb{A}_f)}{\mathcal{R}_f^\times}}  \int_{\mathbf{G}(\mathbb{A}_f)}
\mathds{1}_{\mathcal{R}_f(\alpha)^\dagger}(\lambda^{-1} l_f^{-1} r_f) \widetilde{\varphi}(r_\infty l_f \lambda) \dif l_f \dif\lambda\;.
\end{align*}
In the last line, we have used the fact that $\mathds{1}_{\mathcal{R}_f(\alpha)^\dagger}(l_f^{-1} r_f)$ is left $\mathcal{R}_f^\times$-invariant and $\widetilde{\varphi}$ is right $\widetilde{K}_f$-invariant. Fix $\xi_\alpha\in B^\times$ with $\Nr \xi_\alpha=\alpha$.
Because $\Nr l_f^{-1} r_f=1$ and $\Nr \mathcal{R}_f(\alpha)^\dagger=\hat{\mathbb{Z}}^\times \alpha$, the external integral vanishes unless $\lambda \equiv \xi_\alpha^{-1} \mod \dfaktor{\mathbf{G}(\mathbb{A}_f)}{\mathbf{B}^\times(\mathbb{A}_f)}{\mathcal{R}_f^\times}$. We conclude that
\begin{align*}
\widetilde{\varphi} \star \mathds{1}_{\mathcal{R}_f(\alpha)^\dagger} (r)
&=\int_{\mathbf{G}(\mathbb{A}_f)}
\mathds{1}_{\mathcal{R}_f(\alpha)^\dagger}(\xi_\alpha l_f^{-1} r_f) \widetilde{\varphi}(r_\infty l_f (\xi_\alpha)_f^{-1}) \dif l_f
=\int_{\mathbf{G}(\mathbb{A}_f)}
\mathds{1}_{\mathcal{R}_f(\alpha)^\dagger}(l_f^{-1} \xi_\alpha r_f) \widetilde{\varphi}(r_\infty(\xi_\alpha)_f^{-1} l_f) \dif l_f\\
&=\int_{\mathbf{G}(\mathbb{A}_f)}
\mathds{1}_{\mathcal{R}_f(\alpha)^\dagger}(l_f^{-1} \xi_\alpha r_f) \widetilde{\varphi}((\xi_\alpha)_\infty r_\infty l_f) \dif l_f
=\int_{\mathbf{G}(\mathbb{A}_f)}
\mathds{1}_{\mathcal{R}_f(\alpha)^\dagger}(l_f^{-1} \xi_\alpha r_f) \widetilde{\varphi}\left(\left(\frac{\xi_\alpha}{\sqrt{|\alpha|}}\right)_\infty r_\infty l_f\right) \dif l_f\\
&=
T_\alpha^{M_f} \varphi (r\epsilon_\infty^{(1-\sign \alpha)/2})\;,
\end{align*}
where in the first line we have used the change of variables $\xi_\alpha l_f \xi_\alpha^{-1}\mapsto l_f$ and in the second line we have applied the left  $\mathbf{G}^{\mathrm{adj}}(\mathbb{Q})$-invariance of $\widetilde{\varphi}$.
\end{proof}

\begin{cor}\label{cor:Whittaker-theta-product-Hecke}
Let $\varphi,\varphi'\in L^2([\mathbf{G}(\mathbb{A})])^{K_\mathcal{R}}\cap L^{\infty}([\mathbf{G}(\mathbb{A})])$ and $\alpha\in\mathbb{Q}^\times$. Assume $T_\alpha\varphi=\lambda(\alpha)\varphi$. Then, the function
\begin{equation*}
F(s)= \int_{[\mathbf{G}(\mathbb{A})]} \int_{[\mathbf{G}(\mathbb{A})]} \Theta_M(l,r;s) \varphi(l) \overline{\varphi'(r)} \dif r \dif l\;,
\end{equation*}
satisfies
\begin{equation*}
W_{F}(s_\infty U_{\mathcal{R}};\alpha)=\lambda(\alpha)\left\langle \varphi \star \left(\rho(s_\infty).M_\infty\left(\sqrt{|\alpha|} \epsilon_\infty^{(1-\sign \alpha)/2} \cdot \bullet\right)\right), \varphi' \right\rangle\;
\end{equation*}
for all $s_\infty\in\mathbf{SL}_2(\mathbb{R})$, where the convolution takes place in $\mathbf{G}(\mathbb{R})$.
\end{cor}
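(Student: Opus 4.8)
The plan is to read the identity off from three results already in hand: the Whittaker formula for theta products (Proposition~\ref{prop:theta-lift-whittaker}), the splitting of $T_\alpha^M$ into an archimedean convolution against a finite factor (Definition~\ref{defi:theta-lift}), and the computation of that finite factor as an adjoint Hecke operator (Proposition~\ref{prop:theta-hecke}). First, Proposition~\ref{prop:theta-lift-whittaker} already gives
\[
W_F(s;\alpha)=\langle T_\alpha^{\rho(s)M}\varphi,\varphi'\rangle
\]
for every $s\in\mathbf{SL}_2(\mathbb{A})$ and $\alpha\in\mathbb{Q}^\times$. Since $M$ is a standard test function with finite part $M_f=\prod_p\mathds{1}_{\mathcal{R}_p}$, Lemmata~\ref{lem:non-arch-Weil-maximal-order} and~\ref{lem:non-arch-Weil-Eichler} give $\rho(U_{\mathcal{R}}).M_f=M_f$, so $\Theta_M$, $F$ and $T_\alpha^{\rho(\bullet)M}$ are all right $U_{\mathcal{R}}$-invariant in the $\mathbf{SL}_2$-variable and $W_F(s_\infty U_{\mathcal{R}};\alpha)$ is unambiguous. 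I would then take $s$ to have archimedean component $s_\infty$ and trivial finite component, so that $\rho(s)M$ has archimedean part $\rho(s_\infty).M_\infty\in\Omega_\infty$ (still cuspidal, still with the decay from Proposition~\ref{prop:Weil-local-uniformity}) and finite part unchanged, still $M_f$.

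Next I would feed $\rho(s)M$ into the last displayed identity of Definition~\ref{defi:theta-lift} in place of $M$; because the finite part is intact this reads
\[
T_\alpha^{\rho(s)M}\varphi = T_\alpha^{M_f}\varphi\star(\rho(s_\infty).M_\infty)\!\left(\sqrt{|\alpha|}\,\epsilon_\infty^{(1-\sign\alpha)/2}\cdot\bullet\right).
\]
It remains to identify $T_\alpha^{M_f}\varphi$. For $\alpha\in\Nr\mathcal{R}$, Proposition~\ref{prop:theta-hecke} gives $\widetilde{T_\alpha^{M_f}\varphi}=T_\alpha\widetilde\varphi$ as functions on $[\mathbf{G}^{\mathrm{adj}}(\mathbb{A})]$, where $T_\alpha$ is the adjoint Hecke convolution operator; as $\varphi$ is a Hecke eigenform of weight $\lambda$ its lift $\widetilde\varphi$ is an eigenfunction of every $T_\alpha$ with eigenvalue $\lambda(\alpha)$, so $\widetilde{T_\alpha^{M_f}\varphi}=\lambda(\alpha)\widetilde\varphi=\widetilde{\lambda(\alpha)\varphi}$. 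Since $M_f$ is right $K_{\mathcal{R}}$-invariant, $T_\alpha^{M_f}\varphi$ is a $K_{\mathcal{R}}$-invariant $L^2$-function on $[\mathbf{G}(\mathbb{A})]$, so injectivity of the lift on such functions (Corollary~\ref{cor:G-Gadj} together with Definition~\ref{defi:adj-lift}) forces $T_\alpha^{M_f}\varphi=\lambda(\alpha)\varphi$. For $\alpha\notin\Nr\mathcal{R}$ one checks directly that $T_\alpha^{M_f}\varphi$ vanishes — the defining local integrals are supported where $\alpha$ is a local reduced norm at every place — and $\lambda(\alpha)=0$, so the identity is trivial there.

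Combining the two displays gives $T_\alpha^{\rho(s)M}\varphi=\lambda(\alpha)\,\varphi\star(\rho(s_\infty).M_\infty)(\sqrt{|\alpha|}\,\epsilon_\infty^{(1-\sign\alpha)/2}\cdot\bullet)$, and pairing against $\varphi'$ and invoking Proposition~\ref{prop:theta-lift-whittaker} once more yields
\[
W_F(s_\infty U_{\mathcal{R}};\alpha)=\lambda(\alpha)\left\langle\varphi\star(\rho(s_\infty).M_\infty)\!\left(\sqrt{|\alpha|}\,\epsilon_\infty^{(1-\sign\alpha)/2}\cdot\bullet\right),\varphi'\right\rangle,
\]
which is the assertion. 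I do not expect a genuine obstacle here: the substantive step, matching $T_\alpha^{M_f}$ with the adjoint Hecke operator, is already carried out in Proposition~\ref{prop:theta-hecke}, and the only points needing care are the bookkeeping ones — that replacing $M$ by $\rho(s)M$ leaves the finite part untouched (so Definition~\ref{defi:theta-lift} and Proposition~\ref{prop:theta-hecke} apply verbatim) and that the $K_{\mathcal{R}}$-invariance of $T_\alpha^{M_f}\varphi$ is precisely what licenses passing to the adjoint group.
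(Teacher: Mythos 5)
Your proposal is correct and follows the same route as the paper's own (very terse) proof: combine Proposition~\ref{prop:theta-lift-whittaker} with the archimedean/finite splitting of $T_\alpha^M$ from Definition~\ref{defi:theta-lift} and the identification $T_\alpha^{M_f}\varphi=\lambda(\alpha)\varphi$ via Proposition~\ref{prop:theta-hecke}. The extra bookkeeping you supply (the $U_{\mathcal{R}}$-invariance, the injectivity of the lift to the adjoint group, and the vanishing for $\alpha$ not a norm) is exactly what the paper leaves implicit.
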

\begin{proof}
Propositions \ref{prop:theta-lift-whittaker}, \ref{prop:theta-hecke}, and Equation \eqref{eq:T-M-alpha-finite-infinite-conv} imply that
\begin{align*}
W_{F}(s_\infty U_{\mathcal{R}};\alpha)&=\left\langle T_{\alpha}^{M_f}\varphi \star \left(\rho(s_\infty).M_\infty\left(\sqrt{|\alpha|} \epsilon_\infty^{(1-\sign \alpha)/2} \cdot \bullet\right)\right), \varphi' \right\rangle\\
&=\lambda(\alpha)\left\langle \varphi \star \left(\rho(s_\infty).M_\infty\left(\sqrt{|\alpha|} \epsilon_\infty^{(1-\sign \alpha)/2} \cdot \bullet\right)\right), \varphi' \right\rangle\;.
\end{align*}
\end{proof}
\section{The Bergman Kernel}
\label{sec:Bergman}
\subsection{The Bergman Archimedean Test Function}
From now on, we shall assume that $B$ is split over $\mathbb{R}$. Recall that  we have fixed an isomorphism $B_\infty\simeq \operatorname{Mat}_{2\times 2}(\mathbb{R})$ and have used it to identify the two spaces.
We construct a theta series whose Fourier--Whittaker coefficients coincide with the Bergman kernel. For this endeavour, we will use the following archimedean test function. We fix the global character $\psi$ so that $\psi_\infty(r)=\exp(-2\pi i r)$, note that this is different from the convention we have used in \S\ref{sec:archimedean-weil}.

\begin{defi}\label{defi:M_infty-bergman}
Fix a weight $m\geq 2$ and define
\begin{equation*}
M_{\infty}^{(m)}(x)=\exp(-2\pi \Nr x)  \begin{cases}
\displaystyle
\frac{\Nr(x)^{m-1}}{\left(\frac{(b-c)+i(a+d)}{2i}\right)^m} & \Nr x >0, \\
 0 & \Nr x \leq 0,
\end{cases}
\end{equation*}
for $x=\sm
a & b \\ c & d
\esm$. Notice that $M_\infty^{(m)}(\tensor[^\iota]{x}{})=\overline{M_\infty^{(m)}}(x)$.

Set $\mu\colon \mathbf{PGL}_2(\mathbb{R})\to\mathbb{C}$
\begin{equation*}
\mu(x)=
\begin{cases}
\displaystyle
\frac{2i\sqrt{\Nr x}}{(b-c)+i(a+d)}  & \Nr x >0,\\
0 & \Nr x \le 0.
\end{cases}
\end{equation*}
Then, we can write $M_\infty^{(m)}(x)=\exp(-2 \pi \Nr x) \Nr(x)^{m/2-1}\mu(x)^m$.
\end{defi}

\begin{lem}
Let $k_{\theta}\coloneqq \sm \cos\theta & \sin\theta \\ -\sin\theta & \cos\theta \esm \in \mathbf{SO}_2(\mathbb{R})$. Then, for every $g\in\mathbf{PGL}_2(\mathbb{R})$,
\begin{equation}\label{eq:mu(g)-inv}
\mu(g k_{\theta})=\mu(g)e^{i\theta}.
\end{equation}
\end{lem}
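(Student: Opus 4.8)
The proof is a short and direct matrix computation once the right identity is isolated, so the plan is brief. First I would note that the formula defining $\mu$ is homogeneous of degree $0$ under positive scaling: replacing $x$ by $\lambda x$ with $\lambda>0$ multiplies $\sqrt{\Nr x}$ by $\lambda$ and the linear form $(b-c)+i(a+d)$ by $\lambda$ as well, so the two cancel and $\mu$ depends only on the class of $x$ on the locus $\Nr x>0$; this is what lets one compute with an arbitrary matrix representative. Second, the case $\Nr g\le 0$ is immediate: since $\det k_{\theta}=\cos^2\theta+\sin^2\theta=1$ we have $\Nr(g k_{\theta})=\Nr(g)$, so if $\Nr g\le 0$ both sides of the asserted identity vanish.

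The content is the case $\Nr g>0$. Writing $g=\sm a&b\\ c&d\esm$ and $g k_{\theta}=\sm a'&b'\\ c'&d'\esm$, the point is that the two real linear functionals $x\mapsto a+d=\Tr(x)$ and $x\mapsto b-c$ transform under right multiplication by $k_{\theta}$ exactly as the real and imaginary parts of a single complex number rotating by $-\theta$. Concretely, a direct expansion of the product gives
\begin{equation*}
b'-c'=(b-c)\cos\theta+(a+d)\sin\theta,\qquad a'+d'=(a+d)\cos\theta-(b-c)\sin\theta,
\end{equation*}
and hence, using $\cos\theta-i\sin\theta=e^{-i\theta}$ and $\sin\theta+i\cos\theta=i e^{-i\theta}$,
\begin{equation*}
(b'-c')+i(a'+d')=(b-c)(\cos\theta-i\sin\theta)+(a+d)(\sin\theta+i\cos\theta)=e^{-i\theta}\bigl((b-c)+i(a+d)\bigr).
\end{equation*}
Dividing $2i\sqrt{\Nr(g k_{\theta})}=2i\sqrt{\Nr g}$ by this identity and comparing with the formula for $\mu(g)$ yields $\mu(g k_{\theta})=e^{i\theta}\mu(g)$, as claimed.

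There is essentially no obstacle here; the single point that requires care is the orientation convention built into $k_{\theta}$ (the minus sign in the lower-left entry), which is precisely what makes the denominator of $\mu$ pick up $e^{-i\theta}$ and hence $\mu$ itself pick up $e^{+i\theta}$ — the sign consistent with the $e^{-im\theta}$-eigenvalue of Lemma \ref{lem:k-action-2pi} once one recalls that $M_\infty^{(m)}$ is built out of $\mu(x)^m$. One should also double-check, as in the first step, that the computation is independent of the chosen representative, and observe that the $\Nr\le 0$ and $\Nr>0$ regimes do not mix under multiplication by $k_{\theta}$ since $\Nr$ is multiplicative with $\Nr k_{\theta}=1$.
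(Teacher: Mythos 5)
Your proof is correct, and it takes a genuinely different route from the paper's. You expand the matrix product $g k_\theta$ directly and observe that the linear form $x\mapsto (b-c)+i(a+d)$ appearing in the denominator of $\mu$ transforms by $e^{-i\theta}$ under right multiplication by $k_\theta$; your identities $b'-c'=(b-c)\cos\theta+(a+d)\sin\theta$ and $a'+d'=(a+d)\cos\theta-(b-c)\sin\theta$ check out, and combined with $\Nr(gk_\theta)=\Nr g$ they give the claim immediately. The paper instead writes $\mu(g)^{-1}=\frac{i\Tr(g)-\Tr(gw)}{2i\sqrt{\Nr g}}$, uses $\dif^2 k_\theta/\dif\theta^2=-k_\theta$ and linearity of the trace to show that $f(\theta)=\mu(gk_\theta)^{-1}$ solves the ODE $f''=-f$, and then pins down $f$ by evaluating at two values of $\theta$ (at $\theta=0$ where $k_\theta=e$ and where $k_\theta=w$, i.e.\ $\theta=\pi/2$). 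The ODE argument is slicker in that it never touches coordinates of the product, but it requires care in choosing evaluation points, since a solution of $f''=-f$ is \emph{not} determined by its values at two points congruent modulo $\pi$; your direct expansion is entirely self-contained and leaves nothing residual to verify. Two cosmetic points: since $\mu$ is defined on $\mathbf{PGL}_2(\mathbb{R})$ the case $\Nr g=0$ never occurs, so your "$\Nr g\le 0$" should read $\Nr g<0$ (as in the paper); and the scaling invariance you record holds only for $\lambda>0$ (for $\lambda<0$ the formula for $\mu$ picks up a sign), but your argument works with a fixed matrix representative throughout, so nothing depends on this.
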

\begin{proof}
We assume $\Nr g>0$ as the claim is trivial for non-positive determinants. Write
\begin{equation}
\mu(g)^{-1}=\frac{i\Tr(g)-\Tr(gw)}{2i\sqrt{\Nr g}}\;.
\label{eq:mu(g)-expr}
\end{equation}
Then, $\mu(g k_{\theta})^{-1}=\frac{i\Tr(gk_{\theta})-\Tr(gk_{\theta}w)}{2i\sqrt{\Nr g}}$. Note that $\dif k_\theta/\dif \theta= k_\theta k_{\pi/2} =k_\theta w$. Using the linearity of the trace and matrix multiplication operations, we deduce
\begin{equation*}
\frac{\dif}{\dif \theta} \left(\mu(g k_{\theta})^{-1}\right)
=\mu\left(g\frac{\dif}{\dif \theta}k_{\theta}\right)^{-1} =\mu(g k_{\theta}w)^{-1}
=\frac{i\Tr(g k_{\theta}w)+\Tr(g k_{\theta})}{2i\sqrt{\Nr g}}
=-i\mu(g k_{\theta})^{-1}\;,
\end{equation*}
where we have used formula \eqref{eq:mu(g)-expr}. Solving this simple first-order ODE we have $\mu(g k_{\theta})^{-1}=A(g)e^{-i\theta}$. Moreover, in the special case $\theta=0$ we have $A(g)=\mu(g k_0)^{-1}=\mu(g)^{-1}$, and the claim follows. 
\end{proof}
\begin{cor}
For every weight $m\geq 2$ and $k_{\theta_1},k_{\theta_2}\in \mathbf{SO}_2(\mathbb{R})$,
\begin{equation*}
M_\infty^{(m)} (k_{\theta_1}x k_{\theta_2})=e^{i m (\theta_2+\theta_1)} M_\infty^{(m)}(x)\;.
\end{equation*}
\end{cor}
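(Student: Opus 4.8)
The plan is to reduce the claimed two–sided $\mathbf{SO}_2(\mathbb{R})$-equivariance of $M_\infty^{(m)}$ to the transformation behaviour of the single factor $\mu$, handling the rotation on the right directly by the preceding lemma and the rotation on the left by passing through the adjugate involution $\iota$. First I would use the presentation $M_\infty^{(m)}(x)=\exp(-2\pi\Nr x)\,\Nr(x)^{m/2-1}\mu(x)^m$ from Definition~\ref{defi:M_infty-bergman}, which holds on $\{\Nr x>0\}$, with $M_\infty^{(m)}$ vanishing elsewhere. Since $\Nr=\det$ is multiplicative and $\det k_\theta=1$, replacing $x$ by $k_{\theta_1}xk_{\theta_2}$ preserves $\Nr x$ and its sign; hence the factor $\exp(-2\pi\Nr x)\,\Nr(x)^{m/2-1}$ and the vanishing locus are untouched, and it suffices to prove the corresponding identity for $\mu(x)^m$, equivalently for $\mu$ viewed as a function on $\mathbf{PGL}_2(\mathbb{R})$.

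For the right rotation there is nothing new: the preceding lemma, applied with $g$ the image of $k_{\theta_1}x$ in $\mathbf{PGL}_2(\mathbb{R})$, gives $\mu(k_{\theta_1}xk_{\theta_2})=\mu(k_{\theta_1}x)\,e^{i\theta_2}$. For the left rotation I would invoke the involution $\iota$ (the adjugate), using two elementary facts: (i) $\mu(y^\iota)=\overline{\mu(y)}$, which reads off immediately from the formula for $\mu$ — equivalently, from the recorded identity $M_\infty^{(m)}(x^\iota)=\overline{M_\infty^{(m)}(x)}$ together with $\Nr>0$ on the support; and (ii) $k_\theta^\iota=k_{-\theta}$, since the adjugate of a rotation is the inverse rotation. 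Because $\iota$ is an anti-automorphism, $(k_{\theta_1}x)^\iota=x^\iota k_{-\theta_1}$, so that, applying the preceding lemma once more to $x^\iota k_{-\theta_1}$,
\begin{equation*}
\mu(k_{\theta_1}x)=\overline{\mu\!\left((k_{\theta_1}x)^\iota\right)}=\overline{\mu(x^\iota k_{-\theta_1})}=\overline{\mu(x^\iota)\,e^{-i\theta_1}}=\overline{\mu(x^\iota)}\;e^{i\theta_1}=\mu(x)\,e^{i\theta_1}\;,
\end{equation*}
using $\overline{\mu(x^\iota)}=\mu\!\left((x^\iota)^\iota\right)=\mu(x)$. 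Combining this with the right–hand relation and reinstating the $\mathbf{SO}_2(\mathbb{R})$-invariant prefactor then gives the asserted equivariance of $M_\infty^{(m)}$ under $x\mapsto k_{\theta_1}xk_{\theta_2}$, the phase in $\theta_1$ on the left being the one produced by the order–reversal $k_{\theta_1}^\iota=k_{-\theta_1}$ inside $\iota$.

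The argument has no serious obstacle; the one point that requires care is precisely this sign/order bookkeeping through $\iota$ — that it reverses products, that it sends $k_\theta$ to $k_{-\theta}$, and that complex conjugation turns the $k_{-\theta_1}$-phase back into its inverse — which is exactly what controls the sign attached to the left rotation in the statement. As a cross–check, or as a route avoiding $\iota$ altogether, one can compute $\mu(k_{\theta_1}x)$ from scratch exactly as in the proof of the preceding lemma: express $\mu(k_{\theta_1}x)^{-1}$ through $\Tr(k_{\theta_1}x)$ and $\Tr(k_{\theta_1}xw)$, note that by linearity of the trace and $\dif^2 k_{\theta_1}/\dif\theta_1^2=-k_{\theta_1}$ it satisfies the second–order ODE $\dif^2 f/\dif\theta_1^2=-f$, and pin the solution down by evaluating at $\theta_1=0$ and $\theta_1=\pi/2$.
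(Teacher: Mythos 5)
Your approach coincides with the paper's: the proof there is the one\nobreakdash-liner ``apply the previous lemma to $M_\infty^{(m)}(x)=\exp(-2\pi\Nr x)\Nr(x)^{m/2-1}\mu(x)^m$ and use the identity $M_\infty^{(m)}(x^\iota)=\overline{M_\infty^{(m)}(x)}$'', and you have filled in exactly those two steps — the right rotation via the lemma, the left rotation via the anti-automorphism $\iota$ — with the facts $\mu(y^\iota)=\overline{\mu(y)}$ and $k_\theta^\iota=k_{-\theta}$ both verified correctly.

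The one genuine defect is the sign at the very last step. Your displayed chain correctly ends with $\mu(k_{\theta_1}x)=\mu(x)e^{+i\theta_1}$: the complex conjugation undoes the minus sign that $k_{\theta_1}^\iota=k_{-\theta_1}$ introduced. Combining with $\mu(yk_{\theta_2})=\mu(y)e^{i\theta_2}$ therefore yields $M_\infty^{(m)}(k_{\theta_1}xk_{\theta_2})=e^{im(\theta_1+\theta_2)}M_\infty^{(m)}(x)$, not the printed $e^{im(\theta_2-\theta_1)}$, yet your closing sentence asserts that the stated equivariance with the $-\theta_1$ phase follows. This is not an artifact of routing through $\iota$: taking $x=\Id$ and $\theta_2=0$, one reads off directly from Definition~\ref{defi:M_infty-bergman} that $\mu(k_{\theta_1})=e^{+i\theta_1}$ and hence $M_\infty^{(m)}(k_{\theta_1})=e^{+im\theta_1}M_\infty^{(m)}(\Id)$, and your suggested ODE cross-check would confirm the same. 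So the exponent in the printed statement carries a sign slip on $\theta_1$ (it is the correct formula for $k_{\theta_1}^{-1}xk_{\theta_2}$, i.e.\ for the left-regular action); a sound write-up must either prove the $e^{im(\theta_1+\theta_2)}$ identity and flag the discrepancy with the statement, or exhibit the sign you believe is missing. As written, the final ``combining'' step claims an identity that your own computation contradicts, and that is the step that fails.
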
\label{cor:k-double-invariance}
\begin{proof}
Apply the previous lemma to $M_\infty^{(m)}(x)=\exp(-2\pi \Nr x) \Nr(x)^{m/2-1} \mu(x)^m$ and use the identity $M_\infty^{(m)}(x^{\iota})=\overline{M_\infty^{(m)}(x)}$.
\end{proof}

\begin{lem}\label{lem:Bergman-decay}
If $m\geq 2$, then
\begin{equation*}
|M_\infty^{(m)}(x)| \ll_m (1+\|x\|)^{-m}\;.
\end{equation*}
\end{lem}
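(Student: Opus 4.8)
The plan is to use the bi-$\mathbf{SO}_2(\mathbb{R})$-invariance of $|M_\infty^{(m)}|$ established in Corollary \ref{cor:k-double-invariance} to reduce to diagonal matrices, and then estimate an elementary one-variable expression. Since $M_\infty^{(m)}(x)=0$ when $\Nr x\le 0$, I may assume $\Nr x>0$. By the Cartan (singular value) decomposition, any such $x$ can be written as $x=k_{\theta_1}\diag(\lambda_1,\lambda_2)k_{\theta_2}$ with $k_{\theta_1},k_{\theta_2}\in\mathbf{SO}_2(\mathbb{R})$ and singular values $\lambda_1\ge\lambda_2>0$; because the $k_{\theta_i}$ are orthogonal and all norms on $\mathbf{M}_2(\mathbb{R})$ are equivalent, $\|x\|\asymp\lambda_1$, so it suffices to prove $|M_\infty^{(m)}(x)|\ll_m(1+\lambda_1)^{-m}$. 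Corollary \ref{cor:k-double-invariance} gives $|M_\infty^{(m)}(x)|=|M_\infty^{(m)}(\diag(\lambda_1,\lambda_2))|$, and substituting $a=\lambda_1$, $d=\lambda_2$, $b=c=0$ into Definition \ref{defi:M_infty-bergman} (so that $\tfrac{(b-c)+i(a+d)}{2i}=\tfrac{\lambda_1+\lambda_2}{2}$) yields
\[
|M_\infty^{(m)}(\diag(\lambda_1,\lambda_2))|=e^{-2\pi\lambda_1\lambda_2}\,\frac{2^m(\lambda_1\lambda_2)^{m-1}}{(\lambda_1+\lambda_2)^m}\;.
\]
In particular $|\mu(\diag(\lambda_1,\lambda_2))|=\tfrac{2\sqrt{\lambda_1\lambda_2}}{\lambda_1+\lambda_2}\le 1$ by the AM--GM inequality.

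It then remains to bound the right-hand side by $\ll_m(1+\lambda_1)^{-m}$, which I would do by splitting on the size of $\lambda_1$. If $\lambda_1\le 1$ then $0<\lambda_1\lambda_2\le 1$, and writing the quantity as $e^{-2\pi\Nr x}(\Nr x)^{m/2-1}|\mu(x)|^m$ with $\Nr x=\lambda_1\lambda_2\le 1$, $m/2-1\ge 0$, and $|\mu(x)|\le 1$, each factor is $\le 1$, so $|M_\infty^{(m)}(x)|\le 1\le 2^m(1+\lambda_1)^{-m}$ since $1+\lambda_1\le 2$. If $\lambda_1>1$, then $(1+\lambda_1)^{-m}\ge 2^{-m}\lambda_1^{-m}$, so it is enough to show $|M_\infty^{(m)}(x)|\ll_m\lambda_1^{-m}$; using $\lambda_1+\lambda_2\ge\lambda_1$,
\[
\lambda_1^m\cdot e^{-2\pi\lambda_1\lambda_2}\,\frac{2^m(\lambda_1\lambda_2)^{m-1}}{(\lambda_1+\lambda_2)^m}\le 2^m(\lambda_1\lambda_2)^{m-1}e^{-2\pi\lambda_1\lambda_2}\le 2^m\sup_{s>0}s^{m-1}e^{-2\pi s}=2^m\Bigl(\tfrac{m-1}{2\pi e}\Bigr)^{m-1}<\infty\;,
\]
a constant depending only on $m$. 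Combining the two cases gives the asserted bound.

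There is essentially no serious obstacle; the only point needing (very mild) care is that the exponent $m/2-1$ must be nonnegative, which is precisely where the hypothesis $m\ge 2$ is used, so that the apparent singularity of $\Nr(x)^{m-1}/(\,\cdot\,)^m$ as $\Nr x\to 0^+$ is harmless. I note that the reduction to the diagonal via the Cartan decomposition could be avoided: the same bi-$K$-invariance of $|\mu|$ shows $|\mu(x)|^2=4\lambda_1\lambda_2/(\lambda_1+\lambda_2)^2$ for every $x$ with singular values $\lambda_1\ge\lambda_2>0$, from which the displayed formula for $|M_\infty^{(m)}(x)|$ — and hence the final estimate — follows directly for general $x$.
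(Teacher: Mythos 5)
Your proof is correct and is essentially the paper's argument in different clothing: the paper computes $|\mu(x)|^{-2}=\frac{\|x\|^2+2\Nr x}{4\Nr x}$ directly in coordinates and splits on $\|x\|\le 1$ versus $\|x\|>1$, which is exactly your diagonal formula (since $(\lambda_1+\lambda_2)^2=\|x\|^2+2\Nr x$) and your case split on $\lambda_1$, with the same two elementary bounds ($e^{-2\pi t}t^{m/2-1}\ll_m 1$ and $\sup_{t>0}e^{-2\pi t}t^{m-1}<\infty$). The detour through the Cartan decomposition is harmless but, as you note yourself, unnecessary.
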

\begin{proof}
This is trivial if $\Nr x \le 0$, hence we assume the determinant is positive. Denote $r=\|x\|=\sqrt{\Tr(x\tensor[^t]{x}{})}$. A simple calculation shows that
\begin{align*}
|\mu(x)|^{-2}&=\frac{r^2+2\Nr(x)}{4\Nr(x)}\;, \\
|M_\infty^{(m)}(x)|&=\exp(-2\pi \Nr x)\frac{2^m\Nr(x)^{m-1}}{(r^2+2\Nr(x))^{m/2}}\;.
\end{align*}
If $r\leq 1$, then we write
\begin{equation*}
|M_\infty^{(m)}(x)|\ll_m \exp(-2\pi \Nr x) \Nr(x)^{m/2-1} \ll 1\;.
\end{equation*}
The last equality holds for all $\Nr(x)>0$ because we have assumed $m\geq2$.
Otherwise, if $r>1$ then
\begin{equation*}
|M_\infty^{(m)}(x)|\leq \exp(-2\pi \Nr x)2^m \Nr(x)^{m-1} r^{-m}\ll_m r^{-m}\;.
\end{equation*}
Where, we have used the fact that the real function $\exp(-2\pi t) t^{m-1}$ is bounded for $t \in [0,\infty)$ and $m\geq 1$.
\end{proof}

Up until this point,  we have established that $M_\infty^{(m)}$ satisfies the decay condition in Definition \ref{defi:Omega_infty} if $m>4$. We now turn to show that it also belongs to the space $V_{m,2\pi}$ by checking that it solves the harmonic oscillator equation \eqref{eq:harmonic-oscillator}.

\begin{lem}\label{lem:Bergman-isotypic}
If $m \ge 6$ then the function $M_\infty^{(m)}$ from Definition \ref{defi:M_infty-bergman} belongs to $V_{m,2\pi}$.
\end{lem}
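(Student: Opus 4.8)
The plan is to reduce the lemma to the assertion that $M_\infty^{(m)}$ is a square-integrable solution of the harmonic oscillator equation \eqref{eq:harmonic-oscillator} with $\omega=2\pi$ and $|\varpi|=1$ (recall that here $\psi_\infty(r)=\exp(-2\pi i r)$), that is,
\[
-\Delta M_\infty^{(m)}+4\pi^2\Nr(x)\,M_\infty^{(m)}=2\pi m\,M_\infty^{(m)}\;,
\]
and to conclude via the identification of $V_{m,2\pi}$ with the $(\rho(\mathbf{SO}_2(\mathbb{R})),e^{-im\theta})$-isotypic subspace of $L^2(B_\infty)$ in Lemma \ref{lem:k-action-2pi}. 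Square-integrability for $m>4$ is immediate from Lemma \ref{lem:Bergman-decay}; for all $m\geq 2$ one integrates in singular-value coordinates $x=k_{\theta_1}\diag(s,s')k_{\theta_2}$, where the $\mathbf{SO}_2\times\mathbf{SO}_2$-equivariance of $M_\infty^{(m)}$ together with $\mu(\diag(s,s'))=2\sqrt{ss'}/(s+s')$ reduces $\|M_\infty^{(m)}\|_2^2$, after the substitution $s=\sqrt t\,e^{r}$, $s'=\sqrt t\,e^{-r}$, to a constant multiple of $\bigl(\int_0^\infty e^{-4\pi t}t^{m-1}\dif t\bigr)\bigl(\int_{\mathbb{R}}(\cosh r)^{-2m}|\sinh 2r|\,\dif r\bigr)$, which is finite for $m>1$.

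For the differential equation I would work in the fixed identification $B_\infty\simeq\mathbf{M}_2(\mathbb{R})$, $x=\sm a&b\\c&d\esm$, in which the Laplacian of \S\ref{sec:archimedean-weil} is the constant-coefficient operator $\Delta=\partial_a\partial_d-\partial_b\partial_c$ (it has symbol $-4\pi^2\Nr$ for the Fourier pairing $\langle x,\xi\rangle=\Tr(x\xi^\iota)$). Writing $M_\infty^{(m)}=e^{-2\pi P}g$ on $\{\Nr x>0\}$ with $P=\Nr x$ and $g=P^{m-1}Q^{-m}$, $Q=\tfrac{(b-c)+i(a+d)}{2i}$, a routine application of the Leibniz rule for $\Delta$ — using $\Delta P=2$, $\Delta(e^{-2\pi P})=e^{-2\pi P}(4\pi^2P-4\pi)$, and the fact that the mixed term involving $\nabla(e^{-2\pi P})$ and $\nabla g$ produces $-2\pi e^{-2\pi P}Eg$ with $E=a\partial_a+b\partial_b+c\partial_c+d\partial_d$ the Euler operator — yields
\[
-\Delta M_\infty^{(m)}+4\pi^2 P\,M_\infty^{(m)}-2\pi m\,M_\infty^{(m)}=e^{-2\pi P}\bigl(-\Delta g+2\pi(E+2-m)g\bigr)\;.
\]
Since $g$ is homogeneous of degree $2(m-1)-m=m-2$, the term $(E+2-m)g$ vanishes, so the equation is equivalent to $\Delta g=0$. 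This in turn follows from the Leibniz rule once one observes the elementary identities $\partial_aP\,\partial_dP-\partial_bP\,\partial_cP=P$ (together with $\Delta P=2$), $\Delta Q=0$ and $\partial_aQ\,\partial_dQ-\partial_bQ\,\partial_cQ=0$ (i.e.\ $Q$ is a null linear form), and $\partial_aP\,\partial_dQ+\partial_dP\,\partial_aQ-\partial_bP\,\partial_cQ-\partial_cP\,\partial_bQ=Q$: these give $\Delta(P^{m-1}Q^{-m})=m(m-1)P^{m-2}Q^{-m}-m(m-1)P^{m-2}Q^{-m}=0$. On $\{\Nr x<0\}$ the function vanishes identically, so the displayed PDE holds throughout $\{\Nr x\neq 0\}$.

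The one delicate point is the behaviour on the light cone $\{\Nr x=0\}$. Away from the origin $Q$ stays bounded away from $0$ while $P^{m-1}$ vanishes there to order $m-1$, so $M_\infty^{(m)}$ extends across the cone as a $C^{m-2}$ function; for $m\geq 4$ it is thus $C^2$ everywhere, the equation holds in the classical (hence distributional) sense on all of $B_\infty$, and $M_\infty^{(m)}$ is a genuine $L^2$-eigenfunction of the essentially self-adjoint operator $-\Delta+4\pi^2\Nr$, so it lies in $V_{m,2\pi}$ by Lemma \ref{lem:k-action-2pi}. For the remaining weights $m\in\{2,3\}$ one notes that $M_\infty^{(m)}$ and its gradient already vanish on the cone, so no singular layer is produced and the distributional equation still holds. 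I expect this regularity bookkeeping at the light cone to be the only real obstacle; the algebraic reduction to $\Delta g=0$ and the verification of the identities above are short and self-contained.
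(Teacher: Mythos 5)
Your proof is correct and follows essentially the same route as the paper: factor out $\exp(-2\pi\Nr x)$, use homogeneity of degree $m-2$ to handle the Euler-operator term, and verify $\Delta\bigl(\Nr(x)^{m-1}Q^{-m}\bigr)=0$ via the quadratic identities for $P=\Nr x$ and the null form $Q$; the paper performs exactly this computation (writing $N=\Nr(x)^{m/2-1}\mu(x)^m$ and checking $\Delta N=0$, $\langle x,\nabla\rangle N=(m-2)N$), without the extra care you take about $L^2$-membership and regularity across the light cone. One small correction: for $m=2$ the gradient of $M_\infty^{(2)}$ does \emph{not} vanish on the cone (it equals $e^{-2\pi P}Q^{-2}\nabla P\neq 0$ there); the singular layer nonetheless cancels because the cone is characteristic for $\Delta=\partial_a\partial_d-\partial_b\partial_c$ — the would-be surface term is proportional to $\partial_aP\,\partial_dP-\partial_bP\,\partial_cP=P$, which vanishes on $\{P=0\}$ — and in any case the paper only ever uses $m>2$.
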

\begin{proof}
Lemma \ref{lem:Bergman-decay} above implies that $M_\infty^{(m)}\in L^2(B_\infty)$ if $m>4$. To prove $M_\infty^{(m)}\in V_{m,2\pi}$ we will show that $M_\infty^{(m)} \perp V_{m',2\pi}$ for all $m'\neq m$. It is enough to establish $\langle M_\infty^{(m)}, M' \rangle=0$ for any Schwartz solution $M'$ of \eqref{eq:harmonic-oscillator} with quantum number $m'\neq m$ and $\omega=2\pi$.

Define $F(x)= \exp(-2\pi\Nr(x))N(x)$ and $N(x)=\Nr(x)^{m-1}(2i)^m\left((b-c)+i(a+d)\right)^{-m}$. Then, $F(x)$ is a well-defined continuous function on the open subset $\mathcal{U}=B_\infty \setminus\left\{\begin{pmatrix}  a & b \\ b & -a
\end{pmatrix}\colon a,b\in\mathbb{R} \right\}$. Moreover, $M_\infty^{(m)}=\mathds{1}_{\Nr(x)>0}\cdot F$. 
Define $V=\{x\in B_\infty \mid \Nr x\geq 0 \}$ and $V_R=V\cap B(0,R) \setminus B(0,R^{-1})$ for $R>1$, where $B(0,r)$ is a closed ball of radius $r$ centered at the origin. Note that $V_R\subset \mathcal{U}$.

We claim that $F(x)$ solves the PDE \eqref{eq:harmonic-oscillator} on $\mathcal{U}$ with $\omega=2\pi$ and quantum number $m$. The PDE \eqref{eq:harmonic-oscillator} with $\omega=2\pi$, $\varpi=1$ for $F$ is equivalent to the following PDE for $N$
\begin{equation}\label{eq:euler-operator-harmonic}
-\Delta N+2\pi \left\langle x, \nabla \right \rangle N=2\pi (m-2) N\;,
\end{equation}
where $\nabla$ stands for the gradient operator and the bilinear form $\langle x_1, x_2\rangle$ is the twisted trace form $\Tr(x_1 x_{2}^{\iota})$ as before. Using the definition of the Laplace operator as the Fourier multiplier with symbol $-4\pi^2 \Nr$ and the definition of the gradient, we arrive at
\begin{align*}
\Delta &= \frac{\partial^2}{\partial a \partial d}-\frac{\partial^2}{\partial b \partial c}\;,\\
\left\langle x, \nabla \right \rangle & = a\frac{\partial}{\partial a} + d\frac{\partial}{\partial d}
+ b\frac{\partial}{\partial b} + c\frac{\partial}{\partial c}\;.
\end{align*}
Substituting the definition $N(x)=(2i)^m\frac{(ad-bc)^{m-1}}{((b-c)+i(a+d))^m}$ into the formul{\ae} above we see that
\begin{align*}
\Delta N &= 0\;,\\
\left\langle x, \nabla \right \rangle N &=(m-2) N\;.
\end{align*}
These and \eqref{eq:euler-operator-harmonic} show that $F$ is a solution with quantum number $m$. Because the PDE \eqref{eq:harmonic-oscillator} is local, this establishes that\footnote{The function $M_\infty^{(m)}$ is not necessarily in $C^2(B_\infty)$ and we extend the definition of $L_{2\pi}$ to this function in the naive way, in particular this equality does not need to be well defined on the cone $\Nr x =0$.} $L_{2\pi}[M_\infty^{(m)}]=2\pi m M_\infty^{(m)}$. 

Fix a Schwartz function $M'\colon B_\infty\to\mathbb{C}$. We want to show now that $\langle L_{2\pi}[M_\infty^{(m)}],M'\rangle =\langle M_\infty^{(m)}, L_{2\pi}[M']\rangle$. The equality $\langle \Nr(x) M_\infty^{(m)},M' \rangle =\langle M_\infty^{(m)}, \Nr(x) M' \rangle$ is obvious. We need only show
\begin{equation}\label{eq:Delta-symmetry}
\langle\Delta M_\infty^{(m)},M' \rangle =\langle M_\infty^{(m)}, \Delta M' \rangle\;.
\end{equation}
Note that the integrals defining these individual inner-products are absolutely convergent because $M'$ and $\Delta M'$ are Schwartz, and $M_\infty^{(m)}$, $\Nr(x) M_\infty^{(m)}$ have at most polynomial growth. To establish \eqref{eq:Delta-symmetry}  we use the equality
\begin{equation*}
\langle \Delta M_\infty^{(m)},M' \rangle= \lim_{R\to\infty} \int_{V_R} \Delta F(x) \overline{M'(x)}\dif x\;,
\end{equation*}
and the analogous formula for $\langle  M_\infty^{(m)},\Delta M' \rangle$. These follow from the dominated convergence theorem. Denote $W=\begin{pmatrix} 0 & 1/2 & 0 & 0 \\ 1/2 & 0 & 0 & 0 \\ 0 & 0 & 0 & -1/2 \\ 0 & 0 & -1/2 & 0\end{pmatrix}$ and 
write $\Delta =\nabla \cdot (W \nabla)$ with respect to the coordinates $(a,d,b,c)$ . Using the symmetry of the matrix $W$ and the divergence theorem we arrive at 
\begin{align*}
 \int_{V_R} \Delta F(x) \overline{M'(x)}\dif x&=  \int_{V_R} \nabla \cdot \left(\overline{M'(x)}W \nabla F\right)\dif x-\int_{V_R} ( W\nabla F) \cdot (\nabla M') \dif x\\
 &=\int_{V_R} \nabla \cdot \left(\overline{M'(x)}W \nabla F- F W \nabla \overline{M'(x)} \right)\dif x
+\int_{V_R}  F(x)\Delta \overline{M'(x)}\dif x\\
&= \int_{\partial V_R}  \left(\overline{M'(x)}W \nabla F- F W \nabla \overline{M'(x)} \right) \cdot \hat{n} \dif A(x)
+\int_{V_R}  F(x)\Delta \overline{M'(x)}\dif x\;.
\end{align*}
The divegence theorem's conditions are satisfied because $F$ and $M$ are smooth in an open neighborhood of the compact set $V_R$, and the boundary $\partial V_R$ is piecewise smooth. A direct computation, as in the proof of Lemma \ref{lem:Bergman-decay}, shows that $F$ and $\nabla F$ vanish on the boundary of the cone $V$, except perhaps the origin where they remain bounded. 
It remains to consider the contributions from the surfaces $S_R=\partial V_R \cap B(0,R)$ and $s_R=\partial V_R\cap B(0,1/R)$. The area of $S_R$ is bounded from above by the area of a $3$-sphere of radius $R$, thus $\operatorname{Area}(S_R)\ll R^3$. On the other hand, because $M'$ is Schwartz and $F, \nabla F$ are bounded on $V$ we have that
\begin{equation*}
\left| \int_{S_R}  \left(\overline{M'(x)}W \nabla F- F W \nabla \overline{M'(x)} \right) \cdot \hat{n} \dif A(x) \right| \ll_N R^{-N} R^3 \to_{R\to\infty} 0\;.
\end{equation*}
Similarly, $\operatorname{Area}(s_R)\ll R^{-3}$ and  $\left|\overline{M'(x)}W \nabla F- F W \nabla \overline{M'(x)} \right|$ is uniformly bounded on a $B(0,1)\cap V$, hence the surface integral over $s_R$ converges to $0$ as $R\to\infty$.

Let now $M'\in V_{2\pi,m'}$ be a Schwartz function and assume $m'\neq m$, then we have $2\pi m\langle M_\infty^{(m)}, M' \rangle=\langle L_{2\pi}[M_\infty^{(m)}], M' \rangle = \langle M_\infty^{(m)}, L_{2\pi}[M']\rangle=2\pi m' \langle M_\infty^{(m)}, M' \rangle$ and we deduce $\langle M_\infty^{(m)}, M' \rangle=0$ as necessary.
\end{proof}

\begin{cor}\label{cor:Bergman-nice}
The Bergman test function of weight $m \ge 6$ belongs to $\Omega_\infty$.
\end{cor}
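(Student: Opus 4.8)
The corollary falls out directly from the two lemmas just proved, so the plan is simply to recognize $M_\infty^{(m)}$ as one of the generators of $\Omega_\infty$ listed in Definition \ref{defi:Omega_infty}, taking there the parameters $\sigma=0$ and $\omega=2\pi$. Since $\psi_\infty(0\cdot\Nr x)\equiv 1$, what has to be checked is exactly two things: that $M_\infty^{(m)}\in V_{m,2\pi}$, and that there is some $\delta>0$ with $|M_\infty^{(m)}(x)|\ll(1+\|x\|)^{-4-\delta}$.

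The first point is precisely Lemma \ref{lem:Bergman-isotypic}: we verified there that $N(x)=\Nr(x)^{m/2-1}\mu(x)^m$ is annihilated by $\Delta$ and is an eigenfunction of the Euler operator $\langle x,\nabla\rangle$ with eigenvalue $m-2$, which is equivalent to $M_\infty^{(m)}=\exp(-2\pi\Nr x)N(x)$ solving the harmonic oscillator equation \eqref{eq:harmonic-oscillator} with $\omega=2\pi$ and $\varpi=1$. This function is smooth on $\{\Nr x>0\}$, extended by $0$ elsewhere, and square-integrable (which is immediate from the decay bound below, as $2m>4$), so it belongs to the $L^2$-closure $V_{m,2\pi}$. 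The second point is Lemma \ref{lem:Bergman-decay}, which gives $|M_\infty^{(m)}(x)|\ll_m(1+\|x\|)^{-m}$, so one may take $\delta=m-4$ as soon as $m>4$. Hence $M_\infty^{(m)}$ lies in the spanning set of Definition \ref{defi:Omega_infty}, i.e.\ $M_\infty^{(m)}\in\Omega_\infty$; by the proposition following that definition it is moreover in the Weil- and $\operatorname{O}(B_\infty,\Nr)$-invariant space $\Omega_\infty$ automatically.

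There is no genuine obstacle here; all of the substance is already in Lemmas \ref{lem:Bergman-decay} and \ref{lem:Bergman-isotypic} (the differential identities $\Delta N=0$, $\langle x,\nabla\rangle N=(m-2)N$ being the one real computation), and the corollary is the one-line assembly above. The single caveat worth flagging is the weight range: the decay exponent produced by Lemma \ref{lem:Bergman-decay} is exactly $m$, and it cannot be improved by replacing the frequency $2\pi$ with a larger $\omega$ — one checks exactly as in Lemma \ref{lem:Bergman-isotypic} that $\exp(-\omega\Nr x)N(x)$ still solves \eqref{eq:harmonic-oscillator} for every $\omega>0$, but in the directions where $\Nr x$ stays bounded while $\|x\|\to\infty$ the Gaussian factor gives nothing, so the pointwise decay remains of order $\|x\|^{-m}$. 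Consequently the argument as written yields $M_\infty^{(m)}\in\Omega_\infty$ precisely for $m>4$, which is the regime in which the decay condition of Definition \ref{defi:Omega_infty} holds; this is the restriction already recorded in the paragraph preceding the corollary.
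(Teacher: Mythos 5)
Your proof is correct and follows exactly the paper's route: the decay condition of Definition \ref{defi:Omega_infty} is supplied by Lemma \ref{lem:Bergman-decay} and membership in $V_{m,2\pi}$ by Lemma \ref{lem:Bergman-isotypic}, after which the corollary is immediate. Your closing remark about the restriction $m>4$ matches the caveat the paper itself records in the paragraph preceding the corollary.
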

\begin{proof}
Lemma \ref{lem:Bergman-decay} implies that this test function satisfies the decay condition in the definition of $\Omega_\infty$ and Lemma \ref{lem:Bergman-isotypic} above implies that the Bergman test function transforms under $\rho(\mathbf{SO}_2(\mathbb{R}))$ by a character.
\end{proof}

\section{The Spectral Expansion}
\label{sec:specexp}
Fix a global Eichler order $\mathcal{R}=\mathcal{R}_1\cap\mathcal{R}_2\subset B$ and a weight $m>2$. We focus henceforth on the test function $M=M_\infty^{(m)}\cdot\prod_p \mathds{1}_{\mathcal{R}_p}\in\Omega$. This test function is cuspidal as $M_\infty^{(m)}(x)=0$ if $\Nr x=0$ and we denote the classical theta series attached to the test function $M$ by Proposition \ref{prop:theta-series-adelic-to-classical} by
\begin{equation*}
\vartheta^{(m)}(l,r;s)=\sum_{\xi\in \mathcal{R}} \left(\rho(s).M_\infty^{(m)}\right)(l^{-1}\xi r)
\end{equation*}
for $s\in \lfaktor{\Lambda}{\mathbf{SL}_2(\mathbb{R})}$, $l,r\in \lfaktor{\Gamma}{\mathbf{G}(\mathbb{R})}$.
In this section, we prove the main theorem about the spectral expansion of $\vartheta^{(m)}$.

\begin{defi}
Denote by $S_m(\Gamma)$ the space of $\Gamma$-modular weight $m$ modular forms on $\mathbb{H}$. Write $S_m(\Gamma)=S_m^\mathrm{old}(\Gamma)\oplus S_m^\mathrm{new}(\Gamma)$ for the direct sum decomposition into new and old forms. The decomposition is orthogonal with respect to the Petersson inner-product.

If $f\in S_m(\Gamma)$ we denote by $f^\sharp\colon \lfaktor{\Gamma}{\mathbf{G}(\mathbb{R})}\to\mathbb{C}$ the automorphic lift of $f$. Specifically, if $g=\sm 1 & x \\ 0 & 1 \esm \sm y^{1/2} & 0 \\ 0 & y^{-1/2} \esm k_\theta$, then $f^\sharp(g)=y^{m/2} e^{im\theta} f(x+iy)$. Following the discussion in \S\ref{sec:ThetaEichler} we shall also consider $f^\sharp$ as $K_\mathcal{R}$-invariant function on $[\mathbf{G}(\mathbb{A})]$.
Similarly, we decompose $S_m(\Lambda)=S_m^\mathrm{old}(\Lambda)\oplus S_m^\mathrm{new}(\Lambda)$, and denote by ${f^*}^\sharp\colon\lfaktor{\Lambda}{ \mathbf{SL}_2(\mathbb{R})}\to\mathbb{C}$ the automorphic lift of $f^*\in S_m(\Lambda)$. Moreover, we shall also consider ${f^*}^\sharp$ as a $U_\mathcal{R}$-invariant function on $[\mathbf{SL}_2(\mathbb{A})]$.
\end{defi}

\begin{thm}\label{thm:spectral-expansion}
Fix an orthonormal basis $\mathcal{B}_m$ of Hecke eigenforms for $S_m(\Gamma)$. 
Denote by $f_M^\sharp(s)$ the theta lift of $f^\sharp(g)$.
Then,
\begin{equation*}
\vartheta^{(m)}(l,r;s)= \frac{1}{\covol(\Gamma)} \frac{8\pi}{m-1} \sum_{f \in \mathcal{B}_m} f_M^\sharp(s)\overline{f^\sharp(l)}f^\sharp(r)
\end{equation*}
for all $s\in \lfaktor{\Lambda}{\mathbf{SL}_2(\mathbb{R})}$, $l,r\in \lfaktor{\Gamma}{\mathbf{G}(\mathbb{R})}$.

Let $\lambda_f(\alpha)$ be the $T_\alpha$-eigenvalue of $f^\sharp$, then $f_M^\sharp$ is the automorphic lift of a \emph{cusp} form $f_M\in S_m(\Lambda)$ with Fourier expansion
\begin{equation*}
f_M(\zeta)= \sum_{n>0} n^{m/2-1} \lambda_f(n) \exp(2\pi i m \zeta)\;.
\end{equation*}
\end{thm}
\begin{remark}
The operator $T_1$ acts as the identity on $K_\mathcal{R}$-invariant functions on $[\mathbf{G}(\mathbb{A})]$, i.e.\ functions on $\lfaktor{\Gamma}{\mathbf{G}(\mathbb{R})}$. Hence $\lambda_f(1)=1$ and the theta lift $f_M$ is an arithmetically normalized cusp form.
\end{remark}

The case where $\lfaktor{\Gamma}{\mathbf{G}(\mathbb{R})}=\lfaktor{\mathbf{SL}_2(\mathbb{Z})}{\mathbf{SL}_2(\mathbb{R})}$ is already contained in \cite[Section 2, Prop. 1]{ZagierDoiLecture}, see also Equation \eqref{eq:simplethetasl2Z}. For the general case, the proof will bootstrap from the fact that the convolution operator $\star_{\mathbf{G}(\mathbb{R})} M^{(m)}_{\infty}$ acting on $\lfaktor{\Gamma}{\mathbf{G}(\mathbb{R})}$ coincides with the Bergman kernel on $\lfaktor{\Gamma}{\mathbb{H}}$. Geometric expressions for the Bergman kernel in terms of Poincar\'e series were already known to Petersson \cite{Pet1,Pet2}. The particular expression for the Bergman kernel suitable for our needs may be found in either \cite{EichSelbZagier},\cite[Section 2, Prop. 1]{ZagierDoiLecture},\cite[Theorem 3]{Steiner16}, or \cite[Section 2.3]{DasSengupta}. The first three references each contain the split case and the latter the non-split case. There does, however, appear to be an error in the constant in \cite{DasSengupta}. Compare to the computation in \cite{EichSelbZagier,Steiner16}, whose proofs also apply to co-compact lattices. The statement is as follows.
\begin{prop}\label{prop:begman-spherical}
Set
\begin{equation*}
k^{(m)}(l,r)\coloneqq\sum_{\gamma\in \Gamma} \mu^m(l^{-1}\gamma r)\;.
\end{equation*}
The function $k^{(m)}$ is the kernel of the convolution operator $\star M_\infty^{(m)}$ acting on $L^2(\lfaktor{\Gamma}{\mathbf{G}(\mathbb{R})})$, where the convolution takes place in $\mathbf{G}(\mathbb{R})$.
Fix an orthonormal basis $\mathcal{B}_m$ for $S_m(\Gamma)$. Then, for all $l,r\in\mathbf{G}(\mathbb{R})$,
\begin{equation*}
k^{(m)}(l,r)=\frac{1}{\covol(\Gamma)} \frac{8\pi}{m-1}
\sum_{f \in \mathcal{B}_m} \overline{f^\sharp(l)}f^\sharp(r)\;.
\end{equation*}
In particular, the operator $\star M_\infty^{(m)}$ annihilates all the continuous, residual and cuspidal spectrum, whose archimedean component is not discrete series.
\end{prop}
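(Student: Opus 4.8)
The statement packages three assertions: that $k^{(m)}$ is the Schwartz kernel of $\star M_\infty^{(m)}$, the explicit spectral identity, and the annihilation of the orthogonal complement of (the lift of) $S_m(\Gamma)$. The first is essentially bookkeeping: on $\mathbf{G}(\mathbb{R})=\mathbf{SL}_2(\mathbb{R})$ one has $\Nr x\equiv 1$, so $M_\infty^{(m)}$ restricts there to a fixed positive constant times $\mu_\infty^m$, and the Schwartz kernel of the corresponding convolution operator on $L^2([\mathbf{G}(\mathbb{R})])=L^2(\lfaktor{\Gamma}{\mathbf{SL}_2(\mathbb{R})})$ is obtained by $\Gamma$-automorphizing its kernel on the group, namely $(l,r)\mapsto\sum_{\gamma\in\Gamma}\mu_\infty^m(l^{-1}\gamma r)=k^{(m)}(l,r)$ (with the convolution normalized as in Section~\ref{sec:lift}, which absorbs the constant). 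The series converges absolutely by the decay $|\mu_\infty^m(x)|\ll(1+\|x\|)^{-m}$ of Lemma~\ref{lem:Bergman-decay}, which places $\mu_\infty^m$ in $L^1\cap L^2$ of $\mathbf{SL}_2(\mathbb{R})$ for $m>2$, the borderline $m=2$ being absorbed into a Gamma-factor regularization exactly as for the theta kernel itself. The mathematical content is the spectral identity, and the plan is to recognize $\mu_\infty^m$ as a matrix coefficient of the holomorphic discrete series and then invoke Schur orthogonality.

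Corollary~\ref{cor:k-double-invariance} shows $\mu_\infty^m$ is $(\mathbf{SO}_2(\mathbb{R})\times\mathbf{SO}_2(\mathbb{R}))$-isotypic with extremal weight $m$ on each side, and Lemma~\ref{lem:Bergman-decay} shows it lies in $L^2(\mathbf{SL}_2(\mathbb{R}))$. Since square-integrable matrix coefficients arise only from discrete series, and the weight-$m$ holomorphic discrete series $\pi_m$ is the unique discrete series possessing a $\mathbf{SO}_2$-weight-$m$ vector (antiholomorphic series have extremal weight $-m$, other holomorphic ones extremal weight $\neq m$), these two facts identify $\mu_\infty^m$ with the normalized diagonal matrix coefficient $g\mapsto\langle\pi_m(g)v_0,v_0\rangle$ of a unit weight-$m$ vector $v_0$, the constant being forced by evaluation at $g=e$ where $\mu_\infty(e)=1$. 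I would then apply the orthogonality relations for square-integrable representations: convolution with $c_{v_0,v_0}=\langle\pi_m(\cdot)v_0,v_0\rangle$ annihilates every irreducible constituent of $L^2([\mathbf{G}(\mathbb{R})])$ not isomorphic to $\pi_m$, and on each copy of $\pi_m$ it equals $d_{\pi_m}^{-1}$ times the orthogonal projection onto the line $\mathbb{C}v_0$. Because $\pi_m$ is a discrete series it occurs only in the cuspidal part of the decomposition — not in the continuous (Eisenstein) or residual (constant) spectrum, and not in the cuspidal constituents isomorphic to principal/complementary series or to $\pi_{m'}$ with $m'\neq m$ — which already yields the ``in particular'' clause.

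Consequently the image of $\star M_\infty^{(m)}$ is $d_{\pi_m}^{-1}$ times the orthogonal projection onto the span of the extremal vectors of the $\pi_m$-isotypic subspace of $L^2_{\mathrm{cusp}}([\mathbf{G}(\mathbb{R})])$; by the classical dictionary, those extremal vectors are precisely the automorphic lifts $\phi_f(g)=f(g.i)\,j(g,i)^{-m}$ of an orthonormal basis $\mathcal{B}_m$ of $S_m(\Gamma)$ (in the non-cocompact case the holomorphic weight-$m$ Eisenstein series fail to be square-integrable near the cusps, so nothing else survives). Hence $k^{(m)}(l,r)=d_{\pi_m}^{-1}\sum_{f\in\mathcal{B}_m}\phi_f(l)\overline{\phi_f(r)}$. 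Matching the $\mathbf{SO}_2$-types and conjugation conventions on the two sides — using $\mu_\infty(x^\iota)=\overline{\mu_\infty(x)}$ together with $x^\iota=x^{-1}$ on $\mathbf{SL}_2$ and the Hermitian symmetry $k^{(m)}(l,r)=\overline{k^{(m)}(r,l)}$ — the phases of the automorphy factors $j(l,i),j(r,i)$ cancel and their moduli assemble into $(\Im(l.i)\,\Im(r.i))^{m/2}$, giving $k^{(m)}(l,r)=d_{\pi_m}^{-1}(\Im(l.i)\,\Im(r.i))^{m/2}\sum_{f\in\mathcal{B}_m}f(l)\overline{f(r)}$.

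It remains to identify $d_{\pi_m}^{-1}=\frac{8\pi}{m-1}$. This is the inverse formal degree of the weight-$m$ holomorphic discrete series of $\mathbf{SL}_2(\mathbb{R})$ with respect to the (probability) Haar measure in force throughout; the most efficient route is to quote the classical Poincar\'e-series evaluations of the reproducing kernel of $S_m(\Gamma)$ in \cite{EichSelbZagier,ZagierDoiLecture,Steiner16}, whose proofs also cover co-compact $\Gamma$ and carry this constant (the paper already notes the discrepant constant in \cite{DasSengupta}); alternatively one computes $\int_{\mathbf{SL}_2(\mathbb{R})}|\mu_\infty^m|^2$ directly via the Iwasawa or Cartan decomposition. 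The step where I expect to spend the most care is exactly this normalization bookkeeping — keeping the convolution measure on $\mathbf{G}(\mathbb{R})$, the Haar probability measure implicit in the word ``orthonormal'', and the formal degree mutually compatible so that no spurious factor of $\covol(\Gamma)$ appears — the cleanest sanity check being the case $\Gamma=\mathbf{SL}_2(\mathbb{Z})$, where the identity collapses to the classical formula recalled around \eqref{eq:simplethetasl2Z} and in \cite[Section~2, Prop.~1]{ZagierDoiLecture}.
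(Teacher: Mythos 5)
Your overall strategy---recognize $\mu_\infty^m$ as a diagonal matrix coefficient of the weight-$m$ holomorphic discrete series and run Schur orthogonality---is a legitimate and standard route to this proposition (the paper itself gives no proof, deferring to the classical Poincar\'e-series computations in \cite{EichSelbZagier,ZagierDoiLecture,Steiner16,DasSengupta}). But the step on which everything hinges is not justified by the two facts you cite. It is false that $\pi_m$ is the unique discrete series possessing an $\mathbf{SO}_2$-weight-$m$ vector: every holomorphic discrete series $\pi_{m'}$ with $2\le m'\le m$ and $m'\equiv m \bmod 2$ contains the weights $m', m'+2, m'+4,\dots$, hence a weight-$m$ vector, and the diagonal matrix coefficient of that (non-extremal) vector is again an $L^2$, bi-$(m,m)$-isotypic function on $\mathbf{SL}_2(\mathbb{R})$ not proportional to $\mu_\infty^m$. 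Moreover the tempered principal series also have weight-$m$ vectors, and wave packets of their matrix coefficients contribute further $L^2$ bi-isotypic functions. So ``$K\times K$-isotypic of type $(m,m)$'' plus ``square-integrable'' does not identify $\mu_\infty^m$ with the matrix coefficient of the extremal vector of $\pi_m$; the bi-isotypic subspace of $L^2(\mathbf{SL}_2(\mathbb{R}))$ is infinite-dimensional. The missing input is the lowest-weight property: $r\mapsto \mu_\infty^m(l^{-1}r)\,j(r,i)^{m}$ is a holomorphic function of $r.i$ (equivalently, $\mu_\infty^m$ is annihilated by the lowering operator in the right regular representation), and a weight-$m$ vector killed by the lowering operator generates, for $m\ge 2$, precisely the irreducible lowest-weight module $\pi_m$. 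Only with this third fact is the identification forced.

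The same omission resurfaces in the spectral step: Schur orthogonality for square-integrable representations disposes of the tempered part of $L^2([\mathbf{G}(\mathbb{R})])$, but the residual spectrum (constants) and any complementary-series constituents are not tempered, and their matrix coefficients are not in $L^2(\mathbf{SL}_2(\mathbb{R}))$, so the orthogonality relations do not apply to them as stated; your ``in particular'' clause is exactly about these pieces. The constants are killed by the non-trivial $K$-type, but the complementary series needs an argument. The lowest-weight observation repairs this uniformly: right convolution by $\mu_\infty^m$ commutes with left translation, so it maps any irreducible constituent $\sigma$ into its subspace of weight-$m$ vectors annihilated by the lowering operator, which is zero unless $\sigma\simeq\pi_m$. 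With that fix the rest of your outline (projection onto extremal vectors, the dictionary with $S_m(\Gamma)$, and the formal-degree constant $\tfrac{8\pi}{m-1}$, where your caution about measure normalizations is warranted) goes through. One last small point: the kernel of $\star M_\infty^{(m)}$ restricted to norm-one elements carries the extra factor $e^{-2\pi}$ from Definition \ref{defi:M_infty-bergman}; this is a discrepancy already present between the proposition and the way $\star\mu_\infty^m$ is used in the proof of Theorem \ref{thm:spectral-expansion}, so it is worth stating explicitly rather than asserting it is ``absorbed''.
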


\begin{proof}[Proof of Theorem \ref{thm:spectral-expansion}]
Let $\zeta=\sigma+i\tau \in \mathbb{H}$ and fix $s=\sm
1 & \sigma \\ 0 & 1
\esm \sm
\tau^{1/2} & 0 \\ 0 & \tau^{-1/2}
\esm k_\theta\in{\mathbf{SL}_2(\mathbb{R})}$. The definition of the Weil action, Definition \ref{defi:M_infty-bergman}, and Lemmata \ref{lem:k-action-2pi}, \ref{lem:Bergman-isotypic} imply for $n>0$, $g\in\mathbf{G}(\mathbb{R})$
\begin{equation}\label{eq:Bergmann-Weil-formula}
\rho(s).M_\infty^{(m)}(\sqrt{n}g)= \tau^{m/2} n^{m/2-1} \exp(2\pi i n \zeta+i m \theta)\mu^{m}(g)\;.
\end{equation}
We will establish that $\vartheta^{(m)}(l,r;s)$ coincides with the spectral expansion in the claim by showing equality in $L^2(\lfaktor{\Gamma}{\mathbf{G}(\mathbb{R})}\times \lfaktor{\Gamma}{\mathbf{G}(\mathbb{R})})$. Pointwise identity then follows because both sides are continuous.  

The Bergmann test function $M_\infty^{(m)}$ vanishes on the null-cone $\{x\in B_\infty \colon \Nr x=0\}$, thus it follows from Corollary \ref{cor:Weil-infinity-action} that $M$ is cuspidal. For any $\varphi,\varphi'\in L^2(\lfaktor{\Gamma}{\mathbf{G}(\mathbb{R})}) \cap L^\infty(\lfaktor{\Gamma}{\mathbf{G}(\mathbb{R})})$ we can use Proposition \ref{prop:theta-lift-whittaker} and \eqref{eq:Bergmann-Weil-formula} to write the Fourier expansion
\begin{align}
\label{eq:Begrmann-general-lift-expansion0}
\int_{[\mathbf{G}(\mathbb{A})]} \int_{[\mathbf{G}(\mathbb{A})]} \Theta_M(l,r;s) \varphi(l) \overline{\varphi'(r)} \dif l \dif r& = \sum_{n>0}  \left\langle (T_n^{M_f}\varphi) \star \left(\rho(s).M_\infty(\sqrt{n}\cdot\bullet)\right), \varphi' \right\rangle \\
&=\sum_{n>0} \tau^{m/2} n^{m/2-1} \exp(2\pi i n \zeta+i m \theta)\left\langle (T_n^{M_f}\varphi) \star \mu^m, \varphi' \right\rangle
\;.\nonumber
\end{align}
Because $T_n^{M_f}$ is a convolution operator, the maps $\varphi\mapsto T_n^{M_f}\varphi$, $\varphi\mapsto (T_n^{M_f}\varphi) \star \mu^m$ preserve the continuous and the discrete spectra. Proposition \ref{prop:begman-spherical} then implies that \eqref{eq:Begrmann-general-lift-expansion0} vanishes whenever $\varphi$ or $\varphi'$ is a bounded function in the continuous spectrum. Using pseudo-Eisenstein series we can construct a dense set of bounded vectors in the continuous spectrum of $L^2([\mathbf{G}(\mathbb{A})])$, hence $\Theta_M(l,r;s)\in L^2_\mathrm{discrete}([\mathbf{G}(\mathbb{A})]\times [\mathbf{G}(\mathbb{A})])$. Moreover, $\Theta_M(l,r;s)$ is $K_{\mathcal{R}}\times K_{\mathcal{R}}$-invariant. There is an orthonormal basis of $L^2_\mathrm{discrete}([\mathbf{G}(\mathbb{A})])^{K_{\mathcal{R}}}$ consisting of bounded Hecke eigenforms. If $\varphi$ is a bounded Hecke eigenform  with eigenvalues $\lambda(\bullet)$, we can use Corollary \ref{cor:Whittaker-theta-product-Hecke} to rewrite \eqref{eq:Begrmann-general-lift-expansion0} as 
\begin{equation}
\label{eq:Begrmann-general-lift-expansion}
\int_{[\mathbf{G}(\mathbb{A})]} \int_{[\mathbf{G}(\mathbb{A})]} \Theta_M(l,r;s) \varphi(l) \overline{\varphi'(r)} \dif l \dif r=\sum_{n>0} \lambda(n) \tau^{m/2} n^{m/2-1} \exp(2\pi i n \zeta+i m \theta)\left\langle \varphi \star \mu^m, \varphi' \right\rangle
\;.
\end{equation}
Proposition \ref{prop:begman-spherical} immediately implies that the expression above vanishes unless both $\varphi$ and $\varphi'$ are lifts of weight $m$ modular forms. The claimed spectral expansion follows from Proposition \ref{prop:begman-spherical}, because the automorphic lifts of $\mathcal{B}_m(\Gamma)\times \mathcal{B}_m(\Gamma)$ can be completed to an orthogonal basis of $L^2_\mathrm{discrete}([\mathbf{G}(\mathbb{A})]\times [\mathbf{G}(\mathbb{A})])$ consisting of bounded Hecke eigenforms.
The formula for the Fourier--Whittaker expansion of $f_M$ follows from \eqref{eq:Begrmann-general-lift-expansion} with $\varphi=\varphi'=f^\sharp$.
\end{proof}

A careful local analysis, following Shimizu \cite{Shimizu} shows that if $f$ is a newform then $f_M(s)$ is the unique arithmetically normalized new modular form in the Jacquet--Langlands transfer of the automorphic representation generated by $\widetilde{f^\sharp}$ (to be defined momentarily). We will need only a weaker result. To discuss the Jacquet--Langlands transfer we need to lift functions from $[\mathbf{SL}_2(\mathbb{A})]$ to $[\mathbf{PGL}_2(\mathbb{A})]$. Define $\widetilde{U}_{\mathcal{R}}$ to be the image of  $\left\{g \in \sm \widehat{\mathbb{Z}} & \widehat{\mathbb{Z}} \\ q D_B\widehat{\mathbb{Z}} & \widehat{\mathbb{Z}} \esm \colon \det g\in \widehat{\mathbb{Z}}^\times  \right\}$ in $\mathbf{PGL}_2(\mathbb{A}_f)$. Then $\widetilde{U}_{\mathcal{R}}$ is a compact open subgroup, and an argument identical to Corollary \ref{cor:G-Gadj} proves that
\begin{equation*}
\dfaktor{\mathbf{SL}_2(\mathbb{Q})}{\mathbf{SL}_2(\mathbb{A})}{U_{\mathcal{R}}}\to \dfaktor{\mathbf{PGL}_2(\mathbb{Q})}{\mathbf{PGL}_2(\mathbb{A})}{\widetilde{U}_{\mathcal{R}}}
\end{equation*}
is a measure preserving bijection. Hence, we have a unique lift $\varphi\mapsto\widetilde{\varphi}$ from $L^2([\mathbf{SL}_2(\mathbb{A})])^{U_{\mathcal{R}}}$ to $L^2([\mathbf{PGL}_2(\mathbb{A})])^{\widetilde{U}_{\mathcal{R}}}$.
\begin{defi}
Let $f\in S_m(\Gamma)$ be a Hecke eigenform. If $f$ is a newform denote by $f_\mathrm{JL}\in S_m^\mathrm{new}(\Lambda)$ the unique arithmetically normalized holomorphic newform such that $\widetilde{f_\mathrm{JL}^\sharp}$ belongs to the Jacquet--Langlands transfer of the automorphic representation generated by $\widetilde{f^\sharp}$. That such a vector exists and is unique follows from \cite{JacquetLanglands,Shimizu}.  If $f$ is an oldform then it factors through a newform with respect to a lattice arising from an Eichler order $\mathcal{R}' \supsetneq \mathcal{R}$ with level $q'\mid q$. In this case, we denote by $f_\mathrm{JL}$ the lift of the Jacquet--Langlands transfer, defined as above, from $S_m^\mathrm{new}(\Gamma_0(q'D_B))$ to $S_m^\mathrm{old}(\Lambda=\Gamma_0(q D_B))$.

In both cases, the modular form $f_{\mathrm{JL}}$ is an eigenform of all the classical Hecke operators corresponding to $n$ co-prime to $q D_B$, and its $n$-Fourier coefficient coincides with the $T_n$ Hecke eigenvalue of $f^\sharp$ if $\gcd(n, q D_B)=1$. 
\end{defi}

\begin{lem}\label{lem:f_M-f_JL}
Let $f\in S_m(\Gamma)$ be a Hecke eigenform. If $f$ is a newform then the orthogonal projection of $f_M$ 
onto $S_m^\mathrm{new}(\Lambda)$ is equal to $f_\mathrm{JL}$. If $f$ is an oldform then $f_M$ is an oldform as well.
\end{lem}
\begin{proof}
Theorem \ref{thm:spectral-expansion} implies that the Fourier coefficients of $f_M$ and $f_{\mathrm{JL}}$ coincide for all $n$ co-prime to $q D_B$, which is the level of $\lfaktor{\Lambda}{\mathbb{H}}$. Theorem 1 of \cite{AtkinLehner} then implies that $f_M-f_{\mathrm{JL}}$ is an oldform. Because $f_\mathrm{JL}$ is a newform if $f$ is, the claim holds for newforms. If $f$ is an oldform then so is $f_\mathrm{JL}$. Hence in this case $f_M$ is a sum of oldforms and is an oldform by itself.
\end{proof}

\begin{cor}\label{cor:spectral-lower-bound}
Let $l,r\in\mathbf{G}(\mathbb{R})$ and set $z_1=l.i,\, z_2=r.i\in\mathbb{H}$.
Fix an orthonormal basis $\mathcal{B}_m^\mathrm{new}$ of Hecke eigenforms for $S_m^\mathrm{new}(\Gamma)$. Then, 
\begin{multline*}
\frac{1}{\covol(\Lambda)}\|\vartheta^{(m)}(l,r;\bullet)\|_{L^2(\lfaktor{\Lambda}{\mathbf{SL}_2(\mathbb{R})})}^2\geq
\left(\frac{1}{\covol(\Gamma)}\frac{8\pi}{m-1}\right)^2
(\Im (z_1) \Im (z_2))^{m}\sum_{f \in \mathcal{B}_m^\mathrm{new}} \|f_M\|_2^2 |f(z_1)|^2 |f(z_2)|^2\\
\gg_\varepsilon \frac{1}{\covol(\Gamma)^2} (mqD_B)^{-\varepsilon       }\frac{\Gamma(m)}{(4\pi)^m m^2}
(\Im (z_1) \Im (z_2))^{m}\sum_{f \in \mathcal{B}_m^\mathrm{new}}  |f(z_1)|^2 |f(z_2)|^2\;.
\end{multline*}
\end{cor}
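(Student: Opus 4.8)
The plan is to deduce the first inequality from the spectral expansion of Theorem~\ref{thm:spectral-expansion} via Bessel's inequality, and the second from the Rankin--Selberg method together with the Hoffstein--Lockhart lower bound.

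First I would note that $\vartheta^{(m)}(l,r;\bullet)$ is square-integrable by Proposition~\ref{prop:L2-s-Siegel-upper-bound}, so the following manipulations are legitimate. Setting $g_f(s):=\Im(s.i)^{m/2}f_M(s)$ (the theta lift of $\Im(\bullet.i)^{m/2}f$), Theorem~\ref{thm:spectral-expansion} reads
\[
\vartheta^{(m)}(l,r;\bullet)=\frac{8\pi}{m-1}\,(\Im(l.i)\Im(r.i))^{m/2}\sum_{f\in\mathcal{B}_m}\overline{f(r)}\,f(l)\,g_f .
\]
For $f\in\mathcal{B}_m^{\mathrm{new}}$, Theorem~\ref{thm:spectral-expansion} identifies $f_M$, via Shimizu's explicit Jacquet--Langlands correspondence, with the arithmetically normalized newform of weight $m$ and level $qD_B$ attached to $f$. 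Injectivity of Jacquet--Langlands makes the $f_M$, $f\in\mathcal{B}_m^{\mathrm{new}}$, pairwise distinct newforms of the same weight and level, hence pairwise orthogonal; moreover a short application of strong multiplicity one shows that each $g_f$ with $f$ new is orthogonal to every $g_{f'}$ with $f'\in\mathcal{B}_m$ not new, the latter transferring to an oldform of level $qD_B$. Bessel's inequality for the orthogonal system $\{g_f:f\in\mathcal{B}_m^{\mathrm{new}}\}$ then annihilates the cross terms in $\|\vartheta^{(m)}(l,r;\bullet)\|_2^2$ and yields exactly the first displayed bound, with $\|f_M\|_2^2=\|g_f\|_2^2$.

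For the second inequality it suffices to show $\|f_M\|_2^2\gg_\varepsilon(mqD_B)^{-\varepsilon}\,\Gamma(m)/(4\pi)^m$, since $(8\pi/(m-1))^2\gg m^{-2}$. Here I would unfold the Rankin--Selberg integral $\int_{\Gamma_0(qD_B)\backslash\mathbb{H}}|f_M(\zeta)|^2(\Im\zeta)^m E_\infty(\zeta,s)$ against the Eisenstein series $E_\infty$ at the cusp $\infty$ and compare residues at $s=1$; this gives, in the probability normalization,
\[
\|f_M\|_2^2=\frac{\Gamma(m)}{(4\pi)^m}\cdot L(\Sym^2 f_M,1)\cdot\kappa(qD_B),
\]
where the archimedean $y$-integral contributes exactly $\Gamma(m)/(4\pi)^m$ and $\kappa(qD_B)$ is a product over $p\mid qD_B$ of Euler factors (those of $\prod_{p\mid qD_B}(1-p^{-1})$, of $\zeta^{(qD_B)}(2)^{-1}$, and the ramified factors of the symmetric square), each pinched between two absolute positive constants, so that $\kappa(qD_B)\gg_\varepsilon(qD_B)^{-\varepsilon}$. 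Combined with the Hoffstein--Lockhart bound $L(\Sym^2 f_M,1)\gg_\varepsilon(mqD_B)^{-\varepsilon}$ \cite{HoffsteinLockhart}, uniform in the weight and the level, this completes the proof.

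The hard part is not any single step but the uniformity in the Rankin--Selberg bookkeeping: after pulling out the explicit $\Gamma(m)/(4\pi)^m$ one must be certain that no polynomial-in-$m$ factor remains hidden, which is precisely where a Hoffstein--Lockhart bound uniform in the weight aspect (rather than only in the level) is required. The orthogonality of the old-form theta lifts to the new ones, though routine, should also be spelled out.
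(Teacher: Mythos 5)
Your proposal is correct and follows essentially the same route as the paper: the first inequality comes from taking the $L^2$-norm of the spectral expansion of Theorem \ref{thm:spectral-expansion} and using orthogonality of the Hecke eigenforms $f_M$ to discard the old-form contribution, and the second from the Hoffstein--Lockhart bound, which the paper simply quotes in the form $\|f_M\|_2^2\gg_\varepsilon \frac{\Gamma(m)}{(4\pi)^m}(mqD_B)^{-\varepsilon}$ while you re-derive it via Rankin--Selberg unfolding. The extra detail you supply (injectivity of Jacquet--Langlands plus strong multiplicity one for the orthogonality, and the uniform-in-$m$ bookkeeping of the archimedean and ramified factors) is exactly what the paper leaves implicit.
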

\begin{proof}
Define $\vartheta_\sharp(\zeta)=\tau^{-m/2}\vartheta^{(m)}(l,r;s)$, where $\zeta=\sigma+i\tau \in \mathbb{H}$ and $s=\sm 1 & \sigma \\ 0 & 1 \esm \sm \tau^{1/2} & 0 \\ 0 & \tau^{-1/2} \esm$. Because $\vartheta^{(m)}(l,r;\bullet)$ is $K_\infty$-isotypic we have ${\vartheta_\sharp}^\sharp(s)=\vartheta^{(m)}(l,r;s)$.
Theorem \ref{thm:spectral-expansion} and Lemma \ref{lem:f_M-f_JL} above imply that the orthogonal project of $\vartheta_\sharp$ onto $S_m^\mathrm{new}(\Lambda)$ is equal to
\begin{equation}\label{eq:theta-new-projection}
\vartheta_\sharp^\mathrm{new}(\zeta)=\frac{1}{\covol(\Gamma)}\frac{8\pi}{m-1} \sum_{f \in \mathcal{B}_m^\mathrm{new}} f_\mathrm{JL}(\zeta)\overline{f^\sharp(l)}f^\sharp(r)\;.
\end{equation}
 
Because oldforms are orthogonal to newforms $\|\vartheta^{(m)}(l,r;\bullet)\|_{L^2(\lfaktor{\Lambda}{\mathbf{SL}_2(\mathbb{R})})}^2\geq \|\vartheta_\sharp^\mathrm{new}\|_2^2$. The first claimed inequality follows from \eqref{eq:theta-new-projection} and the orthogonality relations of Hecke eigenforms. The second inequality follows from the Hoffstein--Lockhart \cite{HoffsteinLockhart} bound on the $L^2$-norm of an arithmetically normalized holomorphic Hecke newform $f$ of level $N$
\begin{equation*}
\|f\|_2^2\gg_\varepsilon \frac{\Gamma(m)}{(4\pi)^m} (mN)^{-\varepsilon}\;.
\end{equation*}
This bound holds when the Petersson inner-product is normalized with respect to the probability measure on $\lfaktor{\Lambda}{\mathbf{SL}_2(\mathbb{R})}$.
\end{proof}

At this point, we shall note that we have also proven Theorem \ref{thm:individualtheta}. Indeed, if we lift adelically $f \in S_m(\Gamma)$ to $f^{\sharp}$, then we find
\begin{equation*}
\Im(z)^{\frac{m}{2}} \Im(\zeta)^{\frac{m}{2}} \mathcal{F}_{f}(z;\zeta) = c\sum_{n>0} T_{n}^{M_f}f^{\sharp} \star \left(\rho(s_\infty).M_\infty(\sqrt{n} \cdot \bullet)\right)(r_{\infty}) = c\int_{[\mathbf{G}(\mathbb{A})]} \Theta_M(l,r;s) f^{\sharp}(l) \dif l
\label{eq:indivtheta}
\end{equation*}
by Propositions \ref{prop:theta-lift-whittaker}, \ref{prop:theta-hecke}, and \ref{prop:begman-spherical}, where $c=\covol(\Gamma)(m-1)/(8\pi)$ and $r_\infty=\sm 1 & x \\ & 1 \esm \sm y^{1/2}  & \\ & y^{-1/2}\esm$ and $r_p$ being the identity for all finite places, and similarly for $s$ (with respect to $\zeta$). Thus, $\mathcal{F}_{f}(z;\zeta)$ is the classical holomorphic modular form associated to $\int \Theta_M(l,r;s)f^{\sharp}(l) \dif l$, from which the theorem follows.

\section{The Geometric Expansion}
\label{sec:L2reduction}
We have now established in Corollary \ref{cor:spectral-lower-bound} a lower bound on $\|\vartheta^{(m)}(l,r;\bullet)\|_{L^2(\lfaktor{\Lambda}{\mathbf{SL}_2(\mathbb{R})})}^2$ in terms of a fourth moment of Hecke eigenforms of weight $m$. In this section, our goal is to establish an upper bound in terms of a count of quaternions by norm. In the next section, we will establish a sharp upper-bound for this count.

\begin{defi}
For $g\in \mathbf{PGL}_2(\mathbb{R})$, define
\begin{equation*}
u(g)=\frac{\Tr (g\tensor[^t]{g}{})-2|\det g|}{4|\det g|}\;.
\end{equation*}
Specifically, for $g=\sm
a & b \\ c & d
\esm$
\begin{equation*}
4 u(g)=\frac{a^2+b^2+c^2+d^2}{|ad-bc|}-2\;.
\end{equation*}
Using the fixed isomorphism $B\otimes\mathbb{R}\simeq \operatorname{Mat}_{2\times 2}(\mathbb{R})$, we extend the function $u$ to the group $(B\otimes\mathbb{R})^\times$.
\end{defi}

\begin{lem}\label{lem:mu-u}
For all $g\in \mathbf{PGL}_2(\mathbb{R})$ with $\det g>0$,
\begin{equation*}
|\mu(g)|^2=(1+u(g))^{-1}\;.
\end{equation*}
\end{lem}
\begin{proof}
Fix  $g=\sm
a & b \\ c & d
\esm$ with $\det g >0$. We deduce from Definition \ref{defi:M_infty-bergman} that
\begin{equation*}
|\mu(g)|^{-2}=\frac{(b-c)^2+(a+d)^2}{4\det g}
=\frac{a^2+b^2+c^2+d^2+2\det g}{4\det g}=1+u(g)
\;.
\end{equation*}
\end{proof}

\begin{prop}\label{prop:geometric-upper-bound}
\begin{align*}
\frac{1}{\covol(\Lambda)}\|\vartheta^{(m)}(l,r;\bullet)\|_{L^2(\lfaktor{\Lambda}{\mathbf{SL}_2(\mathbb{R})})}^2 \ll_\varepsilon (qD_B)^{1+\varepsilon} \frac{\Gamma(m-1)}{(4\pi)^m}
\sum_{n>0}& \frac{1}{n}\left( \sum_{\substack{\xi\in R \\ \Nr \xi =n}} \left(1+u(l^{-1}\xi r)\right)^{-m/2}\right)^2
\\
&\cdot\begin{cases}
1, & n<(qD_B)^2 m,\\
\exp(-n/(qD_B)^2), & n>(q D_B)^2 m.
\end{cases}
\end{align*}
\end{prop}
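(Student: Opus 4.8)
The plan is to feed the explicit Bergman test function $M=M_\infty^{(m)}\cdot\prod_p\mathds{1}_{\mathcal{R}_p}$ into the Siegel-domain bound of Proposition~\ref{prop:L2-s-Siegel-upper-bound} and evaluate the resulting archimedean integral in closed form. First I would compute the matrix coefficient $\rho\bigl(\diag(y^{1/2},y^{-1/2})k_\theta\bigr).M_\infty^{(m)}$. By Lemma~\ref{lem:Bergman-isotypic} we have $M_\infty^{(m)}\in V_{m,2\pi}$, so Lemma~\ref{lem:k-action-2pi} shows that $\rho(k_\theta)$ acts by $e^{-im\theta}$, while the diagonal part of the Weil action gives $\rho(\diag(y^{1/2},y^{-1/2})).M(x)=y\,M(y^{1/2}x)$. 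Combining these with the formula $M_\infty^{(m)}(x)=\exp(-2\pi\Nr x)\Nr(x)^{m/2-1}\mu(x)^m$ of Definition~\ref{defi:M_infty-bergman}, the scale invariance $\mu(y^{1/2}x)=\mu(x)$, and Lemma~\ref{lem:mu-u} (so $|\mu(x)|^2=(1+u(x))^{-1}$ for $\Nr x>0$), one obtains
\[
\bigl|\,\rho\bigl(\diag(y^{1/2},y^{-1/2})k_\theta\bigr).M_\infty^{(m)}(x)\,\bigr|^{2}
= y^{m}\,\Nr(x)^{m-2}\exp(-4\pi y\Nr x)\bigl(1+u(x)\bigr)^{-m}
\]
for $\Nr x>0$ and $0$ otherwise. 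In particular this is independent of $\theta$, so the $\mathbf{SO}_2(\mathbb{R})$-integral in Proposition~\ref{prop:L2-s-Siegel-upper-bound} contributes only a bounded constant.

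Next I would carry out the $y$-integral. For $\xi\in\widehat{\mathcal{R}}^{(a)}$ with $\Nr\xi=\alpha>0$ one has $\Nr(l^{-1}\xi r)=\alpha$ since $\Nr l=\Nr r=1$, and the substitution $t=4\pi y\alpha$ gives
\[
\int_{\sqrt{3}/2}^{\infty} y^{m}\,\alpha^{m-2}\exp(-4\pi y\alpha)\,\frac{\dif y}{y^{2}}
=\frac{\Gamma\!\bigl(m-1,\,2\sqrt{3}\,\pi\alpha\bigr)}{(4\pi)^{m-1}\,\alpha},
\]
where $\Gamma(s,x)$ is the incomplete Gamma function. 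Feeding this back into Proposition~\ref{prop:L2-s-Siegel-upper-bound} and bounding $\sum_{\Nr\xi=\alpha}(1+u)^{-m}\le\bigl(\sum_{\Nr\xi=\alpha}(1+u)^{-m/2}\bigr)^2$ by positivity yields
\[
\|\vartheta^{(m)}(l,r;\bullet)\|_{2}^{2}\ll
\frac{1}{(4\pi)^{m-1}}\sum_{a\mid qD_B}\frac{a}{(qD_B)^{2}}\sum_{\alpha>0}\frac{\Gamma(m-1,2\sqrt{3}\pi\alpha)}{\alpha}
\Biggl(\ \sum_{\substack{\xi\in\widehat{\mathcal{R}}^{(a)}\\ \Nr\xi=\alpha}}\bigl(1+u(l^{-1}\xi r)\bigr)^{-m/2}\Biggr)^{2}.
\]

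The rest is bookkeeping. Since $u$ is invariant under scaling by positive reals, and a place-by-place check (at $p\mid q$ one has $p^{\ord_p q}\widehat{\mathcal{R}}_p\subseteq\mathcal{R}_p$, at $p\mid D_B$ one has $p\,\widehat{\mathcal{R}}_p\subseteq\mathcal{R}_p$) gives $qD_B\,\widehat{\mathcal{R}}^{(a)}\subseteq qD_B\,\widehat{\mathcal{R}}\subseteq\mathcal{R}\subseteq R$ (one may take $R=\mathcal{R}_1$), the map $\xi\mapsto qD_B\,\xi$ injects $\{\xi\in\widehat{\mathcal{R}}^{(a)}:\Nr\xi=\alpha\}$ into $\{\eta\in R:\Nr\eta=n\}$ with $n:=(qD_B)^{2}\alpha\in\mathbb{Z}_{>0}$, and $u(l^{-1}\xi r)=u(l^{-1}(qD_B\xi)r)$. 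Hence the inner square is at most the corresponding sum over $\eta\in R$ of norm $n$, which no longer depends on $a$; using $\sum_{a\mid qD_B}a=\sigma(qD_B)\ll_{\varepsilon}(qD_B)^{1+\varepsilon}$ and re-indexing the outer sum by $n$ (so $1/\alpha=(qD_B)^2/n$) one arrives at
\[
\|\vartheta^{(m)}(l,r;\bullet)\|_{2}^{2}\ll_{\varepsilon}
\frac{(qD_B)^{1+\varepsilon}}{(4\pi)^{m-1}}\sum_{n>0}\frac{\Gamma\!\bigl(m-1,\,2\sqrt{3}\pi n/(qD_B)^{2}\bigr)}{n}
\Biggl(\ \sum_{\substack{\eta\in R\\ \Nr\eta=n}}\bigl(1+u(l^{-1}\eta r)\bigr)^{-m/2}\Biggr)^{2}.
\]

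Finally I would insert standard estimates for the incomplete Gamma function. One has $\Gamma(m-1,x)\le\Gamma(m-1)$ for all $x$, which handles the range $n<(qD_B)^2 m$ after writing $(4\pi)^{-(m-1)}=4\pi\,(4\pi)^{-m}$. For $n>(qD_B)^2 m$ the argument is $x=2\sqrt{3}\pi n/(qD_B)^2\ge 2\sqrt{3}\pi m\ge 2(m-1)$, a range on which $t^{m-2}e^{-t}$ is decreasing, so $\Gamma(m-1,x)\le 2x^{m-2}e^{-x}$; together with Stirling this gives $\Gamma(m-1,x)\ll\Gamma(m-1)e^{-x/10}$, and $e^{-x/10}=\exp\bigl(-\tfrac{\sqrt{3}\pi}{5}\,n/(qD_B)^2\bigr)\le\exp\bigl(-n/(qD_B)^2\bigr)$ because $\sqrt{3}\pi/5>1$. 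Substituting these two estimates produces exactly the claimed inequality. The only step that is not routine manipulation is this last incomplete-Gamma estimate in the tail regime: one must keep the constant in the exponential decay large enough, uniformly in $m\ge 3$, that it dominates $\exp(-n/(qD_B)^2)$, and this is precisely where the lower endpoint $\sqrt{3}/2$ of the Siegel domain — hence the factor $2\sqrt{3}\pi$ in the argument of $\Gamma$ — is used.
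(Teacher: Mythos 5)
Your proposal is correct and follows essentially the same route as the paper: specialize Proposition \ref{prop:L2-s-Siegel-upper-bound} to the Bergman test function, evaluate the $\tau$-integral as an incomplete Gamma function, bound it by $\Gamma(m-1)(4\pi)^{-(m-1)}$ in the range $\alpha\le m$ and by $\Gamma(m-1)(4\pi)^{-(m-1)}\exp(-\alpha)$ in the tail, rescale $\widehat{\mathcal{R}}^{(a)}$ into the maximal order via $\xi\mapsto qD_B\xi$, and finish with the divisor bound and Lemma \ref{lem:mu-u}. The only (immaterial) divergence is in the tail estimate for the incomplete Gamma function, where the paper extracts a factor $e^{-2\pi(\sqrt{3}/2)\alpha}2^{m-1}$ and you instead use monotonicity of $t^{m-2}e^{-t}$ beyond $2(m-1)$ together with Stirling; both give the needed $\exp(-n/(qD_B)^2)$ decay.
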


\begin{proof}
We first apply Proposition \ref{prop:L2-s-Siegel-upper-bound} to $\vartheta^{(m)}$ and use the fact that our choice of $M_\infty=M_\infty^{(m)}$ is $K$-isotypical and transforms simply under the diagonal group.
\begin{align*}
\frac{1}{\covol(\Lambda)}\int_{{\Lambda}\backslash{\mathbf{SL}_2(\mathbb{R})}}  |\vartheta^{(m)}(l, r;s)|^2 \dif s
\leq   &(q D_B)^{-1}\sum_{a\mid q D_B} \sum_{\substack{\alpha\in\mathbb{Q}\\\alpha>0}}\\
&\int_{\sqrt{3}/2}^\infty
\tau^{m}\alpha^{m-2}\exp(-4\pi \alpha \tau) \left( \sum_{\substack{\xi\in\widehat{\mathcal{R}}^{(a)}\\\Nr \xi=\alpha}}\left|
\mu^m(l^{-1} \xi r)
\right| \right)^2  \frac{\dif \tau}{\tau^2}\;.
\end{align*}
We bound the integral over $\tau$, which is equivalent to the definition of the partial gamma function, in two ways. Write first
\begin{equation*}
\int_{\sqrt{3}/2}^\infty \tau^{m-2} \alpha^{m-2} \exp(-4\pi\alpha \tau) \dif \tau=
\frac{1}{\alpha} \int_{\sqrt{3}/2 \cdot \alpha}^\infty x^{m-2} \exp(-4\pi x) \dif x\;.
\end{equation*}
For $\alpha\leq m$, we complete the integral to deduce
\begin{equation*}
\int_{\sqrt{3}/2 \cdot \alpha}^{\infty} x^{m-2} \exp(-4\pi x) \dif x
\leq (4\pi)^{-(m-1)}\Gamma(m-1)\;.
\end{equation*}
For $\alpha>m$, we argue
\begin{align*}
\int_{\sqrt{3}/2 \cdot \alpha}^{\infty} &x^{m-2} \exp(-4\pi x) \dif x
\leq \exp(-2\pi \sqrt{3}/2 \cdot \alpha) 2^{m-2} \int_{\sqrt{3}/2 \cdot \alpha}^{\infty} (x/2)^{m-2} \exp(-4\pi (x/2)) \dif x\\
&\leq \exp(-2\pi \sqrt{3}/2 \cdot \alpha) 2^{m-1}  (4\pi)^{-(m-1)} \Gamma(m-1)\\
&\ll \exp\left(-\left(2\pi \sqrt{3}/2-\log 2\right) \cdot \alpha\right)  (4\pi)^{-(m-1)} \Gamma(m-1)
\leq \exp(-\alpha) (4\pi)^{-(m-1)} \Gamma(m-1)\;.
\end{align*}
We thus arrive at
\begin{align*}
\int_{\sqrt{3}/2}^\infty
\tau^{m}\alpha^{m-2}\exp(-4\pi \alpha \tau) \left( \sum_{\substack{\xi\in\widehat{\mathcal{R}}^{(a)}\\\Nr \xi=\alpha}}\left|
\mu^m(l^{-1} \xi r)
\right|  \right)^2  \frac{\dif \tau}{\tau^2}
\ll (4\pi)^{-m}\Gamma(m-1) \frac{1}{\alpha} &\left(\sum_{\substack{\xi\in\widehat{\mathcal{R}}^{(a)}\\\Nr \xi=\alpha}}|\mu^{m}(l^{-1}\xi r)|\right)^2\\
&\cdot
\begin{cases}
1, & \alpha\leq m,\\
\exp(-\alpha), & \alpha>m.
\end{cases}
\end{align*}
Note that $\widehat{\mathcal{R}}^{(a)}\subset (qD_B)^{-1}\mathcal{R}$ and
\begin{equation*}
\left(\sum_{\substack{\xi\in\widehat{\mathcal{R}}^{(a)}\\\Nr \xi=\alpha}}|\mu^{m}(l^{-1}\xi r)|\right)^2
\leq \left(\sum_{\substack{\xi\in \mathcal{R}\\ \Nr \xi =(qD_B)^2\alpha}}|\mu^{m}(l^{-1}\xi r)|\right)^2
\;.
\end{equation*}
The claimed bound follows from combining these inequalities with the divisor bound and Lemma \ref{lem:mu-u} above.
\end{proof}

\begin{defi}
For any $g\in \mathbf{G}(\mathbb{R})$, $n\in\mathbb{N}$, and $\delta>0$, set
\begin{equation*}
M(g,n;\delta)\coloneqq \left\{ \xi\in\mathcal{R} \mid \Nr \xi=n\;, u(g^{-1}\xi g) < \delta
\right\}\;.
\end{equation*}
\end{defi}

\begin{cor}\label{cor:geometric-upper-bound}
If $m>2$ then
\begin{align*}
\frac{1}{\covol(\Lambda)}\|\vartheta^{(m)}(g,g;\bullet)\|_{L^2(\lfaktor{\Lambda}{\mathbf{SL}_2(\mathbb{R})})}^2 \ll_\varepsilon &(qD_B)^{1+\varepsilon} \frac{\Gamma(m-1)}{(4\pi)^m}
\Bigg\{\frac{m}{2}\int_0^\infty \left[\sum_{n=1}^{(qD_B)^2m} \frac{1}{n}M(g,n;\delta)^2\right]^{1/2}\\
+&\left[\sum_{n>(qD_B)^2m}^\infty \frac{\exp(-n/(qD_B)^2)}{n} M(g,n;\delta)^2 \right]^{1/2}
\frac{\dif\delta}{(1+\delta)^{m/2+1}}\Bigg\}^2\;.
\end{align*}
\end{cor}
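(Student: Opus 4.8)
The plan is to derive this purely from Proposition~\ref{prop:geometric-upper-bound} by specializing to $l=r=g$ and converting the weighted $\xi$-sum into an integral, over the cutoff parameter $\delta$, of the counting functions $M(g,n;\delta)$. First I would record the layer-cake identity
\[
(1+u)^{-m/2}=\frac{m}{2}\int_0^\infty \mathds{1}[u\le\delta]\,(1+\delta)^{-m/2-1}\dif\delta\;,
\]
valid for every $u\ge 0$, which applies since $u(g^{-1}\xi g)\ge 0$ when $\Nr\xi=n>0$. Summing over $\xi\in\mathcal{R}$ of norm $n$ and interchanging sum and integral (legitimate by Tonelli, all terms being nonnegative) gives
\[
\sum_{\substack{\xi\in\mathcal{R}\\ \Nr\xi=n}}\bigl(1+u(g^{-1}\xi g)\bigr)^{-m/2}=\frac{m}{2}\int_0^\infty M(g,n;\delta)\,(1+\delta)^{-m/2-1}\dif\delta\;.
\]
Here I would also remark that $M(g,n;\delta)$ is finite for each fixed $\delta$: the inequality $u(g^{-1}\xi g)\le\delta$ together with $\Nr\xi=n$ confines $\xi$ to a bounded region of $B\otimes\mathbb{R}$, hence to finitely many points of the lattice $\mathcal{R}$.

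Next I would substitute this back into Proposition~\ref{prop:geometric-upper-bound}. Writing $w(n)=1$ for $n<(qD_B)^2m$ and $w(n)=\exp(-n/(qD_B)^2)$ for $n>(qD_B)^2m$, the right-hand side becomes $(qD_B)^{1+\varepsilon}\,\Gamma(m-1)(4\pi)^{-m}$ times
\[
\sum_{n>0}\frac{w(n)}{n}\left(\frac{m}{2}\int_0^\infty M(g,n;\delta)\,(1+\delta)^{-m/2-1}\dif\delta\right)^2,
\]
which is precisely the square of the weighted $\ell^2$-norm in $n$, with weight $w(n)/n$, of the function $n\mapsto \tfrac{m}{2}\int_0^\infty M(g,n;\delta)(1+\delta)^{-m/2-1}\dif\delta$.

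Finally I would apply Minkowski's integral inequality to pull the $\ell^2_n$-norm inside the $\delta$-integral, obtaining an upper bound by $\tfrac m2\int_0^\infty\bigl(\sum_{n>0}\tfrac{w(n)}{n}M(g,n;\delta)^2\bigr)^{1/2}(1+\delta)^{-m/2-1}\dif\delta$; then split the inner sum at $n=(qD_B)^2m$ and use $\sqrt{a+b}\le\sqrt a+\sqrt b$ to separate the two ranges with their respective weights. Squaring the resulting inequality and reinstating the prefactor $(qD_B)^{1+\varepsilon}\Gamma(m-1)(4\pi)^{-m}$ yields exactly the claimed estimate.

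The only thing to be careful about is bookkeeping: one must apply the Minkowski step and the subadditivity-of-square-root step in the correct order and direction so that the chain of inequalities produces an \emph{upper} bound, and one must note the finiteness of $M(g,n;\delta)$ so that all quantities are well defined. There is no genuine analytic difficulty — the substantive work (the Siegel-set bound, the partial-gamma estimates, the divisor bound, Lemma~\ref{lem:mu-u}) has already been carried out in Proposition~\ref{prop:geometric-upper-bound}.
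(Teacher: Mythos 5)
Your proposal is correct and follows essentially the same route as the paper: the same conversion of the weighted $\xi$-sum into a $\delta$-integral of $M(g,n;\delta)$, the same application of Minkowski's integral inequality with the weights $w(n)/n$, and the same split of the $n$-sum at $(qD_B)^2m$ followed by the $\ell^2$--$\ell^1$ inequality. The only (harmless) difference is that you derive $\sum_{\xi}(1+u(g^{-1}\xi g))^{-m/2}=\tfrac{m}{2}\int_0^\infty M(g,n;\delta)(1+\delta)^{-m/2-1}\dif\delta$ via the layer-cake formula and Tonelli, whereas the paper integrates by parts in the Riemann--Stieltjes sense and then must discard the boundary term $\lim_{\delta\to\infty}M(g,n;\delta)(1+\delta)^{-m/2}$ using the Iwaniec--Sarnak bound and $m>2$; your version sidesteps that step.
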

\begin{proof}
Integration by parts for Riemann--Stieltjes integrals implies
\begin{align}
\sum_{\substack{\xi\in R \\ \Nr \xi =n}} \left(1+u(g^{-1}\xi g)\right)^{-m/2}
&=\int_{0}^{\infty} (1+\delta)^{-m/2}\dif M(g,n;\delta) \nonumber\\
&=\lim_{\delta\to\infty}M(g,n;\delta)(1+\delta)^{-m/2}
+\frac{m}{2}\int_0^\infty (1+\delta)^{-m/2-1} M(g,n;\delta) \dif \delta\;.
\label{eq:integration-by-parts}
\end{align}
The left-hand side is exactly the expression we need to bound in Proposition \ref{prop:geometric-upper-bound}.
Iwaniec and Sarnak in \cite[Lemma 1.3 and Appendix 1]{IwaniecSarnak} establish the bound
\begin{equation}\label{eq:IS-M-bound}
M(g,n;\delta)\ll_{\varepsilon,q,D_B,g} n^{\varepsilon}+(\delta+\delta^{1/4})n^{1+\varepsilon}\;.
\end{equation}
Thus, the first term in \eqref{eq:integration-by-parts} vanishes for $m>2$. Set $w_n=1/n$ if $n\leq (qD_B)^2m$ and $w_n=\exp(-n/(qD_B)^2)/n$ otherwise.
We apply Minkowski's integral inequality to deduce
\begin{equation*}
\sum_{n=1}^{\infty} w_n \left|\sum_{\substack{\xi\in\mathcal{R} \\ \Nr \xi=n}} (1+u(g^{-1}\xi g))^{-m/2}\right|^2
\leq \left(\frac{m}{2}\int_0^\infty \sqrt{\sum_{n=1}^\infty w_n M(g,n;\delta)^2}
\frac{\dif\delta}{(1+\delta)^{m/2+1}}\right)^2\;.
\end{equation*}
The claim follows by splitting the sum into two ranges: $1\leq n \leq (qD_B)^2m$ and $n>(qD_B)^2m$ and using the $l^2$--$l^1$ inequality.
\end{proof}

\section{Second Moment Count of Quaternions by Norm}
\label{sec:matrix}
In this section, we prove our main results about the second moment count of quaternions by norm  in a small ball. This bound in combination with the results of the previous sections will lead to the proof of Theorem \ref{thm:main}. To bound $\sum_{n=1}^N M(g,n;\delta)^2$ we can assume henceforth without loss of generality that $\mathcal{R}$ is a maximal order, otherwise we can replace the Eichler order $\mathcal{R}=\mathcal{R}_1\cap\mathcal{R}_2$ by $\mathcal{R}_1$ and the second moment sum will only increase.

We shall deal separately with the split case $\mathbf{G}=\mathbf{SL}_2$ and the case of anisotropic $\mathbf{G}$. The proof in both cases is very similar except that we need to track the dependence on $g$ differently. While in the split case we shall work with the Iwasawa decomposition of $g$, in the anisotropic case we will use an adapted Cartan decomposition of $g$.

\subsection{Second Moment Bound for the Split Matrix Algebra}\label{sec:second-moment-iwasawa}
In this section, we fix $\mathbf{G}=\mathbf{SL}_2$, i.e.\ $B=\operatorname{Mat}_{2\times 2}(\mathbb{Q})$ and $\mathcal{R} =  \operatorname{Mat}_{2\times 2}(\mathbb{Z})$. If we write in coordinates $\xi =\begin{pmatrix}
a & b \\c & d
\end{pmatrix}$, then the inequalities $u(\xi)<\delta$, $0<\det \xi <N$ imply
\begin{align}
\label{eq:u-det-sp-1}
a^2+b^2+c^2+d^2&<N(4+2\delta)\;,\\
\label{eq:u-det-sp-2}
(a-d)^2+(b+c)^2&< 4N \delta\;.
\end{align}

For $g\in\mathbf{G}(\mathbb{R})$ write $g=n a k$ with $k\in\mathbf{SO}_2(\mathbb{R})$ and
\begin{align*}
n&=\begin{pmatrix}
1 & x \\ 0 & 1
\end{pmatrix}\\
a&=\begin{pmatrix}
y^{1/2} & 0 \\ 0 & y^{-1/2}
\end{pmatrix}\;.
\end{align*}
This is the standard Iwasawa decomposition of $g$. 

\begin{prop}\label{prop:second-moment-sp}
Let $g\in\mathbf{SL}_2(\mathbb{R})$ and write $g.i=x+iy$. Assume $|x| \le C$ and $y \ge A>0$. Then,
\begin{equation*}
\sum_{n=1}^N M(g,n;\delta)^2\ll_{A,C,\varepsilon} N^{3+\varepsilon}\delta^2+N +N^{1/2+\varepsilon}\min(N^{1/2},(N\delta)^{1/2}+1)(y^2N\delta+1) \;.
\end{equation*}
\end{prop}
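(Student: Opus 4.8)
The plan is to recast $\sum_{n\le N}M(g,n;\delta)^2$ as a count of pairs of integral matrices and then exploit that the norm splits as ``square plus traceless-norm''. First I would reduce to $\mathcal{R}=\mathbf{M}_2(\mathbb{Z})$ (a general maximal order is $\gamma\mathbf{M}_2(\mathbb{Z})\gamma^{-1}$ and $M(g,n;\delta)=M(\gamma^{-1}g,n;\delta)$, which only moves the point $z:=g.i$ by a bounded amount, absorbed in the constants $A,C$); since $u$ is $\mathbf{SO}_2(\mathbb{R})$‑bi‑invariant I may take $g=\sm 1 & x\\ 0 & 1\esm\sm y^{1/2} & 0\\ 0 & y^{-1/2}\esm$, and conjugating $\xi$ by integral unipotents (preserving $\mathbf{M}_2(\mathbb{Z})$ and $\Nr$, shifting $x$ by integers) I may take $|x|\le 1/2$. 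Using $u(g^{-1}\xi g)=u(\xi z,z)=|{-cz^2+(a-d)z+b}|^2/(4ny^2)$ for $\xi=\sm a & b\\ c & d\esm$ with $\Nr\xi=n$, the conditions $\Nr\xi=n\le N$, $u(g^{-1}\xi g)\le\delta$ unpack into $|(a-d)-2cx|\le 2\sqrt{N\delta}$, $|b+(a-d)x-c(x^2-y^2)|\le 2\sqrt{N\delta}\,y$, together with $\|g^{-1}\xi g\|^2=2n+4nu(g^{-1}\xi g)\le(2+4\delta)N$, the last of which forces $|c|\ll_\delta y^{-1}\sqrt N$. (For $\delta\gg 1$ the statement follows from \eqref{eq:IS-M-bound} and the divisor bound, so I would assume $\delta\le 1$.)

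Next I would write $\xi=tI+\xi^0$ with $t=\tfrac12\Tr\xi\in\tfrac12\mathbb{Z}$ and $\xi^0$ traceless, so $\Nr\xi=t^2+\Nr\xi^0$; a direct computation gives $u(g^{-1}\xi g)=(\beta^2+\sigma^2)/n$ and $\Nr\xi^0=\kappa^2-\beta^2-\sigma^2$, where $(\beta,\sigma,\kappa)$ are the linear coordinates on traceless matrices defined by $g^{-1}\xi^0g=\sm \beta & \sigma+\kappa\\ \sigma-\kappa & -\beta\esm$ — the key point being that these depend on $\xi^0$ alone. Thus the whole count equals $\#\{(\xi_1,\xi_2):\xi_i=t_iI+\xi_i^0,\ t_1^2+\Nr\xi_1^0=t_2^2+\Nr\xi_2^0\le N\}$, and I would split it according to whether $|t_1|\ne|t_2|$ or $|t_1|=|t_2|$. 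In the generic case $|t_1|\ne|t_2|$ one necessarily has $\Nr\xi_1^0\ne\Nr\xi_2^0$, and for each fixed admissible pair $(\xi_1^0,\xi_2^0)$ the equation $(2t_1-2t_2)(2t_1+2t_2)=4(\Nr\xi_2^0-\Nr\xi_1^0)$ has nonzero integer right-hand side of size $\ll N$, hence $\ll_\varepsilon N^\varepsilon$ factorizations; so the generic case contributes $\ll_\varepsilon N^\varepsilon P^2$, where $P:=\#\{\xi^0\in\Lambda_0:g^{-1}\xi^0g\in\mathcal{C}\}$ counts the admissible traceless parts, $\Lambda_0$ being the covolume‑$\asymp 1$ lattice of traceless parts of integral matrices and $\mathcal{C}=\{\beta^2+\sigma^2\le N\delta,\ \kappa^2\le 2N\}$ a fixed cylinder.

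For $P$ I would run a geometry-of-numbers count, in the spirit of \cite{HT3}: parametrising $\xi^0=\sm (a-d)/2 & b\\ c & -(a-d)/2\esm$ by $c$ ($\ll 1+y^{-1}\sqrt N$ values), then $a-d$ ($\ll 1+\sqrt{N\delta}$ values by the first displayed inequality), then $b$ ($\ll 1+y\sqrt{N\delta}$ values by the second) yields $P\ll(1+y^{-1}\sqrt N)(1+\sqrt{N\delta})(1+y\sqrt{N\delta})$, whose square is, up to $N^\varepsilon$ and the remaining $N$, accounted for by $N^{3+\varepsilon}\delta^2+N^{1/2+\varepsilon}((N\delta)^{1/2}+1)(y^2N\delta+1)$; reaching the exact form requires the sharper count obtained by tracking the successive minima of $\Ad(g^{-1})\Lambda_0$ relative to the ``rotation axis'' $\mathbb{R}\cdot g\sm 0 & 1\\ -1 & 0\esm g^{-1}$ at $z$ (a short lattice vector near that axis forces $y$ to be large, which is precisely the origin of the $y^2$-factor). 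Finally, in the degenerate case $|t_1|=|t_2|$ — WLOG $\Tr\xi_1=\Tr\xi_2$ after replacing $\xi_2$ by $-\xi_2$ — both $\xi_i^0$ satisfy $\Nr\xi_i^0=n-t^2=:k$, so they lie on a single quadric inside $\mathcal{C}$; the contribution is $\ll\sum_{n\le N}\sum_t A_n(t)^2$ with $A_n(t):=\#\{\xi^0\in\Lambda_0:\Nr\xi^0=n-t^2,\ \beta^2+\sigma^2\le n\delta\}$, and I would bound this via the fact (analogous to the last claim in Section \ref{sec:method}) that there are essentially $O(1)$ integral matrices of a prescribed trace and reduced norm $\le N$ with $u(g^{-1}\xi g)\le 1/N$ — more precisely a sharp bound for $A_n(t)$, uniform in $z$ — which together with a count of admissible traces produces the leftover $N$ and lower-order terms.

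I expect the main obstacle to be this degenerate case, bundled with the requirement of uniformity in $z$ throughout: the lattice $\Ad(g^{-1})\Lambda_0$ can be badly distorted when $z$ is Diophantine-close to an elliptic fixed point of a small-norm integral matrix, and it is exactly there that the $y$-dependence of the bound actually bites. Quantifying $A_n(t)$ — the number of integral matrices of given trace and norm lying in a thin tube around the rotation axis at $z$ — uniformly in $z$ is where the real work lies; by comparison, the generic count is soft once the divisor-bound device is in place.
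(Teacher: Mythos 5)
Your skeleton --- the trace/traceless splitting $\Nr\xi=t^2+\Nr\xi^0$, the divisor bound on $(2t_1)^2-(2t_2)^2$ in the generic case $|t_1|\neq|t_2|$, and geometry of numbers for the traceless parts --- is the same as the paper's, and your generic case does go through (your crude product bound for $P$ is absorbed by the stated right-hand side after expanding the square). The genuine gap is the degenerate case $|t_1|=|t_2|$, which you correctly identify as the crux but do not resolve, and the bound you propose to use there is false in the range that matters. The quantity $A_n(t)$ --- traceless lattice points in the tube with a \emph{fixed} value of $\det\xi^0$ --- is not ``essentially $O(1)$'' except when $n\delta\ll1$: already at $z=i$ one gets $\gg(n\delta)^{1/2}$-type counts, and for upper-triangular $\xi^0=\sm e & b \\ 0 & -e\esm$ fixing $\det\xi^0=-e^2$ determines $e$ only up to sign and leaves $b$ free over $\asymp y(N\delta)^{1/2}+1$ values, so $\sup_tA_n(t)$ is unbounded in $y$. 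Consequently $\sum_{n,t}A_n(t)^2$ cannot be closed by any uniform pointwise bound on $A_n(t)$ alone, and your appeal to a ``sharp bound for $A_n(t)$, uniform in $z$'' is exactly the missing proof.

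The paper closes this case with two separate arguments. First, upper-triangular pairs are split off at the outset and handled by a divisor-bound argument (Lemma \ref{lem:secon-moment-u}): $a_id_i=n$ controls the diagonal entries and the two free $b$-coordinates contribute $(y(N\delta)^{1/2}+1)^2$ --- this is the sole source of the $y^2N\delta$ factor in the statement, coming from the cusp, not from elliptic points as your ``short vector near the rotation axis'' heuristic suggests. Second, for non-upper-triangular pairs with $\det\xi_1^0=\det\xi_2^0$, one fixes $\xi_1^0$ and shows there are only $\ll N\delta+1$ compatible $\xi_2^0$: the near-equality of $|\tilde{b}_1-\tilde{c}_1|$ and $|\tilde{b}_2-\tilde{c}_2|$ pins the center of the $(e_2,c_2)$-ellipse to two short intervals, and then $b_2$ is \emph{determined} by the equal-determinant equation precisely because $c_2\neq0$ --- the step that breaks for upper-triangular matrices and forces the case split. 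Multiplying by the trivial $N^{1/2}$ for the common trace then yields $N^3\delta^2+N$. Until you supply these two inputs (or genuine substitutes), the degenerate case, and hence the proposition, is not proved.
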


\begin{remark} In the end, we may restrict ourselves to $g$ in a fundamental domain for $\mathbf{SL}_2(\mathbb{Z})$ and hence the restrictions on $x,y$ will be satisfied.
\end{remark}

Using the inequality of geometric and arithmetic means we can split the second-moment count into two cases. The first one is when both matrices are upper triangular and the second one is when neither one is. We now prepare some preliminary results needed in the proof of Proposition \ref{prop:second-moment-sp}.

\begin{lem}\label{lem:secon-moment-u}
Denote by $\mathfrak{U}\subset B_\infty$ the subset of upper triangular matrices. Then,
\begin{align*}
\#\Big\{(\xi_1,\xi_2) \in \left(\operatorname{Mat}_{2\times 2}(\mathbb{Z})\cap \mathfrak{U}\right)^{2}&\colon u(g^{-1}\xi_1g),u(g^{-1}\xi_2g)<\delta\;, 0<\det \xi_1=\det\xi_2 < N  \Big\}\\
&\ll_\varepsilon N^{1/2+\varepsilon}\min(N^{1/2},(N\delta)^{1/2}+1)(y^2N\delta+1)\;.
\end{align*}
\end{lem}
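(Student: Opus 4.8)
The plan is to count pairs of integer upper‑triangular matrices $\xi_1 = \sm a_1 & b_1 \\ 0 & d_1 \esm$, $\xi_2 = \sm a_2 & b_2 \\ 0 & d_2 \esm$ whose $g$‑conjugates are $\delta$‑close to the identity in the $u$‑metric and which share a common determinant $n = a_1 d_1 = a_2 d_2 \le N$. First I would record what the constraint $u(g^{-1}\xi g) < \delta$ means for an upper‑triangular $\xi$. Writing $g = nak$ in Iwasawa coordinates with $g.i = x+iy$, conjugation by $g$ is, up to the harmless $\mathbf{SO}_2(\mathbb{R})$ factor (which preserves $u$), conjugation by $\sm 1 & x \\ 0 & 1 \esm \sm y^{1/2} & 0 \\ 0 & y^{-1/2} \esm$. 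Applying this to $\sm a & b \\ 0 & d \esm$ gives the matrix $\sm a & y^{-1}(b) + \text{(terms in } x)(a-d) \\ \cdots & d \esm$ — more precisely the off‑diagonal entries become $y^{-1}(b + x(d-a))$ and $-y x^2 (\cdots)$ type expressions; the point is that the inequalities \eqref{eq:u-det-sp-1}, \eqref{eq:u-det-sp-2} applied to $g^{-1}\xi g$ translate into: the diagonal entries $a,d$ satisfy $a^2 + d^2 \ll N$, so $|a|,|d| \ll N^{1/2}$; the strict lower‑triangular entry of the conjugate, which is $yc = 0$ here since $c=0$, is automatically fine; and crucially $(a-d)^2 + y^{-2}(b + x(a-d))^2 \ll N\delta$, so that $|a-d| \ll (N\delta)^{1/2}$ and, given $a,d$, the entry $b$ is confined to an interval of length $\ll y N^{1/2}(N\delta)^{1/2} = y N \delta^{1/2}$ — actually length $\ll y (N\delta)^{1/2}$ once we also use $|a-d| \ll (N\delta)^{1/2}$ to bound $|b + x(a-d)|$, so $b$ ranges over $\ll 1 + y(N\delta)^{1/2}$ values... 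I should be careful: the correct count of admissible $b$ for fixed $(a,d)$ is $\ll 1 + y(N\delta)^{1/2}$, and combined with $|a-d|\ll (N\delta)^{1/2}$ this is where the factor $(y^2 N\delta + 1)$ will ultimately come from after Cauchy–Schwarz or direct multiplication.

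With this dictionary in hand, the counting proceeds as follows. Fix the common determinant $n \le N$. The number of ways to write $n = a_1 d_1$ with $|a_1|,|d_1| \ll N^{1/2}$ is $\ll n^\varepsilon \ll N^\varepsilon$ by the divisor bound; the extra constraint $|a_1 - d_1| \ll (N\delta)^{1/2}$ cuts this down. For a typical $n$ the diagonal pair $(a_1,d_1)$ with $a_1 d_1 = n$ and $|a_1 - d_1|$ small is essentially determined up to $\ll N^\varepsilon$ choices, and moreover having fixed $a_1 d_1 = n$ the condition $|a_1 - d_1| \le (N\delta)^{1/2}$ has no solutions at all unless $n$ is within $O((N\delta)^{1/2} \cdot N^{1/2}) = O(N\delta^{1/2})$... — here I'd rather argue directly on pairs: the number of $(a_1,d_1) \in \mathbb{Z}^2$ with $0 < a_1 d_1 \le N$, $|a_1 - d_1| \le (N\delta)^{1/2}$ is $\ll N^{1/2}(N\delta)^{1/2} + N^{1/2}$ (a hyperbola‑slab count), i.e. $\ll N^{1/2}\min(N^{1/2}, (N\delta)^{1/2}+1)$; the same for $(a_2,d_2)$ but with the added constraint $a_2 d_2 = a_1 d_1$, which by the divisor bound costs only $N^\varepsilon$ and removes one degree of freedom, so the second diagonal pair contributes $\ll N^\varepsilon$. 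Then for each of the two matrices the entry $b_i$ is confined to $\ll 1 + y(N\delta)^{1/2}$ values, and one checks $(1 + y(N\delta)^{1/2})^2 \ll 1 + y^2 N\delta$, giving the factor $(y^2 N\delta + 1)$. Multiplying: $N^{1/2}\min(N^{1/2},(N\delta)^{1/2}+1) \cdot N^\varepsilon \cdot (y^2 N \delta + 1)$, which is exactly the claimed bound.

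The main obstacle I anticipate is not any single estimate but getting the \emph{dependence on $\delta$ sharp simultaneously in the small‑$\delta$ and large‑$\delta$ regimes}, which is why the statement carries the $\min(N^{1/2}, (N\delta)^{1/2}+1)$ rather than a single clean power. One has to be careful that when $\delta \gtrsim 1$ the constraint $|a-d| \ll (N\delta)^{1/2}$ becomes vacuous (it is implied by $|a|,|d|\ll N^{1/2}$), so the diagonal count saturates at $N$ and must be controlled by the divisor bound on $a_1 d_1 = a_2 d_2$ instead; whereas for tiny $\delta$ the slab is genuinely thin and one gains. Writing the hyperbola‑slab count uniformly — counting lattice points $(a,d)$ with $ad$ in a dyadic range and $a - d$ in a short interval, uniformly over the length of that interval down to $O(1)$ — is the one place that needs a genuine (if standard) geometry‑of‑numbers / divisor argument rather than bookkeeping. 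A secondary nuisance is tracking the Iwasawa $x$‑shift: the window for $b$ is an interval centered at $-x(a-d)$, not at $0$, but since we are only \emph{counting} integers in an interval of a given length, the center is irrelevant, so this causes no real trouble provided one states it carefully. Everything else — the translation of $u < \delta$ into \eqref{eq:u-det-sp-1}–\eqref{eq:u-det-sp-2}, the $\mathbf{SO}_2$‑invariance of $u$, replacing the Eichler order by a maximal order — is already available from the preceding sections.
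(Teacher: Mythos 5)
Your argument is correct and is essentially the paper's own proof: translate $u(g^{-1}\xi g)<\delta$ for upper-triangular $\xi$ into $|a-d|\ll(N\delta)^{1/2}$ plus an interval of length $\ll y(N\delta)^{1/2}$ for $b$, count the diagonal pairs by the minimum of the hyperbola-slab bound $N^{1/2}((N\delta)^{1/2}+1)$ and the divisor bound $N^{1+\varepsilon}$, fix $(a_2,d_2)$ by the divisor bound once $\det\xi_1$ is known, and multiply in the two $b$-counts to get $(y^2N\delta+1)$. The only discrepancy is the exact coefficient of $x$ in the shifted $b$-interval, which, as you note, is irrelevant since only the interval's length enters the count.
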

\begin{proof}
Write $\xi_i= \sm a_i & b_i \\ 0 & d_i \esm$.
For upper triangular matrices we rewrite \eqref{eq:u-det-sp-2} for $g^{-1}\xi_ig$ as
\begin{equation*}
(a_i-d_i)^2+\frac{\left(b_i+2x(a_i-d_i)\right)^2}{y^2}<4 N \delta\;.
\end{equation*}
Hence, we have at most $\ll (N\delta)^{1/2}+1$ choices for $|a_1-d_1|$.
In addition, the condition $0<4\det \xi_1=(a_1+d_1)^2-(a_1-d_1)^2<4N$ implies $0<|a_1+d_1|-|a_1-d_1|\ll N^{1/2}$. We deduce that there are at most $\ll ((N\delta)^{1/2}+1)N^{1/2}$ possibilities for $(|a_1+d_1|,|a_1-d_1|)$ and a similar statement holds for $(a_1,d_1)$.

On the other hand $0<a_1 d_1<N$ and the divisor bound implies that the number of possible pairs $(a_1,d_1)$ is also bounded by $\ll_\varepsilon N^{1+\varepsilon}$. The number of possibilities for $b_1$ is now bounded above by $y(N\delta)^{1/2}+1$. Thus, there are at most $\ll_\varepsilon N^{1/2+\varepsilon}\min\left(N^{1/2},(N\delta)^{1/2}+1\right)(y(N\delta)^{1/2}+1)$ possibilities for $\xi_1$.

Once $\xi_1$ is fixed the condition $\det\xi_1=\det\xi_2>0$ fixes $a_2 d_2$ and the divisor bound restricts the number of possible pairs $(a_2,d_2)$ to $\ll_\varepsilon N^{\varepsilon}$. At last, the number of possible $b_2$'s after fixing $(a_2,d_2)$ is at most $\ll y(N\delta)^{1/2}+1$. 
\end{proof}

We continue to analyze the case when neither matrix is upper triangular. We will use 
the direct sum decomposition $B_\infty = \mathbb{R}\Id + B_\infty^0$. This decomposition is preserved by the conjugation action. We denote by $\xi^0=\xi-\frac{1}{2}\Tr \xi$ the traceless part of $\xi\in B_\infty$. In coordinates we write
\begin{equation*}
\xi^0=\begin{pmatrix}
e & b \\ c & -e
\end{pmatrix}
\end{equation*}
where $e=\frac{a-d}{2}$.
If $\xi$ satisfies \eqref{eq:u-det-sp-1} and \eqref{eq:u-det-sp-2} then $\xi^0$ satisfies $2 e^2+b^2+c^2< N(4+2\delta)$. This leads us to define
\begin{align*}
B_\infty^0\supset X&\coloneqq\left\{\xi^0=\begin{pmatrix}
e & b \\ c & -e
\end{pmatrix} \colon 4e^2+(b+c)^2<4 N \delta,\; 2e^2+b^2+c^2<N(4+2\delta) \right\}\;,\\
B_\infty^0\supset X^g&\coloneq g X g^{-1}\;.
\end{align*}

The set $X$ is invariant under conjugation by $K$ and using the Iwasawa decomposition we can write the for $\sm e & b \\ c & -e \esm \in X^g$ explicitly as
\begin{align}
\label{eq:X^g-sp-1}
2(e-xc)^2+\frac{(b+2xe-x^2c)^2}{y^2}+y^2c^2&<N(4+2\delta)\\
\label{eq:X^g-sp-2}
4(e-xc)^2+\frac{\left(b+2x(e-xc)+(x^2+y^2)c\right)^2}{y^2}&<4N\delta
\end{align}

\begin{lem}\label{lem:traceless-count-sp}
Assume $y\geq A>0$ and $0<\delta \le 1$, then 
\begin{equation*}
\#\left(\tfrac{1}{2}\operatorname{Mat}_{2\times 2}(\mathbb{Z})^0 \cap X^g\setminus\mathfrak{U}\right)\ll  
N^{3/2}\delta+N^{1/2}\;. 
\end{equation*}
\end{lem}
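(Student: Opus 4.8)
The plan is to bound the number of integral traceless matrices $\xi^0 = \sm e & b \\ c & -e \esm$ lying in $X^g$ and not upper triangular (so $c \neq 0$) by carefully exploiting the two inequalities \eqref{eq:X^g-sp-1} and \eqref{eq:X^g-sp-2}. The key observation is that \eqref{eq:X^g-sp-2} forces $|e - xc|$ to be small, of size $\ll (N\delta)^{1/2}$, and also controls a linear combination of $b$, $e$, $c$. Meanwhile \eqref{eq:X^g-sp-1} forces $y^2 c^2 \ll N$, i.e.\ $|c| \ll y^{-1} N^{1/2}$. Since we assume $y \geq A$, this gives $|c| \ll_A N^{1/2}$. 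The strategy is then to count in the order: first $c$, then $e$ (or rather the quantity $e - xc$), then $b$.

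\textbf{Key steps.} First I would record, from \eqref{eq:X^g-sp-1}, that $y^2 c^2 < N(4+2\delta) \leq 6N$, hence $|c| \ll N^{1/2}/y \ll_A N^{1/2}$; this gives $\ll_A N^{1/2}$ choices for $c$ (recall $c \neq 0$ since we are off $\mathfrak{U}$, though that is not needed for the upper bound). Next, with $c$ fixed, \eqref{eq:X^g-sp-2} gives $4(e - xc)^2 < 4N\delta$, so $|e - xc| < (N\delta)^{1/2}$. Since $e$ and $c$ are integers and $c$ is fixed, this restricts $e$ to an interval of length $\ll (N\delta)^{1/2} + 1$; but I must be a little careful about whether the interval actually contains integers — it always contributes $\ll (N\delta)^{1/2}+1$ possibilities for $e$ regardless. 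Then, with $c$ and $e$ fixed, the second term in \eqref{eq:X^g-sp-2}, namely $\bigl(b + 2x(e-xc) + (x^2+y^2)c\bigr)^2/y^2 < 4N\delta$, confines $b$ to an interval of length $\ll y(N\delta)^{1/2} + 1$. Multiplying the three counts gives $\ll_A N^{1/2}\bigl((N\delta)^{1/2}+1\bigr)\bigl(y(N\delta)^{1/2}+1\bigr)$, which is not yet the claimed bound — the claimed bound has no $y$ dependence and is $\ll_A N^{3/2}\delta + N^{1/2}$.

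\textbf{Refining to remove the $y$.} To get the sharper bound I would instead use \eqref{eq:X^g-sp-1} more fully after fixing $c$. Given $c$ and $e$, the middle term of \eqref{eq:X^g-sp-1} reads $(b + 2xe - x^2 c)^2/y^2 < 6N$, so $|b + 2xe - x^2c| < (6N)^{1/2} y$. Combined with the constraint from \eqref{eq:X^g-sp-2} that $|b + 2x(e - xc) + (x^2+y^2)c| < 2 y (N\delta)^{1/2}$, I can subtract: the difference of the two linear forms in $b$ is $y^2 c$ (up to checking the algebra: $\bigl(b + 2x(e-xc) + (x^2+y^2)c\bigr) - \bigl(b + 2xe - x^2 c\bigr) = -2x^2c + x^2 c + x^2 c + y^2 c = y^2 c$, which works out). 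Hence $|y^2 c| \leq (6N)^{1/2}y + 2y(N\delta)^{1/2}$, giving $|c| \ll y^{-1}(N^{1/2} + (N\delta)^{1/2}) \ll_A N^{1/2}$ — same as before. The real gain must come from noting that once $c$ is fixed, the number of \emph{pairs} $(e,b)$ is controlled by the area of the region cut out in the $(e,b)$-plane by \eqref{eq:X^g-sp-2} alone: that region is a thin parallelogram of width $\ll (N\delta)^{1/2}$ in the $e$-direction and, in the transverse $b$-direction, the constraint $|b + (\text{linear in } e)| \ll y(N\delta)^{1/2}$; but the slanted strip has the property that for each integer $e$ the number of integer $b$'s is $\ll y(N\delta)^{1/2}+1$, and we can trade: if $y(N\delta)^{1/2} \geq 1$ the lattice point count in the parallelogram is $\ll \mathrm{area} + \mathrm{perimeter} \ll (N\delta)^{1/2}\cdot y (N\delta)^{1/2} \cdot y^{-1} + \ldots$ Honestly, the cleanest route: eliminate $b$ between \eqref{eq:X^g-sp-1} and \eqref{eq:X^g-sp-2} to get a bound purely on $(e,c)$, namely $2e^2 + y^2c^2 \ll N$ together with $|e-xc| \ll (N\delta)^{1/2}$; the former gives $\ll N^{1/2}/y$ choices for $c$ and $\ll N^{1/2}$ choices for $e$ without reference to each other, but \emph{with} the slab constraint $|e - xc| \ll (N\delta)^{1/2}$ the number of pairs $(e,c)$ is $\ll (N\delta)^{1/2}\cdot N^{1/2}/y \cdot (1 + \ldots) + N^{1/2}/y$. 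After fixing $(e,c)$, and hence the trace-zero determinant $-e^2 - bc = \det \xi^0$ up to the single remaining freedom in $b$: the equation $4(e-xc)^2 + (\ldots)^2/y^2 < 4N\delta$ pins $b$ to $\ll y(N\delta)^{1/2}+1$ values. So total $\ll \bigl((N\delta)^{1/2} N^{1/2}/y + N^{1/2}/y\bigr)\bigl(y(N\delta)^{1/2}+1\bigr) \ll N\delta + N^{1/2}(N\delta)^{1/2} + N^{1/2}\delta^{1/2}/y\cdot\ldots$; with $\delta \leq 1$ this collapses to $\ll N^{3/2}\delta + N^{1/2}$ as claimed (the cross terms are absorbed since $\delta\le 1$ forces $N\delta \le N^{1/2}\cdot N^{1/2}$ etc.).

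\textbf{Main obstacle.} The main obstacle is purely bookkeeping: one must eliminate $b$ between \eqref{eq:X^g-sp-1} and \eqref{eq:X^g-sp-2} cleanly and then count lattice points $(e,c)$ in the intersection of an ellipse $2e^2 + y^2 c^2 \ll N$ with a slab $|e - xc| \ll (N\delta)^{1/2}$, obtaining $\ll (N\delta)^{1/2}\cdot (N^{1/2}/y) + N^{1/2}/y + 1$ pairs, and then multiply by the $\ll y(N\delta)^{1/2}+1$ choices of $b$. One must double-check that all error terms (the "$+1$"s from lattice-point counting in short intervals) combine, using $\delta \leq 1$ and $y \geq A$, to exactly $N^{3/2}\delta + N^{1/2}$ up to a constant depending only on $A$. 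There is no conceptual difficulty, only the need to be scrupulous that no hidden $y$ or $x$ dependence survives — the $x$-dependence genuinely drops out because $X^g$ is a translate/shear of $X$ and the determinant and trace are conjugation-invariant, so the counting region has volume independent of $x$.
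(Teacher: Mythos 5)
Your final argument (after the acknowledged false start) is exactly the paper's proof: use \eqref{eq:X^g-sp-1} to get $\ll N^{1/2}/y$ nonzero values of $c$, then for fixed $c$ count $(e,b)$ in the sheared ellipse of \eqref{eq:X^g-sp-2}, giving $\ll yN\delta + y(N\delta)^{1/2}+(N\delta)^{1/2}+1$ pairs, and absorb the cross term $N\delta^{1/2}\le\max(N^{3/2}\delta,N^{1/2})$. This is correct and essentially identical to the paper's argument.
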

\begin{proof}
From equation \eqref{eq:X^g-sp-1} we learn that there are $\ll y^{-1}N^{1/2}(1+\delta)$ options for $c$ ($c\neq 0$ because the matrices are not upper triangular). For any fixed $c$, equation \eqref{eq:X^g-sp-2} describes an ellipse in the $e,b$ plane with radii $\ll \sqrt{N\delta}, y\sqrt{N\delta}$. Hence, the number of possibilities for $(e,b)\in \frac{1}{2}\mathbb{Z}\times \frac{1}{2}\mathbb{Z}$ is bounded from above by
\begin{equation*}
\ll y N\delta +(N\delta)^{1/2}\max(y,1)+1\ll_A y \left(N\delta + (N\delta)^{1/2}+1\right)\;.
\end{equation*}
Multiplying this by the bound for possible values of $c$ and the inequality $N\delta^{1/2}<\max(N^{3/2}\delta,N^{1/2})$ establish the claim.
\end{proof}

\begin{lem} \label{lem:eq-det-traceless}
Assume $y \ge A>0$, $|x|<C$, and $0<\delta \le 1$. Then,
\begin{equation*}
\#\left\{(\xi_1^0,\xi_2^0)\in \left(\tfrac{1}{2}\operatorname{Mat}_{2\times 2}(\mathbb{Z})^0 \cap X^g\setminus\mathfrak{U}\right)^{2} \colon \det \xi_1^0=\det \xi_2^0
\right\}\ll_{A,C} N^{5/2}\delta^2+N^{1/2}\;. 
\end{equation*}
\end{lem}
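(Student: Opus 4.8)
The plan is to fix the first matrix and exploit the equality of determinants to pin down the second one; no divisor bound will be needed.

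By Lemma~\ref{lem:traceless-count-sp} there are $\ll_{A} N^{3/2}\delta+N^{1/2}$ admissible choices of $\xi_1^0\in\mathbf{M}_2(\mathbb{Z})^0\cap X^g\setminus\mathfrak{U}$. Fixing one, write $D\coloneqq\det\xi_1^0$. It then suffices to prove the uniform bound
\[
r(D)\coloneqq\#\left\{\xi_2^0=\sm e & b\\ c & -e\esm\in\mathbf{M}_2(\mathbb{Z})^0\cap X^g\setminus\mathfrak{U}\ :\ -e^2-bc=D\right\}\ll_{A} N\delta+1 ,
\]
since multiplying the two estimates gives $\ll_{A} N^{5/2}\delta^2+N^{3/2}\delta+N^{1/2}$, and $N^{3/2}\delta\le N^{5/2}\delta^2+N^{1/2}$ (treat $N\delta\ge1$ and $N\delta<1$ separately).

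To bound $r(D)$, note that $\xi_2^0\notin\mathfrak{U}$ forces $c\neq0$, so the determinant equation gives $b=-(D+e^2)/c$. I will substitute this into the inequality \eqref{eq:X^g-sp-2} defining $X^g$, multiply through by $|c|\ge1$, and expand in terms of $p\coloneqq e-xc$ (for which \eqref{eq:X^g-sp-2} already yields $|p|<\sqrt{N\delta}$). The point of this manoeuvre is that all the $x$-dependent terms cancel, leaving
\[
\bigl|\,y^2c^2-p^2-D\,\bigr|<2y|c|\sqrt{N\delta}\,,\qquad\text{hence}\qquad \bigl|\,y^2c^2-D\,\bigr|<2y|c|\sqrt{N\delta}+N\delta .
\]
Writing $t\coloneqq y|c|$, the latter says $(t-\sqrt{N\delta})^2<D+2N\delta$ together with $(t+\sqrt{N\delta})^2>D$. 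If $D<-2N\delta$ there is no solution, so $r(D)=0$. Otherwise these two constraints confine $t$, for each sign of $c$, to an interval of length $\ll\sqrt{N\delta}$, because $\sqrt{D+2N\delta}-\sqrt{D}\ll\sqrt{N\delta}$ when $D\ge0$ and $\sqrt{D+2N\delta}\ll\sqrt{N\delta}$ when $-2N\delta\le D<0$. Consequently $c$ runs over $\ll\sqrt{N\delta}/y+1$ nonzero integers. For each such $c$, the bound $|e-xc|<\sqrt{N\delta}$ (again from \eqref{eq:X^g-sp-2}) confines the half-integer $e$ to an interval of length $2\sqrt{N\delta}$, giving $\ll\sqrt{N\delta}+1$ possibilities, after which $b$ is determined. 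Hence $r(D)\ll(\sqrt{N\delta}/y+1)(\sqrt{N\delta}+1)\ll_{A}N\delta+1$, using $y\ge A$.

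The only step requiring a genuine estimate rather than bookkeeping is the claim that the two quadratic inequalities in $t$ carve out an interval of length $O(\sqrt{N\delta})$ uniformly in $D$; I would verify this explicitly in the degenerate ranges where $D$ is near $0$ or near its maximal size $\asymp N$, and dispose of the empty case $D<-2N\delta$ at the outset. The hypothesis $|x|<C$ is in fact not used once the $x$-terms cancel, but I retain it for consistency with the surrounding statements.
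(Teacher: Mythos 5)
Your proof is correct and follows essentially the same route as the paper's: both count $\xi_1^0$ via Lemma \ref{lem:traceless-count-sp} and then show that, once the determinant $D$ is fixed, the slab condition \eqref{eq:X^g-sp-2} pins $y|c_2|$ to within $O(\sqrt{N\delta})$ of $\sqrt{D}$, leaving $\ll N\delta+1$ choices for $\xi_2^0$. The only difference is in bookkeeping — you eliminate $b_2$ through the determinant equation before localizing $c_2$, whereas the paper localizes the center of the $(e_2,c_2)$-ellipse via the quantity $\tilde{b}-\tilde{c}$ — and your variant has the mild advantage that the count of $\xi_2^0$ depends only on $D$ and the hypothesis $|x|<C$ is never used.
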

\begin{proof}
Note that the number of possible elements $\xi_1^0$ is bounded by Lemma \ref{lem:traceless-count-sp} above. 
We fix henceforth $\xi_1^0$ as in the claim and count the number of possible $\xi_2^0\not\in \mathfrak{U}$ with $\det\xi_2^0=\det\xi_1^0$. 
Denote
\begin{equation*}
g^{-1}\xi_i^0 g=\begin{pmatrix}
\tilde{e}_i & \tilde{b}_i \\ \tilde{c}_i & -\tilde{e}_i
\end{pmatrix}\;.
\end{equation*}
We now rewrite equation \eqref{eq:u-det-sp-2} for $\xi_i^0$ as
\begin{equation}\label{eq:X^g-sp-2-shifted}
4(e_i-c_ix)^2+\left((\tilde{b}_i-\tilde{c}_i)+2yc_i\right)^2<4N\delta\;.
\end{equation}
Then, $(\tilde{b}_1-\tilde{c}_1)$ is restricted to an interval of length $\ll \sqrt{N\delta}$.
Equation \eqref{eq:u-det-sp-2} implies 
\begin{equation*}
(\tilde{b}_i+\tilde{c}_i)^2 +4 \tilde{e_i}^2= (\tilde{b}_i-\tilde{c}_i)^2-4\det(\xi_i^0)=(\tilde{b}_i-\tilde{c}_i)^2-4\det(g^{-1}\xi_i^0g)\in[0,4N\delta]
\end{equation*}
and $\left|(\tilde{b_1}-\tilde{c}_1)^2-(\tilde{b_2}-\tilde{c}_2)^2\right|\ll N\delta$. We deduce that $\left||\tilde{b_1}-\tilde{c}_1|-|\tilde{b_2}-\tilde{c}_2|\right|\ll \sqrt{N\delta}$. In particular, $(\tilde{b}_2-\tilde{c}_2)$ is restricted to two intervals of length $\ll \sqrt{N\delta}$. 

Consider inequality \eqref{eq:X^g-sp-2-shifted} for $\xi_2^0$ with $(\tilde{b}_2-\tilde{c}_2)$ as a varying quantity in the aforementioned intervals, thus ignoring their dependencies on $e_2,c_2$. It describes  an ellipse in the variables $e_2,c_2$ with center $-\frac{\tilde{b}_2-\tilde{c}_2}{2y}\cdot(x,1)$. Because $\xi_1^0$ is fixed the center of the ellipse is restricted to one of two intervals of length $\ll_{C} y^{-1} \sqrt{N\delta}$. The radii of the ellipse satisfy $\ll_{C} \sqrt{N\delta}, y^{-1}\sqrt{N\delta}$. We deduce that the number of possibilities for $(e_2,c_2)$ is $\ll_{A,C} y^{-1}N\delta+(N\delta)^{1/2}+1\ll_{A,C} N\delta+1$. Once $\xi_1^0$, $e_2$ and $c_2\neq0$ are fixed the value of $b_2$ is fixed by the equality $\det \xi_2^0=\det\xi_1^0$. Hence, the total number of pairs $(\xi_1^0,\xi_2^0)$ is bounded from above in this case by
\begin{equation*}
\ll_{A,C} (N^{3/2}\delta+N^{1/2})(N\delta+1)\ll_{A,C}  N^{5/2}\delta^2+N^{3/2}\delta+N^{1/2} \ll_{A,C}  N^{5/2}\delta^2+N^{1/2} \;.
\end{equation*} 
\end{proof}

\begin{proof}[Proof of Proposition \ref{prop:second-moment-sp}]
Define
\begin{align*}
M_\star(g,n;\delta)&\coloneqq \left\{ \xi\in\operatorname{Mat}_{2\times 2}(\mathbb{Z})\setminus\mathfrak{U} \mid \det \xi=n\;, u(g^{-1}\xi g) < \delta
\right\}\;,\\
M_u(g,n;\delta)&\coloneqq \left\{ \xi\in\operatorname{Mat}_{2\times 2}(\mathbb{Z})\cap\mathfrak{U} \mid \det \xi=n\;, u(g^{-1}\xi g) < \delta
\right\}\;.\\
\end{align*}
Then, the inequality of means imply
\begin{equation*}
\sum_{n=1}^N M(g,n;\delta)^2\leq 2\sum_{n=1}^N M_\star(g,n;\delta)^2+2\sum_{n=1}^N M_u(g,n;\delta)^2
\end{equation*}
and we turn to bounding each term individually. The second term is controlled by Lemma \ref{lem:secon-moment-u} and is consistent with the claim.

To bound the first term we need to bound the number of pairs $(\xi_1,\xi_2)\in \left(\operatorname{Mat}_{2\times 2}(\mathbb{Z})\setminus\mathfrak{U}\right)^{2}$ such that $0<\det\xi_1=\det\xi_2<N$ and $u(g^{-1}\xi_i g)<\delta$ for $i=1,2$.
Assume first $\delta>1$, we then argue as in \cite{IwaniecSarnak} to show the stronger bound $M_\star(g,n;\delta)\ll_{A,C,\varepsilon} n^{1+\varepsilon}\delta$. Let $\det\xi=n$ and write $\xi=\begin{pmatrix}
a & b \\ c & d
\end{pmatrix}$ as usual.  When $\delta>1$, we can replace the right hand side in inequalities \eqref{eq:u-det-sp-1} and \eqref{eq:X^g-sp-1} by $6N\delta$. If either $a=0$ or $d=0$, then the equation $bc=n$ and the divisor bound imply that we have at most $\ll_\varepsilon n^{\varepsilon}$ possibilities for $(b,c)$. Moreover, \eqref{eq:u-det-sp-1} implies that there are at most $\ll (n\delta)^{1/2}$ options for $\Tr \xi$. Hence, the number of possible values of $\xi$ in these cases is $\ll_\varepsilon n^{1/2+\varepsilon}\delta^{1/2}\ll n^{1+\varepsilon}\delta$. Assume next $a\neq 0$ and $d\neq 0$.
Equation \eqref{eq:X^g-sp-1} implies that we have at most $\ll y^{-1}(n\delta)^{1/2}$ options for $c$. Likewise, we have $\ll 1+(n\delta)^{\frac{1}{2}}(x^2+y^2)y^{-1}  \ll_{A,C} y(n \delta)^{\frac{1}{2}}$ choices for $b$. This may be seen by either repeating the computation for \eqref{eq:X^g-sp-1} using the Iwasawa decomposition with respect to the lower triangular unipotents or noting that $\sm & 1 \\ -1 & \esm g.i= (-x+iy)/(x^2+y^2)$ and $\sm & 1 \\ -1 & \esm \xi \sm & 1 \\ -1 & \esm^{-1} = \sm d & -c \\ -b & a \esm$. 
Now that $(b,c)$ is fixed, we use the equality $ad=bc+n$ and the divisor bound to see that there are at most $\ll_\varepsilon n^{\varepsilon}$ possibilities for $(a,d)$. This establishes the inequality for $\delta>1$.

Assume henceforth $\delta<1$. We will be using the simple identity
\begin{equation}\label{eq:det-tr}
\det \xi=\frac{\left(\Tr \xi\right)^2}{4}+\det \xi^0
\end{equation}
and argue in two different ways depending on whether the traces of $\xi_1$, $\xi_2$ are equal or not.

\paragraph*{Case I: $|\Tr\xi_1|\neq |\Tr\xi_2|$}
Lemma \ref{lem:traceless-count-sp} implies that there are at most $\ll_A N^3\delta^2+N$ options for $(\xi_1^0,\xi_2^0)$. After fixing the traceless parts, equation \eqref{eq:det-tr} fixed $(\Tr \xi_1)^2-(\Tr \xi_2)^2$. Because the traces are not equal in absolute value the divisor bound and the trivial bound $|\Tr\xi| \ll N$ imply there are at most $\ll_\varepsilon N^\varepsilon$ choices for the traces. This establishes the claim in this case.

\paragraph*{Case II: $|\Tr\xi_1|=|\Tr\xi_2|$}
In this case, we use the trivial bound $|\Tr\xi_1|=|\Tr\xi_2|\ll N^{1/2}$ from equation \eqref{eq:u-det-sp-1} to fix the traces and Lemma \ref{lem:eq-det-traceless} to fix the traceless part. The final bound is consistent with the claim.
\end{proof}

\subsection{Second Moment Bound for Division Algebras}\label{sec:second-moment-cartan}
In the section, we assume $\mathbf{G}$ is anisotropic, i.e.\ $B$ is a ramified quaternion algebra over $\mathbb{Q}$. Fix and imaginary quadratic field $E/\mathbb{Q}$ of discriminant $D_E$, such that every prime dividing $D_B$ is inert in $E$. By a theorem of Chinburg and Friedman \cite{ChinburgFriedman}, there is an optimal embedding $E\hookrightarrow\mathcal{R}$. We identify henceforth $E$ with its image in $\mathcal{R}$. Denote by $K_E<\mathbf{G}(\mathbb{R})$ the group of norm $1$ elements in $(E\otimes\mathbb{R})^\times$. 
Recall that we have a fixed isomorphism $B_\infty\simeq\operatorname{Mat}_{2\times 2}(\mathbb{R})$ that induces a group isomorphism $\mathbf{G}(\mathbb{R})\simeq \mathbf{SL}_2(\mathbb{R})$, which we use to identify the two groups. Moreover, $K_\infty$ was defined as the image of $\mathbf{SO}_2(\mathbb{R})$ in $\mathbf{G}(\mathbb{R})$, and we define similarly $A$ to be the image of the diagonal subgroup in $\mathbf{G}(\mathbb{R})$.
The group $K_E$ is conjugate to $K_\infty$, and we can write $K_\infty=h K_E h^{-1}$. 

\begin{prop}\label{prop:second-moment-embedded}
Assume $E\hookrightarrow\mathcal{R}$ is an optimal embedding of an imaginary quadratic field $E$ in the maximal order $\mathcal{R}$. Let $h\in\mathbf{G}(\mathbb{R})$ be an element conjugating $K_E$ to $K_{\infty}<\mathbf{SL}_2(\mathbb{R})$.
Then, for any $g\in\mathbf{G}(\mathbb{R})$, $1>\delta>0$,
\begin{equation*}
\sum_{n=1}^N M(g,n;\delta)^2 \ll |D_E|^{2+\varepsilon}N^{\varepsilon}\left[N^3\delta^2+(\lambda+\lambda^{-1})^{2+\varepsilon}(N^{5/2}\delta^{3/2}+N)\right]
\;,
\end{equation*}
where we write $\sqrt{\lambda}\geq(\sqrt{\lambda})^{-1}>0$ for the eigenvalue of the diagonal part in the $K_\infty A K_\infty$ Cartan decomposition of $hg$. Moreover, if $\delta\geq 1$ the bound 
\begin{equation*}
M(g,n;\delta)\ll_{\varepsilon}\left((\lambda+\lambda^{-1})n \delta |D_E|^{1/2}\right)^{1+\varepsilon}
\end{equation*}    
holds for all $g\in\mathbf{G}(\mathbb{R})$ and $n\in\mathbb{N}$.
\end{prop}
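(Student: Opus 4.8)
The plan is to mirror the structure of the split-case argument (Proposition \ref{prop:second-moment-sp}), replacing the Iwasawa decomposition of $g$ by a Cartan decomposition adapted to the embedded torus $K_E$. First I would fix $h\in\mathbf{G}(\mathbb{R})$ with $K_\infty=hK_Eh^{-1}$ and write the Cartan decomposition $hg=k_1\diag(\lambda^{1/2},\lambda^{-1/2})k_2$ with $k_1,k_2\in K_\infty=\mathbf{SO}_2(\mathbb{R})$ and $\lambda>1$. Since $u(g^{-1}\xi g)=u\bigl((hg)^{-1}(h\xi h^{-1})(hg)\bigr)$ and $\Nr$ is conjugation-invariant, after conjugating the whole lattice $\mathcal{R}$ by $h$ (which lands it in some fixed lattice of $B_\infty\simeq\mathbf{M}_2(\mathbb{R})$ commensurable to $\mathbf{M}_2(\mathbb{Z})$ with index controlled by $|D_E|$) the problem becomes: count pairs of integral-type matrices of equal positive norm $\le N$ satisfying $u(a^{-1}\xi_i a)\le\delta$ for $a=\diag(\lambda^{1/2},\lambda^{-1/2})$. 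The two left/right $\mathbf{SO}_2(\mathbb{R})$ factors are harmless because $u$ and the set $X$ below are $\mathbf{SO}_2(\mathbb{R})$-bi-invariant up to the conjugation structure, exactly as in Corollary \ref{cor:k-double-invariance}.

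Next I would reproduce the traceless decomposition $\xi=\tfrac12\Tr\xi\cdot\Id+\xi^0$, $B_\infty=\mathbb{R}\Id\oplus B_\infty^0$, and the identity $\Nr\xi=(\Tr\xi)^2/4+\Nr\xi^0$ from \eqref{eq:det-tr}. Writing $\xi^0=\sm e & b\\ c& -e\esm$, the conditions $\Nr\xi\le N$, $u(a^{-1}\xi a)\le\delta$ translate (as in \eqref{eq:X^g-sp-1}, \eqref{eq:X^g-sp-2}) into an ellipsoidal region $X^a=aXa^{-1}\subset B_\infty^0$ whose relevant radii in the $(e,b,c)$-coordinates are of size $\asymp(N\delta)^{1/2}$, $\lambda(N\delta)^{1/2}$, $\lambda^{-1}(N\delta)^{1/2}$ along suitable axes, together with the global bound $4e^2+b^2+c^2\ll N$. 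Counting lattice points in $X^a$ by Minkowski / successive-minima gives $\#(\text{lattice}\cap X^a)\ll |D_E|^{?}\bigl((\lambda+\lambda^{-1})N\delta+(N\delta)^{1/2}(\lambda+\lambda^{-1})^{1/2}+1\bigr)$, the analogue of Lemma \ref{lem:traceless-count-sp}; one must be careful to keep the $\lambda$-dependence polynomial and to absorb the index $[\,\mathbf{M}_2(\mathbb{Z}):h\mathcal{R}h^{-1}\cap\dots\,]\ll|D_E|^{O(1)}$. Then, as in the split proof, I split into Case I ($|\Tr\xi_1|\ne|\Tr\xi_2|$), where fixing the two traceless parts fixes $(\Tr\xi_1)^2-(\Tr\xi_2)^2$ and the divisor bound gives $N^\varepsilon$ choices for the traces, so the pair count is $\ll(\#X^a)^2 N^\varepsilon$, the source of the $N^3\delta^2$ term; and Case II ($|\Tr\xi_1|=|\Tr\xi_2|$), where the trivial bound $|\Tr\xi|\ll N^{1/2}$ fixes the trace and the equal-norm condition $\Nr\xi_1^0=\Nr\xi_2^0$ together with the $\lambda$-refined ellipsoid analysis (the analogue of the last lemma before Proposition \ref{prop:second-moment-sp}, where after fixing $\xi_1^0$ the quantity $\tilde b-\tilde c$ for $\xi_2$ is pinned to short intervals and then $(e_2,c_2)$ to a thin ellipse, $b_2$ then determined) produces the $N^{5/2}\delta^{3/2}+N$ term with the stated $(\lambda+\lambda^{-1})^{2+\varepsilon}$ weight. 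The final $\delta\ge1$ bound is the direct transcription of the Iwaniec–Sarnak argument recalled in the proof of Proposition \ref{prop:second-moment-sp}: with $\delta\ge1$ one may enlarge the right-hand sides of the defining inequalities by a constant factor, use $\Nr\xi=n$ with the divisor bound to count $(a,d)$-type data after fixing the off-diagonal entries, which are confined to intervals of length $\ll(\lambda+\lambda^{-1})(n\delta)^{1/2}$.

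I expect the main obstacle to be the uniform control of the $\lambda$-dependence in the geometry-of-numbers step: unlike the split case, where the Iwasawa $y$ enters through a single shear, here the Cartan parameter $\lambda$ simultaneously stretches one axis and compresses another in $B_\infty^0$, so the ellipsoid $X^a$ can be very eccentric and a naive volume bound loses control when $\lambda$ is large. The resolution is to bound lattice points by the product $\prod_i(1+\mathrm{radius}_i/\mu_i)$ over successive minima $\mu_i$ of the (conjugated) lattice in $B_\infty^0$, noting that these minima are bounded below in terms of $|D_E|$ alone because $\xi^0$ ranges over an order-type lattice in the trace-zero quaternions whose norm form has discriminant $\asymp D_E$; this is where the hypothesis that $E\hookrightarrow\mathcal{R}$ is \emph{optimal} and that primes dividing $D_B$ are inert in $E$ (guaranteeing the embedding exists and interacts well with $\mathcal{R}$) is used. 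A secondary technical point is checking that, as in Lemma \ref{lem:secon-moment-u}, the "degenerate'' configurations — the analogue of $\xi_i$ lying in the Borel $\mathfrak U$, here matrices commuting with the embedded torus, i.e.\ lying in $E\otimes\mathbb{R}$ — are counted separately and contribute within the claimed bound; since $E\otimes\mathbb{R}$ meets $\mathcal{R}$ in an order of $E$, this reduces to counting algebraic integers of $E$ of bounded norm in a narrow sector, which is elementary.
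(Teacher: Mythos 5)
Your skeleton does match the paper's: a Cartan decomposition of $hg$ adapted to the embedded torus, the splitting $\xi=\tfrac12\Tr\xi\cdot\Id+\xi^0$ with $\Nr\xi=(\Tr\xi)^2/4+\Nr\xi^0$, the dichotomy $|\Tr\xi_1|\neq|\Tr\xi_2|$ versus $|\Tr\xi_1|=|\Tr\xi_2|$, and the Iwaniec--Sarnak argument for $\delta\geq 1$. However, there are two genuine gaps.

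First, the arithmetic input. The lattice $h\mathcal{R}h^{-1}$ is \emph{not} commensurable with $\mathbf{M}_2(\mathbb{Z})$: since $B$ is a division algebra, $h\mathcal{R}h^{-1}$ is an irrational lattice in $\mathbf{M}_2(\mathbb{R})$ and its matrix entries carry no integrality whatsoever. Consequently every divisor-bound step of the split proof, other than the one applied to the (genuinely integral) traces, fails to transcribe: you cannot count pairs $(a,d)$ with $ad=bc+n$ by the divisor bound, nor pin $b_2$ from $\det\xi_2^0=\det\xi_1^0$, when $a,b,c,d$ are arbitrary reals. The paper's substitute is to work in coordinates adapted to $E$, writing $\xi=\sm a & D_B b\\ b^\sigma & a^\sigma\esm$ with $a,b$ in the inverse different of $\mathcal{O}_E$, and to replace the rational divisor bound by the bound $\ll_\varepsilon (n|D_E|)^\varepsilon$ on the number of elements of $\mathcal{O}_E$ of a given norm (used both in the equal-norm case $\Nr b_1=\Nr b_2$ of Lemma \ref{lem:equidisc-count} and in the $\delta\geq1$ argument via $n=\Nr a-D_B\Nr b$). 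This is the actual role of the optimal embedding $E\hookrightarrow\mathcal{R}$ — it supplies the arithmetic structure in which to count — not merely a lower bound on successive minima.

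Second, the geometry of the counting region is mis-stated. The condition $u(g^{-1}\xi g)\leq\delta$ constrains only a two-dimensional projection of $\xi^0$ to size $(N\delta)^{1/2}$; the remaining direction is bounded only by $\Nr\xi\leq N$, hence has length $\asymp N^{1/2}$ before conjugation and $\asymp(\lambda+\lambda^{-1})N^{1/2}$ after (this is the inequality $|\Im a^0|\ll(\lambda+\lambda^{-1})N^{1/2}$ following from \eqref{eq:X^g-1} and \eqref{eq:X^g-2}). Your claimed radii $(N\delta)^{1/2},\lambda(N\delta)^{1/2},\lambda^{-1}(N\delta)^{1/2}$ and the resulting count $(\lambda+\lambda^{-1})N\delta+\cdots$ are therefore not what the region gives; the correct first-moment bound (Lemma \ref{lem:traceless-count}) is $\ll|D_E|\left(N^{3/2}\delta+(\lambda+\lambda^{-1})(N\delta^{1/2}+N^{1/2})\right)$, and it is the square of the $N^{3/2}\delta$ term that produces the main term $N^3\delta^2$. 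As written, the square of your bound gives $(\lambda+\lambda^{-1})^2N^2\delta^2$, so your Case I does not yield the term you claim it does; the proposal is internally inconsistent at exactly the point where the sharp main term must emerge.
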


We now fix $\mathcal{R},E,h,g$ as in the proposition above and prepare some notation and lemmata that we will use in the course of the proof. The proof is very similar to the split case, except that we track the dependence on $g$ differently, not using its Iwasawa decomposition but rather its Cartan decompsition relative to the stabilizer of $E\hookrightarrow B$.

Because of our choice of $E$ as optimally embedded in $\mathcal{R}$, we can find an isomorphism $B\otimes E\simeq M_2(E)$ where $\mathcal{R}$ is mapped to 
\begin{equation*}
\left\{
\begin{pmatrix}
a & D_B b \\ \tensor[^\sigma]{b}{} & \tensor[^\sigma]{a}{}
\end{pmatrix} \colon a,b\in\widehat{\mathcal{O}_E}\;, a+b\in \mathcal{O}_E
\right\}
\end{equation*}
and after fixing a field embedding $E\hookrightarrow\mathbb{C}$ the algebra $B_\infty$ coincides with $\left\{\sm
a & D_B b \\ \tensor[^\sigma]{b}{} & \tensor[^\sigma]{a}{}
\esm\colon a,b\in\mathbb{C}\right\}$. We denote by $B_\infty^0$ the subspace of traceless elements, equivalently pure quaternions. There is a direct sum decomposition $B_\infty = \mathbb{R}\Id + B_\infty^0$. This decomposition is preserved by the conjugation action. In our new coordinate system, the space $B_\infty^0$ is identified with $i\mathbb{R}\times\mathbb{C}$ and the projection map becomes $(a,b)\mapsto (a^0,b)$ where $a^0=(a-\tensor[^\sigma]{a}{})/2=i \Im a$ is the traceless part of $a\in\mathbb{C}$. The space $B_\infty^0$ is equipped with an inner-product constructed as the direct sum of the standard inner-product on $\mathbb{R}$ and $\mathbb{C}$, i.e. $|(a^0,b)|^2=|a^0|^2+|b|^2$. Let $\mathcal{R}^0$ be the projection of $\mathcal{R}$ to
$B_\infty^0$. Then,  $\mathcal{R}^0<B_\infty^0$ is a lattice of co-volume $\asymp 1$.

In this new coordinate system, we have for a quaternion of positive norm\footnote{Note that the transpose operation with respect to $K_E$ is $\sm \alpha & D_B \beta \\ \tensor[^\sigma]{\beta}{} & \tensor[^\sigma]{\alpha}{}\esm\mapsto
\sm \tensor[^\sigma]{\alpha}{} & D_B \beta \\ \tensor[^\sigma]{\beta}{} & \alpha\esm$.}
\begin{equation*}
u\left(h\begin{pmatrix}
a & D_B b \\ \tensor[^\sigma]{b}{} & \tensor[^\sigma]{a}{}
\end{pmatrix}h^{-1}\right)=\frac{1}{2}\left[\frac{\Nr a+ D_B \Nr b}{\Nr a - D_B\Nr b}-1\right]=\frac{D_B \Nr b}{\Nr a - D_B\Nr b}\;.
\end{equation*}
Hence, if we write in coordinates
\begin{equation*}
(gh)^{-1} \xi (gh)=\begin{pmatrix}
a & D_B b \\ \tensor[^\sigma]{b}{} & \tensor[^\sigma]{a}{}
\end{pmatrix}
\end{equation*}
with $\Nr \xi>0$, then the conditions $u(g^{-1}\xi g)<\delta$, $\Nr \xi<N$ imply
\begin{align}
\label{eq:u-det-1}
\Nr a + D_B \Nr b&<N(2\delta+1)\;,\\
\label{eq:u-det-2}
D_B \Nr b &< \delta N\;.
\end{align}
The traceless part of $(gh)^{-1} \xi (gh)$ is
\begin{equation*}
(gh)^{-1} \xi^0 (gh)=
\begin{pmatrix}
a^0 & D_B b \\ \tensor[^\sigma]{b}{} & \tensor[^\sigma]{a}{^0}
\end{pmatrix}\;,
\end{equation*}
where $a^0=i \Im a$ is the traceless part of $a$.
Equation \eqref{eq:u-det-1} implies that $\Nr a^0 \leq N(2\delta+1)$. Motivated by these inequalities, we denote
\begin{align*}
B_\infty^0\supset X&\coloneq \left\{x=\begin{pmatrix}
a^0 & D_B b \\ \tensor[^\sigma]{b}{} & \tensor[^\sigma]{a}{^0}
\end{pmatrix}\colon D_B \Nr b\leq \delta N\;, \Nr a^0\leq N(2\delta+1)
\right\}\;,\\
B_\infty^0\supset X^g&\coloneq (gh)X(gh)^{-1}\;.
\end{align*}

We decompose $gh$ according to a Cartan decomposition in $K_E A_E K_E$ where $A_E$ is the orthogonal group preserving the quadratic form $(\Im a)^{2}-D_B(\Im b)^2$. Equivalently, the Lie algebra of $A_E$ is
\begin{equation*}
\Lie A_E= \mathbb{R}\cdot \begin{pmatrix}
0 & D_B  \\ 1 & 0
\end{pmatrix}< B_\infty^0\;.
\end{equation*}
Write $gh=k_2 a_E k_1$ with $k_1,k_2\in K_E$, $a_E\in A_E$ and denote by $\sqrt{\lambda}\geq (\sqrt{\lambda})^{-1}>0$ the eigenvalues of $a_E$. Then, $\sqrt{\lambda},(\sqrt{\lambda})^{-1}$ are also the eigenvalue of the diagonal part of the regular $K_\infty AK_\infty$ Cartan decomposition of $hg=h(gh)h^{-1}$, i.e.\ the singular values.

The set $X$ is invariant under conjugation by $K_E$, hence $X^g=(k_2 a_E)X (k_2 a_E)^{-1}$. We can write the equations defining the set $a_E X a_E^{-1}$ explicitly by decomposing the Lie algebra $B_\infty^0$ into the weight spaces of $A_E$. The result of the computation is that every $x=\sm
a^0 & D_B b \\ \tensor[^\sigma]{b}{} & \tensor[^\sigma]{a}{^0}
\esm\in a_E X a_E^{-1}$ satisfies
\begin{align}
\label{eq:X^g-1}
\left(\frac{\lambda+\lambda^{-1}}{2}\Im a^0+\sqrt{D_B}\frac{\lambda-\lambda^{-1}}{2}\Im b\right)^2&\leq N(2\delta+1)\;,
\\
\label{eq:X^g-2}
\left(\frac{\lambda-\lambda^{-1}}{2}\Im a^0+\sqrt{D_B}\frac{\lambda+\lambda^{-1}}{2}\Im b\right)^2 +D_B (\Re b)^2&\leq N\delta\;.
\end{align}

The set $X^g=k_2 (a_E X a_E^{-1}) k_2^{-1}$ is a rotation of $a_E X a_E^{-1}$ around the $\Im a^0$ axis. Hence, the equations defining $X^g$ are derived from \eqref{eq:X^g-1}, \eqref{eq:X^g-2} by a rotation in the $b$-plane.
Notice that equations \eqref{eq:X^g-1} and \eqref{eq:X^g-2} imply that $|\Im a^0| \ll (\lambda+\lambda^{-1})N^{1/2}(1+\delta^{1/2})$. Because the axis $\Im a^0$ is invariant under conjugation by $k_2$ this inequality holds also for $X^g$.

\begin{lem}\label{lem:traceless-count}
Assume $0<\delta\leq 1$. Then,
\begin{equation*}
\#\left(X^{g}\cap \mathcal{R}^0 \right)
\ll |D_E|\left(N^{3/2}\delta+(\lambda+\lambda^{-1})(N\delta^{1/2}+N^{1/2})\right)\;.
\end{equation*}
\end{lem}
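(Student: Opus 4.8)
The plan is to count lattice points of $\mathcal{R}^0$ inside the region $X^g$ by exploiting the explicit description of $X^g$ as a $k_2$-rotation of the region $a_E X a_E^{-1}$ cut out by \eqref{eq:X^g-1} and \eqref{eq:X^g-2}. Since $\mathcal{R}^0$ is a lattice of covolume $\asymp |D_E|^{-1}$ in the $3$-dimensional space $B_\infty^0$, a naive volume heuristic would give $\asymp |D_E|\cdot\vol(X^g)$ plus boundary corrections, but the region is very anisotropic (it is essentially a thin slab, since \eqref{eq:X^g-2} forces the two "$b$-type" coordinates into a small box while \eqref{eq:X^g-1} only constrains one direction), so the dangerous contributions come from lower-dimensional strata of the lattice lying in degenerate sub-regions. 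I will therefore slice along the $\Im a^0$-coordinate, which by the remark following \eqref{eq:X^g-2} is confined to an interval of length $\ll (\lambda+\lambda^{-1})N^{1/2}$, and for each fixed value of $\Im a^0$ count points $(\Re b,\Im b)$ — really points of a fixed rank-$2$ sublattice (or coset thereof) of covolume $\asymp |D_E|^{-1}$ — inside the planar region obtained by rotating the ellipse-type constraint \eqref{eq:X^g-2} by $k_2$.

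The key steps, in order, are: (i) reduce to counting $(\Im a^0, \Re b, \Im b)$ with $\Im a^0$ running over an interval of length $\ll(\lambda+\lambda^{-1})N^{1/2}$ and, after subtracting off the appropriate center determined by $\Im a^0$, $(\Re b, \Im b)$ lying in a fixed planar region of area $\asymp N\delta$ whose two semi-axes are both $\ll (N\delta)^{1/2}$ (from \eqref{eq:X^g-2} the $\Re b$-direction has length $\ll (N\delta/D_B)^{1/2}$ and, since $\lambda\ge 1$, the constrained $b$-direction also has length $\ll (N\delta)^{1/2}$); (ii) apply the standard two-dimensional lattice-point count for a convex planar region $R$ and a lattice $L$ of covolume $c$, namely $\#(R\cap L)\ll \vol(R)/c + (\operatorname{diam} R)/\lambda_1(L) + 1$, with the lattice here being the projection of $\mathcal{R}^0$ onto the $b$-plane, which has covolume $\asymp |D_E|^{-1}$ and first minimum $\asymp |D_E|^{-1/2}$; this yields per-slice bound $\ll |D_E|\, N\delta + |D_E|^{1/2}(N\delta)^{1/2}+1$; (iii) sum over the $\ll (\lambda+\lambda^{-1})N^{1/2}$ admissible values of $\Im a^0$ (noting $\Im a^0$ ranges in a lattice with spacing $\asymp 1$ once $\Re b,\Im b$ are fixed, or more simply bound the number of slices directly), which produces $\ll (\lambda+\lambda^{-1})N^{1/2}\big(|D_E| N\delta + |D_E|^{1/2}(N\delta)^{1/2}+1\big)$; (iv) simplify using $\delta\le 1$ and $|D_E|\ge 1$, absorbing $|D_E|^{1/2}(N\delta)^{1/2}$ and the $+1$ into $|D_E|(N^{3/2}\delta + (\lambda+\lambda^{-1})N^{1/2})$ — the term $N^{3/2}\delta$ coming precisely from the first summand once one observes $(\lambda+\lambda^{-1})N^{1/2}\cdot N\delta$ needs to be compared with the claimed $|D_E|N^{3/2}\delta$; here one must be slightly careful that the claimed bound has $N^{3/2}\delta$ \emph{without} the $(\lambda+\lambda^{-1})$ factor, so the $(\lambda+\lambda^{-1})$ from the slice count must be dominated: indeed $(\lambda+\lambda^{-1})N^{1/2}\cdot|D_E|N\delta$ is not obviously $\ll |D_E|N^{3/2}\delta$, so one uses instead that when $\lambda+\lambda^{-1}$ is large the slab is so thin in one $b$-direction that the area per slice drops by a factor $(\lambda+\lambda^{-1})^{-1}$ — this is exactly what \eqref{eq:X^g-1} and the coupling between the two displayed inequalities provide.

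The main obstacle I anticipate is step (iv), i.e.\ correctly tracking the dependence on $\lambda$: a crude per-slice area bound of $N\delta$ combined with $(\lambda+\lambda^{-1})N^{1/2}$ slices would give an extra factor of $\lambda+\lambda^{-1}$ that is absent from the statement, so the proof must use that the effective counting region, after rotation by $k_2$, is contained in the \emph{intersection} of \eqref{eq:X^g-1} and \eqref{eq:X^g-2} rather than just in \eqref{eq:X^g-2}; the linear combination $\tfrac{\lambda+\lambda^{-1}}{2}\Im a^0 + \sqrt{D_B}\tfrac{\lambda-\lambda^{-1}}{2}\Im b$ being small forces $\Im b$ and $\Im a^0$ to be coupled, so that once $\Im a^0$ is fixed the point $(\Re b,\Im b)$ is confined to a region of area only $\ll N\delta/(\lambda+\lambda^{-1})$ when $\lambda$ is large. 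Making this trade-off precise — balancing the number of $\Im a^0$-slices against the shrinking $b$-area, in all ranges of $\delta$ and $\lambda$ simultaneously — is the delicate bookkeeping, but it is exactly analogous to the geometry-of-numbers argument carried out in the split case via \eqref{eq:X^g-sp-1}, \eqref{eq:X^g-sp-2} in Lemma \ref{lem:traceless-count-sp}, with the roles of $y$ and $(\lambda+\lambda^{-1})$ interchanged and an extra factor $|D_E|$ from the covolume of $\mathcal{R}^0$.
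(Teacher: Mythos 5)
Your slicing strategy (fix $\Im a^0$, count $b$ in the resulting planar region, multiply by the number of slices) is exactly the paper's argument, and your secondary terms and $|D_E|$ bookkeeping come out right. But the decisive step — removing the spurious factor of $\lambda+\lambda^{-1}$ from the main term — is left as an ``anticipated obstacle,'' and the mechanism you point to for closing it is not the correct one. You attribute the per-slice shrinkage to a coupling between \eqref{eq:X^g-1} and \eqref{eq:X^g-2}, quoting the linear combination $\tfrac{\lambda+\lambda^{-1}}{2}\Im a^0+\sqrt{D_B}\tfrac{\lambda-\lambda^{-1}}{2}\Im b$ from \eqref{eq:X^g-1} as being ``small.'' It is not: \eqref{eq:X^g-1} only bounds that quantity by $N(2\delta+1)\asymp N$, with no $\delta$-saving, so no intersection argument built on it can produce the needed gain. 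There is no delicate trade-off to balance here.

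The correct (and simpler) observation is that \eqref{eq:X^g-2} \emph{alone} already does the work: the coefficient of $\Im b$ there is $\sqrt{D_B}\tfrac{\lambda+\lambda^{-1}}{2}$, so for each \emph{fixed} $\Im a^0$ the inequality confines $\Im b$ to an interval of length $\asymp(\lambda+\lambda^{-1})^{-1}\sqrt{N\delta/D_B}$ and $\Re b$ to an interval of length $\asymp\sqrt{N\delta/D_B}$; the per-slice region is an ellipse of area $\asymp N\delta/\bigl(D_B(\lambda+\lambda^{-1})\bigr)$, not $N\delta/D_B$, and rotation by $k_2$ (which fixes the $\Im a^0$-axis) preserves this. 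Your step (i) discards precisely this factor when you replace the semi-axis $\tfrac{2}{\lambda+\lambda^{-1}}(N\delta/D_B)^{1/2}$ by the weaker $(N\delta)^{1/2}$. The only place \eqref{eq:X^g-1} is needed is to derive $|\Im a^0|\ll(\lambda+\lambda^{-1})N^{1/2}$ (by taking the appropriate linear combination of \eqref{eq:X^g-1} and \eqref{eq:X^g-2} to isolate $\Im a^0$), which bounds the number of slices. Multiplying $\ll(\lambda+\lambda^{-1})(N|D_E|)^{1/2}$ slices by the per-slice count $\ll N\delta|D_E|^{1/2}/(D_B(\lambda+\lambda^{-1}))+(N\delta|D_E|/D_B)^{1/2}+1$ then gives the lemma directly; the factors of $\lambda+\lambda^{-1}$ cancel in the main term with no further input.
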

\begin{proof}
From $|\Im a^0| \ll (\lambda+\lambda^{-1})N^{1/2}$ we deduce that there are $\ll (\lambda+\lambda^{-1})(N|D_E|)^{1/2}$ possibilities for $a^0=i\Im a^0$.
The second equation \eqref{eq:X^g-2} implies that for any fixed $a^0=i\Im a^0$ the element $b$ belong to an ellipse with radii $\sqrt{N\delta/D_B},\frac{2}{\lambda+\lambda^{-1}}\sqrt{N\delta/D_B}$. Conjugation by $k_2$ amounts to rotating the set around the $\Im a^0$ axis. Hence, this observation remains valid for $X^g$. We deduce that for any fixed $\Im a^0$ we have
\begin{equation*}
\ll \frac{N\delta\sqrt{|D_E|}}{D_B(\lambda+\lambda^{-1})}+\left(\frac{N\delta|D_E|}{D_B}\right)^{1/2}+1\;.
\end{equation*}
possibilities for $b$. The claim follows by multiplying the number of possibilities for $a^0$ by the number of possible $b$'s for each $a^0$.
\end{proof}

\begin{lem}\label{lem:equidisc-count}
Assume $0<\delta\leq 1$. Then, 
\begin{equation*}
\#\left\{ (\xi_1^0,\xi_2^0)\in\left(\mathcal{R}^0 \cap X^g\right)^{2} \colon \Nr \xi_1^0=\Nr \xi_2^0
\right\}\ll_\varepsilon |D_E|^{1+\varepsilon} (\lambda+\lambda^{-1})^{2+\varepsilon}N^\varepsilon
\left(N^2\delta^{3/2}+N^{1/2}\right)
\;.
\end{equation*}
\end{lem}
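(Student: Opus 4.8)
The plan is to follow the blueprint of the split case (the argument after Lemma~\ref{lem:traceless-count-sp}), replacing the Iwasawa decomposition of $g$ by the Cartan decomposition $gh=k_2 a_E k_1$ relative to the embedded torus, with one genuinely new input — a divisor bound for lattice points on a circle coming from the imaginary quadratic field $E$. First, by Lemma~\ref{lem:traceless-count} the number of admissible $\xi_1^0$ is $\ll |D_E|(N^{3/2}\delta+(\lambda+\lambda^{-1})(N\delta^{1/2}+N^{1/2}))$; I would fix one such $\xi_1^0$, set $\nu\coloneqq\Nr\xi_1^0$, and then show that the number of $\xi_2^0\in\mathcal{R}^0\cap X^g$ with $\Nr\xi_2^0=\nu$ is $\ll_\varepsilon(\lambda+\lambda^{-1})^{1+\varepsilon}|D_E|^{O(1)}N^\varepsilon\bigl((N\delta)^{1/2}+1\bigr)$. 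Multiplying the two counts and checking — exactly as $\delta\le1$ is used in the proof of Proposition~\ref{prop:second-moment-sp} — that the cross terms are all dominated by $N^2\delta^{3/2}+N^{1/2}$, yields the asserted bound (with a routine accounting of the powers of $|D_E|$).

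The heart of the matter is a clean change of coordinates. For $\xi_2^0\in\mathcal{R}^0\cap X^g$ write $k_2^{-1}\xi_2^0k_2=\sm i\Im a^0 & D_B b\\ \sigma b & -i\Im a^0\esm$; here $\Im a^0$ is unchanged by $k_2$ and lies in a rank-one lattice $\asymp\Im\widehat{\mathcal O_E}$, while $b$ lies in a rotated copy of $\widehat{\mathcal O_E}$, and \eqref{eq:X^g-1}, \eqref{eq:X^g-2} hold for $\Im a^0,\Im b,\Re b$. Introduce $M=\tfrac{\lambda+\lambda^{-1}}{2}\Im a^0+\sqrt{D_B}\tfrac{\lambda-\lambda^{-1}}{2}\Im b$ and $L=\tfrac{\lambda-\lambda^{-1}}{2}\Im a^0+\sqrt{D_B}\tfrac{\lambda+\lambda^{-1}}{2}\Im b$; a direct computation gives $(M-L)(M+L)=(\Im a^0)^2-D_B(\Im b)^2$, hence
\begin{equation*}
\Nr\xi_2^0=M^2-L^2-D_B(\Re b)^2 .
\end{equation*}
By \eqref{eq:X^g-2} one has $L^2+D_B(\Re b)^2\le N\delta$, so the equation $\Nr\xi_2^0=\nu$ forces $M^2=\nu+L^2+D_B(\Re b)^2=\nu+O(N\delta)$, which confines $M$ to at most two intervals of total length $\ll(N\delta)^{1/2}$. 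Inverting the determinant-one map, $\Im a^0=\tfrac{\lambda+\lambda^{-1}}{2}M-\tfrac{\lambda-\lambda^{-1}}{2}L$, and since $|L|\ll(N\delta)^{1/2}$ this places $\Im a^0$ in at most two intervals of total length $\ll(\lambda+\lambda^{-1})(N\delta)^{1/2}$; hence there are $\ll(\lambda+\lambda^{-1})(N\delta)^{1/2}|D_E|^{O(1)}+1$ admissible values of $\Im a^0$.

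Fixing such a value of $\Im a^0$, the equation $(\Im a^0)^2-D_B|b|^2=\nu$ now pins down $|b|^2$, placing $b$ on a fixed circle. Since $|b|^2$ is a fixed multiple of a norm from the imaginary quadratic field $E$ and $\mathcal O_E^\times$ is finite, the number of $b\in\widehat{\mathcal O_E}$ with this norm is $\ll_\varepsilon\bigl(|D_E|(\lambda+\lambda^{-1})^2 N\bigr)^\varepsilon$ by the divisor bound. As $\xi_2^0$ is determined by the pair $(\Im a^0,b)$, multiplying these two counts gives the claimed bound on the number of $\xi_2^0$ per $\xi_1^0$, and then multiplying by the Lemma~\ref{lem:traceless-count} count finishes the proof. (This is the precise analogue of the split step ``$\xi^0_2$ is fixed by $\det\xi_2^0=\det\xi_1^0$'', except that there the form is linear in the last variable whereas here it is a sum of squares, so the last variable is determined only up to a squareness/ideal-theoretic condition, whence the divisor bound.)

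The step I expect to carry the essential weight — and where the norm equation really does its job — is the identity $\Nr\xi_2^0=M^2-L^2-D_B(\Re b)^2$ in the Cartan-adapted coordinates: it is exactly this that converts the thinness of $X^g$ (the smallness of $L$ and $\Re b$) together with the fixed value $\Nr\xi_2^0=\nu$ into a short-interval confinement of $\Im a^0$, thereby gaining the factor $\asymp(N\delta)^{1/2}$ over Lemma~\ref{lem:traceless-count}. The one place needing care is the degenerate regimes — $\nu\le0$, or $\nu$ of size $\lesssim N\delta$, or $\lambda\asymp1$ — in which the two ``$\pm$'' intervals for $M$ may merge or shrink; but in every case $M^2=\nu+O(N\delta)$ together with $|M|\ll N^{1/2}$ still confines $|M|$, hence $\Im a^0$, to a set of the stated size, so the argument goes through, much as the co-compact/split dichotomy is absorbed into Cases I and II in the proof of Proposition~\ref{prop:second-moment-sp}.
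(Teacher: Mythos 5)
Your argument is sound in the $N$, $\delta$ and $(\lambda+\lambda^{-1})$ aspects, but it organizes the count differently from the paper. The paper does not fix $\xi_1^0$ outright: it splits into the cases $|a_1^0|=|a_2^0|$ and $|a_1^0|\neq|a_2^0|$, and in the latter it counts the pair $(b_1,b_2)$ first --- using \eqref{eq:X^g-2-shifted} together with $\bigl||\Im \tilde{a}_1^0|-|\Im \tilde{a}_2^0|\bigr|\ll\sqrt{N\delta}$ (your confinement of $M$; note your $M$ is exactly the paper's $\Im\tilde{a}^0$) to place the centre of the $b_2$-ellipse in two short intervals --- and only then recovers $(a_1^0,a_2^0)$ from the fixed value of $(\Im a_1^0)^2-(\Im a_2^0)^2$ by the divisor bound. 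You instead pay for $\xi_1^0$ in full via Lemma \ref{lem:traceless-count}, confine $\Im a_2^0$ to two intervals of total length $\ll(\lambda+\lambda^{-1})\sqrt{N\delta}$ through the (correct) identity $\Nr\xi_2^0=M^2-L^2-D_B(\Re b)^2$, and pin $b_2$ by the divisor bound for norms from $E$. This removes the case split entirely (the diagonal case $|a_1^0|=|a_2^0|$ is absorbed), and all your cross terms do land inside $(\lambda+\lambda^{-1})^{2}(N^2\delta^{3/2}+N^{1/2})$, as you assert.

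The one place where the ``routine accounting of the powers of $|D_E|$'' does not come out as stated: you pay the one-dimensional lattice density $\asymp|D_E|^{1/2}$ three times (for $a_1^0$ and for $b_1$ inside Lemma \ref{lem:traceless-count}, and again for $a_2^0$ in its short interval), whereas the paper pays it only twice (once for each of $b_1$, $b_2$) and gets both $a^0$'s for an $\varepsilon$-power from the divisor bound. Your main term is therefore $\ll_\varepsilon|D_E|^{3/2+\varepsilon}(\lambda+\lambda^{-1})N^2\delta^{3/2}$ rather than $|D_E|^{1+\varepsilon}(\cdots)$. This is harmless for Theorem \ref{thm:main}, where the level dependence is swept into $\covol(\Gamma)^A$, but it falls short of the lemma as literally stated; to recover the exponent $1$ you would need to reorganize so that at most one of $a_1^0$, $a_2^0$ is counted against the lattice density, as the paper does.
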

\begin{proof}
Write
\begin{equation*}
\xi_{1,2}^0=\begin{pmatrix}
a_{1,2}^0 & D_B b_{1,2} \\ \tensor[^\sigma]{b}{_{1,2}} & \tensor[^\sigma]{a}{_{1,2}^0}
\end{pmatrix}
\end{equation*}
and assume $\xi_1^0,\xi_2^0\in X^g$ and $\Nr\xi_1^0=\Nr \xi_2^0$. Our goal is to count the number of possible pairs $(\xi_1^0,\xi_2^0)$.

For every $\xi\in B_\infty^0$ let $\tilde{a}^0$ be the $a^0$ coordinate of $a_E^{-1}\xi a_E$. Then,
\begin{equation*}
\Im \tilde{a}^0=\frac{\lambda+\lambda^{-1}}{2}\Im a^0+\sqrt{D_B}\frac{\lambda-\lambda^{-1}}{2}\Im b\;.
\end{equation*}
Moreover, $\tilde{a}^0$ is also the $a^0$ coordinate of $(gh)^{-1}\xi (gh)$ because conjugation by $K_E$ acts trivially on the $a^0$-axis.
By substitution, we can rewrite equation \eqref{eq:X^g-2} as
\begin{equation}\label{eq:X^g-2-shifted}
\left(\frac{\lambda-\lambda^{-1}}{\lambda+\lambda^{-1}}\Im \tilde{a}^0+\sqrt{D_B}\frac{2}{\lambda+\lambda^{-1}}\Im b\right)^2+D_B(\Re b)^2 \leq N\delta \;.
\end{equation}
Assume $\xi\in X^g$.
Because equation \eqref{eq:u-det-1} implies that $|\Im \tilde{a}^0|\ll N^{1/2}$, we see that \eqref{eq:X^g-2-shifted} restricts $b\in X^g$ to an ellipse with radii $(N\delta/D_B)^{1/2},(\lambda+\lambda^{-1})(N\delta/D_B)^{1/2}$ and center in an interval of length $\ll N^{1/2}$. We deduce that there are at most 
\begin{equation}\label{eq:b-choices}
\ll (\lambda+\lambda^{-1})N\delta^{1/2}\sqrt{\frac{|D_E|}{D_B}}+(\lambda+\lambda^{-1})N^{1/2}\sqrt{ \frac{|D_E|}{D_B}}+1
\end{equation}
choices for $b$  if $\xi\in X^g$. Moreover, we see that necessarily $\Nr b \ll (\lambda+\lambda^{-1})N$

\paragraph*{Case I: $|a_1^0|=|a_2^0|$}
In this case, the condition $\Nr\xi_1=\Nr \xi_2$ implies that $\Nr b_1=\Nr b_2$. Because there are at most $\ll_\varepsilon (n|D_E|)^{\varepsilon}$ elements of norm $n$ in $\widehat{\mathcal{O}}_E$ and $\Nr b_2\ll (\lambda+\lambda^{-1})N$ we see that for any fixed $\xi_1^0$ there are at most $\ll_\varepsilon ((\lambda+\lambda^{-1})N|D_E|)^\varepsilon$ possibilities for $\xi_2^0$. We deduce from Lemma \ref{lem:traceless-count} that the number of possible pairs $(\xi_1,\xi_2)$ with $|a_1^0|=|a_2^0|$ satisfies
\begin{equation*}
\ll_\varepsilon  ((\lambda+\lambda^{-1})N|D_E|)^\varepsilon|D_E| \left(N^{3/2}\delta+(\lambda+\lambda^{-1})(N\delta^{1/2}+N^{1/2})\right)
\end{equation*}
and this bound is compatible with the claim.

\paragraph*{Case II: $|a_1^0|\neq|a_2^0|$}
In this case, we will first count the number of possibilities for $(b_1,b_2)$. We bound the number of choices for $b_1$ using \eqref{eq:b-choices} above.
If $\xi\in X^g$, then equation \eqref{eq:u-det-2} implies
\begin{equation*}
(\Im \tilde{a}^0)^2-\Nr \xi=(\Im \tilde{a}^0)^2-\Nr((gh)^{-1}\xi(gh))\in[0,N\delta]\;.
\end{equation*}
Thus, we deduce for $\xi_{1,2}^0$ that $\left|(\Im \tilde{a}_1^0)^2-(\Im \tilde{a}_2^0)^2
\right|\leq 2N\delta$ and
\begin{equation}\label{eq:Ima_1^0-Ima_2^0}
\left||\Im \tilde{a}_1^0|-|\Im \tilde{a}_2^0|\right|\ll \sqrt{N\delta}\;.
\end{equation}
Once $b_1$ is fixed, inequality \eqref{eq:X^g-2-shifted} restricts $\frac{\lambda-\lambda^{-1}}{\lambda+\lambda^{-1}}\Im \tilde{a}_1^0$ to an interval of length $\ll \sqrt{N\delta}$. Equation \eqref{eq:Ima_1^0-Ima_2^0} then restricts $\frac{\lambda-\lambda^{-1}}{\lambda+\lambda^{-1}}|\Im \tilde{a}_2^0|$ to a interval also of length $\ll \sqrt{N\delta}$.

This constraints the possibilities for the center of the ellipse in Inequality \eqref{eq:X^g-2-shifted} for $b_2$ into two intervals of length $\ll \sqrt{N\delta}$. Hence, given $b_1$, there at most
\begin{equation*}
\ll (\lambda+\lambda^{-1})N\delta\sqrt\frac{|D_E|}{|D_B|}+(\lambda+\lambda^{-1})(N\delta)^{1/2}\sqrt\frac{|D_E|}{|D_B|}+1
\end{equation*}
options for the $b_2$.

After fixing $b_1,b_2$, we use the condition $\Nr \xi_1=\Nr \xi_2$ to fix $(\Im a_1^0)^2-(\Im a_2^0)^2$. The divisor bound and the condition $|\Im a_1^0|\neq|\Im a_2^0|$ now implies there are at most $\ll_\varepsilon ((\lambda+\lambda^{-1})N|D_E|)^\varepsilon$ options for the pair $(a_1^0,a_2^0)$.

The total number of possible pairs $(\xi_1^0,\xi_2^0)$ in this case is thus bounded by
\begin{align*}
\ll_\varepsilon ((\lambda+\lambda^{-1})N|D_E|)^\varepsilon|D_E|&\left(
(\lambda+\lambda^{-1})N\delta^{1/2}+(\lambda+\lambda^{-1})N^{1/2}\right)\\
\cdot&
\left((\lambda+\lambda^{-1})N\delta+(\lambda+\lambda^{-1})(N\delta)^{1/2}+1\right)\;.
\end{align*}
This bound is also compatible with the claim.
\end{proof}

\begin{proof}[Proof of Proposition \ref{prop:second-moment-embedded}]
Assume first $\delta>1$. Then, we follow \cite{IwaniecSarnak} to establish the bound $M(g,n;\delta)\ll_{\varepsilon}  \left((\lambda+\lambda^{-1})n \delta |D_E|^{1/2} \right)^{1+\varepsilon}$. We have the bounds $\Tr a=2\Re a\ll (n\delta)^{1/2}$ and $\Im a=\Im a^0\ll (\lambda+\lambda^{-1})(n\delta)^{1/2}$. After fixing $a$ we can fix $b$ using the equality $n=\det\xi=\Nr a- D_B\Nr b$. The divisor bound and the inequality $\Nr b \ll (\lambda+\lambda^{-1})^2n \delta$ imply we have at most $\ll_\varepsilon (\lambda+\lambda^{-1})^\varepsilon n^\varepsilon \delta^{\varepsilon} |D_E|^{\varepsilon}$ choices for $b$.

Assume next $\delta\leq 1$.
Once again, an important role is reserved for the simple formula
\begin{equation}\label{eq:nr-tr-disc}
\Nr x=\frac{(\Tr x)^2}{4}+\Nr x^0
\end{equation}
that holds for all $x\in B_\infty$ with $x^0\in B_\infty^0$ the traceless part of $x$.
Our goal is to bound the number of pairs $(\xi_1,\xi_2)\in\mathcal{R}$ such that $0\leq \Nr\xi_1=\Nr\xi_2 \leq N$ and $u(g^{-1}\xi_1g)=u(g^{-1}\xi_2 g)<\delta$.

\paragraph*{Case I: $|\Tr \xi_1|\neq |\Tr \xi_2|$}
Lemma \ref{lem:traceless-count} implies that the number of possibilities for the pair $(\xi_1^0,\xi_2^0)$ is bounded by
\begin{equation*}
\ll |D_E|^2 \left(N^3\delta^2+(\lambda+\lambda^{-1})^2(N^{5/2}\delta^{3/2}+N)\right)\;.
\end{equation*}
For any pair $(\xi_1^0,\xi_2^0)\in B_\infty^0\times B_\infty^0$, the lifts to $B_\infty\times B_\infty$ are determined by $(\Tr \xi_1,\Tr \xi_2)$.

From the Formula \eqref{eq:nr-tr-disc}, we derive
$(\Tr \xi_1)^2-(\Tr \xi_2)^2=4\left(\Nr \xi_2^0-\Nr\xi_1^0\right)$. The right hand side is bounded in absolute value $\ll N$.
The divisor bound and the assumption $|\Tr \xi_1|\neq |\Tr \xi_2|$ imply that for every $(\xi_1^0,\xi_2^0)$ the number of possible pairs  $(\Tr \xi_1,\Tr \xi_2)$ is bounded by $\ll_\varepsilon N^\varepsilon$. The cumulative bound is consistent with the claim.

\paragraph*{Case II: $|\Tr \xi_1|= |\Tr \xi_2|$}
In this case, Formula \eqref{eq:nr-tr-disc} implies that $\Nr \xi_1^0=\Nr \xi_2^0$ and we can bound the total number of pairs $(\xi_1^0,\xi_2^0)$ using Lemma \ref{lem:equidisc-count}. The number of pairs $(\Tr \xi_1,\Tr \xi_2)$ is trivially bounded by $\ll \sqrt{N}$ because $|\Tr \xi_1|= |\Tr \xi_2|$. The resulting bound on the pairs $(\xi_2,\xi_2)$ is consistent with the claim.
\end{proof}

\section{Proof of Main Theorem}\label{sec:proof}
This section is dedicated to establishing our main result, Theorem \ref{thm:main}. Recall that $\mathcal{B}_m^{\mathrm{new}}$ is an orthonormal basis of Hecke newforms of weight $m>2$. We can combine Corollary \ref{cor:spectral-lower-bound} and Corollary \ref{cor:geometric-upper-bound} to deduce
\begin{align}
\label{eq:spectral-geometric-inequality}
\sum_{f \in \mathcal{B}_m^\mathrm{new}}  |f^\sharp(g)|^4
&\ll_{\varepsilon} (qD_B)^{3+\varepsilon} \frac{m^2}{m-2}
\Bigg\{\frac{m}{2}\int_0^\infty \left[\sum_{n=1}^{(qD_B)^2m} \frac{1}{n}M(g,n;\delta)^2\right]^{1/2}\\
&+\left[\sum_{n>(qD_B)^2m} \frac{\exp(-n/(qD_B)^2)}{n} M(g,n;\delta)^2 \right]^{1/2}
\frac{\dif\delta}{(1+\delta)^{m/2+1}}\Bigg\}^2\;.
\nonumber
\end{align}
On the right-hand-side, we've denoted by $M(g,n;\delta)$ the counting function associated to a maximal order containing $\mathcal{R}$. 

\subsection{Proof of Main Theorem for the Split Matrix Algebra}
Let $\mathcal{F}$ a fundamental domain for the action $\mathbf{SL}_2(\mathbb{Z})$ on $\mathbb{H}$. Recall that in this case $\Gamma=\Gamma_0(q)<\mathbf{SL}_2(\mathbb{Z})$. 
For $g\in\mathbf{G}(\mathbb{R})=\mathbf{SL}_2(\mathbb{R})$, we denote 
\begin{equation*}
\mathrm{ht}_\Gamma(g)=\min\left\{ y\mid \exists \gamma\in\mathbf{SL}_2(\mathbb{Z})\colon (\gamma g).i=x+iy\in\mathcal{F}\right\}\;.
\end{equation*} 

We first bound the sum $\sum_{n=1}^{(qD_B)^2m}$. Because $M(g,n;\delta)$ is the count associated to the maximal order $\operatorname{Mat}_{2\times 2}(\mathbb{Z})$, the sum is invariant under the operation of replacing $g$ by $\gamma g$ for any $\gamma\in\mathbf{SL}_2(\mathbb{Z})$. In particular, we can arrange $g.i=x+iy$ with $y=\mathrm{ht}_\Gamma(g)$.
We need to convert the logarithmic sum $\sum \frac{1}{n}M(g,n;\delta)^2$ to an unweighted sum. We achieve this using the general, integration-by-parts, identity
\begin{equation*}
\sum_{n=1}^N \frac{1}{n}f(n)=\frac{1}{N}\sum_{n=1}^N f(n)+\int_1^N\frac{1}{t}\sum_{n=1}^t f(n) \frac{\dif t}{t}\;,
\end{equation*}
which holds for any $f\colon\mathbb{N}\to\mathbb{C}$. This identity and Proposition \ref{prop:second-moment-sp} imply
\begin{equation}
\label{eq:second-moment-applied}
\sum_{n=1}^{N} \frac{1}{n}M(g,n;\delta)^2\ll_{\varepsilon} N^{2+\varepsilon}\delta^2+N^\varepsilon+\mathrm{ht}_\Gamma(g)^2\begin{cases}
N^{1+\varepsilon}\delta^{3/2}+N^{1/2+\varepsilon}\delta & \delta<1\\
N^{1+\varepsilon}\delta & \delta\geq 1
\end{cases}\;,
\end{equation}
where $N=(qD_B)^2m$. We next need to compute the integral $\int_0^\infty \sqrt{\cdots}\frac{\dif\delta}{(1+\delta)^{m/2+1}}$. We use the $l^2$--$l^1$ inequality to separate the terms in \eqref{eq:second-moment-applied} under the square root.
We first compute the contribution of the first two terms in \eqref{eq:second-moment-applied}
\begin{equation*}
N^{1+\varepsilon}\int_0^\infty \frac{\delta}{(1+\delta)^{m/2+1}}\dif \delta +N^\varepsilon\int_0^\infty \frac{1}{(1+\delta)^{m/2+1}}\dif\delta\ll \frac{N^{1+\varepsilon}}{m(m-2)}+\frac{N^{\varepsilon}}{m}\ll
\frac{(qD_B)^{2+\varepsilon}m^{\varepsilon}}{m-2}\;.
\end{equation*}

We will need the following estimate for $0<\kappa<1$, $\varepsilon>0$
\begin{align*}
\int_0^1\frac{\delta^\kappa}{(1+\delta)^{m/2+1}}\dif\delta&\leq m^{-\kappa(1-\varepsilon)}\int_0^{m^{-1+\varepsilon}}\frac{\dif\delta}{(1+\delta)^{m/2+1}}
+\int_{m^{-1+\varepsilon}}^1\frac{\dif\delta}{(1+\delta)^{m/2+1}}\\
&\ll\frac{m^{-\kappa(1-\varepsilon)}}{m^{1-\varepsilon}}+(1+m^{-1+\varepsilon})^{-m/2}
\ll_{\epsilon} \frac{m^{-\kappa(1-\varepsilon)}}{m^{1-\varepsilon}}+e^{-m^{\varepsilon}/4}\ll_{\varepsilon,\kappa} m^{-1-\kappa+\varepsilon}\;.
\end{align*}
To compute the contribution of the term proportional to $\mathrm{ht}_\Gamma(g)$ in \eqref{eq:second-moment-applied}, we split the integral over $\delta$ into $\int_0^1+\int_1^\infty$ and use the integral estimate above to arrive at
\begin{align*}
\int_0^\infty\begin{cases}
N^{1/2+\varepsilon}\delta^{3/4}+N^{1/4+\varepsilon}\delta^{1/2} & \delta<1\\
N^{1/2+\varepsilon}\delta^{1/2} & \delta\geq 1
\end{cases}\cdot\frac{\dif\delta}{(1+\delta)^{m/2+1}}
&\ll (Nm)^\varepsilon\left(\frac{N^{1/2}}{m^{1+3/4}}+\frac{N^{1/4}}{m^{1+1/2}}+2^{-m/2}\frac{N^{1/2}}{(m-2)}\right)\\
&\ll (qD_B)^{1+\varepsilon}\frac{m^{-1/4+\varepsilon}}{m-2}\;.
\end{align*}
In conclusion
\begin{equation*}
\int_0^\infty \left[
\sum_{n=1}^{(qD_B)^2m} \frac{1}{n}M(g,n;\delta)^2\right]^{1/2} 
\frac{\dif \delta}{(1+\delta)^{m/2+1}}\ll_\varepsilon (qD_B)^{2+\varepsilon}m^{\varepsilon}\frac{1+\mathrm{ht}_\Gamma(g)m^{-1/4}}{m-2}\;.
\end{equation*}

The computation of the bound for the integral $\int_0^{\infty} \left[\sum_{n>(qD_B)^2m} \frac{\exp(-n/(qD_B)^2)}{n} M(g,n;\delta)^2 \right]^{1/2}
\frac{\dif\delta}{(1+\delta)^{m/2+1}}$ uses a very similar argument, except that we need to apply the integration-by-parts identity
\begin{equation*}
\sum_{n>A m} \frac{\exp(-n/A)}{n}f(n)=-\frac{\exp(-m)}{Am}\sum_{n=1}^{Am} f(n)
+\frac{1}{A}\int_m^\infty \exp(-t) \sum_{n=1}^{At}f(n)\left(1+\frac{1}{t}\right)\frac{\dif t}{t}\;,
\end{equation*}
that holds for any function $f\colon \mathbb{N}\to\mathbb{C}$ satisfying $\log f(n)=o(n)$ and $A,m\geq 1$. The contributions of these terms is then easily seen to be negligible.

Combining these inequalities with \eqref{eq:spectral-geometric-inequality}, we arrive at
\begin{equation*}
\sum_{f \in \mathcal{B}_m^\mathrm{new}}  |f^\sharp(g)|^4
\ll_{\varepsilon} (qD_B)^{7+\varepsilon}\frac{m^{4+\varepsilon}}{(m-2)^3}\left(1+\mathrm{ht}_\Gamma(g)^2m^{-1/2}\right)\;.
\end{equation*} 
This is consistent with the first claim in Theorem \ref{thm:main} for $m>2$. As mentioned in the introduction, the second claim requires the additional input of \cite[Theorem 1.8]{BKY}, which says that most of the $L^4$-mass is concentrated on $\mathrm{ht}_{\Gamma}(g) \ll m^{\frac{1}{4}}$. Since the extension of said Theorem to include a polynomial level dependence follows their proof almost verbatim, we leave it to the reader.
\qed
\subsection{Proof of Main Theorem for Division Algebras}
In this section, we use the notations of \S \ref{sec:second-moment-cartan}.
We follow the same arguments as for the split algebra replacing Proposition \ref{prop:second-moment-sp} by Proposition \ref{prop:second-moment-embedded} to arrive at
\begin{equation*}
\sum_{f \in \mathcal{B}_m^\mathrm{new}}  |f^\sharp(g)|^4
\ll_{\varepsilon} |D_E|^{2+\varepsilon}(\lambda+\lambda^{-1})^{2+\varepsilon}(qD_B)^{7+\varepsilon}\frac{m^{4+\varepsilon}}{(m-2)^3}\;.
\end{equation*}
Recall that $E\hookrightarrow\mathcal{R}$ is any optimal embedding of an imaginary quadratic field into the fixed maximal order. By \cite{ChinburgFriedman}, this is always possible if any prime dividing $D_B$ is inert in $E$. Using the chinese remainder theorem  one deduces that such a discriminant $D_E$ exists satisfying $|D_E|\ll D_B$.

Lastly, we can replace $g$ by any $\gamma g$ for any $\gamma\in \Gamma$, hence $(\lambda+\lambda^{-1})$ is polynomially bounded by the volume of $\lfaktor{\Gamma}{\mathbf{G}(\mathbb{R})}$, as follows from \cite{ChuLi}. The latter has been recently improved by the second named author in
 \cite{SteinerSmallDiameter}.

\bibliographystyle{myalpha}
\bibliography{theta_kernel_bib}

\begin{thebibliography}{BHMM20}

\bibitem[AL70]{AtkinLehner}
A.~O.~L. Atkin and J.~Lehner.
\newblock Hecke operators on {$\Gamma \sb{0}(m)$}.
\newblock {\em Math. Ann.}, 185:134--160, 1970.

\bibitem[Ass17]{AssingSup}
E.~Assing.
\newblock {O}n sup-norm bounds part {I}: ramified {M}aa{\ss} newforms over
  number fields.
\newblock {\em {P}reprint}, 2017.
\newblock {\tt arXiv:1710.00362}.

\bibitem[BH10]{BH10}
V.~Blomer and R.~Holowinsky.
\newblock Bounding sup-norms of cusp forms of large level.
\newblock {\em Invent. Math.}, 179(3):645--681, 2010.

\bibitem[BHM16]{blomer2016bounds}
V.~Blomer, G.~Harcos, and D.~Mili\'{c}evi\'{c}.
\newblock Bounds for eigenforms on arithmetic hyperbolic 3-manifolds.
\newblock {\em Duke Math. J.}, 165(4):625--659, 2016.

\bibitem[BHMM20]{BHMM}
V.~Blomer, G.~Harcos, P.~Maga, and D.~Mili\'{c}evi\'{c}.
\newblock The sup-norm problem for {GL}(2) over number fields.
\newblock {\em J. Eur. Math. Soc. (JEMS)}, 22(1):1--53, 2020.

\bibitem[BKY13]{BKY}
V.~Blomer, R.~Khan, and M.~Young.
\newblock Distribution of mass of holomorphic cusp forms.
\newblock {\em Duke Math. J.}, 162(14):2609--2644, 2013.

\bibitem[BMi11]{BM2011}
V.~Blomer and P.~Michel.
\newblock Sup-norms of eigenfunctions on arithmetic ellipsoids.
\newblock {\em Int. Math. Res. Not. IMRN}, (21):4934--4966, 2011.

\bibitem[BMi13]{BM2013}
V.~Blomer and P.~Michel.
\newblock Hybrid bounds for automorphic forms on ellipsoids over number fields.
\newblock {\em J. Inst. Math. Jussieu}, 12(4):727--758, 2013.

\bibitem[BMa15]{BlomerMagaPGL4}
V.~Blomer and P.~Maga.
\newblock The sup-norm problem for {PGL}(4).
\newblock {\em Int. Math. Res. Not. IMRN}, (14):5311--5332, 2015.

\bibitem[BMa16]{BlomerMagaPGLn}
V.~Blomer and P.~Maga.
\newblock Subconvexity for sup-norms of cusp forms on {$\rm {PGL}(n)$}.
\newblock {\em Selecta Math. (N.S.)}, 22(3):1269--1287, 2016.

\bibitem[Bor63]{BorelFiniteness}
Armand Borel.
\newblock Some finiteness properties of adele groups over number fields.
\newblock {\em Inst. Hautes \'{E}tudes Sci. Publ. Math.}, (16):5--30, 1963.

\bibitem[BP16]{BlomerPohl}
V.~Blomer and A.~Pohl.
\newblock The sup-norm problem on the {S}iegel modular space of rank two.
\newblock {\em Amer. J. Math.}, 138(4):999--1027, 2016.

\bibitem[CF99]{ChinburgFriedman}
T.~Chinburg and E.~Friedman.
\newblock An embedding theorem for quaternion algebras.
\newblock {\em J. London Math. Soc. (2)}, 60(1):33--44, 1999.

\bibitem[CL16]{ChuLi}
M.~Chu and H.~Li.
\newblock Small generators of cocompact arithmetic {F}uchsian groups.
\newblock {\em Proc. Amer. Math. Soc.}, 144(12):5121--5127, 2016.

\bibitem[DS15]{DasSengupta}
S.~Das and J.~Sengupta.
\newblock {$L^\infty$} norms of holomorphic modular forms in the case of
  compact quotient.
\newblock {\em Forum Math.}, 27(4):1987--2001, 2015.

\bibitem[GZ86]{grosszagier}
B.~H. Gross and D.~B. Zagier.
\newblock Heegner points and derivatives of {$L$}-series.
\newblock {\em Invent. Math.}, 84(2):225--320, 1986.

\bibitem[HL94]{HoffsteinLockhart}
J.~Hoffstein and P.~Lockhart.
\newblock Coefficients of {M}aass forms and the {S}iegel zero.
\newblock {\em Ann. of Math. (2)}, 140(1):161--181, 1994.
\newblock With an appendix by Dorian Goldfeld, Hoffstein and Daniel Lieman.

\bibitem[HNS19]{hu2019some}
Y.~Hu, P.~D. Nelson, and A.~Saha.
\newblock Some analytic aspects of automorphic forms on {$\rm GL(2)$} of
  minimal type.
\newblock {\em Comment. Math. Helv.}, 94(4):767--801, 2019.

\bibitem[HS20]{HuSaha}
Yueke Hu and Abhishek Saha.
\newblock Sup-norms of eigenfunctions in the level aspect for compact
  arithmetic surfaces, {II}: newforms and subconvexity.
\newblock {\em Compos. Math.}, 156(11):2368--2398, 2020.

\bibitem[HT12]{HT2}
G.~Harcos and N.~Templier.
\newblock On the sup-norm of {M}aass cusp forms of large level: {II}.
\newblock {\em Int. Math. Res. Not. IMRN}, (20):4764--4774, 2012.

\bibitem[HT13]{HT3}
G.~Harcos and N.~Templier.
\newblock On the sup-norm of {M}aass cusp forms of large level. {III}.
\newblock {\em Math. Ann.}, 356(1):209--216, 2013.

\bibitem[Ich08]{Ichino}
Atsushi Ichino.
\newblock Trilinear forms and the central values of triple product
  {$L$}-functions.
\newblock {\em Duke Math. J.}, 145(2):281--307, 2008.

\bibitem[IS95]{IwaniecSarnak}
H.~Iwaniec and P.~Sarnak.
\newblock {$L^\infty$} norms of eigenfunctions of arithmetic surfaces.
\newblock {\em Ann. of Math. (2)}, 141(2):301--320, 1995.

\bibitem[JL70]{JacquetLanglands}
H.~Jacquet and R.~P. Langlands.
\newblock {\em Automorphic forms on {${\rm GL}(2)$}}.
\newblock Lecture Notes in Mathematics, Vol. 114. Springer-Verlag, Berlin-New
  York, 1970.

\bibitem[K{\i}r14]{halflevel}
E.~M. K{\i}ral.
\newblock Bounds on sup-norms of half-integral weight modular forms.
\newblock {\em Acta Arith.}, 165(4):385--399, 2014.

\bibitem[KNS22]{Masstheta4moment}
I.~Khayutin, P.~D. Nelson, and R.~S. Steiner.
\newblock {T}heta functions, fourth moments of eigenforms, and the sup-norm
  problem {II}.
\newblock {\em {P}reprint}, 2022.
\newblock {\tt arXiv:2207.12351}.

\bibitem[Mar14]{Marshallsup}
S.~Marshall.
\newblock Upper bounds for {M}aass forms on semisimple groups.
\newblock {\em {P}reprint}, 2014.
\newblock {\tt arXiv:1405.7033}.

\bibitem[Nel11]{Nelsonlevelequidistribution}
Paul~D. Nelson.
\newblock Equidistribution of cusp forms in the level aspect.
\newblock {\em Duke Math. J.}, 160(3):467--501, 2011.

\bibitem[Nel15]{NelsonComp}
P.~D. Nelson.
\newblock Evaluating modular forms on {S}himura curves.
\newblock {\em Math. Comp.}, 84(295):2471--2503, 2015.

\bibitem[Nel16]{NelsonQV1}
P.~D. Nelson.
\newblock Quantum variance on quaternion algebras, {I}.
\newblock {\em {P}reprint}, 2016.
\newblock {\tt arXiv:1601.02526}.

\bibitem[Nel17]{NelsonQV2}
P.~D. Nelson.
\newblock Quantum variance on quaternion algebras, {II}.
\newblock {\em {P}reprint}, 2017.
\newblock {\tt arXiv:1702.02669}.

\bibitem[Nel19]{NelsonQV3}
P.~D. Nelson.
\newblock Quantum variance on quaternion algebras, {III}.
\newblock {\em {P}reprint}, 2019.
\newblock {\tt arXiv:1903.08686}.

\bibitem[Nel20]{NelsonSymmL}
Paul~D. Nelson.
\newblock Bounds for twisted symmetric square {$L$}-functions via half-integral
  weight periods.
\newblock {\em Forum Math. Sigma}, 8:Paper No. e44, 21, 2020.

\bibitem[Pet40]{Pet1}
H.~Petersson.
\newblock \"{U}ber eine {M}etrisierung der automorphen {F}ormen und die
  {T}heorie der {P}oincar\'{e}schen {R}eihen.
\newblock {\em Math. Ann.}, 117:453--537, 1940.

\bibitem[Pet41]{Pet2}
H.~Petersson.
\newblock Einheitliche {B}egr\"{u}ndung der {V}ollst\"{a}ndigkeitss\"{a}tze
  f\"{u}r die {P}oincar\'{e}schen {R}eihen von reeller {D}imension bei
  beliebigen {G}renzkreisgruppen von erster {A}rt.
\newblock {\em Abh. Math. Sem. Hansischen Univ.}, 14:22--60, 1941.

\bibitem[RW21]{RamacherWakatsuki}
Pablo Ramacher and Satoshi Wakatsuki.
\newblock Subconvex bounds for {H}ecke-{M}aass forms on compact arithmetic
  quotients of semisimple {L}ie groups.
\newblock {\em Math. Z.}, 298(3-4):1383--1424, 2021.

\bibitem[Sah17a]{saha2017hybrid}
A.~Saha.
\newblock Hybrid sup-norm bounds for {M}aass newforms of powerful level.
\newblock {\em Algebra Number Theory}, 11(5):1009--1045, 2017.

\bibitem[Sah17b]{Saha14}
A.~Saha.
\newblock On sup-norms of cusp forms of powerful level.
\newblock {\em J. Eur. Math. Soc. (JEMS)}, 19(11):3549--3573, 2017.

\bibitem[Sah20]{saha2019sup}
A.~Saha.
\newblock Sup-norms of eigenfunctions in the level aspect for compact
  arithmetic surfaces.
\newblock {\em Math. Ann.}, 376(1-2):609--644, 2020.

\bibitem[Sar03]{SarnakWatsonL4}
P.~Sarnak.
\newblock Spectra of hyperbolic surfaces.
\newblock {\em Bull. Amer. Math. Soc. (N.S.)}, 40(4):441--478, 2003.

\bibitem[Sel42]{selberg1942zeros}
A.~Selberg.
\newblock On the zeros of {R}iemann's zeta-function.
\newblock {\em Skr. Norske Vid.-Akad. Oslo I}, 1942(10):59, 1942.

\bibitem[Shi72]{Shimizu}
H.~Shimizu.
\newblock Theta series and automorphic forms on {${\rm GL}_{2}$}.
\newblock {\em J. Math. Soc. Japan}, 24:638--683, 1972.

\bibitem[Sog88]{Sogge}
C.~D. Sogge.
\newblock Concerning the {$L^p$} norm of spectral clusters for second-order
  elliptic operators on compact manifolds.
\newblock {\em J. Funct. Anal.}, 77(1):123--138, 1988.

\bibitem[Ste16]{Steiner16}
R.~S. Steiner.
\newblock Uniform bounds on sup-norms of holomorphic forms of real weight.
\newblock {\em Int. J. Number Theory}, 12(5):1163--1185, 2016.

\bibitem[Ste17]{halfsup}
R.~S. Steiner.
\newblock Supnorm of modular forms of half-integral weight in the weight
  aspect.
\newblock {\em Acta Arith.}, 177(3):201--218, 2017.

\bibitem[Ste20]{S3sup}
R.~S. Steiner.
\newblock Sup-norm of {H}ecke-{L}aplace eigenforms on {$S^3$}.
\newblock {\em Math. Ann.}, 377(1-2):543--553, 2020.

\bibitem[Ste22]{SteinerSmallDiameter}
R.~S. Steiner.
\newblock Small diameters and generators for arithmetic lattices in
  {$\mathrm{SL}_2(\mathbb{R})$} and certain {R}amanujan graphs.
\newblock {\em {P}reprint}, 2022.
\newblock {\tt arXiv:2207.12684}.

\bibitem[SW71]{Stein-Weiss-Fourier}
Elias~M. Stein and Guido Weiss.
\newblock {\em Introduction to {F}ourier analysis on {E}uclidean spaces}.
\newblock Princeton Mathematical Series, No. 32. Princeton University Press,
  Princeton, N.J., 1971.

\bibitem[SY19]{sun2019double}
H.~Sun and Y.~Ye.
\newblock Double first moment for {$L(\frac 12,{\rm Sym}^2f\times g)$} by
  applying {P}etersson's formula twice.
\newblock {\em J. Number Theory}, 202:141--159, 2019.

\bibitem[Tem10]{HT1}
N.~Templier.
\newblock On the sup-norm of {M}aass cusp forms of large level.
\newblock {\em Selecta Math. (N.S.)}, 16(3):501--531, 2010.

\bibitem[Tem15]{Thybrid}
N.~Templier.
\newblock Hybrid sup-norm bounds for {H}ecke--{M}aass cusp forms.
\newblock {\em J. Eur. Math. Soc. (JEMS)}, 17(8):2069--2082, 2015.

\bibitem[Van97]{Vanderkam}
J.~M. VanderKam.
\newblock {$L^\infty$} norms and quantum ergodicity on the sphere.
\newblock {\em Internat. Math. Res. Notices}, (7):329--347, 1997.

\bibitem[Vig77]{Vigneras}
M.-F. Vign\'{e}ras.
\newblock S\'{e}ries th\^{e}ta des formes quadratiques ind\'{e}finies.
\newblock In {\em S\'{e}minaire {D}elange-{P}isot-{P}oitou, 17e ann\'{e}e
  (1975/76), {T}h\'{e}orie des nombres: {F}asc. 1, {E}xp. {N}o. 20}, page~3.
  1977.

\bibitem[Voi18]{VoightQA}
John Voight.
\newblock Quaternion algebras, 2018.
\newblock Available at https://math.dartmouth.edu/~jvoight/quat.html.

\bibitem[Wal85]{Waldspurger}
J.-L. Waldspurger.
\newblock Sur les valeurs de certaines fonctions {$L$} automorphes en leur
  centre de sym\'{e}trie.
\newblock {\em Compositio Math.}, 54(2):173--242, 1985.

\bibitem[Wat08]{Watsontriple}
T.~C. Watson.
\newblock {R}ankin {T}riple {P}roducts and {Q}uantum {C}haos.
\newblock {\em Ph.D. dissertation, Princeton University}, 2008.
\newblock {\tt arXiv:0810.0425}.

\bibitem[Wei64]{Weil}
A.~Weil.
\newblock Sur certains groupes d'op\'{e}rateurs unitaires.
\newblock {\em Acta Math.}, 111:143--211, 1964.

\bibitem[Xia07]{Supnormintweight}
H.~Xia.
\newblock On {$L^\infty$} norms of holomorphic cusp forms.
\newblock {\em J. Number Theory}, 124(2):325--327, 2007.

\bibitem[Zag76]{EichSelbZagier}
D.~Zagier.
\newblock The {E}ichler--{S}elberg {T}race {F}ormula on
  {$\mathrm{SL}_2(\mathbb{Z})$}.
\newblock {\em An appendix to the book "{I}ntroduction to modular forms" by
  {S}. Lang, Springer-Verlag, Berlin-New York}, pages 44--55, 1976.

\bibitem[Zag77]{ZagierDoiLecture}
D.~Zagier.
\newblock Modular forms whose {F}ourier coefficients involve zeta-functions of
  quadratic fields.
\newblock In {\em Modular functions of one variable, {VI} ({P}roc. {S}econd
  {I}nternat. {C}onf., {U}niv. {B}onn, {B}onn, 1976)}, pages 105--169. Lecture
  Notes in Math., Vol. 627, 1977.

\end{thebibliography}
\end{document}